\documentclass[11pt]{amsart}

\usepackage{amsmath,amssymb,latexsym}
\usepackage{dsfont}
\usepackage[T1]{fontenc}
\usepackage{enumerate}
\usepackage{eqnarray}
\usepackage{mathtools}
\usepackage{url}
\usepackage{xcolor}

\usepackage[a4paper,top=3cm, bottom=3cm, inner=3.5cm,outer=2.7cm,nofoot,headsep=1.5cm]{geometry}
\def\Ind#1#2{#1\setbox0=\hbox{$#1x$}\kern\wd0\hbox to 0pt{\hss$#1\mid$\hss}
\lower.9\ht0\hbox to 0pt{\hss$#1\smile$\hss}\kern\wd0}
\def\Notind#1#2{#1\setbox0=\hbox{$#1x$}\kern\wd0\hbox to 0pt{\mathchardef
\nn="3236\hss$#1\nn$\kern1.4\wd0\hss}\hbox to 0pt{\hss$#1\mid$\hss}\lower.9\ht0
\hbox to 0pt{\hss$#1\smile$\hss}\kern\wd0}
\def\indi{\mathop{\mathpalette\Ind{}}}
\def\nindi{\mathop{\mathpalette\Notind{}}}

\newcommand{\F}{\mathbb{F}}

\newcommand{\1}{\mathbf{1}}

\newcommand{\abs}[1]{\left|#1\right|}

\theoremstyle{plain}
\newtheorem{theorem}{Theorem}[section]
\newtheorem*{theorem*}{Theorem}
\newtheorem{prop}[theorem]{Proposition}
\newtheorem{proposition}[theorem]{Proposition}

\newtheorem{fact}[theorem]{Fact}
\newtheorem{lemma}[theorem]{Lemma}
\newtheorem{cor}[theorem]{Corollary}
\newtheorem*{cor*}{Corollary}
\newtheorem{claim}{Claim}

\theoremstyle{definition}
\newtheorem{defn}[theorem]{Definition}
\newtheorem{definition}[theorem]{Definition}
\newtheorem{remark}[theorem]{Remark}

\newtheorem{problem}[theorem]{Problem}
\newtheorem{expl}[theorem]{Example}
\newtheorem{example}[theorem]{Example}
\newtheorem{conjec}[theorem]{Conjecture}

\newtheorem{notation-num}{Notation}
\newtheorem*{notation}{Notation}

\newcommand{\tp}{\operatorname{tp}}
\newcommand{\M}{\mathcal{M}}

\newcommand{\cU}{\mathbb{M}}

\newcommand{\acl}{\operatorname{acl}}

\newcommand{\eq}{\operatorname{eq}}
\newcommand{\RCF}{\operatorname{RCF}}
\newcommand{\bdd}{\operatorname{bdd}}
\newcommand{\heq}{\operatorname{heq}}
\newcommand{\Dp}{\operatorname{dp}}
\newcommand{\gs}{\operatorname{gs}}
\newcommand{\NTP}{\operatorname{NTP}}

\newcommand{\GIP}{\operatorname{GIP}}
\newcommand{\EM}{\operatorname{EM}}

\newcommand{\Aut}{\operatorname{Aut}}

\newcommand{\Av}{\operatorname{Av}}
\newcommand{\ACF}{\operatorname{ACF}}

\newcommand{\alg}{\operatorname{alg}}
\newcommand{\Th}{\operatorname{Th}}
\newcommand{\Sk}{\operatorname{Sk}}

\newcommand{\ind}{\operatorname{\indi}}
\newcommand{\nind}{\operatorname{\nindi}}
\newcommand{\opg}{\operatorname{opg}}

\newcommand{\id}{\operatorname{id}}
\newcommand{\cl}{\operatorname{cl}}

\newcommand{\VC}{\operatorname{VC}}

\newcommand{\Mbb}{\mathbb{M}}

\newcommand{\Ucal}{\mathcal{U}}

\newcommand{\Nbb}{\mathbb{N}}
\newcommand{\End}{\operatorname{End}}

\newcommand{\Ical}{\mathcal{I}}
\newcommand{\Jcal}{\mathcal{J}}
\newcommand{\Ncal}{\mathcal{N}}

\newcommand{\B}{\mathfrak{B}}
\newcommand{\A}{\mathfrak{A}}

\newcommand{\Lcal}{\mathcal{L}}
\newcommand{\xb}{\bar x}
\newcommand{\yb}{\bar y}
\newcommand{\ab}{\bar a}
\newcommand{\bb}{\bar b}
\newcommand{\cb}{\bar c}
\newcommand{\db}{\bar d}

\newcommand{\zb}{\bar z}

\newcommand{\ib}{\bar i}
\newcommand{\jb}{\bar j}
\newcommand{\rb}{\bar r}
\newcommand{\ub}{\bar u}

\newcommand{\cfrak}{\mathfrak{c}}
\newcommand{\dfrak}{\mathfrak{d}}
\newcommand{\Bfrak}{\mathfrak{B}}
\newcommand{\Mfrak}{\mathfrak{M}}
\newcommand{\Afrak}{\mathfrak{A}}
\newcommand{\sm}{\setminus}
\newcommand{\es}{\emptyset}
\newcommand{\sub}{\subseteq}

\title{On $n$-distality, $n$-triviality and hypergraph regularity in NIP theories}
\author{Artem Chernikov and Francis Westhead}

\begin{document}
\begin{abstract}
 We study Keisler measures in strongly $n$-distal NIP theories, generalizing  some results of Simon and Chernikov--Starchenko for distal theories  and addressing some questions of Walker. In particular, we establish a hypergraph version of the distal regularity lemma, compact domination for definable fsg groups, and demonstrate that the strong $n$-distality hierarchy is strict among stable theories using a connection to Poizat's total triviality of forking. We also show that infinite strongly $n$-distal NIP fields have characteristic $0$ using a discrepancy result of Babai--Hayes--Kimmel from multiparty communication complexity.
 \end{abstract}

\maketitle

\section{Introduction}

This paper contributes to the emerging subject of \emph{higher arity   classification theory} in model theory and its connections to hypergraph combinatorics, focusing on higher arity generalizations of \emph{distality}. Typically, various tameness notions in Shelah's classification theory are  given by restrictions on the combinatorial complexity of definable
binary relations, by forbidding certain induced subgraphs (e.g.~$T$
is \emph{stable} if no definable binary relation can contain arbitrary
large finite half-graphs; and \emph{NIP} if sufficiently large random
bipartite graphs are omitted). A typical result then demonstrates
that binary relations are ``approximated'' by the unary ones in
some form, up to a ``small'' error. For example, stationarity of
forking in stable theories says that given $p\left(x\right),q\left(y\right)$
types over a model $M$, there exists a \emph{unique type} $r(x,y)$
over $M$ so that if $\left(a,b\right)\models r$ then $a\models p,b\models q$
and $a\ind_{M}b$ \textemdash{} that is, there is a unique type $r\left(x,y\right)$
extending $p\left(x\right)\cup q\left(y\right)$, up to the forking
formulas $\varphi\left(x,y\right)\in\mathcal{L}\left(M\right)$.  Recently a number of results began to emerge concerning the higher arity generalizations
of these phenomena: under
some restricting assumption on the definable relations of arity $n+1$,
demonstrate an ``approximation'' by relations each involving at
most $n$ out of $n+1$ variables, up to a ``small error''. Mirroring
the passage from graphs to hypergraphs in combinatorics, this leads
to a significant growth in complexity of the occurring phenomena. 

Distal theories were defined by Simon \cite{simon2013distal} in terms of indiscernible sequences, aiming to isolate the subclass of purely unstable NIP theories \cite{simon2015guide}. In particular, these are precisely the theories in which all global generically stable Keisler measures are smooth (\cite[Theorem 1.1]{simon2013distal}; see Section \ref{sec: keisler meas intro} for the definitions). Distality was characterized in terms of the existence of (uniform) strong honest definitions in \cite{chernikov2015externally},  exhibiting a (not noticed at the time) connection to the existing notion of the ``isolation property'' \cite{belegradek1999extended, benedikt2003definable} studied in computer science. In \cite{chernikov2018regularity}, this was recast as a distal cell decomposition, and used to establish equivalence of distality of a theory to a definable version of the strong Erd\H{o}s-Hajnal property (generalizing \cite{alon2005crossing} in the semi-algebraic and \cite{basu2010combinatorial} in the o-minimal case), as well as a so-called distal regularity lemma (generalizing \cite{fox2012overlap, fox2016polynomial} in the semi-algebraic case), for hypergraphs definable in it (see also \cite{simon2016note}). Further work demonstrates that distal structures provide a general abstract setting for tame Erd\H os-style incidence combinatorics \cite{chernikov2020cutting, chernikov2021ramsey,  chernikov2021model, bays2023incidence, chernikov2024model, tong2026zarankiewicz}. Distality is  studied from the point of view of pure model theory \cite{boxall2018definable, boxall2023theories, chernikov2015externally, kaplan2017exact, anderson2023fuzzy}, and we refer to e.g.~\cite{hieronymi2017distal, aschenbrenner2022distality, tong2025distal, okura2026distal} for further examples of distal theories (see also the introduction of \cite{chernikov2018regularity} for a survey).

Two natural higher arity generalizations of distality, \emph{$n$-distality} and \emph{strong $n$-distality} (for $1 \leq n \in \mathbb{N}$), were proposed and studied by Walker \cite{walker2023distality} (see Section \ref{sec: n-dist}; with distality = $1$-distality = strong $1$-distality). $N$-distal theories form a subclass of the better studied $n$-dependent theories (see Proposition \ref{prop: n-dist implies n-dep}), introduced by Shelah \cite{MR3666349, MR3273451} and generalizing NIP/dependent theories in the case $n=1$. In the last decade, $N$-dependent theories were studied in pure model theory \cite{chernikov2014n, hempel2016n, chernikov2019mekler, chernikov2021n, chernikov2024n}; in connection to (arithmetic) hypergraph regularity in combinatorics \cite{chernikov2020hypergraph, terry2021irregular, terry2023improved, terry2024growth, terry2025structure, terry2025quadratic, gishboliner2025regularity, sheats2025linear}  (generalizing from graphs \cite{alon2007efficient, lovasz2010regularity, hrushovski2013generically}, as well as hypergraphs \cite{chernikov2021definable, fox2019erdHos}, of finite VC-dimension; we also refer to the introduction of \cite{chernikov2024perfect} for a survey); and higher arity VC theory (VC$_n$ theory) and PAC learning  (PAC$_n$ learning) in product spaces \cite{Kobayashi, KotaPAC, chernikov2020hypergraph, chernikov2025higher, CorMal2025}. We also mention some higher arity generalizations of stability, forming another subclass of $n$-dependent theories, considered in \cite{takeuchi, terry2021higher, terry2021irregular, CheOber, zbMATH08064119, arXiv:2508.05839, arXiv:2506.19147}. See also \cite{CheOber} for some  connections to higher amalgamation and stationarity in simple theories.

However, already the restriction of (strong) $n$-distality to NIP, or even stable,  theories, is of interest. In this paper, we  generalize some aspects of the rich theory of distal theories to strongly $n$-distal NIP theories, focusing on Keisler measures, as well as isolate some new phenomena --- this should be viewed as a first step in the development of a general theory of (strong) $n$-distality, informed by $n$-dependence. In what follows we summarize the main results of the paper. 

In Section \ref{sec: prelims} we discuss some preliminaries for the rest of the paper. In particular, in Section \ref{sec: meas on Bool alg} we overview some facts about finitely additive probability measures on Boolean algebras, including a criterion for the existence/uniqueness of extensions of a measure to a bigger Boolean algebra (Fact \ref{existence of measure extensions}, Proposition \ref{border and determination}). In Section \ref{sec: keisler meas intro} we review some basic properties and results concerning Keisler measures on definable sets in NIP theories, and in Section \ref{sec: forking} we review properties of forking (and related pre-independence relations) in NIP theories. In Section \ref{sec: n-dist} we review the definitions and basic properties of (strong) $n$-distality, and note that $n$-distality implies $n$-dependence (Proposition \ref{prop: n-dist implies n-dep}). In Section \ref{sec: cyl inters sets} we recall (definable) \emph{cylinder intersection sets} (i.e.~subsets of product spaces in a Boolean algebra generated by sets that only depend on a proper subset of the coordinates) which play a crucial role in the paper.

In Section \ref{sec: n-determinacy for measures and n-distal regularity lemma} we establish our main results about products of Keisler measures in (strongly) $n$-distal NIP theories and obtain some combinatorial applications.  In Section \ref{sec: Indiscernible measures in strongly n-distal NIP theories} we generalize some results in NIP and strongly $n$-distal theories from indiscernible sequences to indiscernible measures. In Section \ref{sec: n-smooth measures in strongly n-distal NIP theories} we generalize Simon's result that in a distal theory all generically stable measures are smooth \cite{simon2013distal} to strongly $n$-distal NIP theories:
\begin{theorem*}[Proposition \ref{2-determinacy with a type}]
    Assume $T$ is NIP and strongly $n$-distal, and let $\mu_0(x_0)$, $\ldots$, $\mu_{n-1}(x_{n-1})$ be global measures generically stable over a small model $\M$. Then $\mu := \mu_0\otimes\dots\otimes\mu_{n-1}$ is $n$-smooth over $\M$ (i.e.~for any global measure $\omega(x_0,\dots,x_{n-1})$ so that $\omega|_{\M} = \mu|_{\M}$ and $\omega|_{(x_i : i \neq j)} = \mu|_{(x_i : i \neq j)}$ for all $0 \leq j < n$, we must have $\omega = \mu$; Definition \ref{def: n-smooth}).
\end{theorem*}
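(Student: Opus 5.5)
The plan is to generalize Simon's argument that generically stable measures are smooth in distal theories, replacing "smooth" with "$n$-smooth" and the distal indiscernibility condition with strong $n$-distality. By the criterion for uniqueness of extensions of measures on Boolean algebras (Proposition \ref{border and determination}), it suffices to show the following. For every $\mathcal{L}(\cU)$-formula $\varphi(x_0,\dots,x_{n-1})$ and every $\varepsilon>0$, there are sets $\psi^-\subseteq\varphi\subseteq\psi^+$ in the Boolean algebra generated by $\mathcal{L}(\M)$-definable sets and by cylinder sets depending on at most $n-1$ of the variables (with arbitrary parameters), such that $\mu(\psi^+\setminus\psi^-)<\varepsilon$. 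Since $\omega$ agrees with $\mu$ on all generators of this algebra by hypothesis, $\omega(\varphi)$ is then squeezed to within $\varepsilon$ of $\mu(\varphi)$.

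The first step is to produce these approximations. Write $\varphi=\varphi(\bar x;b)$. Since each $\mu_i$ is generically stable over $\M$, it is approximated by averages along Morley sequences: for a large $m$, take $(\bar a_0,\dots,\bar a_{m-1})\models\mu^{(m)}|_{\M b}$ (or coordinatewise $\mu_i^{(m)}$), so that $\mu(\varphi)$ is approximated by the frequency of $\varphi$ along the sequence, uniformly by the VC/NIP theorem. Using the indiscernible-measure generalization of strong $n$-distality from Section \ref{sec: Indiscernible measures in strongly n-distal NIP theories}, strong $n$-distality over an indiscernible sequence should yield the following: $\tp(b/\text{sequence})$ together with the types of $b$ over each $(n-1)$-subfamily determines $\varphi$ on all but $\varepsilon$-measure of the product. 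Concretely, the set of $\bar x$ on which $\varphi$ is not determined by $\M$-formulas and by formulas in $n-1$ of the variables with parameters $b$ is $\mu$-small.

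I would establish this by contradiction. Suppose the ``border'' $\psi^+\setminus\psi^-$ has $\mu$-measure at least $\varepsilon$ for every choice. Then by NIP (finite alternation) together with generic stability, we can build an indiscernible array/sequence realizing $\mu_0\otimes\dots\otimes\mu_{n-1}$ over $\M b$. In this array, $b$ fails to be determined in the strongly $n$-distal sense: by compactness, one can insert new tuples $a_0,\dots,a_{n-1}$ (drawn from the border) into the sequence so that each $(n-1)$-sub-insertion preserves indiscernibility over $b$, but the full insertion flips $\varphi$. This contradicts strong $n$-distality.

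The main obstacle is this last step: converting the measure-theoretic failure (positive border measure) into a genuine configuration witnessing failure of strong $n$-distality. One has to choose the border points generically for the product measure and use symmetry/commutativity of products of generically stable measures, so that the sub-insertions really remain indiscernible. I would first try to handle it via the previously established indiscernible-measure version of the strong $n$-distality condition, proceeding by induction on $n$ and fixing one coordinate at a time via Fubini for generically stable measures.
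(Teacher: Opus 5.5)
There is a genuine gap already in your reduction. The hypothesis of $n$-smoothness (Definition \ref{def: n-smooth}) only says that $\omega$ agrees with $\mu$ on $\mathcal{L}_{\bar x}(\M)$ and on each $\mathcal{L}_{(x_i : i\neq j)}(\cU)$ \emph{separately}. The union of these subalgebras is not closed under intersection, so agreement on it does not determine $\omega$ on the Boolean algebra it generates. Already for $n=2$, knowing $\omega|_{x_0}=\mu_0$, $\omega|_{x_1}=\mu_1$ and $\omega|_{\M}=\mu|_{\M}$ does not give $\omega(\chi_0(x_0)\wedge\chi_1(x_1))=\mu_0(\chi_0)\mu_1(\chi_1)$. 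Still less does it give the value of $\omega$ on $\theta(x_0,x_1)\wedge\chi_0(x_0)$ with $\theta\in\mathcal{L}(\M)$. Your $\psi^{\pm}$ are Boolean combinations of exactly such sets, so from $\psi^-\subseteq\varphi\subseteq\psi^+$ you cannot compare $\omega(\varphi)$ with $\mu(\varphi)$. By Proposition \ref{border and determination} and Fact \ref{existence of measure extensions}, your sandwich property is equivalent to $\mu$ being $\mathfrak{B}\bigl(\mathcal{L}^{n-1}_{x_0,\dots,x_{n-1}}(\cU)\cup\mathcal{L}_{x_0,\dots,x_{n-1}}(\M)\bigr)$-determined, i.e.\ to \emph{weak} $n$-smoothness (Remark \ref{rem: weak n-smooth}). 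That is the consequence the paper extracts in the proof of Proposition \ref{prop: dist packing lemma}, but it is a priori weaker than the statement. Agreement of $\omega$ and $\mu$ on mixed Boolean combinations is part of what must be proved, not an available input.

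Second, the core step is not yet an argument. A Keisler measure is not realized by tuples; at best you can realize types in its support or in the border. Nothing in your outline explains why such realizations can be inserted into a common indiscernible sequence so that all $(n-1)$-sub-insertions stay indiscernible over $b$ while the full insertion does not. You flag this as the main obstacle, but it is the entire content of the proposition.

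The missing idea, which is how Simon's $n=1$ proof \cite[Proposition 2.27]{simon2013distal} actually goes and what the paper generalizes, is to work with the competing measure rather than with sandwiches for $\mu$. Given $\omega'$ over $\M a$ with the right restrictions, Lemma \ref{existence of smooth extensions of products} (a Keisler-type transfinite construction using NIP) gives a global $\omega$ extending $\omega'$ with marginals $\bigotimes_{i\neq j}\mu_i$. This $\omega$ is $n$-smooth over some small $B\supseteq\M a$, hence $B$-invariant by Lemma \ref{smooth extensions of products are invariant}.

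One then takes a coheir Morley sequence $(B_i)$ of $\tp(B/\M)$ with conjugate measures $\omega_i$. One shows that $\lambda=\bigotimes_i\omega_i$ is totally indiscernible over $\M$, using non-forking, left transitivity and Fact \ref{fact: gen stable measures}. One also shows that the measure obtained by plugging $B_i$ into the $i$-th block is indiscernible.

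Strong $n$-distality enters only through Lemmas \ref{measure strong 2-distality} and \ref{key lemma with strong indiscernibility}, with roles opposite to your sketch. The inserted element is $B_i$, and the $n$ external parameters are the $n$ coordinate blocks of $\lambda$. Their $(n-1)$-marginals are $\M$-invariant, so $(B_i)$ is indiscernible over them. The lemmas then give that $(B_i)$ is $\lambda$-indiscernible, so $\omega_i=\omega_0$ by $n$-smoothness. Since $\omega_1|_{\M_0}=\mu|_{\M_0}$, it follows that $\omega'=\omega_0|_{\M a}=\mu|_{\M a}$.
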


\noindent We note that a different notion of $n$-smoothness was also considered by Walker. In Section \ref{sec: N-distal packing lemma, n-determinacy for measures and hypergraph regularity lemma} we present some applications of this result. First, combining it with a compactness argument,  we obtain an \emph{$n$-distal packing lemma}:
\begin{theorem*}[Proposition \ref{prop: dist packing lemma}]
	Let $T$ be NIP and strongly $n$-distal. For every formula $\varphi(x_0, \ldots,x_{n-1};x_{n})$ and $\varepsilon>0$ there exist some $k \in \mathbb{N}$ and formulas  $\gamma_{i}(x_0, \ldots, x_{n-1}; z_0) \in \Lcal$, $\rho_i(x_{n}; z_0) \in \Lcal$ and $\chi_i^-(x_0,\dots, x_{n-1};z), \chi_i^+(x_0,\dots, x_{n-1};z) \in \Lcal^{n}_{x_0, \ldots, x_{n-1}, z}$ for $i < k$, where $z = z_0 z_1$ and each $\chi_i^-, \chi_i^+$ is given by a Boolean combination of $\gamma_{j}(x_0, \ldots, x_{n-1}; z_0)$, and some formulas of the form $ \delta_{u}((x_t : t \in u ); z_1) \in \mathcal{L}$ for $u \subsetneq \{0, \ldots, n-1\}$, satisfying the following.   Given a tuple of global generically stable measures $\bar{\mu} = (\mu_0(x_0), \ldots, \mu_{n-1}(x_{n-1}))$ there exists some $e_{\bar{\mu}} \in \cU^{z_0}$ so that: 
	$(\rho_i(\cU;e_{\bar{\mu}}) : i < k)$ partition $\cU^{x_n}$ and for every  $b \in \rho_i(\cU^{x_{n}})$ there is some $d_b \in \cU^{z_1}$ with 
     \begin{enumerate}
         \item $\chi_i^-(x_0,\dots, x_{n-1}; e_{\bar{\mu}}, d_b)\rightarrow \varphi(x_0,\dots, x_{n-1},b) \rightarrow \chi_i^+(x_0,\dots, x_{n-1}; e_{\bar{\mu}}, d_b)$,
        \item $\mu_0\otimes \dots \otimes\mu_{n-1}(\chi_i^+(x_0,\dots, x_{n-1}; e_{\bar{\mu}}, d_b)) - \mu_0\otimes \dots \otimes\mu_{n-1}(\chi_i^-(x_0,\dots, x_{n-1}; e_{\bar{\mu}}, d_b))< \varepsilon$.
     \end{enumerate}
\end{theorem*}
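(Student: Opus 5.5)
We derive the packing lemma from the $n$-smoothness result (Proposition \ref{2-determinacy with a type}) by compactness, following the proof of the distal regularity lemma in \cite{chernikov2018regularity} and Simon's uniform smooth-approximation arguments.

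\textbf{Step 1: approximation for a single tuple of measures.} Fix global generically stable $\mu_0,\dots,\mu_{n-1}$, and fix a small model $\M$ over which they are all generically stable. Put $\mu = \mu_0\otimes\dots\otimes\mu_{n-1}$, which is $n$-smooth over $\M$ by Proposition \ref{2-determinacy with a type}. Fix $b \in \cU^{x_n}$. Let $\mathcal{B}$ be the Boolean algebra generated by the $\M$-definable sets in $x_0\dots x_{n-1}$ together with all definable cylinder sets $\delta_u((x_t)_{t\in u}; d)$, $u \subsetneq n$, with arbitrary parameters.

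By $n$-smoothness, every global $\omega$ agreeing with $\mu$ on $\mathcal{B}$ equals $\mu$. In particular $\mu$ has a unique extension from $\mathcal{B}$ to $\mathcal{B}[\varphi(\bar x, b)]$. By the border/determination criterion (Fact \ref{existence of measure extensions}, Proposition \ref{border and determination}), this yields sets $\chi^- \subseteq \varphi(\bar x,b) \subseteq \chi^+$ in $\mathcal{B}$ with $\mu(\chi^+\setminus\chi^-) < \varepsilon$. Each of $\chi^\pm$ is a Boolean combination of finitely many $\M$-formulas $\gamma_j(\bar x; m)$ and finitely many cylinder formulas $\delta_u(\bar x_u; d_b)$, where the parameters $d_b$ depend on $b$.

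\textbf{Step 2: uniformity in $b$ for fixed $\bar\mu$.} Consider the condition ``there exist $d$ such that $\chi^-(\bar x;m,d) \to \varphi(\bar x,y) \to \chi^+(\bar x;m,d)$ and the measure gap is $<\varepsilon$.'' In $y$ this condition is type-definable over $\M$ up to an arbitrarily small perturbation of $\varepsilon$. This uses the Borel-definability (in fact definability) of $\mu$ in $d$, which holds since $\mu$ is generically stable, hence definable over $\M$.

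By Step 1 these conditions, taken over all possible shapes, cover $\cU^{x_n}$. Compactness then gives finitely many shapes $i<k$ and $\M$-definable sets $\rho_i$ covering $\cU^{x_n}$; we refine them into a partition. The parameters from $\M$ used by the $\gamma_j$ and $\rho_i$ are collected into a single tuple $e_{\bar\mu}$.

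\textbf{Step 3: uniformity over all $\bar\mu$.} This is the main obstacle: we need $k$, the $\gamma_j$, the $\rho_i$, and the shapes of the $\chi_i^\pm$ to be independent of $\bar\mu$. I would argue by contradiction. Suppose that for each finite candidate family of formulas (indexed along a countable enumeration) some tuple $\bar\mu_m$ fails. Take an ultraproduct/ultralimit of the models and measures. Generically stable measures in NIP are approximated by averages of finitely many points (frequency interpretation measures), so the failure can be expressed first-order: replace $\mu_j$ by averages $\Av(\bar a_j)$ with uniformly bounded size depending on $\varepsilon$ and on VC-dimension bounds. In an elementary extension this produces a tuple of generically stable measures, namely limits of averages of indiscernible sequences, along the lines of the indiscernible-measure results of Section \ref{sec: Indiscernible measures in strongly n-distal NIP theories}, for which no finite family works. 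That contradicts Steps 1--2.

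The delicate part is making ``fails for all families of size $\le m$'' expressible by formulas in the average parameters with a quantitative $\varepsilon$. For this I would use the uniform $\varepsilon$-approximation by averages for products in NIP (VC theorem for products). Finally, the $\chi_i^\pm$ are in $\Lcal^n$ because the only formulas they involve are the $\gamma_j$ and the cylinder formulas $\delta_u$.
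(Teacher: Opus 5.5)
Your Steps 1 and 2 are essentially the paper's non-uniform argument. $n$-smoothness (Proposition \ref{2-determinacy with a type}), used in its weak form, together with Fact \ref{existence of measure extensions} gives the sandwich $\chi_b^-\subseteq\varphi(\bar x,b)\subseteq\chi_b^+$ with parameters from $\M$ and a cylinder parameter $d_b$. Definability of $\mu_0\otimes\cdots\otimes\mu_{n-1}$ over $\M$ (Lemma \ref{gs and formula defining measure}) then produces $\M$-definable sets $\rho$ covering $\cU^{x_n}$. One correction: compactness needs the condition ``some $d$ gives a sandwich with gap $<\varepsilon$'' to be \emph{open}, i.e.\ a union of $\M$-definable sets. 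Type-definability would not give a finite subcover.

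The gap is in Step 3, which is where uniformity in $\bar\mu$ is actually proved, and the mechanism you propose does not work as stated. There is no uniform bound on the size of the approximating averages. The fact that $\bar\mu_m$ defeats every candidate family up to stage $m$ concerns the values of $\mu_{0,m}\otimes\cdots\otimes\mu_{n-1,m}$ on all instances of all the $\chi^\pm$ in those families. Approximating these within a fixed error in general needs a number of points that grows with that stock of formulas, hence with $m$. Bounded-size averages control only a fixed finite stock, so neither they nor their limit need defeat later families. With unbounded sizes the limit is a nonstandard average, and you would still have to show three things: that it induces a global generically stable measure, that $\otimes$ commutes with passing to the limit, and that failure of each fixed family survives. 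None of this is supplied. The appeal to averages of indiscernible sequences and Section \ref{sec: Indiscernible measures in strongly n-distal NIP theories} is beside the point. Generic stability of a limit comes from uniform finite approximability, and a generically stable measure need not be an average of an indiscernible sequence (e.g.\ $\frac12\delta_a+\frac12\delta_b$ with $a\neq b$).

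The missing ingredient is a type-definable coding of generically stable measures that is compatible with $\otimes$. The paper takes this from \cite[Section 3.4]{chernikov2025externally}:
\begin{itemize}
\item global generically stable measures in $x$ are identified with points of an $\emptyset$-hyperdefinable set $\widetilde{\mathcal M}_x$ (in effect, coherent systems of approximating averages at all scales);
\item the map $([\mu_0],\dots,[\mu_{n-1}])\mapsto[\mu_0\otimes\cdots\otimes\mu_{n-1}]$ is $\emptyset$-type-definable \cite[Proposition 3.29]{chernikov2025externally};
\item consequently, for fixed $k,\gamma_i,\rho_i,\chi_i^\pm$, the set of tuples of codes at which these formulas fail the conclusion is type-definable \cite[Proposition 3.27]{chernikov2025externally}, since ``gap $\geq\varepsilon$'' is type-definable in the codes and the $\forall e\,\exists b\,\forall d$ pattern preserves this in a saturated model.
\end{itemize}
Merging finitely many candidate families into one shows these sets have the finite intersection property. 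Saturation of $\cU$ then gives a single tuple $\bar\mu^*$ of genuine generically stable measures at which every candidate family fails, contradicting Steps 1--2. An ultralimit version of your plan can be made to work, but only by proving exactly these facts, which your sketch names (``VC theorem for products'') without establishing.
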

\noindent This is a stronger form of the $n$-dependent packing lemma (i.e.~a packing lemma for families of sets of finite VC$_n$ dimension, generalizing the classical Haussler's packing lemma \cite{haussler1995sphere} in the case $n=1$) established in 
\cite{chernikov2020hypergraph} (see Remark \ref{rem: n-dependent packing lemma} for a discussion).

\noindent As an application, we obtain a regularity lemma for hypergraphs definable in strongly $n$-distal NIP theories (generalizing the case $n=1$ established in \cite{chernikov2018regularity})\footnote{We note that a hypergraph regularity lemma for strongly $n$-distal NIP hypergraphs was also obtained independently by Tong \cite{TongThesis}, using a different approach of generalizing strong honest definitions to higher arity. Another higher arity version of strong honest definitions was previously obtained by Walker \cite{WalkerThesis}.}:
\begin{theorem*}[See Corollary \ref{cor: str n-dist reg lemma} for a more precise version]
	Let $\M$ be an NIP and strongly $n$-distal structure. Given a definable $(n+1)$-ary relation $\varphi(x_0, \ldots, x_{n})$ and $\varepsilon > 0$ there exists $k \in \mathbb{N}$ satisfying the following. Given finitely supported measures $\mu_0(x_0), \ldots, \mu_n(x_n)$, for every $0 \leq i \leq n$ there is a  partition $(P^i_{\ell} : \ell < k)$ of $\prod_{j \neq i}\M^{x_j}$ so that $\sum_{(\ell_0, \ldots, \ell_n)} \mu_0 \otimes \ldots \otimes \mu_n \left( \bigwedge_{i =0}^{n} P^i_{\ell_i} \right) < \varepsilon$, where the sum is over all cylinder intersections sets   $\bigwedge_{i =0}^{n} P^i_{\ell_i} \subseteq \prod_{0 \leq j \leq n}\M^{x_j}$ (Definition \ref{def: cylinder inters sets}) that are \emph{not $\varphi$-homogeneous}. 
\end{theorem*}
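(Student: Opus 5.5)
We derive the regularity lemma from the $n$-distal packing lemma (Proposition \ref{prop: dist packing lemma}), in the same way the distal regularity lemma of \cite{chernikov2018regularity} follows from the distal cell decomposition / packing argument. Pass to a monster model $\cU \succ \M$. Finitely supported measures on $\M$ are global generically stable measures (averages of realized types), and the product $\mu_0 \otimes \dots \otimes \mu_n$ of finitely supported measures is the ordinary finite product measure. Apply Proposition \ref{prop: dist packing lemma} to $\varphi(x_0,\dots,x_{n-1};x_n)$ with $\varepsilon$, taking the tuple $\bar\mu = (\mu_0,\dots,\mu_{n-1})$. This gives $e_{\bar\mu}$, a partition $(\rho_i(\cU;e_{\bar\mu}) : i<k)$ of the last coordinate, and for each $b$ sandwiching sets $\chi^\pm_i(\bar x; e_{\bar\mu}, d_b)$. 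To work inside $\M$ we need $e_{\bar\mu}$ and the $d_b$ in $\M$. Since the measures are finitely supported on $\M$, the relevant conditions are finitely many first-order statements about the supports. By $\M \prec \cU$ we can therefore pull $e_{\bar\mu}$ and each $d_b$ (for $b$ in the support of $\mu_n$) back into $\M$; only $b \in \operatorname{supp}(\mu_n)$ matters.

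Next we build the partitions. Each $\chi^\pm_i$ is a Boolean combination of the $\gamma_j(x_0,\dots,x_{n-1};e)$, which use all of the first $n$ coordinates, and of the $\delta_u((x_t)_{t\in u}; d_b)$ with $u \subsetneq n$. For the face $\prod_{j\neq n}$ (missing $x_n$), the partition $P^n$ is the partition into atoms of the $\gamma_j(\bar x; e)$. It has boundedly many pieces and does not depend on $b$. For each other face $i<n$, the partition $P^i$ of $\prod_{j\ne i}$ takes $\rho_\ell(x_n;e)$ together with the atoms, in the variables $(x_t)_{t \in u}$, of the sets $\delta_u((x_t)_{t\in u}; x'_n)$ for $u \subseteq n\setminus\{i\}$. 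Here the parameter $d_b$ has to be replaced by something depending definably on $x_n$. For this we use a definable choice of $d_b$ as a function of $b$ over $e$. A first attempt would be uniform finiteness: strengthen the packing lemma by compactness so that, for each $i$, the valid $d$ are witnessed by a formula $\theta_i(z_1; b, e)$. The sets $\{(\bar x_u, x_n) : \exists d\, \theta_i \wedge \delta_u(\bar x_u; d)\}$ are not quite right. Instead we should partition via the $\varphi$-type, i.e.\ via the finitely many conditions ``$\chi^-_i(\bar x;e,d_b)$ holds'' and ``$\chi^+_i$ holds'', each of which is a Boolean combination of cylinder sets in which $x_n$ enters only through $d_b$. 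The number of pieces is bounded by a function of $k$ and the number of $\delta_u$, and this gives the required bound $k$ in the statement.

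Then comes the counting. Fix a cylinder intersection $C = \bigwedge_i P^i_{\ell_i}$. Then $x_n$ lies in a fixed $\rho_{i_0}$, and the atom of the $\chi$-data is fixed. If $C \subseteq \chi^-$ or $C \cap \chi^+ = \emptyset$ for the relevant $b$, then $C$ is $\varphi$-homogeneous, since $\chi^- \to \varphi \to \chi^+$. So a non-homogeneous $C$ has its $(x_0..x_{n-1})$-fibre over each $b$ contained in $\chi^+\setminus\chi^-$. By Fubini, summing over all non-homogeneous $C$ gives total mass at most $\int_b \mu_0\otimes\dots\otimes\mu_{n-1}(\chi^+_{i(b)}\setminus\chi^-_{i(b)})\,d\mu_n < \varepsilon$. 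This uses that the cells are disjoint and that each fibre lies in the error region.

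The main obstacle is the step above: the parameters $d_b$ vary with $b$ in a way that is not a priori uniformly definable. The cylinder partitions of the faces containing $x_n$ must be definable (of bounded complexity) in $(x_t)_{t\in u}, x_n$. I would first try a compactness/definable-selection argument. Since only finitely many formulas $\delta_u$ occur, the relevant data is the $\Delta$-type of $d_b$ over $\bar x$. The condition on $d$ ($\chi^-\to\varphi(\cdot,b)\to\chi^+$ and a small measure gap) can be expressed using the definability of $\mu$-measures of $\delta$-formulas for generically stable measures. We can then take $P^i$ to be the atoms of the definable family $\{(\bar x_u,x_n): \delta_u(\bar x_u; f(x_n))\}$, where $f$ is an $\M$-definable choice function. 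If definable choice fails, we would instead choose $d_b$ among boundedly many candidates by a pigeonhole/uniformity argument, which still yields a bounded number of parts.
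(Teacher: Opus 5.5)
Your argument is essentially the paper's. You apply the packing lemma in $T$, replace $d_b$ by a function of $x_n$ so that the $\chi^\pm_i$ become cylinder-type sets $\xi^\pm$ in $(x_0,\dots,x_n)$, partition each face into atoms, and note that every non-homogeneous cell lies in $\xi^+\setminus\xi^-$, which has measure $<\varepsilon$ by Fubini.

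The ``main obstacle'' you flag is not one, because the statement does not require the $P^i_\ell$ to be definable. An arbitrary choice $b\mapsto d_b$ already gives a number of cells bounded in terms of $k$ and the number of $\gamma_j,\delta_u$. The paper packages this choice as a Skolem function in a Skolemization $T^{\Sk}$. That theory need not be NIP, since NIP and strong $n$-distality are used only for the packing lemma in $T$, and finite support makes all the integrals finite sums. Definable choice in $T$ is needed only for the uniformly definable clause of Corollary \ref{cor: str n-dist reg lemma}, and pulling $e_{\bar\mu}$ and $d_b$ back into $\M$ is unnecessary.
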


\noindent Again, this strengthens the $n$-dependent (equivalently, finite $\VC_n$-dimension) hypergraph regularity lemma from \cite{chernikov2020hypergraph} which only guarantees $\varepsilon$-homogeneous rather than actually homogeneous cylinder intersection sets, see Remark \ref{rem: n-dep reg lemma}). 

\noindent This regularity lemma immediately implies a higher arity version of the \emph{strong Erd\H{o}s-Hajnal property} (again generalizing the $n=1$ case from \cite{chernikov2018regularity}):

\begin{theorem*}[See Corollary \ref{cor: n-str EH} for a more precise version]
	Let $\M$ be NIP and strongly $n$-distal. Then every definable relation $\varphi(x_0, \ldots, x_{n}) \in \Lcal$ satisfies the \emph{$n$-strong Erd\H{o}s-Hajnal property}, or \emph{$n$-sEH}: there exists $\alpha > 0$ satisfying the following. Given  any finitely supported measures $\mu_0(x_0), \ldots, \mu_n(x_n)$, there exists a cylinder intersection set $C = \bigwedge_{i =0}^{n} C_{i} \subseteq \prod_{0 \leq j \leq n}\M^{x_j}$ (with $C_i \subseteq \prod_{j \neq i}\M^{x_j}$) so that $\mu_0 \otimes \ldots \otimes \mu_n \left(C \right) \geq \alpha$ and $C$ is $\varphi$-homogeneous.
\end{theorem*}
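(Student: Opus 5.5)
The plan is to derive the $n$-sEH property directly from the strongly $n$-distal regularity lemma (Corollary \ref{cor: str n-dist reg lemma}) by a pigeonhole argument. Fix $\varphi(x_0,\ldots,x_n)$ and apply the regularity lemma with $\varepsilon = 1/2$. This yields some $k \in \mathbb{N}$, depending only on $\varphi$ (the uniformity over measures is essential). Set $\alpha := \frac{1}{2k^{n+1}}$.

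Now fix arbitrary finitely supported measures $\mu_0, \ldots, \mu_n$. For each $0 \leq i \leq n$, the regularity lemma gives partitions $(P^i_\ell : \ell < k)$ of $\prod_{j\neq i}\M^{x_j}$. The cylinder intersection sets $\bigwedge_{i=0}^n P^i_{\ell_i}$, indexed by $(\ell_0,\ldots,\ell_n) \in k^{n+1}$, then partition $\prod_{j \leq n}\M^{x_j}$. To see this, note that each point $(a_0,\ldots,a_n)$ lies in exactly one $P^i_{\ell}$ after projecting away $a_i$, for each $i$. Hence, writing $\mu := \mu_0\otimes\cdots\otimes\mu_n$, the measures $\mu(\bigwedge_i P^i_{\ell_i})$ sum to $1$. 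By the regularity lemma, the non-$\varphi$-homogeneous pieces have total measure $< 1/2$, so the $\varphi$-homogeneous pieces have total measure $> 1/2$. Since there are at most $k^{n+1}$ pieces, one homogeneous piece $C = \bigwedge_i P^i_{\ell_i}$ has $\mu(C) \geq \frac{1}{2k^{n+1}} = \alpha$. Taking $C_i := P^i_{\ell_i}$ gives the required cylinder intersection set.

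The only real point to check is that the regularity lemma's $k$ bounds the number of parts uniformly and that the family of cylinder intersections genuinely partitions the product space. Both are immediate from the form of Corollary \ref{cor: str n-dist reg lemma}. If the precise version also asserts definability of the $P^i_\ell$ (uniformly, by instances of fixed formulas), then the same argument shows that each $C_i$ is defined by an instance of one of finitely many formulas depending only on $\varphi$. That would be the ``more precise version'' referred to in Corollary \ref{cor: n-str EH}.
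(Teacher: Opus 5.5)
Your proposal is correct and is essentially the paper's own proof: you apply Corollary \ref{cor: str n-dist reg lemma} with $\varepsilon = \frac{1}{2}$ and pigeonhole over the at most $k^{n+1}$ cylinder intersection cells to obtain $\alpha = \frac{1}{2k^{n+1}}$. You also state explicitly that these cells partition the product space, a point the paper leaves implicit.
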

\noindent Finally, in Section \ref{sec: n-distality vs av meas} we demonstrate that $n$-distal (rather than strongly $n$-distal) NIP theories are characterized by $n$-determinacy for $(n+1)$-tuples of particular generically stable measures obtained by averaging over mutually indiscernible sequences, generalizing the results in  \cite[Proposition 2.21]{simon2013distal} in the case $n=1$.

In a distal theory all global generically stable types are algebraic. This fits as the first level into a hierarchy expressing $n$-triviality of forking for realizations of generically stable types in (strongly) $n$-distal NIP theories. Recall that a stable theory $T$ has \emph{trivial forking} if $a \ind_{A} b, a \ind_{A} c$ and $b \ind_{A} c$ implies $a \ind_{A} bc$. Poizat  \cite{goode1991some} considers its higher arity version, \emph{$n$-trivial forking} (if all proper subsets of a set of size $n+2$ are independent over $A$, then the whole set is independent over $A$). Walker shows, using the $n$-determinacy principle for generically stable types, that in a stable theory $(n+1)$-distality is equivalent to $n$-triviality of forking (Fact \ref{fac: walker $k$-triv vs $k$-dist}). In Section \ref{sec: n-dist and n-triv} we consider two other notions of ``$n$-triviality''. First is also from \cite{goode1991some}: $T$ has \emph{(1-)totally trivial forking} if $a \ind_{A} b $ and $a \ind_{A} c$ implies $a \ind_{A} bc$, without requiring $b$ and $c$ to be independent (and its higher arity version, totally $n$ trivial forking, see Definition \ref{def: tot k-triv forking NIP}). The other is (endless) \emph{indiscernible triviality}, considered in \cite{braunfeld2021characterizations} (we also consider its higher arity version, $n$-indiscernible triviality, Definition \ref{def: k-indisc triv}). We show:
\begin{theorem*}[Proposition \ref{prop: total triv in stable}]
	($T$ stable) The following are equivalent for all $n \geq 1$: 
	\begin{enumerate}
	\item $T$ is strongly $(n+1)$-distal,
	\item  $T$ is (endlessly) indiscernibly $n$-trivial,
	\item  $T$ has totally $n$-trivial forking.
	\end{enumerate}
\end{theorem*}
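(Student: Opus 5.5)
We prove the cycle $(1)\Rightarrow(3)\Rightarrow(2)\Rightarrow(1)$, working in $\cU^{\eq}$ throughout. In a stable theory forking is symmetric and transitive, and every type over a model (or over $\acl^{\eq}(A)$) is stationary and generically stable. Strong $(n+1)$-distality will be used in Walker's indiscernible-sequence form: if $(I_0,\dots,I_{n+1})$ are mutually indiscernible over $A$ and $I_0,\dots,I_{n+1}$ with one point removed from each are still mutually indiscernible over $Ab$ on every $n+1$ of the coordinates, then the full configuration is mutually indiscernible over $Ab$. We use the form given in Section \ref{sec: n-dist}. Total $n$-trivial forking (Definition \ref{def: tot k-triv forking NIP}) says: for $a$ and $b_0,\dots,b_n$, if $a\ind_A (b_j:j\neq i)$ for every $i\le n$, then $a\ind_A b_0\dots b_n$. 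No independence among the $b_j$ is assumed.

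\textbf{$(1)\Rightarrow(3)$.} Assume $a\ind_A (b_j:j\ne i)$ for all $i$. We may take $A=\acl^{\eq}(A)$, or even a model, after adding an independent copy.
\begin{enumerate}
\item Let $p=\tp(b_0\dots b_n/A)$. Build a Morley-type array: sequences $I_j$ in $\tp(b_j/A)$ that are mutually indiscernible over $A$, each $I_j$ containing $b_j$ as a point, such that every choice of one element from each $I_j$ realizes $p$. Concretely, take a Morley sequence in $p$ and split it coordinatewise.
\item By the hypothesis, stationarity, and independence of the extension, arrange that for each $i$ the sequences $(I_j:j\neq i)$ stay mutually indiscernible over $Aa$.
\item Apply strong $(n+1)$-distality to conclude that $(I_0,\dots,I_n)$ is mutually indiscernible over $Aa$.
\item Then $\tp(a/A\,b_0\dots b_n)$ does not fork. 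The reason is that a mutually indiscernible array that is Morley over $A$ and remains indiscernible over $Aa$ forces nonforking, via the usual "Morley sequence indiscernible over $a$ implies independence" argument applied to the sequence of $(n+1)$-tuples.
\end{enumerate}

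\textbf{$(3)\Rightarrow(2)$.} Indiscernible $n$-triviality (Definition \ref{def: k-indisc triv}) says that if $I$ is endlessly indiscernible over $A$ and over each $Ab_{u}$ for $|u|=n$, then it is indiscernible over $Ab_0\dots b_n$. Here $I$ is an endless indiscernible sequence, and in stable $T$ it is an indiscernible set. Pass to a model-like base $A'$ containing a tail of $I$, and let $a$ be a later segment. Indiscernibility of $I$ over $Ab_u$ gives $a\ind_{A'} b_u$ for each $n$-subset $u$. Totality then gives $a\ind_{A'}b_0\dots b_n$. Stationarity then makes the sequence indiscernible over all the $b$'s, and compactness handles arbitrary finite pieces.

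\textbf{$(2)\Rightarrow(1)$.} This is the reverse translation. We check the mutual-indiscernibility condition of strong $(n+1)$-distality one sequence at a time. Fix $I_k$ and treat the remaining sequences and parameters as the "$b$'s". Indiscernible $n$-triviality upgrades indiscernibility over each $n$-subset of these to indiscernibility over all of them. Stability lets us replace mutual indiscernibility by indiscernibility of each sequence over the union of the others.

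The main obstacle is $(1)\Rightarrow(3)$. Strong distality speaks about indiscernible configurations, while the hypothesis only gives independence of $a$ from $n$-subsets, with no independence among the $b_j$. The key step is building the array so that each $n$-subfamily stays indiscernible over $a$ while the full family carries the non-independent type $p$. I would try first to realize the sequences inside a Morley sequence of $\tp(a/A)$-conjugates, by reversing the roles of $a$ and the $b$'s via symmetry. I would expect indexing subtleties, matching the "one point removed" format of strong $(n+1)$-distality, to need care.
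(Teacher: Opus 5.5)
\textbf{The gap is in $(1)\Rightarrow(3)$.} The principle you use there is not strong $(n+1)$-distality.

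\emph{What strong distality actually says.} As defined in Section \ref{sec: n-dist} (it is $(n+1,n+1)$-distality in Definition \ref{def: (k,l)-distality}), it concerns one sequence $\mathcal{I}_0+\mathcal{I}_1$, one inserted point $e$, and $n+1$ parameter tuples $c_0,\dots,c_n$. Suppose $\mathcal{I}_0+\mathcal{I}_1$ is indiscernible over $c_0\dots c_n$, and $\mathcal{I}_0+e+\mathcal{I}_1$ is indiscernible over every $n$ of the $c_j$. Then $\mathcal{I}_0+e+\mathcal{I}_1$ is indiscernible over all of the $c_j$.

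\emph{Why your version cannot work.} You insert one point into each of several mutually indiscernible sequences. That is the multi-cut format of plain ($(k,0)$- or $(k,1)$-)distality. In stable theories such principles amount to $(k-1)$-triviality of forking (Walker's theorem and Section \ref{sec: n-dist and n-triv}). Triviality does not imply total triviality. Poizat's trivial superstable theory of rank $\omega$ that is not totally $1$-trivial (Fact \ref{fac: Poizat triv}) is $k$-distal for every $k\geq 2$. So no argument resting only on such a principle can prove $(3)$.

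\emph{Where it fails concretely.} Step 1 already breaks when the $b_j$ are dependent over $A$, say $b_0=b_1\notin\acl(A)$.
\begin{enumerate}
\item Splitting a Morley sequence in $p=\tp(b_0\dots b_n/A)$ coordinatewise gives $I_0=I_1$, which is not indiscernible over itself.
\item Off-diagonal transversals realize $\bigotimes_j\tp(b_j/A)$, not $p$.
\item If you force every transversal to realize $p$, then $I_0$ and $I_1$ become constant, and step 4 has no Morley sequence in $p$ to work with.
\end{enumerate}
Step 2 is exactly where this dependence obstructs you, and it is only asserted.

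\textbf{The missing idea is the role reversal you mention at the very end.} Build the sequence from $a$, and let $b_0,\dots,b_n$ be the $n+1$ parameters.
\begin{enumerate}
\item Pass to a model base $M$ as you suggest. Let $q$ be the global nonforking (generically stable) extension of $\tp(a/M)$.
\item Take $\mathcal{I}_0+\mathcal{I}_1\models q^{\otimes(\mathbb{Q}+\mathbb{Q})}|_{Mab_0\dots b_n}$. It is indiscernible over $Mb_0\dots b_n$.
\item Since $a\models q|_{Mb_{\neq i}}$ and Morley sequences in $q$ can be reordered, $\mathcal{I}_0+a+\mathcal{I}_1$ is a Morley sequence in $q$ over $Mb_{\neq i}$. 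Hence it is indiscernible over $Mb_{\neq i}$, for each $i$.
\item Strong $(n+1)$-distality with parameters $Mb_0,\dots,Mb_n$ makes $\mathcal{I}_0+a+\mathcal{I}_1$ indiscernible over $Mb_0\dots b_n$.
\item So $a$ has the same type over $Mb_0\dots b_n$ as an element of $\mathcal{I}_1$, that is, $a\ind_M b_0\dots b_n$.
\end{enumerate}
The paper instead goes through $(2)$. It proves $(1)\Rightarrow(2)$ contrapositively, using total indiscernibility to turn a failure of indiscernible triviality into a single-insertion counterexample $\mathcal{I}'_0+b_{J^*}+\mathcal{I}'_1$. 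It proves $(2)\Rightarrow(3)$ with a Morley sequence in the single independent element, followed by Kim's lemma.

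\textbf{Two smaller points.}
\begin{itemize}
\item $(2)\Rightarrow(1)$ is immediate from the actual definition, because $\mathcal{I}_0+e+\mathcal{I}_1$ is itself endless. Your argument there addresses the misstated notion.
\item In $(3)\Rightarrow(2)$, the base $A'$ must be $A$ together with an infinite piece of $I$ itself, not a model. That choice is what makes $\tp(a/A'b_u)$ finitely satisfiable in $A'$. It also makes $\tp(a/A')$ stationary, because its average-type extension is $A'$-invariant.
\end{itemize}
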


\noindent More generally, this holds for realizations of generically stable types in NIP theories (see Remark \ref{rem: gen stable tot triv}). Combining this with some results and examples in \cite{goode1991some}, we can thus address a question of Walker \cite{walker2023distality}:

\begin{cor*}[Corollary \ref{cor: sep for strong $n$-dist}]
	 For every $n\geq 1$, there exists a superstable $2$-distal $T$ which is strongly $2^n$-distal but not strongly $(2^{n}-1)$-distal. There exists a $2$-distal superstable theory which is not strongly $n$-distal for any $n < \omega$. For a $1$-based stable theory and any $k \geq 2$, $k$-distality implies strong $2$-distality.
\end{cor*}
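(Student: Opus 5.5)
The corollary should follow by feeding Goode's (Poizat's) stable examples into the equivalences of Proposition~\ref{prop: total triv in stable} and Walker's Fact~\ref{fac: walker $k$-triv vs $k$-dist}. Throughout, $T$ will be stable, so NIP holds automatically. Under that hypothesis Proposition~\ref{prop: total triv in stable} says that strong $(m+1)$-distality is the same as totally $m$-trivial forking. Fact~\ref{fac: walker $k$-triv vs $k$-dist} says that $(m+1)$-distality is the same as $m$-trivial forking. This turns every clause of the corollary into a statement about triviality of forking, where \cite{goode1991some} supplies examples. One point to check along the way is that the indexing of ``totally $n$-trivial'' in Definition~\ref{def: tot k-triv forking NIP} agrees with Goode's numbering; if it is shifted by one, the exponents below have to be adjusted.

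\textbf{First clause.} For each $n \geq 1$, I would take Goode's superstable theory whose forking is trivial (i.e.\ $1$-trivial) but which is totally $(2^n-1)$-trivial and not totally $(2^n-2)$-trivial. Such examples should be the iterated constructions in \cite{goode1991some}: towers of equivalence relations or vector-space-like structures built in $n$ stages, with the level of total triviality doubling at each stage. Since forking is $1$-trivial, Fact~\ref{fac: walker $k$-triv vs $k$-dist} gives $2$-distality. Proposition~\ref{prop: total triv in stable} with $n+1 = 2^n$ gives strong $2^n$-distality. Applying the same proposition with $2^n-2$ in place of $n$ rules out strong $(2^n-1)$-distality.

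\textbf{Second clause.} I would use Goode's example of a superstable theory with trivial forking in which total $k$-triviality fails for every $k$. By Fact~\ref{fac: walker $k$-triv vs $k$-dist} it is $2$-distal, and by Proposition~\ref{prop: total triv in stable} it is not strongly $m$-distal for any $m \geq 2$. The case $m = 1$ also has to be excluded. Strong $1$-distality is just distality, and a stable distal theory is trivial. So as long as the example is an infinite stable structure, it is not strongly $1$-distal either.

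\textbf{Third clause.} Let $T$ be $1$-based and $k$-distal with $k \geq 2$. By Fact~\ref{fac: walker $k$-triv vs $k$-dist}, forking is $(k-1)$-trivial. The intended route is Goode's result that in a $1$-based stable theory $n$-trivial forking for some $n$ already implies totally $1$-trivial forking. Granting that, Proposition~\ref{prop: total triv in stable} with $n = 1$ gives strong $2$-distality.

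The main obstacle is this last step. I expect \cite{goode1991some} to prove it, but if not, I would argue directly. Suppose $a \ind_A b$, $a \ind_A c$ and $a \nind_A bc$. In a $1$-based theory the last condition means $a$ is not independent from $\acl^{\eq}(Abc)$ over $A$, while $\acl^{\eq}(Aa) \cap \acl^{\eq}(Abc) \neq \acl^{\eq}(A)$. Using modularity, I would replace $b$ and $c$ by suitable independent copies and build a configuration of size $n+2$ in which every proper subset is independent but the whole set is not. That would contradict $(k-1)$-triviality.
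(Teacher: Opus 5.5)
Your route is the paper's own: feed the Goode--Poizat examples into Proposition~\ref{prop: total triv in stable} and Walker's equivalence of $k$-distality with $(k-1)$-triviality. The second and third clauses go through as you describe. For the third, Fact~\ref{fac: Poizat triv}(4) is exactly the $1$-based equivalence you anticipate, so your modularity fallback is not needed.

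The gap is in the first clause. You posit trivial superstable theories that are totally $(2^n-1)$-trivial but not totally $(2^n-2)$-trivial. That is not what Goode supplies: Fact~\ref{fac: Poizat triv}(3) gives theories $T_n$ that are exactly totally $2^n$-trivial. The indexing worry you raise does not rescue the numbers, because the two conventions agree. The case $n=0$ of Fact~\ref{fac: Poizat triv}(2) says finite rank gives $2^0$-total triviality, i.e.\ ordinary total triviality, which is the paper's totally $1$-trivial forking. Your own heuristic also lands on $2^n$: a level of total triviality that doubles over $n$ stages, starting from total triviality, ends at $2^n$, not $2^n-1$. Applying Proposition~\ref{prop: total triv in stable} to $T_n$ therefore gives that $T_n$ is strongly $(2^n+1)$-distal but not strongly $2^n$-distal. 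By monotonicity in the arity (Remark~\ref{rem: indisc triv def remarks}(2) together with Proposition~\ref{prop: total triv in stable}), this yields the form stated in the body of the paper in Section~\ref{sec: n-dist and n-triv}: strongly $2^{n+1}$-distal but not strongly $2^n$-distal.

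So the exponents as written in this statement are one off from what the argument actually yields. For $n=1$ they are still correct: pure equality is superstable, $2$-distal, strongly $2$-distal and not distal. However, you cannot get ``not strongly $1$-distal'' from Proposition~\ref{prop: total triv in stable} with $k=2^n-2=0$, since that is outside its range. Use instead that stable theories are not distal, as you do in your second clause. For $n\geq 2$, the literal claim would need a trivial superstable theory that is totally $(2^n-1)$-trivial but not totally $(2^n-2)$-trivial. Neither your proposal nor the cited facts provide one. The repair is to shift the conclusion, not the definitions.
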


\noindent We also consider some natural intermediate notions between $n$-distality and strong $n$-distality for $n \geq 2$ (Definition \ref{def: (k,l)-distality}) and demonstrate that some of them collapse in stable theories (Proposition \ref{prop: 2-dist implies 2+-dist}).

In Section \ref{sec: Invariant generically stable measures in n-distal groups}
we consider translation invariant measures on definable groups. If $T$ is ($1$-)distal then every generically stable measure $\mu \in \mathfrak{M}_x(\cU)$ is smooth. This fails badly in stable $2$-distal theories. However, generalizing the distal case, we have: 
\begin{theorem*}[Theorem \ref{thm: G-inv gen stab mu is smooth}]
 Assume $1 \leq n \in \omega$ and $T$ is NIP and strongly $n$-distal. Assume $G$ is a definable group and $\mu \in \mathfrak{M}_{G}(\cU)$ is generically stable and $G$-invariant. Then $\mu$ is smooth.
\end{theorem*}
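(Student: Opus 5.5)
The plan is to reduce smoothness of $\mu$ to the $n$-smoothness of $n$-fold products given by Proposition \ref{2-determinacy with a type}, using $G$-invariance to move back from the product to $\mu$. Fix a small model $\M$ over which $\mu$ is generically stable and $G$ is defined. By the NIP characterization, smoothness of $\mu$ over $\M$ means: for every formula $\varphi(x;b)$ and every global $\nu$ with $\nu|_{\M}=\mu|_{\M}$ we have $\nu(\varphi(x;b))=\mu(\varphi(x;b))$. So we fix such a $\nu$ and a formula $\varphi(x,y)$ and aim to show the two values agree. We may assume $\nu$ is generically stable over some larger small model. Replacing $\nu$ by a Keisler measure that realizes it in an elementary extension is not needed, so we work directly with global measures.

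First, consider the $n$-fold product $\mu^{(n)}=\mu\otimes\dots\otimes\mu$ in variables $x_0,\dots,x_{n-1}$, each ranging over $G$. Using $G$-invariance of $\mu$ together with generic stability (so $\otimes$ is commutative and associative), the pushforward of $\mu^{(n)}$ under the definable map $(x_0,\dots,x_{n-1})\mapsto x_0\cdots x_{n-1}$ is $\mu$. More importantly, for any $j$, the pushforward of $\mu^{(n)}$ restricted to the coordinates $(x_i:i\neq j)$ is invariant under the translation action on coordinate $j$. The idea is to build a global measure $\omega(x_0,\dots,x_{n-1})$ from $\mu$ and $\nu$ such that:
\begin{itemize}
\item[(a)] $\omega|_{\M}=\mu^{(n)}|_{\M}$;
\item[(b)] every proper marginal of $\omega$ equals the corresponding marginal of $\mu^{(n)}$, namely $\mu^{(n-1)}$;
\item[(c)] the pushforward of $\omega$ under multiplication is $\nu$.
\end{itemize}
The natural candidate is $\omega=\mu\otimes\dots\otimes\mu\otimes\nu'$, where $\nu'$ is the pushforward of $\mu^{(n-1)}\otimes \nu$ under $(x_0,\dots,x_{n-2},y)\mapsto (x_0,\dots,x_{n-2},(x_0\cdots x_{n-2})^{-1}y)$. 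In other words, we take $x_0,\dots,x_{n-2}$ to be $\mu$-generic, take $y$ to be $\nu$-distributed independently, and set $x_{n-1}=(x_0\cdots x_{n-2})^{-1}y$.

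We then verify (b). Dropping any $x_i$ with $i<n-1$ leaves a tuple whose last coordinate is a translate of $y$ by a generic element independent from the remaining coordinates. By $G$-invariance of $\mu$ and because $\nu|_{\M}=\mu|_{\M}$, this marginal should be $\mu^{(n-1)}$. Dropping $x_{n-1}$ gives $\mu^{(n-1)}$ trivially. Property (c) holds by construction. Property (a) should follow from $\nu|_{\M}=\mu|_{\M}$ and the fact that the Morley product over $\M$ only depends on the restrictions to $\M$ of invariant measures; here we use generic stability of $\mu$ and the definability of $\otimes$ over $\M$.

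Proposition \ref{2-determinacy with a type} then gives $\omega=\mu^{(n)}$. Pushing both sides forward under multiplication yields $\nu=\mu$, so $\mu$ is smooth.

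The main obstacle is (b), and to a lesser extent (a). Showing that the marginal obtained by dropping some $x_i$, $i<n-1$, is $\mu^{(n-1)}$ requires computing the joint law of $(x_j:j\neq i, j<n-1)$ together with $x_{n-1}$. Here $x_{n-1}$ involves the product with the dropped generic $x_i$, which is independent of everything else, so the translate should become $\mu$-distributed by left invariance. Making this rigorous needs an "absorption" lemma: if $g\models\mu|_{\M c}$ is independent over $\M$ from $c$ (Morley product), then $gc$ (or $g\cdot h\cdot c$) is again $\mu$-distributed over $\M c$. This holds for types and integrates to measures via invariance plus generic stability. The problem is that $\nu$ is not assumed invariant, so the products must be ordered with $\mu$ on the appropriate side; this is exactly where we need generic stability of $\mu$, which makes $\mu\otimes\nu$ well defined for arbitrary $\nu$. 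If the order of translation causes trouble, I would instead use right-invariance, obtained from left-invariance for generically stable measures via the fsg-type symmetry.
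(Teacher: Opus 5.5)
Your proposal is correct and is essentially the paper's proof: the paper likewise pushes forward $\mu^{\otimes(n-1)}\otimes\mu'$ along a multiplication map, with $\mu'$ a smooth extension of $\mu|_{\M}$ obtained from Lemma \ref{existence of smooth extensions of products}. It checks the $(n-1)$-marginals via two-sided invariance and the restriction to $\M$ via $\mu'|_{\M}=\mu|_{\M}$, then invokes Proposition \ref{2-determinacy with a type}; the only difference is that it uses the mirror-image parametrization $x_{n-1}=x_0\cdots x_{n-2}\,y$ with test formula $\varphi((x_0\cdots x_{n-2})^{-1}x_{n-1})$. One small point: in your parametrization $x_i$ enters the last coordinate as $x_i^{-1}$, so the marginal check (b) also needs $\mu$ to be inversion-invariant, which holds because the pushforward of $\mu$ under $x\mapsto x^{-1}$ is right-invariant and $\mu$ is the unique right-invariant measure on the fsg group $G$.
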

\noindent This implies that compact domination holds for definable fsg groups in strongly $n$-distal  NIP theories (see Fact \ref{fac: comp dom iff smooth measure} and the discussion there), and so an  arithmetic version of the distal regularity lemma holds for definable groups exactly as in \cite[Section 6]{conant2021structure}.	We also note that for types the result holds in arbitrary $n$-distal theories, hence no stable $n$-distal theory can type-define an infinite group (Proposition \ref{prop: stable groups are not $n$-distal}).

In Section \ref{sec: Infinite strongly k-distal field have characteristic 0} we prove the following: 
\begin{theorem*}[Theorem \ref{thm: no inf fields n-dist}] 
	No theory satisfying the $k$-strong Erd\H{o}s-Hajnal property (for uniform finitely supported measures) can define an infinite field of positive characteristic. In particular any infinite field definable in a strongly $k$-distal NIP theory has characteristic $0$; and the (stable) theory $\ACF_p$ of algebraically closed fields of characteristic $p>0$ does not admit a strongly $n$-distal NIP expansion for any $n$.
\end{theorem*}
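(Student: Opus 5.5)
We argue by contradiction, using the finite-field discrepancy bound of Babai--Hayes--Kimmel from multiparty communication complexity. Suppose $T$ satisfies the $k$-strong Erd\H{o}s--Hajnal property for uniform finitely supported measures, and suppose $T$ defines an infinite field $K$ of characteristic $p>0$. We need a definable $(k+1)$-ary relation on $K$ whose cylinder intersection sets all have small discrepancy. The natural candidate is
\[
\varphi(x_0,\dots,x_k) :\iff x_0+x_1+\dots+x_k = 0
\]
(or a variant such as $x_0\cdots x_k=1$), which is definable in the field language.

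\textbf{Step 1: finite subfields.} Since $K$ is infinite of characteristic $p$, it contains arbitrarily large finite sets $A$ on which the structure of $\varphi$ is controlled. If $K$ is algebraic over $\F_p$, it contains finite subfields $\F_{p^m}$ with $m\to\infty$. Otherwise we use a finite additive subgroup (an $\F_p$-subspace) $A\cong \F_p^m$ with $m$ large. We then take uniform measures $\mu_i$ on $A$.

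\textbf{Step 2: apply $k$-sEH.} This gives $\alpha>0$, independent of $A$, and a $\varphi$-homogeneous cylinder intersection set $C=\bigwedge_i C_i\subseteq A^{k+1}$ with $|C|\ge \alpha |A|^{k+1}$. Here each $C_i$ depends only on the coordinates other than $i$.

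\textbf{Step 3: rule out the homogeneous case $C\subseteq\neg\varphi$.} This case is easy. Since $\varphi$ has density only $1/|A|$ in $A^{k+1}$, such a $C$ of density $\alpha$ is not by itself contradictory. However, every cylinder set lying entirely inside $\varphi$ has tiny measure, because $\varphi$ itself is tiny. We would therefore fix the relation so that both $\varphi$ and $\neg\varphi$ have density bounded away from $0$, and replace the sum relation by a balanced one. Over $\F_p^m$ with $p$ odd, or in general, take
\[
\varphi(x_0,\dots,x_k) :\iff \operatorname{Tr}\Bigl(\sum_i x_i \in S\Bigr)
\]
for a definable set $S$ of half density. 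Definability of the trace is an issue, so we prefer the Babai--Hayes--Kimmel \emph{multiplicative} version instead. They show that for the finite field $\F_q$ and a nontrivial character $\chi$, the function $(x_0,\dots,x_k)\mapsto\chi(x_0+\dots+x_k)$, or the quadratic-character version of the product, has discrepancy with respect to cylinder intersections at most $q^{-c}$ for some $c>0$ depending on $k$. Taking $\varphi$ to be ``$x_0+\dots+x_k$ is a square'' (definable) in the case $p$ odd, and an Artin--Schreier analogue ``$\exists y\,(y^p-y=x_0+\dots+x_k)$'' in general, both $\varphi$ and $\neg\varphi$ have density about $1/2$ or about $1/p$ on $\F_q^{k+1}$.

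\textbf{Step 4: derive the contradiction.} A homogeneous $C$ of density $\alpha$ yields correlation roughly $\alpha\cdot\min(\text{density})$ with the $\pm$-valued indicator of $\varphi$ centered at its mean. This contradicts discrepancy $q^{-c}\to0$ as $q\to\infty$.

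\textbf{Main obstacle.} The hard part is matching a \emph{definable} relation to the exact character sum that BHK bound. This requires the finite fields $\F_q\subseteq K$ (which must be relatively algebraically closed enough for $\varphi$ restricted to $\F_q$ to agree with the $\F_q$-internal predicate), together with handling transcendental $K$. I would first try to reduce to $K$ containing $\F_p^{\alg}$: if $K$ is not algebraic over $\F_p$, pass to the algebraic closure of $\F_p$ inside $K$; if that is finite, use instead the relation defined via the field operations on finite-dimensional $\F_p$-subspaces. Failing that, I would note that NIP infinite fields of characteristic $p$ are Artin--Schreier closed (Kaplan--Scanlon--Wagner). In that setting the Artin--Schreier predicate restricted to finite subfields is the trace-zero condition, whose cylinder discrepancy is controlled by BHK, and uniformity in $q$ gives the contradiction. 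The ``in particular'' clauses then follow from Corollary \ref{cor: n-str EH}.
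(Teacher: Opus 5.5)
There is a genuine gap: none of the relations you propose can violate $k$-sEH, for two independent reasons.

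First, relations built from the sum through an additive map are structured rather than pseudorandom. For an additive character $\chi$ one has $\chi(x_0+\dots+x_k)=\prod_i\chi(x_i)$. Correspondingly, the trace-zero relation $\operatorname{Tr}(x_0+\dots+x_k)=0$ on $\F_q^{k+1}$ holds on all of $H^{k+1}$, where $H=\ker\operatorname{Tr}$. This is a cylinder intersection of density $p^{-(k+1)}$, independent of $q$; in fact the relation is constant on each of the $p^{k+1}$ boxes determined by the traces of the coordinates. The quadratic character of a product factors the same way: $\eta(x_0\cdots x_k)=\prod_i\eta(x_i)$. So the discrepancy bound you attribute to Babai--Hayes--Kimmel is false for these functions; their bound is for the generalized inner product.

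Second, your existential predicates are evaluated in $K$, not in $\F_q$, and in the situation at hand they trivialize. Here $K$ is Artin--Schreier closed and contains $\F_p^{\alg}$ (see below). So both $\exists y\,(y^p-y=x_0+\dots+x_k)$ and ``$x_0+\dots+x_k$ is a square'' are identically true on $\F_q^{k+1}$. The large finite subfields are never relatively algebraically closed in $K$. You therefore need a relation whose restriction to $\F_q$ does not depend on how $\F_q$ sits inside $K$.

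The missing idea is a quantifier-free multilinear relation. The paper takes $\varphi(x_1,\dots,x_{k+1})$ to be $\sum_{i=1}^{s}x_{1,i}\cdots x_{k+1,i}=0$, with each $x_j$ an $s$-tuple of field variables and $s=2^{k+1}$. Being a polynomial equation, its restriction to $(\F_q^{s})^{k+1}\subseteq (K^{s})^{k+1}$ is exactly the zero set of $\GIP_{q,s,k+1}$. The Babai--Hayes--Kimmel bound on \emph{strong} (fiberwise) discrepancy (Fact~\ref{fac: GIP str discrep bound}, Corollary~\ref{cor: k-sEH fails in fin fields}) then shows that every cylinder intersection of density at least $\alpha$ meets both $\varphi$ and $\neg\varphi$ once $q>k^2/\alpha$. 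Working with fiberwise discrepancy also resolves your Step~3 worry: that $\varphi$ has density only about $1/q$ does no harm.

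Your reduction step needs repair too. The hypothesis is only $k$-sEH, not NIP, so Kaplan--Scanlon--Wagner is not available. The paper first observes that if $T$ is not $k$-dependent, a witnessing formula encodes every finite $(k+1)$-partite hypergraph, in particular the GIP ones, and hence already fails $k$-sEH. In the $k$-dependent case, Hempel's results give that the relative algebraic closure of $\F_p$ in $K$ is $\F_p^{\alg}$. These results are that $k$-dependent fields are Artin--Schreier closed, and that a relatively algebraically closed PAC subfield which is not separably closed forces $k$-independence. This also eliminates your fallback case of a finite relative algebraic closure treated via $\F_p$-subspaces, which as written is not an argument.
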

\noindent This generalizes \cite[Corollary 6.3]{chernikov2018regularity} in the case $n=1$, which combined that NIP fields are Artin-Schreier closed \cite{kaplan2011artin} with the count for point-line incidences on the affine planes over large finite fields. Our proof here combines the fact that $n$-dependent fields are still Artin-Schreier closed \cite{hempel2016n} with a classical result of Babai--Hayes--Kimmel from multiparty communication complexity establishing high discrepancy of \emph{generalized inner products}  over large finite fields \cite{babai1998cost}.

Throughout the paper, we state some questions and conjectures on possible generalizations and refinements of our results.

\subsection{Acknowledgements}
We thank Chris Laskowski, Aris Papadopoulos and Roland Walker for helpful discussions on some of these topics, and Caroline Terry and Mervyn Tong for their comments on the preliminary version of the paper. Part of the results in Section \ref{sec: n-determinacy for measures and n-distal regularity lemma} also appear in the MA thesis of Westhead \cite{WestheadThesis} under the supervision of Chernikov. Chernikov was partially supported by the NSF Research Grant DMS-2246598; and by the Deutsche Forschungsgemeinschaft
(DFG, German Research Foundation) under Germany's Excellence Strategy-EXC-2047/1-390685813. He thanks Hausdorff Research Institute for
Mathematics in Bonn, which hosted him during the Trimester Program
``Definability, decidability, and computability''. Westhead was partially supported by Laskowski's NSF Research Grant DMS-2154101.

\section{Preliminaries}\label{sec: prelims}

\subsection{Measures on Boolean algebras}\label{sec: meas on Bool alg}

Given a Boolean Algebra, $\Bfrak$, we let $S(\Bfrak)$ be its Stone dual, i.e.~the compact Hausdorff totally disconnected space  of ultrafilters on $\Bfrak$ with  the topology generated by a basis of clopen sets of the form $[B] := \{\Ucal \in S(\Bfrak): B\in \Ucal\}$. A $\sigma$-additive Borel measure $\rho$ on a topological space is \emph{regular} if for any Borel set $A$,  $\rho(A) = \sup\{\rho(F):F\sub A\text{ closed}\} = \inf\{\rho(O):A\sub O\text{ open}\}$. Given a finitely additive probability measure $\mu$ on  $\Bfrak$, equivalently on the clopen subsets of $S(\Bfrak)$, it extends uniquely to a $\sigma$-additive regular  probability measure $\tilde \mu$ on the Borel subsets of $S(\Bfrak)$ (see e.g.~the proof of \cite[Section 7.1]{simon2015guide}, which applies verbatim in this slightly greater level of generality). In a slight abuse of notation we will in general identify $\mu$ with $\tilde \mu$.

For a measure $\mu$ on a Boolean algebra $\Bfrak$, we let $S(\mu)$ denote the support of $\mu$, i.e.~the set of ultrafilters $\Ucal \in S(\Bfrak)$ so that $\mu(B) > 0$ for every $B \in \Ucal$. Then, by compactness of  $S(\Bfrak)$ and finite additivity of $\mu$, $S(\mu)$ is a non-empty closed subset of $S(\Bfrak)$ and $\tilde{\mu}(S(\mu)) = 1$.

    Given a measure $\mu$ on a Boolean algebra $\Bfrak$ with a subalgebra $\Afrak \sub \Bfrak$, we denote the restriction of $\mu$ to $\Afrak$ by $\mu|_\Afrak$. And we let $\widetilde{\mu}|_{S(\Afrak)} := \pi_{\ast} \widetilde{\mu}$ denote the pushforward of $\widetilde{\mu}$ under the canonical (continuous, surjective) restriction map $\pi:S(\B) \rightarrow  S(\A)$ defined via $\pi(\mathcal{U}) :=  \mathcal{U}|_{\Afrak}$. Note that for any Boolean algebras $\Afrak \sub \Bfrak$ and $\mu$ (respectively, $\nu$) a finitely additive probability measure on $\Afrak$ (respectively, on $\Bfrak$), if $\nu$ extends $\mu$, then also $\tilde \nu|_{S(\Afrak)} = \tilde\mu$ (indeed, $\tilde\nu |_{S(\Afrak)}$ is a regular extension of $\mu$, so must be equal to $\tilde \mu$ by uniqueness).

\begin{definition}\label{def: A-determined measure}
    Suppose $\Afrak\sub \Bfrak$ are Boolean algebras and $\mu$ is a finitely additive probability measure on $\Bfrak$. We say that $\mu$ is \emph{$\Afrak$-determined} if $\mu$ is the unique extension of $\mu|_{\Afrak}$ to a finitely additive probability measure on $\Bfrak$.
\end{definition}

We now give a useful characterization of $\Afrak$-determinacy (generalizing \cite[Lemma 4.1]{simon2012finding}).
\begin{definition}\label{def: bored of a set}
   Given Boolean algebras $\Afrak\sub \Bfrak$, let $\pi:S(\B) \rightarrow  S(\A)$ denote the canonical restriction. Given $B\in \B$, we  define its \emph{$\Afrak$-border} $\partial_\A B \subseteq S(\Afrak)$ to be 
   \[\{p\in S(\A):\text{ there are } q_0,q_1 \in S(\B)\text{ with } \neg B\in q_0, B\in q_1\text{ and } \pi(q_0)=\pi(q_1)=p\}.\]
\end{definition}

\begin{lemma}\label{border is closed}
    For any $B\in \Bfrak$ and subalgebra $\Afrak\sub \Bfrak$, $\partial_\A B$ is a closed subset of $S(\A)$.
\end{lemma}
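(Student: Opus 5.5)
The plan is to write $\partial_\A B$ as the intersection of two images of closed sets under the continuous map $\pi$. Specifically,
\[\partial_\A B = \pi([B]) \cap \pi([\neg B]),\]
where $[B]$ and $[\neg B]$ are the basic clopen subsets of $S(\B)$. This identity is just an unwinding of Definition \ref{def: bored of a set}. A point $p$ lies in $\pi([B])$ exactly when some $q_1 \in S(\B)$ with $B \in q_1$ satisfies $\pi(q_1) = p$. Likewise, $p \in \pi([\neg B])$ exactly when some $q_0 \ni \neg B$ satisfies $\pi(q_0) = p$. Having both witnesses at once is precisely membership in $\partial_\A B$.

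Next I would check that $\pi$ is continuous. This holds because for $A \in \A$ we have $\pi^{-1}([A]_{S(\A)}) = [A]_{S(\B)}$, which is clopen, and the sets $[A]$ form a basis of $S(\A)$.

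With continuity in hand, the argument closes quickly. The sets $[B]$ and $[\neg B]$ are closed in the compact space $S(\B)$, so they are compact. Their images under the continuous map $\pi$ are compact subsets of the Hausdorff space $S(\A)$, hence closed. The intersection of two closed sets is closed, so $\partial_\A B$ is closed.

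I do not expect any real obstacle. The only points to verify are the continuity of $\pi$ and the compact/Hausdorff facts about Stone spaces, both of which are standard and were already noted in the preliminaries.
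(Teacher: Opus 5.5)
Your proof is correct, but it takes a different route from the paper's.

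You use the identity $\partial_\A B = \pi([B]_\B)\cap\pi([\neg B]_\B)$ together with the closed map lemma: a continuous image of a compact set in a Hausdorff space is closed. The paper instead argues pointwise. For $p\notin\partial_\A B$, every ultrafilter above $p$ contains $B$ (say), so $\bigcap_{A\in p}[A\wedge\neg B]_\B=\emptyset$. Compactness of $[\neg B]_\B$ then gives a finite $p_0\subseteq p$ whose conjunction $A$ satisfies $[A]_\B\subseteq[B]_\B$. So $[A]_\A$ is a clopen neighbourhood of $p$ that misses the border.

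Both arguments rest on the same compactness of $S(\B)$. Yours packages it into a standard topological fact and is shorter and more conceptual. The paper's unwinding buys an explicit description of the complement: $S(\A)\setminus\partial_\A B = X^0_B\cup X^1_B$, where $X^\varepsilon_B$ is the union of the clopens $[A]_\A$ with $[A]_\B\subseteq[B^\varepsilon]_\B$. That description is exactly what is quoted (``by the proof of Lemma \ref{border is closed}'') in the proof of Proposition \ref{border and determination}. With your proof, you would have to redo the finite-subcover argument separately to obtain it.
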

\begin{proof}
    Suppose $p \in S(\A) \sm \partial_\A B$. Then, $\bigcap\limits_{A\in p}[A]_\B \sub [B^\varepsilon]_\B$ for some $\varepsilon\in \{0,1\}$, without loss of generality $\varepsilon =1$. So, $\bigcap\limits_{A\in p} [A\land \neg B]_\B=\es$. Since $[\neg B]_{\Bfrak}$ is compact, there is a finite subset  $p_0 \subseteq p$ so that $\bigcap\limits_{A\in p_0} [A\land \neg B]_\B=\es$, so $\bigcap\limits_{A\in p_0} [A]_\B\sub [B]_\B$. Let $D_p := \bigcap\limits_{A\in p_0} [A]_\A \sub S(\A)$. Now, $D_p$ is clopen, $p\in D_p$ and $D_p \cap\partial_\A B = \es$. Thus, $S(\A) \sm \partial_\A B = \bigcup\limits_{p \in S(\A) \sm \partial_\A B} D_p$, an open set. 
\end{proof}

\noindent We recall a general fact about extending finitely additive probability measures:
\begin{fact}\label{existence of measure extensions}\cite{los1949extensions}
    Suppose $\A \sub \B$ are Boolean algebras and $\mu$ is a finitely additive probability measure on $\A$. Then for each $B\in \B$, and $\alpha \in [0,1]$ such that $\sup\{\mu(A): A\in \A, A\sub B\} \leq \alpha \leq  \inf\{\mu(A):A\in \A, B\sub A\}$, there is a finitely additive probability measure $\nu$ on $\B$  extending $\mu$ with $\nu(B)=\alpha$.
\end{fact}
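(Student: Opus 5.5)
The plan is to prove Łoś's extension result by a Hahn--Banach (sandwich) argument, applied to the space of $\B$-simple functions with $\mu$-integration as the linear functional. Let $V_\B$ be the real vector space of $\B$-simple functions on $S(\B)$, i.e.\ finite linear combinations of indicators $\1_{[C]}$ with $C\in\B$. Let $V_\A\sub V_\B$ be the subspace spanned by the $\A$-clopens, pulled back along $\pi$. On $V_\A$ put the positive linear functional $L(\sum_i r_i \1_{[A_i]}) := \sum_i r_i\mu(A_i)$. This is well defined because $\mu$ is finitely additive: refine any representation to a partition of unity by atoms of the finite subalgebra generated by the $A_i$. Finitely additive probability measures on $\B$ extending $\mu$ then correspond exactly to positive linear functionals on $V_\B$ that extend $L$ and satisfy $L(\1)=1$.

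First, I would extend $L$ to $W := V_\A + \mathbb{R}\1_{[B]}$ by setting $L(\1_{[B]}) := \alpha$. This is the step I expect to need care: I must check that $L$ stays positive, i.e.\ that $f + t\1_{[B]}\geq 0$ with $f\in V_\A$ implies $L(f)+t\alpha\geq 0$.

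1. For $t>0$, we have $\1_{[B]}\geq -f/t$. Write $g:=-f/t$, an $\A$-simple function, and decompose along the atoms $A_j$ of the finite algebra generated by the sets appearing in $f$, so that $g=\sum_j c_j\1_{[A_j]}$. On each atom $A_j$ we have $c_j\le 1$. If $c_j>0$ and $A_j\not\sub B$, then some point of $[A_j\wedge\neg B]$ violates $\1_{[B]}\geq g$. Hence $A_j\sub B$ whenever $c_j>0$, and so
\[ L(g)\le \sum_{c_j>0} c_j\mu(A_j)\le \mu\Bigl(\bigvee_{c_j>0}A_j\Bigr)\le \sup\{\mu(A):A\in\A,\ A\sub B\}\le\alpha. \]
2. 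For $t<0$, the symmetric argument uses $\inf\{\mu(A): B\sub A\}\geq \alpha$. Rewriting $f+t\1_{[B]}\ge0$ as $\1_{[B]}\le f/|t|$, each atom meeting $B$ has coefficient at least $1$, and all coefficients are nonnegative. So $L(f/|t|)\geq \mu(\text{union of atoms meeting } B)\ge\alpha$.
3. The case $t=0$ is just the positivity of $L$ on $V_\A$.

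Next, I would extend from $W$ to all of $V_\B$ positively. Since $\1\in W$ is an order unit of $V_\B$, the Kantorovich/M.~Riesz extension theorem applies; equivalently, use Hahn--Banach with the sublinear functional $p(h):=\inf\{L(w): w\in W,\ w\ge h\}$, which is finite because $h\le \|h\|_\infty\1$. Positivity of $L$ on $W$ gives $L\le p$ on $W$. Any linear $\Lambda\le p$ is positive, since $h\ge0$ implies $p(-h)\le 0$. Alternatively, I can avoid Hahn--Banach by running Zorn's lemma over pairs (subalgebra, positive extension), repeating the one-set step above with $\alpha$ chosen between the corresponding sup and inf; that these bounds are correctly ordered follows from monotonicity of $\mu$.

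Finally, setting $\nu(C):=\Lambda(\1_{[C]})$ gives a finitely additive probability measure on $\B$ that extends $\mu$ and satisfies $\nu(B)=\alpha$. The only real obstacle is the positivity check in the first step, which is exactly where the sandwich hypothesis on $\alpha$ enters.
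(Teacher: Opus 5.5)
Your proof is correct. The paper gives no proof of this Fact; it only cites Łoś--Marczewski, so there is no in-paper argument to match.

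The classical argument behind the citation is explicit and measure-theoretic. On the algebra $\A(B)$ generated by $\A\cup\{B\}$ one writes down the two extremal extensions using inner and outer measure: $A_1\wedge B\vee A_2\wedge\neg B\mapsto \mu_*(A_1\wedge B)+\mu^*(A_2\wedge\neg B)$ and its mirror image. One checks these are finitely additive, takes a convex combination to hit $\alpha$, and then passes from $\A(B)$ to all of $\B$ by a separate Zorn-type argument.

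You instead work with positive functionals on simple functions, so that both stages are handled uniformly.
\begin{itemize}
\item The only hands-on computation is the positivity check on $V_\A+\mathbb{R}\1_{[B]}$. As you implicitly use, this check also gives well-definedness: $f+t\1_{[B]}=0$ forces $L(f)+t\alpha$ to be both $\geq 0$ and $\leq 0$.
\item Kantorovich/Hahn--Banach then does the passage to $\B$ in one step.
\item This avoids verifying additivity of explicit formulas, and it makes transparent exactly where the sandwich hypothesis on $\alpha$ enters.
\item What you give up is the concrete description of the extensions on $\A(B)$, in particular of the extremal ones.
\item Both routes use a choice principle for the final extension.
\end{itemize}

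One caveat concerns your Zorn aside, which is not needed for the main argument. Running Zorn over pairs (subalgebra, extension) with only your one-set step does not close up. The reason is that $V_{\A'}+\mathbb{R}\1_{[C]}$ is in general strictly smaller than $V_{\A'(C)}$, which also needs all $\1_{[A\wedge C]}$ for $A\in\A'$. Zorn should instead be run over subspaces containing $\1$. The one-dimensional step then chooses the value on a new $h$ between $\sup\{L'(w): w\leq h\}$ and $\inf\{L'(w): w\geq h\}$, with $w$ ranging over the current subspace. That is just the proof of the Riesz extension theorem you already invoke.
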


%\noindent This immediately implies the following:
%
%\begin{fact}\label{determination of measures via squashing}
%    Suppose $\mu$ is a finitely additive probability measure on some Boolean algebra $\Afrak$ and $\Bfrak \supseteq \Afrak$ is some larger Boolean algebra. There is a unique extension of $\mu$ to a finitely additive probability measure on $\Bfrak$ if and only if for each $\varepsilon>0$ and $B \in \Bfrak$, there are $A^+,A^-\in \Afrak$ such that:\begin{enumerate}
%        \item $A^- \sub B \sub A^+$
%        \item $\mu(A^+)-\mu(A^-)<\varepsilon$.
%    \end{enumerate}
%\end{fact}

\begin{prop}\label{border and determination}
    Suppose $\Afrak \sub \Bfrak$ are Boolean algebras and $\nu$ is a finitely additive probability measure on $\B$. Denote $\mu:=\nu|_\A$. The measure $\nu$ is $\A$-determined if and only if $\tilde \mu(\partial_\A B)=0$ for all $B\in \B$. 
\end{prop}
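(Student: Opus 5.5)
The plan is to reduce $\A$-determinacy to a statement about inner and outer measures of each $B \in \B$, using Fact \ref{existence of measure extensions}, and then to identify the gap between them with $\tilde\mu(\partial_\A B)$ via compactness and regularity of $\tilde\mu$.

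\textbf{Step 1 (reduction).} By Fact \ref{existence of measure extensions}, $\nu$ is $\A$-determined if and only if for every $B \in \B$
\[
\mu_*(B) := \sup\{\mu(A): A \in \A,\ A \sub B\} \quad\text{and}\quad \mu^*(B) := \inf\{\mu(A): A \in \A,\ B \sub A\}
\]
coincide. Indeed, if they differ for some $B$, the Fact gives two extensions with distinct values on $B$. Conversely, any extension $\nu'$ of $\mu$ satisfies $\mu_*(B) \leq \nu'(B) \leq \mu^*(B)$, so equality forces $\nu'(B) = \nu(B)$ for all $B$.

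\textbf{Step 2 (identifying the regions).} Fix $B$ and consider the closed sets
\[
K_1 := \pi([B]_\B), \qquad K_0 := \pi([\neg B]_\B) \sub S(\A).
\]
They are closed because $\pi$ is continuous and the spaces are compact Hausdorff, and $\partial_\A B = K_0 \cap K_1$. Let $U_1 := S(\A) \sm K_0$ be the set of $p$ all of whose lifts contain $B$. The compactness argument in the proof of Lemma \ref{border is closed} shows that $U_1$ is the union of the clopen sets $[A]_\A$ with $A \in \A$ and $A \sub B$. Symmetrically, $U_0 := S(\A) \sm K_1$ is the union of the $[A]_\A$ with $A \sub \neg B$. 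This gives a partition
\[
S(\A) = U_1 \sqcup \partial_\A B \sqcup U_0 .
\]

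\textbf{Step 3 (computing inner and outer measure).} We claim $\mu_*(B) = \tilde\mu(U_1)$. The inequality $\leq$ is immediate. For $\geq$, use inner regularity of $\tilde\mu$ to find a compact $F \sub U_1$ of measure close to $\tilde\mu(U_1)$. Cover $F$ by finitely many $[A]_\A$ with $A \sub B$, and take their union, which lies in $\A$ and is contained in $B$. Likewise $\mu_*(\neg B) = \tilde\mu(U_0)$, and since $\mu^*(B) = 1 - \mu_*(\neg B)$ we get
\[
\mu^*(B) - \mu_*(B) = 1 - \tilde\mu(U_0) - \tilde\mu(U_1) = \tilde\mu(\partial_\A B).
\]
Combined with Step 1, this proves the proposition.

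The main obstacle is Step 3, specifically the compactness-plus-regularity argument turning the open set $U_1$ into actual elements of $\A$ contained in $B$. The rest is bookkeeping with Fact \ref{existence of measure extensions}.
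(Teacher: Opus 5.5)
Your proof is correct and is essentially the paper's argument: the same partition $S(\A)=U_1\sqcup\partial_\A B\sqcup U_0$ into the two open ``decided'' regions and the border, obtained from the compactness argument of Lemma~\ref{border is closed} and combined with Fact~\ref{existence of measure extensions}. The only difference is that you prove the exact identity $\mu^*(B)-\mu_*(B)=\tilde\mu(\partial_\A B)$ using inner regularity plus compactness. The paper instead uses only the inequalities $\mu_*(B)\le\tilde\mu(U_1)$ and $\tilde\mu(U_1)\le\tilde\nu'([B])\le 1-\tilde\mu(U_0)$ for an arbitrary extension $\nu'$ of $\mu$, the latter following from $\pi_*\tilde\nu'=\tilde\mu$.
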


\begin{proof}
    For $B\in \Bfrak$ and  $\varepsilon\in \{0,1\}$, let $X_B^\varepsilon :=  \bigcup\{[A]_\A: A\in \A, [A]_\B \sub [B^{\varepsilon}]_\B\}$. Then, by the proof of Lemma \ref{border is closed}, $\partial_\A B=S(\A)\sm (X_B^0 \cup X_B^1)$. Both $X_B^\varepsilon$ are open, and disjoint, so $\mu(\partial_\A B)=0$ if and only if $\mu(X_B^0)+\mu(X_B^1)=1$. 
    
    Now, for any $\nu'$ extending $\mu$ to $\B$, $\tilde \mu(X_B^1)\leq \tilde \nu'([B])\leq 1-\tilde\mu(X_B^0)$. So, if $\tilde \mu(\partial_\A B)=0$ for each $B$, it is clear that $\nu'(B)$ is determined by $\mu$ for each $B$, so $\nu' = \nu$. 

    In the other direction, suppose that $\tilde \mu(\partial_\A B)>0$ for some $B\in \B$. So, $\tilde \mu(X_B^0) +\tilde\mu(X_B^1)<1$. Trivially, $\sup\{\mu(A): A\in \A, A\sub B^{\varepsilon}\}\leq \tilde{\mu}(X^\varepsilon_B)$. So,
    \begin{gather*}
    	1-\mu(X^0_B) \leq 1-\sup\{\mu(A): A\in \A, A \sub \neg B\} = 1-\sup\{1-\mu(\neg A): A\in \A, A\sub \neg B\}\\
    	= \inf\{\mu(\neg A): A\in \A, A \sub \neg B\} = \inf\{\mu(A): A\in \A, B \sub A\}.
    \end{gather*}
    Hence, $\sup\{\mu(A): A\in \A, A\sub B\} <  \inf\{\mu(A):A\in \A, B\sub A\}$. By Fact \ref{existence of measure extensions}, $\mu$ has two extensions to $\B$ that take distinct values on $B$.
\end{proof}

\subsection{Keisler measures}\label{sec: keisler meas intro}
We fix a complete $\mathcal{L}$-theory $T$ and $\cU \models T$ a monster model.  Given set $A \subseteq \cU$, a \emph{(Keisler) measure} over $A$ in the variables $x$ is a finitely additive probability measure on $\Lcal_x(A)$, the Boolean algebra of $A$-definable subsets of $\Mbb^x$. We denote by $\Mfrak_x(A)$ the set of Keisler measures over $A$ in $x$. 

\begin{definition} Take a global measure $\mu(x)\in \Mfrak_x(\Mbb)$ and a set $A\sub \Mbb$.
    \begin{enumerate}
        \item $\mu$ is \emph{$A$-invariant} if for any $\varphi(x,y)\in \Lcal$ and $b\equiv_A b'$, we have that $\mu(\varphi(x,b)) = \mu(\varphi(x,b')$. If $\mu$ is $A$-invariant, $q\in S_y(A)$, $a\in A$ and $\psi(x,y,z)\in \Lcal$, we write $\mu(\psi(x,q,a))$ to refer to $\mu(\psi(x,d,a))$ for some/any $d\models q$.
        \item $\mu$ is \emph{finitely satisfiable (f.s.) over $A$} if for any $\varphi(x,y)$ and $b$, if $\mu(\varphi(x,b))>0$, then there is a tuple $a$ in $A$ with $\models \varphi(a,b)$.
        \item $\mu$ is \emph{$A$-definable} if $\mu$ is $A$-invariant and, for each $\varphi(x,y)$ and $\varepsilon>0$, the set $\{q\in S_y(A): \mu(\varphi(x,q))>\varepsilon\}$ is open.
        \item $\mu$ is \emph{generically stable} if there is some small $A$ over which $\mu$ is f.s. and definable (this is not the right definition outside of NIP theories, but we are only interested in NIP theories in this paper). 
        \item $\mu$ \emph{forks over $A$} if there is $\varphi(x)\in \Lcal(\Mbb)$ with $\mu(\varphi(x))>0$ that forks over $A$.
        \item $\mu$ is \emph{Borel definable over $A$} if $\mu$ is $A$-invariant and, for each $\varphi(x,y)$ and $\varepsilon>0$, the set $\{q\in S_y(A): \mu(\varphi(x,q))>\varepsilon\}$ is Borel.
    \end{enumerate}
\end{definition}

\begin{fact}\label{inv iff borel def}\cite[Corollary 4.9]{hrushovski2011nip}
     Assume $T$ has NIP and $\M \prec \cU$ is small. Then a global measure $\mu$ is $\M$-invariant if and only if $\mu$ is Borel definable over $\M$. 
\end{fact}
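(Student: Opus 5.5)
The direction from Borel definability to invariance is immediate, since by definition a Borel definable measure is $\M$-invariant. So the content is the converse: if $\mu$ is $\M$-invariant, then for each $\varphi(x,y)$ the function $f\colon S_y(\M)\to[0,1]$, $f(q)=\mu(\varphi(x,q))$, is Borel. I would follow the pattern of the classical proof that invariant \emph{types} are Borel definable in NIP theories: replace ``$\varphi(x,b)\in p$'' by the condition that $f(\tp(b/\M))$ lies in a given interval, and replace the bound on alternations of a formula along an indiscernible sequence by a bound on alternations of the values of $\mu$.

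The first step is the key combinatorial input, and the one I expect to be the main obstacle. Fix reals $r<s$. I want some $N=N(\varphi,r,s)$ such that there is no $\M$-indiscernible sequence $(b_i:i<N)$ with $f(\tp(b_i/\M))\le r$ for even $i$ and $\geq s$ for odd $i$. Indiscernibility over $\M$ makes all $b_i$ realize the same type over $\M$, so this literal form is trivial. The useful version concerns sequences $(b_i)$ indiscernible over $\M$ \emph{together with} the relevant parameters, such as Morley sequences in a global $\M$-invariant type that are indexed in the interleaved way used in the type case. To get such a bound I would first use NIP to show that for any indiscernible sequence $(b_i)$ the values $\mu(\varphi(x,b_i))$ converge. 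If they alternated $\varepsilon$-apart infinitely often, then by indiscernibility and finite additivity there would be $x$-definable sets $\varphi(x,b_{i})\triangle\varphi(x,b_{i+1})$ of measure at least $s-r$ along a sequence. Via a VC/$\varepsilon$-approximation argument, which is available because $\varphi$ has finite VC-dimension, we could then find a single point $a$ for which $\varphi(a,b_i)$ alternates more than the alternation number of $\varphi$ allows, a contradiction. Compactness would then give the uniform bound $N$.

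The second step writes $\{q:f(q)<r\}$, up to the gap $[r,s]$, as a finite Boolean combination of closed sets. For $b$, I would say ``$b$ has rank $\geq k$'' if there exist $b_0,\dots,b_{k-1}$ forming, together with $b$ appended, an alternating sequence of the relevant indiscernible kind with respect to the thresholds $r$ and $s$. Invariance of $\mu$ ensures this depends only on $\tp(b/\M)$. Describing the sequences via realizations of types over $\M$, the condition is an intersection of conditions of the form $f(\cdot)\le r$ or $f(\cdot)\ge s$, projected along the compact space $S(\M)$. By induction on $k$ the sets so obtained are Borel, and in fact at each stage they are projections of closed sets and hence closed. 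The bound $N$ from the first step makes the rank finite. The parity of the maximal rank then decides whether $f(q)\le r$ or $f(q)\ge s$, exactly as in the type case, with some ambiguity allowed inside $(r,s)$. Consequently, for every rational pair $r<s$ there is a Borel set $B_{r,s}$ with
\[
\{q:f(q)\le r\}\subseteq B_{r,s}\subseteq\{q:f(q)<s\}.
\]

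Finally, I would take $\{q:f(q)<t\}=\bigcup_{r<t}\bigcap_{r<s<t}B_{r,s}$, with $r,s$ ranging over rationals. This is a countable Boolean combination of Borel sets, so $f$ is Borel, which completes the proof. The delicate point throughout is setting up the right notion of indiscernible alternating sequence so that the step-one bound actually applies. If that proves awkward, I would fall back on approximating $\mu$ on $\varphi$ by averages of finitely many $\M$-invariant types, which are Borel definable by the type case, and pass to a limit.
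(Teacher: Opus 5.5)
\textbf{The reduction is fine, but the rank argument is circular.} The easy direction holds by definition, and the final step is correct: producing Borel sets $B_{r,s}$ with $\{f\le r\}\subseteq B_{r,s}\subseteq\{f<s\}$ suffices. The rank argument meant to produce them does not work. The only constraints your rank puts on the auxiliary parameters are $f(\tp(b_i/\M))\le r$ or $f(\tp(b_i/\M))\ge s$. So the set of types of rank $\ge k$ is a projection of a set cut out by $\{f\le r\}$ and $\{f\ge s\}$, which are exactly the sets whose Borelness is in question.

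Your claim that these projections are closed already fails at $k=0$, where it says $\{f\le r\}$ is closed. Work in DLO over $\M=(\mathbb{Q},<)$, take $\varphi(x,y)=(x<y)$, and let $\mu$ be the global $\M$-invariant type at the left end of the $\sqrt{2}$-cut. Then $f$ is the indicator of $\bigcap_{c\in\mathbb{Q},\,c<\sqrt{2}}[y>c]$, which is closed but not open. Moreover, projections of sets that are merely Borel need not be Borel.

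The parity bookkeeping also carries no information. Along a sequence indiscernible over $\M$, or over anything containing $\M$, $f$ is constant. If the sequence is indiscernible over less, its last term is $b$ itself, so ``the alternation ends low'' just restates $f(q)\le r$.

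The type case works for a different reason. There you fix a Morley sequence $(a_i)$ of $p$ over $\M$ in the $x$-variable, let $b$ range over realizations of $q$, and count alternations of $\varphi(a_i;b)$. These conditions are clopen in $b$, so the alternation sets really are closed. $\M$-invariance of $p$ is what lets you extend a non-maximal alternation. A measure has no such sequence of points, and the obvious substitute $\mu^{\otimes\omega}$ gives $\mu^{\otimes\omega}(\varphi(x_i;b))=\mu(\varphi(x;b))$ for every $i$, so nothing alternates.

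\textbf{Your fallback is the right route but is missing its two ingredients.} This is essentially how the cited \cite[Corollary 4.9]{hrushovski2011nip} is obtained.

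(i) For every $\varphi(x,y)$ and $\varepsilon>0$ there are $p_1,\dots,p_m\in S(\mu)$ with $\bigl|\mu(\varphi(x,b))-\tfrac1m|\{i:\varphi(x,b)\in p_i\}|\bigr|<\varepsilon$ for all $b\in\cU$. To get this, apply the VC theorem to the regular Borel extension of $\mu$ on $S_x(\cU)$ and the family $\{[\varphi(x,b)]:b\in\cU\}$. This family has VC-dimension at most that of $\varphi$: realize a shattered set of types in a larger model.

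(ii) Every $p\in S(\mu)$ is $\M$-invariant. Indeed, $\mu$ does not fork over $\M$ (Fact \ref{invariant and non-forking measures}), so no formula in $p$ forks over $\M$. In NIP, a global type not forking over a model is invariant over it (Fact \ref{fac: forking in NIP}).

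With these, $g(q):=\tfrac1m|\{i:\varphi(x,q)\in p_i\}|$ is Borel on $S_y(\M)$ by the type case, and $|f-g|<\varepsilon$ everywhere. So $f$ is a uniform limit of Borel functions, and neither a rank nor sandwich sets are needed.
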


%\begin{example}\label{average measures example}
%    If $T$ has NIP, we have a simple example of a generically stable measure, an \emph{average measure of an indiscernible sequence}. Suppose $(b_i)_{i\in [0,1]}$ is an indiscernible sequence indexed by the interval $[0,1]$. Let $\lambda$ denote the Lebesgue measure on $[0,1]$. Take $\varphi(x)\in \Lcal(\Mbb)$. Note that by NIP, the set $\{t\in [0,1]: \models \varphi(b_t)\}$ is a finite union of intervals. So, we can define the average measure by $Av(b_{[0,1]})(\varphi)= \lambda(\{t\in [0,1]: \models \varphi(b_t)\})$. Clearly, $Av(b_{[0,1]})$ is f.s. (hence invariant) over $\bigcup\{b: b\in b_{[0,1]}\}$. By compactness and the bound on alternation given by NIP, $Av(b_{[0,1]})$ is definable over $b_{[0,1]}$.
%\end{example}

The following is essentially \cite[Proposition 4.3]{hrushovski2011nip} (it is stated there with $\Ncal = \cU$, but the proof goes through only assuming that $\Ncal$ is $|\M|^+$-saturated):
\begin{fact}\label{invariant and non-forking measures}
     Assume that $T$ has NIP. Suppose $\M\models T$, $\Ncal \succeq \M$ is $|\M|^+$-saturated and $\mu\in \Mfrak_x(\Ncal)$. Then $\mu$ does not fork over $\M$ if and only if $\mu$ is $\Aut(\Ncal/\M)$-invariant.
\end{fact}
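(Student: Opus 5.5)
We follow the proof of \cite[Proposition 4.3]{hrushovski2011nip} and check that saturation of $\Ncal$ over $\M$ is all that is used.

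\emph{Invariance implies non-forking.} Assume $\mu$ is $\Aut(\Ncal/\M)$-invariant, and suppose towards a contradiction that $\mu(\varphi(x,b))>0$ for some $\varphi(x,b)\in\Lcal(\Ncal)$ that forks over $\M$. Then $\varphi(x,b)$ implies a finite disjunction of formulas $\psi_i(x,c_i)$, each dividing over $\M$, and by additivity one of them has positive measure. So we may assume $\varphi(x,b)$ divides over $\M$.

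\begin{enumerate}
\item Since $T$ is NIP and $\M$ is a model, dividing over $\M$ is witnessed by a Morley sequence in a global $\M$-invariant (indeed $\M$-finitely satisfiable) extension of $\tp(b/\M)$. This gives an $\M$-indiscernible sequence $(b_i:i<\omega)$ with $b_0=b$ and $\{\varphi(x,b_i)\}_i$ $k$-inconsistent.
\item The type of $(b_i)_{i<\omega}$ over $\M$ is countable over $\M\cup\{b\}$. By $|\M|^+$-saturation of $\Ncal$, we can realise it in $\Ncal$ with $b_0=b$.
\item Each $b_i\equiv_\M b$, and $b_i,b$ lie in the $|\M|^+$-saturated (hence $|\M|^+$-strongly homogeneous over $\M$, for small tuples) $\Ncal$. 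So some $\sigma\in\Aut(\Ncal/\M)$ sends $b$ to $b_i$, and invariance gives $\mu(\varphi(x,b_i))=\mu(\varphi(x,b))=\varepsilon>0$ for all $i$.
\item By $k$-inconsistency and finite additivity, at most $k/\varepsilon$ of the sets $\varphi(x,b_i)$ can have measure $\varepsilon$. This is a contradiction.
\end{enumerate}

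\emph{Non-forking implies invariance.} Suppose $\mu$ does not fork over $\M$ but there are $b\equiv_\M b'$ in $\Ncal$ with $\mu(\varphi(x,b))\neq\mu(\varphi(x,b'))$. Then $\mu(\varphi(x,b)\triangle\varphi(x,b'))=\varepsilon>0$ (replace $\varphi(x,b)$ by the relevant difference). Since $\M$ is a model, there is a global $\M$-invariant type extending $\tp(bb'/\M)$. Using $|\M|^+$-saturation, take a Morley sequence $(b_ib_i')_{i<\omega}$ in that type, realised in $\Ncal$, with each $b_ib_i'\equiv_\M bb'$ and $(b_ib_i')$ indiscernible over $\M$. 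Because $\mu$ does not fork over $\M$, it cannot give positive measure to a formula dividing over $\M$.

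\begin{enumerate}
\item The standard NIP argument considers $\theta_i(x):=\varphi(x,b_i)\triangle\varphi(x,b_i')$. In the relevant sense (Morley sequence in an invariant type, with $b_i\equiv_{\M b_{<i}b'_{<i}}b_i'$), a fixed point $a$ can satisfy only boundedly many $\theta_i$, by the NIP alternation bound.
\item Hence for some $m$ the family $\{\theta_i\}$ is $m$-inconsistent, so $\theta_0$ divides over $\M$.
\item We still need $\mu(\theta_i)\geq\varepsilon$ for many $i$, and this is the main obstacle: it is the delicate step in \cite{hrushovski2011nip}. Their route shows that the measure of $\theta_i$ only depends on $\tp(b_ib_i'/\M)$ whenever $\mu$ is non-forking. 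This uses the lemma that a non-forking measure over a model gives the same value to $\varphi(x,b)$ and $\varphi(x,b')$ whenever $b,b'$ start an $\M$-indiscernible sequence; the proof goes via the alternation bound and averaging over the sequence. One then connects arbitrary $b\equiv_\M b'$ through a chain of two such steps, using a realisation of the invariant extension inside $\Ncal$.
\end{enumerate}

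Every sequence and realisation used above has size at most $|\M|+\aleph_0$ over $\M\cup\{b,b'\}$. So the proof only needs $\Ncal$ to be $|\M|^+$-saturated, not the monster model, and the Fact follows verbatim.
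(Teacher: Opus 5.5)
Your proof has a genuine gap in the direction ``invariance $\Rightarrow$ non-forking'', and the paper's statement shares it.

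\textbf{Step 3 of the first direction is false as stated.} $|\M|^+$-saturation does not make $\Ncal$ strongly $|\M|^+$-homogeneous. That is, two tuples of $\Ncal$ with the same type over $\M$ need not be conjugate under $\Aut(\Ncal/\M)$. This holds, e.g., when $\Ncal$ is saturated in its own cardinality, but it does not follow from $|\M|^+$-saturation. The gap cannot be patched, because under saturation alone this direction of the Fact fails:
\begin{enumerate}
\item Let $T$ be the theory of an equivalence relation $E$ with infinitely many infinite classes; it is stable and has quantifier elimination. Let $\M$ be countable, and let $\Ncal\succeq\M$ have $\aleph_1$ classes of size $\aleph_1$ (containing $\M$) plus one more class $C^*$ of size $\aleph_2$.
\item By quantifier elimination, $\Ncal$ is $\aleph_1$-saturated.
\item Every $\sigma\in\Aut(\Ncal/\M)$ maps $C^*$ onto itself, since it is the only class of size $\aleph_2$. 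So for $c^*\in C^*$, the type $p(x)=\{xEc^*\}\cup\{x\neq d: d\in\Ncal\}$, viewed as a Dirac measure, is $\Aut(\Ncal/\M)$-invariant.
\item Yet $xEc^*$ divides over $\M$: take $c_i$ in distinct classes disjoint from $\M$.
\end{enumerate}
So you need an extra hypothesis. Either assume $\Ncal$ is strongly $|\M|^+$-homogeneous, which is what the paper arranges for its models $\M_i$ in the proof of Proposition \ref{2-determinacy with a type}. Or read invariance as $\mu(\varphi(x,b))=\mu(\varphi(x,b'))$ whenever $b\equiv_\M b'$ lie in $\Ncal$. With either fix your argument goes through, after two adjustments:
\begin{enumerate}
\item Move the parameters $\bar c$ of the dividing disjunction into $\Ncal$ by realizing $\tp(\bar c/\M b)$ there.
\item Drop the appeal to invariant Morley sequences in step 1. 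Any $\M$-indiscernible witness of dividing, realized in $\Ncal$ over $\M b$, will do.
\end{enumerate}

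\textbf{The converse direction does hold under saturation alone, but you misplace the difficulty.} Non-forking of $\mu$ means every formula over $\Ncal$ that divides over $\M$ has measure $0$. So you never need $\mu(\theta_i)\geq\varepsilon$ for many $i$; it suffices that one positive-measure formula divides. Your alternation claim for $\theta_i=\varphi(x,b_i)\triangle\varphi(x,b'_i)$ is also unjustified. A Morley sequence of pairs does not make $b_0,b'_0,b_1,b'_1,\ldots$ indiscernible, and $b_i\equiv_{\M b_{<i}b'_{<i}}b'_i$ need not hold.

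The clean argument is the two-step chain you mention at the end:
\begin{enumerate}
\item Fix a global $\M$-invariant (e.g.\ coheir) $q\supseteq\tp(b/\M)$ and realize $b''\models q|_{\M bb'}$ in $\Ncal$. This is the only use of saturation.
\item Then $(b,b'')$ and $(b',b'')$ each begin a Morley sequence of $q$ over $\M$.
\item For any $\M$-indiscernible $(c_j)_{j<\omega}$, NIP makes $\{\varphi(x,c_{2j})\wedge\neg\varphi(x,c_{2j+1})\}_{j<\omega}$ $k$-inconsistent for some $k$. So $\varphi(x,c_0)\wedge\neg\varphi(x,c_1)$ divides over $\M$.
\item Applying this to $\varphi$ and $\neg\varphi$ gives $\mu(\varphi(x,b)\triangle\varphi(x,b''))=0=\mu(\varphi(x,b')\triangle\varphi(x,b''))$.
\end{enumerate}
Hence $\mu(\varphi(x,b))=\mu(\varphi(x,b'))$, and a fortiori $\mu$ is $\Aut(\Ncal/\M)$-invariant; no averaging is needed.
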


\begin{definition}\label{def: tens prod of meas}
\begin{enumerate}
    \item Given $\mu(x),\nu(y)$ measures over $A$, we define $\mu\times\nu$ to be the unique measure on the product Boolean algebra $\Lcal_{x}(A) \times \Lcal_{x}(A) \subseteq \Lcal_{x,y}(A)$ determined by $(\mu \times \nu)(\varphi(x)\wedge \psi(y)):= \mu(\varphi(x))\cdot\nu(\psi(y))$ for all $\varphi(x),\psi(y) \in \Lcal(A)$.
    \item Suppose $\mu(x)$ is an $A$-invariant global measure, $\nu(y)$ is a global measure, and $\M$ is a small model containing $A$. For any $\varphi(x,y,z)\in \Lcal$ and $b \in \cU^{z}$, we define $(\mu\otimes\nu)(\varphi(x,y,b)) := \int_{S_y(\Ncal)}\mu(\varphi(x,q,b))d\nu|_{\Ncal}$, where $\Ncal$ is any small model with $\M b \sub \Ncal$.
\end{enumerate}
\end{definition}

\begin{remark}\label{rem: tens prod inv meas NIP}
    The tensor product of invariant measures is well-defined in NIP theories by Fact \ref{inv iff borel def}, and does not depend on the choice of $\mathcal{N}$.  We can iterate the above to define $\bigotimes\limits_{i\in\Ical} \mu_i(x_i)$ for any linear order $\Ical$. Finite linear orders are clear, and since formulas may involve only finitely-many variables this is enough.
\end{remark}

\noindent The theory of the tensor product for invariant measures in NIP theories is developed in \cite{hrushovski2011nip}. The following basic observations can be found in e.g.~\cite[Section 7.4]{simon2015guide}. 
\begin{fact}\label{basic tensor facts} Assume $T$ has NIP. Then $\otimes$ is associative, but in general not commutative. If $\mu \otimes \nu = \nu \otimes \mu$, we say that $\mu$ and $\nu$ \emph{commute}. If $\mu$ and $\nu$ are both invariant (finitely satisfiable, definable, generically stable) over $A$, then $\mu \otimes \nu$ is invariant (finitely satisfiable, definable, generically stable) over $A$. 
\end{fact}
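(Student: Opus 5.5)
Since this Fact is quoted from \cite[Section 7.4]{simon2015guide}, I would reconstruct each clause directly from Definition \ref{def: tens prod of meas}, using Fact \ref{inv iff borel def} to make all the integrals meaningful. Throughout, fix $\varphi(x,y,z)\in\Lcal$, $b\in\cU^z$, and a small model $\Ncal\supseteq \M b$ with $\M\supseteq A$.

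\emph{Non-commutativity.} A single example suffices. In DLO, let $p(x)$ be the global type at $+\infty$, which is $\emptyset$-invariant. Compute $(p_x\otimes p_y)(x<y)$: we integrate over $q=p|_\Ncal$ in the variable $y$ and then evaluate $p_x(x<d)$ for $d\models q$. Since $p_x$ puts $x$ above every element of $\cU$, and in particular above $d$, we get $p_x(x<d)=0$. So $(p_x\otimes p_y)(x<y)=0$. By the same reasoning with the roles swapped, $(p_y\otimes p_x)(x<y)=1$.

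\emph{Invariance.} For $\sigma\in\Aut(\cU/A)$, I would apply $\sigma$ to $\Ncal$ and to the integrand. Since $\mu$ and $\nu$ are $A$-invariant, $\nu|_{\sigma(\Ncal)}$ is the pushforward of $\nu|_{\Ncal}$ under $\sigma$. Likewise $q\mapsto\mu(\varphi(x,q,\sigma(b)))$ is the transport of $q\mapsto \mu(\varphi(x,q,b))$. By change of variables the two integrals agree.

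\emph{Finite satisfiability.} Suppose $(\mu\otimes\nu)(\varphi(x,y,b))>0$. Then the Borel set $E=\{q\in S_y(\Ncal):\mu(\varphi(x,q,b))>0\}$ has positive $\nu|_\Ncal$-measure. Since $\nu$ is f.s. over $A$, every type in the support $S(\nu|_\Ncal)$ is finitely satisfiable in $A$. The support has full measure, so $E$ meets it; pick $q$ in the intersection and $d\models q$.
\begin{itemize}
\item Because $\mu(\varphi(x,d,b))>0$ and $\mu$ is f.s. over $A$, there is $a\in A$ with $\models\varphi(a,d,b)$.
\item Then $\varphi(a,y,b)\in q$, and $q$ is finitely satisfiable in $A$, so some $d'\in A$ satisfies it.
\end{itemize}
This gives $\models\varphi(a,d',b)$ with $a,d'\in A$, as required.

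\emph{Definability.} This is the step I expect to be the main technical obstacle, since one needs continuity in the parameter $b$ and not just in $q$. I would proceed as follows.
\begin{enumerate}
\item Definability of $\mu$ over $A$ makes $(q,r)\mapsto\mu(\varphi(x,q,r))$ a continuous function on $S_{yz}(A)$.
\item For fixed $\varepsilon$, compactness lets me approximate this function uniformly within $\varepsilon$ by a finite step function $\sum_i c_i\mathbf 1_{\psi_i(y,z)}$ with $\psi_i\in\Lcal(A)$.
\item Integrating against $\nu$, the value $(\mu\otimes\nu)(\varphi(x,y,b))$ is then within $\varepsilon$ of $\sum_i c_i\,\nu(\psi_i(y,b))$.
\item Each map $b\mapsto\nu(\psi_i(y,b))$ is continuous on $S_z(A)$ by definability of $\nu$. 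A uniform limit of continuous functions is continuous, so $\mu\otimes\nu$ is definable over $A$.
\end{enumerate}
Generic stability then follows by combining the f.s. and definable cases, using the NIP definition of generic stability given above.

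\emph{Associativity.} I would unfold both sides on a formula $\varphi(x,y,w,b)$. After choosing $\Ncal$ large enough to contain the realizations involved, both sides become iterated integrals of the form
\[\int_{S_w(\Ncal)}\int_{S_y(\Ncal')}\mu(\varphi(x,q,r,b))\,d\nu\,d\lambda .\]
The key point is that the inner function $r\mapsto(\mu\otimes\nu)(\varphi(x,y,r,b))$ is itself Borel. This holds because $\mu\otimes\nu$ is invariant by the clause above, and hence Borel definable by Fact \ref{inv iff borel def}. Once measurability is in place, the two iterated integrals coincide. The only real care needed is in choosing the intermediate models so that both expressions are computed over a common base.
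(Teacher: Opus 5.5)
Your arguments for non-commutativity, invariance, finite satisfiability, definability and generic stability are correct. The paper does not prove this Fact; it only cites \cite[Section 7.4]{simon2015guide}.

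The genuine gap is in associativity. That clause, not definability, is the delicate one. The right-hand side $\mu\otimes(\nu\otimes\lambda)$ is by definition a \emph{single} integral $\int_{S_{yw}(\Ncal)}F\,d(\nu\otimes\lambda)|_{\Ncal}$, where $F(s):=\mu(\varphi(x,d,e,b))$ for $(d,e)\models s$. Rewriting it as your iterated integral is a Fubini-type statement. Borel measurability of $r\mapsto(\mu\otimes\nu)(\varphi(x,y,r,b))$ does not give you that statement; it only makes the left-hand side meaningful.

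What the definition of $\nu\otimes\lambda$ does give is the identity between $\int F\,d(\nu\otimes\lambda)$ and the iterated integral when $F$ is the indicator of a clopen $[\chi(y,w)]$. By uniform approximation it then holds for continuous $F$, so your argument does prove associativity when $\mu$ is definable. A monotone class argument extends it further, but only to functions measurable for the $\sigma$-algebra generated by clopens. The space $S_{yw}(\Ncal)$ is not metrizable, and NIP (Fact \ref{inv iff borel def}) only makes $F$ Borel. To reach closed sets you would have to interchange the integral against $\lambda$ with an infimum over uncountably many clopen neighbourhoods. That interchange is the unjustified step, and it is where NIP has to enter beyond Borel definability.

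One way to close the gap is to reduce to a smooth $\lambda$.
\begin{enumerate}
\item Let $\Ncal\supseteq\M b$, with $\mu,\nu$ $\M$-invariant. Both sides evaluated at $\varphi(x,y,w,b)$ depend only on $\lambda|_{\Ncal}$.
\item By Keisler's smooth extension theorem (Lemma \ref{existence of smooth extensions of products} with $n=1$, which uses NIP), replace $\lambda$ by a global $\lambda'$ smooth over some small $\Ncal'\supseteq\Ncal$ with $\lambda'|_{\Ncal}=\lambda|_{\Ncal}$.
\item Smoothness, Fact \ref{existence of measure extensions} and compactness of $S_{xy}(\Ncal')$ give, for each $\varepsilon>0$, a partition $\theta_0(x,y),\ldots,\theta_{k-1}(x,y)$ and formulas $\psi_i^-(w)\to\psi_i^+(w)$, all over $\Ncal'$. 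They satisfy $\theta_i\wedge\psi_i^-\to\varphi(x,y,w,b)$, $\theta_i\wedge\varphi(x,y,w,b)\to\psi_i^+$ and $\lambda'(\psi_i^+\wedge\neg\psi_i^-)<\varepsilon$.
\item On a rectangle $\theta(x,y)\wedge\psi(w)$ over $\Ncal'$, both sides equal $(\mu\otimes\nu)(\theta)\cdot\lambda'(\psi)$. For the right-hand side, restrict $\widetilde{(\nu\otimes\lambda')|_{\Ncal'}}$ to $[\psi]$ and push it forward to $S_y(\Ncal')$. The result is a regular measure agreeing with $\lambda'(\psi)\cdot\widetilde{\nu|_{\Ncal'}}$ on clopens, hence equal to it. So integrating the Borel function $q\mapsto\mu(\theta(x,q))$ against it is unproblematic.
\item Sandwiching $\varphi$ between $\bigvee_i(\theta_i\wedge\psi_i^-)$ and $\bigvee_i(\theta_i\wedge\psi_i^+)$ shows that the two sides differ by less than $\varepsilon$.
\end{enumerate}
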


\begin{fact}\label{fact: gen stable measures}(T NIP) 
\begin{enumerate}
	\item If $\mu \in \mathfrak{M}_x(\cU)$ is generically stable and $\nu \in  \mathfrak{M}_x(\cU)$ is any invariant measure, then $\mu \otimes \nu = \nu \otimes \mu$ \cite[Lemma 3.1]{hrushovski2013generically}.
	\item If $\mu \in \mathfrak{M}_x(\cU)$ is generically stable over $\M$, $\Ncal \succ \M$ is $|\M|^{+}$-saturated and $\nu \in \mathfrak{M}_x(\Ncal)$ is $\Aut(\Ncal/\M)$-invariant, then $\mu|_{\Ncal} = \nu$  \cite[Proposition 3.3]{hrushovski2013generically}.
\end{enumerate}
\end{fact}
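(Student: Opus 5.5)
Both parts are results from \cite{hrushovski2013generically}, and my plan follows the route through the \emph{finite-satisfiability/definability approximation} of generically stable measures in NIP. The basic tool I intend to use, standard for NIP, is that a generically stable $\mu$ over $\M$ is \emph{fim}. That is, for every $\varphi(x,y)$ and $\varepsilon>0$ there are $n$ and an $\M$-definable set $\theta_n(x_1,\dots,x_n)$ of $\mu^{(n)}$-measure close to $1$ such that for every $\bar a\models\theta_n$,
\[
\sup_c\Big|\mu(\varphi(x,c))-\tfrac1n\textstyle\sum_i \varphi(a_i,c)\Big|<\varepsilon .
\]
This comes from the VC theorem applied to the Morley sequences $\mu^{(n)}$, and from definability plus finite satisfiability to make the good set $\M$-definable. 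In particular, $\theta_n$ can be realised by tuples $\bar a$ from $\M$.

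For (1), fix $\varphi(x,y)$ over some small $\Ncal\supseteq\M$ (parameters absorbed) and $\varepsilon$, and pick $\bar a\in\M^n$ as above.
\begin{itemize}
\item[(a)] By Definition \ref{def: tens prod of meas},
\[
(\mu\otimes\nu)(\varphi)=\int \mu(\varphi(x,q))\,d\nu(q).
\]
The integrand is within $\varepsilon$ of $\tfrac1n\sum_i\varphi(a_i,q)$ uniformly in $q$. Hence
\[
(\mu\otimes\nu)(\varphi)\approx_\varepsilon \tfrac1n\sum_i\nu(\varphi(a_i,y)).
\]
\item[(b)] For the other side,
\[
(\nu\otimes\mu)(\varphi)=\int \nu(\varphi(p,y))\,d\mu(p).
\]
Here the integrand $f(p)=\nu(\varphi(p,y))$ is only Borel on $S_x(\Ncal)$, by Fact \ref{inv iff borel def}. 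So I would integrate against $\mu^{(n)}$ rather than approximate pointwise. By symmetry of $\mu^{(n)}$ (generically stable measures commute with themselves),
\[
\int f\,d\mu=\int \tfrac1n\sum_i f(p_i)\,d\mu^{(n)}(\bar p).
\]
Restricting to $\theta_n$ costs at most $\varepsilon$. On $\theta_n$, Fubini-type manipulation with $\nu$ (valid since everything is Borel and $\M$-invariant) gives
\[
\tfrac1n\sum_i f(p_i)=\nu\big(\tfrac1n\sum_i\varphi(p_i,y)\big)\approx_\varepsilon\int\mu(\varphi(x,q))\,d\nu(q).
\]
\item[(c)] Letting $\varepsilon\to0$ gives $\mu\otimes\nu=\nu\otimes\mu$.
\end{itemize}
Step (b) is where care is needed. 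The averages over $\bar p$ with $\bar p\models\theta_n$ must be identified with $\nu$-integrals of the definable family $\tfrac1n\sum_i\varphi(x_i,y)$, which is a formula in $\bar x y$. So the exchange reduces to linearity of $\nu$ and the definability of $\theta_n$.

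For (2), let $\nu\in\Mfrak_x(\Ncal)$ be $\Aut(\Ncal/\M)$-invariant, and suppose towards a contradiction that $\nu(\varphi(x,b))>\mu(\varphi(x,b))+\varepsilon$ for some $b\in\Ncal$.
\begin{itemize}
\item[(a)] By $|\M|^+$-saturation, build inside $\Ncal$ an $\M$-indiscernible sequence $(b_j)_{j<\omega}$ of realisations of $\tp(b/\M)$. Take it to be a Morley sequence of some global $\M$-invariant type extending $\tp(b/\M)$, e.g.\ a coheir; this exists since $\M$ is a model.
\item[(b)] By invariance, $\nu(\varphi(x,b_j))$ is constant, equal to $\nu(\varphi(x,b))$. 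Also $\mu(\varphi(x,b_j))=\mu(\varphi(x,b))$ by $\M$-invariance of $\mu$.
\item[(c)] By NIP, each $a\in\cU^x$ satisfies $\varphi(a,b_j)$ for all but at most $N$ alternation points. Consequently the sets $X_J=\bigwedge_{j\in J}\varphi(x,b_j)$ for $|J|$ large, $J$ consisting of widely spaced indices, have $\nu$-measure tending to $\nu(\varphi(x,b))$.
\item[(d)] The aim is then to obtain the bound $\nu(X_J)\le\mu(\varphi(x,b))+\varepsilon/2$. I would try to get it from fim. Take $\bar a\models\theta_n$ in $\M$ approximating $\mu$ on $\varphi$ uniformly. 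Then the proportion of $a_i$ in $X_J$, and the proportion of $a_i$ in $\varphi(x,b_j)$, is pinned to within $\varepsilon$ of $\mu(\varphi(x,b))$ for every $j$. Compactness and the uniform bound on alternation for $\varphi(x,b_j)$ in $j$ should then yield a contradiction.
\end{itemize}

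The main obstacle is precisely step (d): transferring the counting bound from the $a_i$ to $\nu$, which is not known to be finitely satisfiable. If this fails, my fallback is to deduce (2) from (1) as follows. By Fact \ref{invariant and non-forking measures}, $\nu$ does not fork over $\M$, so it extends to a global $\M$-invariant $\tilde\nu$. Apply commutation to $\mu$ and $\tilde\nu$ for the formula $\varphi(x,y)\wedge y=y'$ paired with the realised parameter. More concretely, compare $\mu\otimes\tilde\nu$ and $\tilde\nu\otimes\mu$ on $\varphi(x_1,b)\wedge\neg\varphi(x_2,b)$, and use that $\mu(\varphi(x,b))$ is computed by $\mu$-generic averages.
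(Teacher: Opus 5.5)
The paper gives no proof here; it only cites \cite{hrushovski2013generically}. Your argument for (1) works, and step (b) alone already finishes it. For $\bar p\in[\theta_n]$ realized by $\bar a'$, the quantity $\frac1n\sum_i\nu(\varphi(a'_i,y))$ is the $\nu$-integral of the simple function $q\mapsto\Av(\bar a')(\varphi(x,q))$. That function is pointwise within $\varepsilon$ of $q\mapsto\mu(\varphi(x,q))$, so step (a) and any Fubini-type manipulation are unnecessary. One caveat: fim is usually derived from total indiscernibility of $\mu^{(n)}|_{\M}$, i.e.\ from $\mu$ commuting with itself. Make sure your source for fim obtains this without appealing to (1) itself.

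Part (2) has a genuine gap, and it starts with the statement. As written it omits the hypothesis $\nu|_{\M}=\mu|_{\M}$. That hypothesis is part of \cite[Proposition 3.3]{hrushovski2013generically}, and it is exactly what the paper has available when it applies this Fact in Subclaim 1 of Proposition \ref{2-determinacy with a type}. Without it the claim is false: every Dirac measure at a point of $\M$ is $\Aut(\Ncal/\M)$-invariant, and these cannot all equal $\mu|_{\Ncal}$.

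Your argument never relates $\nu$ to $\mu$, so step (d) cannot be completed: the fim tuple $\bar a$ controls $\mu$ and says nothing about $\nu$. Step (c) also misstates NIP. Bounded alternation makes $\varphi(a,b_j)$ eventually constant in either direction, not true for all but $N$ indices. What is true, by the argument of Lemma \ref{Artem lemma}, is $\nu(\varphi(x,b_i)\triangle\varphi(x,b_j))=0$, so $\nu(X_J)=\nu(\varphi(x,b))$ exactly, which gives no upper bound in terms of $\mu$. Your fallback is vacuous: on $\varphi(x_1,b)\wedge\neg\varphi(x_2,b)$, both $\mu\otimes\tilde\nu$ and $\tilde\nu\otimes\mu$ equal $\mu(\varphi(x,b))\,\tilde\nu(\neg\varphi(x,b))$ for any $\tilde\nu$.

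Your reduction to a global $\M$-invariant $\tilde\nu$ extending $\nu$ is correct. The missing idea is to combine (1) with $\tilde\nu|_{\M}=\mu|_{\M}$ to show $\tilde\nu^{(k)}|_{\M}=\mu^{(k)}|_{\M}$ for all $k$, and then apply fim to $\tilde\nu^{(k)}$.

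For the induction, take $\psi\in\Lcal(\M)$. Then $(\tilde\nu\otimes\tilde\nu^{(k)})(\psi)$ is an integral over types over $\M$ against $\tilde\nu^{(k)}|_{\M}=\mu^{(k)}|_{\M}$, so it equals $(\tilde\nu\otimes\mu^{(k)})(\psi)$. Since $\mu^{(k)}$ is generically stable, (1) turns this into $(\mu^{(k)}\otimes\tilde\nu)(\psi)$. That is an integral against $\tilde\nu|_{\M}=\mu|_{\M}$, i.e.\ $\mu^{(k+1)}(\psi)$.

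Now let $\theta_k\in\Lcal(\M)$ be given by fim for $\varphi(x;yz)$. Then $\tilde\nu^{(k)}(\theta_k)=\mu^{(k)}(\theta_k)>1-\varepsilon$. Every marginal of $\tilde\nu^{(k)}$ is $\tilde\nu$, so $\tilde\nu(\varphi(x,b))$ is the $\tilde\nu^{(k)}$-integral of $\frac1k\sum_i\1_{\varphi(y_i,b)}$. On $[\theta_k]$ this integrand is within $\varepsilon$ of $\mu(\varphi(x,b))$. Hence $|\tilde\nu(\varphi(x,b))-\mu(\varphi(x,b))|<2\varepsilon$ for every $\varepsilon>0$, so $\tilde\nu=\mu$ and $\nu=\mu|_{\Ncal}$.
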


\begin{remark}\label{rem: pushforw under aut}
	Given a measure $\mu \in \mathfrak{M}_x(A)$ and $\sigma \in \Aut(\cU)$, letting $A' := \sigma(A)$ we have a measure $\sigma(\mu) \in  \mathfrak{M}_x(A')$ defined via  $\sigma(\mu)(\varphi(x,c)) := \mu(\varphi(x,\sigma^{-1}(c))$ for all $\varphi \in \mathcal{L}$ and tuples $c$ from $A'$. This corresponds to the pushforward of the associated Borel probability measures, as follows. Let $\sigma': S_{x}(A) \to S_{x}(A')$ be the map defined via $\sigma'(\tp(b/A)) := \tp(\sigma(b)/A')$. Then $\sigma'$ is continuous: for any $\psi(x,c) \in \mathcal{L}(A')$ we have
$$(\sigma')^{-1}([\psi(x,c)]) = [\psi(x,\sigma^{-1}(c))]$$
for the corresponding clopen sets. Then, for the corresponding extensions to regular Borel measures on $S_{x}(A)$ and $S_{x}(A')$ (see Section \ref{sec: meas on Bool alg}), we have $\widetilde{\sigma(\mu)} = \sigma'_{\ast} (\widetilde{ \mu})$ (the push-forward measure under the continuous $\sigma'$). Indeed, we have the equality on the clopens by definition, so we get equality on the  full Borel $\sigma$-algebra by uniqueness.
\end{remark}

\subsection{Forking in NIP theories}\label{sec: forking}

We will use some properties of forking independence $\ind$ in arbitrary theories, as well as in NTP$_2$, resilient, NIP and stable theories (where $a \ind_C b$ denotes $\tp(a/bC)$ does not fork over $C$, a non-symmetric condition in general).
\begin{fact}\label{fac: forking in general T}
	Let $T$ be arbitrary (see e.g.~\cite[Lemma 3.12]{chernikov2012forking}).
	\begin{enumerate}
		\item Assume that $a \ind_{B} b$. Then $\varphi(x,b)$ divides/forks over $B$ if and only if it divides/forks over $aB$.
		\item $a \ind_C b \Leftrightarrow a \ind_{\acl(C)} b$.
		\item (Left transitivity of forking) If $a \ind_{Dc} b$ and $c \ind_{D} b$, then $a c \ind_{D} b$.
	\end{enumerate}
\end{fact}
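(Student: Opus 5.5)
The three items are standard, and I would prove them in the order (2), (1), (3), since (3) uses (1). Throughout, ``$a \ind_C b$'' means $\tp(a/bC)$ does not fork over $C$. I will use two standard tools:
\begin{itemize}
\item[(i)] Nonforking types have nonforking extensions to larger parameter sets.
\item[(ii)] $\tp(a/Cb)$ does not divide over $C$ if and only if, for every $C$-indiscernible sequence $(b_i)$ with $b_0 = b$, there is $a' \equiv_{Cb} a$ such that $(b_i)$ is $Ca'$-indiscernible.
\end{itemize}

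For (2), it suffices to show that a formula divides over $C$ iff it divides over $\acl(C)$; forking then transfers, since it is defined from dividing. If $\varphi(x,b)$ divides over $\acl(C)$, the witnessing sequence is in particular $C$-indiscernible, so it divides over $C$. Conversely, take a $C$-indiscernible $k$-inconsistent sequence $(b_i)$ with $b_0 = b$. By Ramsey and compactness, extract a sequence that is indiscernible over $\acl(C)$ and realizes the EM-type of $(b_i)$ over $C$; it is still $k$-inconsistent. Its first element $b'$ satisfies $b' \equiv_C b$. Because $\acl(C)$ is fixed setwise by $\Aut(\cU/C)$, and using the finiteness of the relevant algebraic orbits (choosing the extraction to fix $\tp(b/\acl(C))$ by compactness), we may arrange $b' \equiv_{\acl(C)} b$. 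Moving by an automorphism over $\acl(C)$ gives the required witness starting at $b$.

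For (1), dividing over $aB$ implies dividing over $B$ trivially, since an $aB$-indiscernible sequence is $B$-indiscernible. For the converse with dividing:
\begin{itemize}
\item[(a)] Let $(b_i)$ be $B$-indiscernible and $k$-inconsistent with $b_0 = b$.
\item[(b)] Since $\tp(a/Bb)$ does not divide over $B$, tool (ii) gives $a' \equiv_{Bb} a$ with $(b_i)$ indiscernible over $Ba'$.
\item[(c)] Applying an automorphism over $Bb$ sending $a'$ to $a$ gives an $aB$-indiscernible $k$-inconsistent sequence starting at $b$.
\end{itemize}
For forking, suppose $\varphi(x,b) \vdash \bigvee_j \psi_j(x,c_j)$ with each $\psi_j(x,c_j)$ dividing over $B$.
\begin{itemize}
\item[(a)] By tool (i), choose $a' \equiv_{Bb} a$ with $a' \ind_B b\bar c$.
\item[(b)] The dividing case, applied to $a'$ and each $c_j$, shows every $\psi_j$ divides over $a'B$. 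Hence $\varphi(x,b)$ forks over $a'B$.
\item[(c)] Transfer back to $aB$ by an automorphism over $Bb$.
\end{itemize}

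For (3), suppose toward a contradiction that $\tp(ac/Db)$ contains a formula $\theta(x,y,b)$ that forks over $D$. Write $\theta(x,y,b) \vdash \bigvee_j \psi_j(x,y,e_j)$, with each $\psi_j$ dividing over $D$, and let $e = (e_j)_j$.
\begin{itemize}
\item[(a)] Using tool (i) on $c \ind_D b$, find $c^* \equiv_{Db} c$ with $c^* \ind_D be$.
\item[(b)] Transport $a$ along to get $a^*$ with $a^*c^* \equiv_{Db} ac$. Using tool (i) on $a \ind_{Dc} b$, we may also take $a^* \ind_{Dc^*} be$.
\item[(c)] Some $j$ satisfies $\models \psi_j(a^*,c^*,e_j)$. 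So $\psi_j(x,c^*,e_j) \in \tp(a^*/Dc^*be)$, and this formula does not divide over $Dc^*$.
\item[(d)] On the other hand, $\psi_j(x,y,e_j)$ divides over $D$ and $c^* \ind_D e_j$. By (1), $\psi_j(x,y,e_j)$ divides over $Dc^*$, witnessed by a $Dc^*$-indiscernible $k$-inconsistent sequence $(e^i)$ starting at $e_j$.
\item[(e)] Then $(\psi_j(x,c^*,e^i))_i$ is $k$-inconsistent. Hence $\psi_j(x,c^*,e_j)$ divides over $Dc^*$, contradicting (c).
\end{itemize}

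The main obstacle is the bookkeeping in (3): choosing the extensions $c^*$ and then $a^*$ in the right order, so that both independences hold simultaneously over the enlarged parameter set $be$, and so that (1) applies with $c^*$ playing the role of the extra parameter.
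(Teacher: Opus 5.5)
The paper gives no proof of this Fact, only a citation to Chernikov--Kaplan. Your arguments for (1) and (3) are correct and are the standard ones; in particular the order of choosing $c^*$ and then $a^*$ in (3) works as you describe.

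The dividing-level part of (2) is also fine, though more roundabout than it needs to be. You do not need finiteness of algebraic orbits or $b'\equiv_{\acl(C)}b$. If $\sigma\in\Aut(\cU/C)$ sends $b'$ to $b$, then $\sigma$ fixes $\acl(C)$ setwise, so it carries the extracted $\acl(C)$-indiscernible sequence to one starting at $b$. (Alternatively, an infinite $C$-indiscernible sequence is already $\acl(C)$-indiscernible.)

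There is a genuine, though small, gap in (2): the claim that the formula-level equivalence ``$\psi$ forks over $C$ iff $\psi$ forks over $\acl(C)$'' suffices.
\begin{itemize}
\item That equivalence only tells you that $a\ind_{\acl(C)} b$ holds iff $\tp(a/b\acl(C))$ does not fork over $C$.
\item This handles $a\ind_{\acl(C)}b\Rightarrow a\ind_C b$, by passing to the subtype $\tp(a/bC)$.
\item For $a\ind_C b\Rightarrow a\ind_{\acl(C)}b$, you must show that the larger type $\tp(a/b\acl(C))$ does not fork over $C$. It contains formulas with parameters from $\acl(C)\setminus C$, and nothing in your reduction controls them.
\end{itemize}

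The repair uses only your own tools. By (i), choose $a'\equiv_{bC}a$ with $\tp(a'/b\acl(C))$ non-forking over $C$, and let $\sigma\in\Aut(\cU/bC)$ send $a'$ to $a$. Since $\sigma$ fixes $C$ pointwise and $\acl(C)$ setwise, $\tp(a/b\acl(C))=\sigma(\tp(a'/b\acl(C)))$. Hence $\tp(a/b\acl(C))$ does not fork over $C$, and so, by your formula-level equivalence, not over $\acl(C)$.
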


\begin{defn}\cite[Definition 4.8]{yaacov2014independence}
	A theory $T$ is \emph{resilient} if for any indiscernible sequence $\left( a_i : i \in \mathbb{Z}\right)$ and formula $\varphi(x,y) \in \mathcal{L}(\emptyset)$, if $\varphi(x,a_0)$ divides over $a_{\neq 0}$, then $\{ \varphi(x,a_i) : i \in I\}$ is inconsistent.
\end{defn}

\begin{remark}
	It is immediate from the definition that if $T$ is resilient, then $T_{A}$ is also resilient.
\end{remark}

\begin{fact}\cite[Proposition 4.11]{yaacov2008model}\label{fac: simple NIP resilient NTP2}
	\begin{enumerate}
		\item If $T$ is either simple or NIP, then $T$ is resilient.
		\item If $T$ is resilient, then $T$ is NTP$_2$.
	\end{enumerate}
\end{fact}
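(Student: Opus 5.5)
The statement has two parts, and I would prove them separately. Part (2) is a direct array argument. Part (1) reduces to a Kim's lemma for finitely satisfiable types.

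\emph{Part (2): resilient implies NTP$_2$.} Suppose $\varphi(x,y)$ has TP$_2$. By compactness and Ramsey, together with the standard extraction of mutually indiscernible arrays, I would obtain an array $(a_{i,j} : i \in \mathbb{Z}, j<\omega)$ with the following properties:
\begin{enumerate}
\item each row $\{\varphi(x,a_{i,j}) : j<\omega\}$ is $k$-inconsistent;
\item every path $\{\varphi(x,a_{i,f(i)}) : i \in \mathbb{Z}\}$ is consistent;
\item the rows are mutually indiscernible;
\item the sequence of rows $(e_i)_{i\in\mathbb{Z}}$, with $e_i := (a_{i,j})_{j<\omega}$, is itself indiscernible.
\end{enumerate}
Set $\psi(x,e_i) := \varphi(x,a_{i,0})$, a formula in a fixed finite subtuple of $e_i$. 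By mutual indiscernibility, $(a_{0,j})_{j}$ is indiscernible over $e_{\neq 0}$, and by (1) it is $k$-inconsistent. Hence $\psi(x,e_0)$ divides over $e_{\neq 0}$. Resilience then says that $\{\psi(x,e_i) : i\in\mathbb{Z}\}$ is inconsistent. But this set is the path $\{\varphi(x,a_{i,0}) : i\in\mathbb{Z}\}$, which is consistent by (2). This contradiction proves (2). Passing to a $\mathbb{Z}$-indexed sequence of rows is harmless by compactness.

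\emph{Part (1): simple or NIP implies resilient.} Let $(a_i)_{i\in\mathbb{Z}}$ be indiscernible, and suppose that $\varphi(x,a_0)$ divides over $A := a_{\neq 0}$ while $\{\varphi(x,a_i) : i\in\mathbb{Z}\}$ is consistent. The argument has three steps.

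First, $\tp(a_0/A)$ is finitely satisfiable in $A$. Indeed, any $\theta(y)\in\tp(a_0/A)$ uses finitely many parameters $a_{i}$ with $i\neq 0$, and by indiscernibility it is satisfied by $a_{-m}$ for $m$ large enough, that is, by elements just left of all its negative parameters.

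Second, I would take the global coheir $q$ which is the limit type of $(a_{-m})_{m\to\infty}$ along an ultrafilter. Let $(b_j)_{j<\omega}$ be a Morley sequence of $q$ over $A$, taken in reverse order. The intended property is that $(b_j)$ realizes the type of an indiscernible sequence inserted into the cut at position $0$. This means that $a_{<0}\,(b_j)_j\,a_{>0}$ has the same EM-type as $(a_i)$, which one verifies formula by formula from the definition of $q$. Consequently $\{\varphi(x,b_j) : j<\omega\}$ is consistent, by consistency of the original sequence and indiscernibility.

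Third, I would apply Kim's lemma for this Morley sequence. Since $\varphi(x,a_0)$ divides over $A$ and $q$ is an $A$-invariant (indeed finitely satisfiable) extension of $\tp(a_0/A)$, the sequence $\{\varphi(x,b_j)\}$ must be inconsistent. In the simple case this holds because coheir sequences are Morley sequences in the forking sense, and Kim's lemma applies. In the NIP case I would use that NIP implies NTP$_2$, and that dividing in NTP$_2$ theories is witnessed by every strictly invariant Morley sequence (Chernikov--Kaplan); coheirs over $A$ satisfy the required strictness, since for finitely satisfiable types this can be arranged over a model containing $A$. This contradicts the second step.

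The main obstacle I expect is the bookkeeping in the second and third steps. One must arrange that the coheir Morley sequence is simultaneously a witness of dividing over exactly $A$, which may not be a model, and compatible with the original sequence, so that consistency transfers. If Kim's lemma over the non-model base $A$ causes trouble, I would first enlarge $A$ to a model $M$ with $(a_i)$ indiscernible over $M$ and $a_0\ind_A M$. By Fact \ref{fac: forking in general T}(1), dividing over $A$ is preserved when passing to such an $M$.
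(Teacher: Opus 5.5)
The paper does not prove this Fact; it is quoted from Ben Yaacov--Chernikov. So I am judging your argument on its own. Your part (2) is correct and is the standard argument. In a strongly indiscernible TP$_2$ array, row $0$ shows that $\varphi(x,a_{0,0})$ divides over the rest of the first column. Resilience applied to the first column then contradicts the consistency of that path.

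\textbf{Gap in part (1).} The first two steps fail, and they fail because $I$ is indexed by $\mathbb{Z}$.

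\emph{Step 1 is false.} No index other than $0$ lies strictly between $-1$ and $1$, so $\tp(a_0/a_{\neq 0})$ is in general not finitely satisfiable in $a_{\neq 0}$. For example, in DLO with $a_i=i$, the formula $a_{-1}<y<a_1$ belongs to $\tp(a_0/A)$ but has no solution in $A$. Substituting $a_{-m}$ for $a_0$ does not preserve the order type of the tuple: it moves $y$ to the left of all the negative parameters.

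\emph{Step 2 uses the wrong type.} Your ultrafilter limit $q$ of $(a_{-m})_m$ is the type of the left end of $I$. Indeed $q|_A$ contains $y<a_{-1}$, so $q$ does not extend $\tp(a_0/A)$. Its Morley sequence sits at the left end of $I$, not in the cut at $0$, so the hypothesis that $\varphi(x,a_0)$ divides over $A$ says nothing about it.

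\emph{The real difficulty is untouched.} A sequence inserted at the cut at $0$ does exist by compactness, and it does make the $\varphi$-instances consistent. But nothing in your argument makes it a Morley sequence of a type to which a Kim's lemma applies, and that link is exactly the content of resilience. Your argument would work for simple $T$ if $I$ were densely indexed: then $\tp(a_0/a_{\neq 0})$ is finitely satisfiable in $a_{<0}$, and the coheir $\lim_{t\to 0^-}$ has an inserted Morley sequence. The $\mathbb{Z}$-formulation is the stronger one, though: it implies the dense one, while dividing over $a_{\mathbb{Z}\setminus\{0\}}$ does not obviously give dividing over the larger base of a dense extension.

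\emph{The fallback fails.} No model $M\supseteq A$ can have $I$ indiscernible over it, since $a_{-1},a_1\in M$.

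\emph{The NIP case has a further problem.} The NTP$_2$ Kim's lemma concerns \emph{strictly} invariant types. Their existence over an arbitrary set $A$ is not available, and you give no reason why a coheir over the non-model $A$ should be strict. Note also that, apart from NTP$_2$, your NIP argument uses nothing about NIP. The paper records right after this Fact that it is open whether NTP$_2$ theories are resilient, which signals that an NIP-specific ingredient is missing.

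\textbf{A repair for the simple case.} This avoids Morley sequences altogether. Using the shift automorphisms of $I$, each $\varphi(x,a_n)$ $k$-divides over $a_{\neq n}$, hence over $a_{[0,n)}\cup a_{>n}$. Build $(c_\eta)_{\eta\in\omega^{<\omega}}$ together with ``futures'' $F_\eta$ such that $(c_{\eta|1},\dots,c_{\eta|n})^\frown F_\eta\equiv(a_0,\dots,a_{n-1})^\frown(a_n,a_{n+1},\dots)$ whenever $|\eta|=n$. Take the children $(c_{\eta^\frown l})_l$ to be a $k$-inconsistent dividing witness for the first element of $F_\eta$, over the ancestors together with the rest of $F_\eta$. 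Then every branch has the type of $(a_0,a_1,\dots)$, so it is consistent, and $\varphi$ has the tree property. This yields only TP, which NIP theories can have (DLO has TP$_1$). So the NIP case needs a different argument that genuinely uses NIP.
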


\begin{problem}\cite[Question 4.14]{yaacov2008model}
	Are there NTP$_2$ theories that are not resilient?
\end{problem}

\begin{fact}\cite[Proposition 5.1]{kaplan2014strict}\label{fac: resilient div witness}
	Assume that $T$ is resilient, $A$ any small set and $\left(a_i : i \in I \right)$ an $A$-indiscernible sequence such that $a_{\neq i} \ind_A a_i$ for all $i \in I$. If $\varphi(x,y) \in \mathcal{L}(A)$ and $\varphi(x,a_0)$ divides over $A$, then $\left\{ \varphi(x,a_i) : i \in I \right\}$ is inconsistent.
\end{fact}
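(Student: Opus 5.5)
The statement is stated for an arbitrary index set $I$. The plan is to first reduce to $I=\mathbb{Z}$ and then combine Fact \ref{fac: forking in general T}(1) with the definition of resilience, applied in $T_A$. Throughout I assume $I$ is infinite, as is implicit in the statement.

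\emph{Step 1: reduction to $I=\mathbb{Z}$.} Suppose towards a contradiction that $\{\varphi(x,a_i): i\in I\}$ is consistent. By compactness, take an $A$-indiscernible sequence $(b_j : j\in\mathbb{Z})$ whose EM-type over $A$ equals that of $(a_i)_{i\in I}$. Two properties of the $a_i$ then transfer to the $b_j$.
\begin{itemize}
\item \emph{Consistency.} Every finite increasing tuple of the $b_j$ has the same type over $A$ as some finite increasing tuple of the $a_i$. Hence every finite subset of $\{\varphi(x,b_j): j \in \mathbb{Z}\}$ is consistent, and so the whole set is consistent.
\item \emph{Independence, $b_{\neq 0}\ind_A b_0$.} Forking of $\tp(b_{\neq 0}/Ab_0)$ would be witnessed by a formula mentioning only $b_0$ together with finitely many $b_j$, say $k$ of them below $0$ and $m$ above. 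Since $I$ is infinite, we can choose $i\in I$ having at least $k$ elements of $I$ below it and $m$ above it, or else reverse the orientation of $\mathbb{Z}$ if $I$ is unbounded only on one side. The corresponding tuple $(a_{i'})$ has the same type over $A$. Forking over $A$ of a type is invariant under $A$-automorphisms and is preserved by passing to a supertype, so the witness would contradict $a_{\neq i}\ind_A a_i$.
\end{itemize}
Also, $\varphi(x,b_0)$ divides over $A$, since $b_0\equiv_A a_0$. So it suffices to prove the claim for $I=\mathbb{Z}$ and $i=0$.

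\emph{Step 2: move the dividing to the larger base.} Apply Fact \ref{fac: forking in general T}(1) with $a:=b_{\neq 0}$ (a small tuple), $b:=b_0$ and $B:=A$, using $b_{\neq 0}\ind_A b_0$. It yields that $\varphi(x,b_0)$ divides over $A b_{\neq 0}$.

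\emph{Step 3: apply resilience in $T_A$.} Name the parameters of $A$ as constants. By the remark following the definition, $T_A$ is still resilient. The sequence $(b_j)_{j\in\mathbb{Z}}$ is indiscernible in $T_A$, and $\varphi(x,y)$ is a formula over $\emptyset$ in the expanded language. By Step 2, $\varphi(x,b_0)$ divides over $b_{\neq 0}$ in $T_A$. Resilience then gives that $\{\varphi(x,b_j): j\in\mathbb{Z}\}$ is inconsistent, contradicting Step 1.

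The only delicate point is the bookkeeping in Step 1: verifying that the independence hypothesis transfers to the $\mathbb{Z}$-indexed sequence. This uses that forking of a type over $A$ is witnessed by a single formula involving finitely many parameters, and that $I$ supplies enough elements on each side of a suitable index. Steps 2 and 3 are then direct applications of the cited facts.
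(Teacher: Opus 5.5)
Your proof is correct. The paper gives no proof of its own here; it simply imports the statement from Kaplan--Usvyatsov. Your argument is the standard one:
\begin{itemize}
\item pass to a $\mathbb{Z}$-indexed $A$-indiscernible sequence with the same EM-type over $A$, checking that consistency, dividing of $\varphi(x,b_0)$ and the finitary independence hypothesis all transfer;
\item use $b_{\neq 0}\ind_A b_0$ and Fact~\ref{fac: forking in general T}(1) to see that $\varphi(x,b_0)$ divides over $Ab_{\neq 0}$;
\item conclude by resilience of $T_A$.
\end{itemize}

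Two small remarks. First, the hedge about reversing the orientation of $\mathbb{Z}$ is unnecessary. Choose any $k+m+1$ elements $c_0<\dots<c_{k+m}$ of the infinite order $I$ and take $i:=c_k$; it has $k$ of them below and $m$ above. Second, Step~2 only uses that $\tp(b_{\neq 0}/Ab_0)$ does not divide over $A$, so your argument actually works with non-dividing in place of non-forking in the hypothesis.
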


\begin{defn}
\begin{enumerate}
	\item A small set $A \subseteq \mathbb{M}$ is an \emph{extension base} if every complete type $p(x)$ over $A$ admits a global extension non-forking over $A$.
	\item A theory $T$ is \emph{extensible} if every small set is an extension base.
\end{enumerate}	
\end{defn}

\begin{remark}
\begin{enumerate}
	\item $T$ is extensible if and only if every $1$-type over every small set has a global non-forking extension.
	\item Examples of extensible theories: $o$-minimal, simple, ACVF, $p$-adics, ... (\cite{hossain2022extension}).
\end{enumerate}	
\end{remark}

\begin{fact}\cite{chernikov2012forking}\label{fac: forking = div NTP2}
	If $T$ is NTP$_2$, $A$ is an extension base and $\varphi(x,a) \in \mathcal{L}(\mathbb{M})$ is a formula, then $\varphi(x,a)$ divides over $A$ if and only if it forks over $A$.
\end{fact}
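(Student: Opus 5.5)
Since this is a cited fact \cite{chernikov2012forking}, I would reconstruct the Chernikov--Kaplan argument in three steps. Dividing always implies forking, so only the converse needs work. Suppose $\varphi(x,a) \vdash \bigvee_{i<n}\psi_i(x,c_i)$ with each $\psi_i(x,c_i)$ dividing over $A$. The goal is to show that $\varphi(x,a)$ divides over $A$.

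\emph{Step 1 (strict invariance).} Call a global type $q \supseteq \tp(d/A)$ \emph{strictly invariant} over $A$ if it is $A$-invariant and, for every $B \supseteq A$ and $e \models q|_B$, we have $B \ind_A e$. A sequence $(d_j)_{j<\omega}$ with $d_j \models q|_{A d_{<j}}$ is then a \emph{strict Morley sequence}. The first claim is that over an extension base every type $\tp(d/A)$ has a global strictly invariant extension, at least in NTP$_2$, or more generally after passing to a suitable ``strictly non-forking'' variant. I would obtain it as follows:
\begin{itemize}
\item start from a global non-forking extension, which exists because $A$ is an extension base;
\item take a heir-coheir style combination over a large model $\M \supseteq A$, namely a type finitely satisfiable in $\M$ and non-forking over $A$;
\item use compactness together with left transitivity (Fact \ref{fac: forking in general T}(3)) to arrange the required symmetric non-forking condition.
\end{itemize}

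\emph{Step 2 (Kim's lemma for strict Morley sequences).} The next claim is: if $T$ is NTP$_2$, $\psi(x,c)$ divides over $A$, and $(c_j)$ is a strict Morley sequence in $\tp(c/A)$, then $\{\psi(x,c_j)\}_j$ is inconsistent. I expect this to be the main obstacle. My approach is by contradiction. Suppose the set is consistent. Since $\psi(x,c)$ divides, pick a $k$-dividing sequence $(c_j^0)_l$ starting at $c$. Strictness, i.e.\ that each new element is independent from everything before it, lets me realize copies of this dividing sequence so that they are mutually indiscernible along the strict Morley sequence. This uses Fact \ref{fac: forking in general T}(1) to transfer the dividing to larger bases. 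The result is an array whose rows are $k$-inconsistent and whose columns (paths) remain consistent. Iterating this construction produces an inp-pattern of arbitrary depth, contradicting NTP$_2$.

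\emph{Step 3 (conclusion).} Take a global strictly invariant extension of $\tp(a c_0\dots c_{n-1}/A)$ and let $(a_j \bar c_j)_{j<\omega}$ be a strict Morley sequence in it. Restrictions of strictly invariant types are again strictly invariant, so for each $i<n$ the sequence $(c_{i,j})_j$ is a strict Morley sequence in $\tp(c_i/A)$. By Step 2 there is a $k_i$ such that $\{\psi_i(x,c_{i,j})\}_j$ is $k_i$-inconsistent. Now suppose $\{\varphi(x,a_j)\}_{j<N}$ has a realization $b$, where $N > \sum_i k_i$. For each $j$ we have $b \models \psi_i(x,c_{i,j})$ for some $i$. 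By pigeonhole, some fixed $i$ occurs for at least $k_i$ values of $j$, which contradicts $k_i$-inconsistency. Hence $\{\varphi(x,a_j)\}_j$ is inconsistent. The sequence $(a_j)$ is $A$-indiscernible and starts with $a_0 \equiv_A a$, so $\varphi(x,a)$ divides over $A$.
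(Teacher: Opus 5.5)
The paper does not prove this Fact; it quotes Chernikov--Kaplan, so the comparison is with their argument.

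Your architecture is the right one: a Kim's lemma for strict Morley sequences proved via a TP$_2$ array, followed by the pigeonhole argument of Step 3. Steps 2 and 3 work \emph{if} Step 1 supplies a global $A$-invariant type with the strictness property. Strictness lets you make each new row indiscernible over the earlier rows. $A$-invariance, applied by realizing the next first-column element over everything built so far, keeps the earlier rows indiscernible over the later first-column elements. Together these make every path consistent, not just the first column.

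The genuine gap is Step 1, which is where the content of the theorem lies.

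\textbf{Step 1 is false as stated.} It fails over non-model extension bases even for stable $T$. Let $E$ be an equivalence relation with two infinite classes. Every set is an extension base, but any global extension of the unique $1$-type over $\emptyset$ contains $E(x,b)$ for some $b$. An automorphism swapping the two classes moves this formula, so there is no global $\emptyset$-invariant extension at all.

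\textbf{Your construction does not yield strictness either.} If $q$ is finitely satisfiable in $M$ and $e \models q|_B$, then $\tp(e/B)$ is finitely satisfiable in $M$. This gives $e \ind_M B$, with $e$ on the \emph{left*}, whereas strictness asks for $B \ind_A e$, which is an heir-type condition. Such a $q$ is also $M$-invariant rather than $A$-invariant. It need not even be non-forking over $A$: in DLO with $A=\emptyset$ (an extension base), the only global $1$-types not forking over $\emptyset$ are those at $\pm\infty$, and neither is finitely satisfiable in a small model. Appealing to ``compactness and left transitivity'' does not close this, because left transitivity needs $M \ind_A e$ as input, which is the same strictness you are trying to produce.

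\textbf{The fallback to a ``strictly non-forking'' variant gives up invariance}, and the later steps then lose their footing. Sequences with $d_j \models q|_{A d_{<j}}$ need not be indiscernible. More importantly, your Step 2 construction no longer keeps the earlier rows indiscernible over the later first-column elements, so Step 2 needs a genuinely different argument. The indiscernibility used at the end of Step 3 could be recovered by extraction, but that does not help Step 2.

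Producing suitable strict global extensions over an arbitrary extension base, together with a Kim's lemma for the resulting sequences, is precisely the non-trivial part of Chernikov--Kaplan. Your proposal asserts it without proof. Even over a model $M$ you would still need a correct construction of an $M$-invariant global type with the heir-like strictness property. Note also that the paper really uses the non-model case: in Lemma~\ref{lem: tot triv implies indisc triv} the base is $\mathcal{I}_{-1}\mathcal{I}_{1}$.
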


\begin{defn}
	We write $a \ind^i_{A} b$ if there is a global type $p$ extending $\tp(a/Ab)$ which is Lascar-invariant over $A$, i.e.~for every $c \equiv^{L}_{A} d$ and $\varphi(x,y) \in \mathcal{L}(A)$, $\varphi(x,c) \in p \Leftrightarrow \varphi(x,d) \in p$.
\end{defn}

We will need some standard facts about forking and invariance:
\begin{fact}\label{fac: forking in NIP}
	\begin{enumerate}
	\item Assume $T$ is stable, $p$ is a global type, and $A$ a small set. Then $p$ does not fork over $A$ if and only if $p$ is $\acl^{\eq}(A)$-invariant (see e.g.~\cite{pillay1996geometric}).
		\item If $T$ is NIP and $A$ any set, then $a \ind_A b \iff a \ind^i_A b$ (see e.g.~\cite[Section 2]{chernikov2012forking}).
		\item  If  $\mathcal{I}$ is an $A$-indiscernible sequence and $a \ind^i_A \mathcal{I}$, then $\mathcal{I}$ is indiscernible over $aA$ (e.g.~\cite[Remark 2.20]{chernikov2012forking}).
		\item $a \ind^u_A b \implies a \ind^i_A b \implies a \ind_A b$ (see e.g.~\cite[Section 2]{chernikov2012forking}; here $a \ind^u_A b$ denotes that $\tp(a/bA)$ is finitely satisfiable in $A$). 
		\item If $T$ is NIP, $p \in S_x(\cU)$ a global type and $A \subseteq \cU$ small, the $p$ does not fork over $A$ if and only if $p$ is $\Aut(\M/\bdd(A))$-invariant, where $\bdd(A)$ denotes the bounded closure of $A$ in $\cU^{\heq}$ (\cite[Proposition 2.11]{hrushovski2011nip}).
	\end{enumerate}
\end{fact}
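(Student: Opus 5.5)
The statement collects standard facts, so the plan is to reduce each item to two basic mechanisms. The first is that \emph{invariance implies non-dividing}. The second is that, under NIP (or stability), a non-forking global type is forced to be invariant over an appropriate closure of $A$. I will take the items in the order (4), (3), (2), (1), (5), since the later ones reuse the earlier arguments.

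For (4), assume $\tp(a/bA)$ is finitely satisfiable in $A$ and extend it to a global type $p$ finitely satisfiable in $A$ (a coheir), by taking an ultrafilter on $A$-tuples.
\begin{itemize}
\item To see that $p$ is Lascar-invariant over $A$, it suffices to treat $c,d$ that begin an $A$-indiscernible sequence, since Lascar equivalence is the transitive closure of this relation. If $\varphi(x,c)\wedge\neg\varphi(x,d)\in p$, finite satisfiability gives $a'\in A$ with $\models\varphi(a',c)\wedge\neg\varphi(a',d)$. This contradicts the $A$-indiscernibility of the sequence.
\item To see that a Lascar-invariant $p$ does not fork over $A$, note first that if $\varphi(x,b)\in p$ and $(b_i)$ is $A$-indiscernible with $b_0=b$, then all $b_i$ are Lascar-equivalent over $A$. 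Hence every $\varphi(x,b_i)$ lies in $p$, and the set $\{\varphi(x,b_i)\}$ is consistent, so $\varphi(x,b)$ does not divide.
\item Since $p$ is a global type, the same applies to every formula implying a disjunction in $p$, so it does not fork either.
\end{itemize}

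For (3), take a global Lascar-invariant $p$ extending $\tp(a/\mathcal{I}A)$. For increasing tuples $\bar i,\bar j$ from $\mathcal{I}$, the tuples $\mathcal{I}_{\bar i}$ and $\mathcal{I}_{\bar j}$ are Lascar-equivalent over $A$. This holds because, after extending $\mathcal{I}$ if necessary, both tuples can be placed as terms of a common $A$-indiscernible sequence of $n$-tuples. Therefore $\varphi(x,\mathcal{I}_{\bar i})\in p\iff\varphi(x,\mathcal{I}_{\bar j})\in p$, which is exactly the indiscernibility of $\mathcal{I}$ over $aA$.

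For (2), one direction of (2) is already the second bullet of (4). For the converse, extend a non-forking $\tp(a/Ab)$ to a global non-forking type $p$; this is possible because the set of forking formulas forms an ideal. I then want to show $p$ is Lascar-invariant over $A$, and again it suffices to treat $c,d$ beginning an $A$-indiscernible sequence $(c_i)$. Suppose instead that $\varphi(x,c_0)\wedge\neg\varphi(x,c_1)\in p$.
\begin{itemize}
\item Consider the sequence of pairs $(c_{2i},c_{2i+1})$. It is $A$-indiscernible, so $p$, being non-forking, cannot contain the dividing formula that would arise otherwise.
\item The aim is to combine this with NIP: a finite alternation bound on $\varphi(x,c_i)$ along the sequence, together with the non-dividing of $\varphi(x,c_0)\wedge\neg\varphi(x,c_1)$, should produce arbitrarily long alternation and hence IP. This is Adler's argument.
\end{itemize}
This step is \textbf{the main obstacle} of the whole plan. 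I would follow the argument in \cite[Section 2]{chernikov2012forking}, which shows that in NIP a non-dividing type cannot split along an $A$-indiscernible pair.

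For (1), in the stable case a global type is definable, and non-forking over $A$ means its $\varphi$-definitions are almost over $A$. Such definitions have boundedly many conjugates, so they are fixed by $\Aut(\cU/\acl^{\eq}(A))$. For the converse, a type invariant over $\acl^{\eq}(A)$ does not fork over $\acl^{\eq}(A)$ by the argument of (4), and non-forking over $\acl^{\eq}(A)$ is equivalent to non-forking over $A$ by Fact \ref{fac: forking in general T}(2).

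For (5), I would combine (2) with the fact that in NIP a global $A$-non-forking type has only boundedly many $\Aut(\cU/A)$-conjugates. Boundedness comes from the invariance in (2), since the type is then determined by its restriction together with boundedly many Lascar classes. A bounded orbit means the stabilizer contains $\Aut(\cU/\bdd(A))$. The converse is the argument of (4), applied after replacing $A$ by $\bdd(A)$.
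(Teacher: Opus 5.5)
The genuine gap is in (5), at the step ``a bounded orbit means the stabilizer contains $\Aut(\cU/\bdd(A))$''.

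Boundedness of the $\Aut(\cU/A)$-orbit of a global type only gives invariance under the Lascar strong automorphisms $\operatorname{Autf}_L(\cU/A)$ (by an Erd\H{o}s--Rado argument along a long $A$-indiscernible sequence). You already had that from (2). But $\Aut(\cU/\bdd(A))$ is the group of Kim--Pillay strong automorphisms. It is strictly larger than $\operatorname{Autf}_L(\cU/A)$ exactly when Lascar and Kim--Pillay strong types over $A$ differ, and this non-G-compactness does occur, even for NIP $T$.

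The principle you invoke holds for hyperimaginaries (that is essentially the definition of $\bdd(A)$), but not for arbitrary objects on which $\Aut(\cU/A)$ acts. For instance, the Lascar strong type over $A$ of an enumeration of a model containing $A$ has bounded orbit, and its stabilizer is exactly $\operatorname{Autf}_L(\cU/A)$. A global type is not a hyperimaginary, so nothing in your sketch rules out $p$ separating two Lascar classes inside one Kim--Pillay class.

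What is actually needed is the following: for each $\varphi(x,y)$, the bounded $A$-invariant equivalence relation ``$\varphi(x,b)\in\sigma(p)\leftrightarrow\varphi(x,b')\in\sigma(p)$ for all $\sigma\in\Aut(\cU/A)$'' must be coarser than a bounded equivalence relation type-definable over $A$. A sufficient condition is that the stabilizer of $p$ be closed in the logic topology on the Lascar group over $A$. Establishing this is the nontrivial content of the cited Hrushovski--Pillay result, and it needs an argument beyond Lascar invariance. The converse direction of (5) is fine, and more direct than you make it: $\operatorname{Autf}_L(\cU/A)\le\Aut(\cU/\bdd(A))$, so a $\bdd(A)$-invariant type is Lascar-invariant over $A$ and (4) applies.

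The other items are fine at the level of the citations the paper gives (it proves none of them), with two small corrections. In (3), two increasing $n$-tuples $\bar i,\bar j$ that interleave cannot in general be placed as terms of a single $A$-indiscernible sequence of $n$-tuples. Instead, choose a block $\bar k$ lying above both, so that each of $(\mathcal{I}_{\bar i},\mathcal{I}_{\bar k})$ and $(\mathcal{I}_{\bar j},\mathcal{I}_{\bar k})$ begins an $A$-indiscernible sequence of blocks, and use transitivity of Lascar equivalence. In (2), the step you flag as the main obstacle is immediate. The formula $\varphi(x,c_0)\wedge\neg\varphi(x,c_1)\in p$ does not divide over $A$, and $(c_{2i}c_{2i+1})_{i<\omega}$ is $A$-indiscernible. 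Hence $\{\varphi(x,c_{2i})\wedge\neg\varphi(x,c_{2i+1}) : i<\omega\}$ is consistent, and any realization alternates infinitely along an indiscernible sequence, contradicting NIP.
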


\begin{remark}
	We note that if $T$ is NTP$_2$ and $\ind = \ind^i$, then $T$ is NIP (\cite[Theorem 4.3]{chernikov2012forking}).
\end{remark}

\subsection{Higher-arity generalizations of distality}\label{sec: n-dist}
 
We recall $n$-distality and strong $n$-distality, as introduced by Walker \cite{walker2023distality} (with $1$-distality corresponding to the usual notion of distality considered by Simon \cite{simon2013distal}).

\begin{definition}
    \begin{enumerate}
        \item Given a sequence $\Ical$ of tuples in $\cU$, a \emph{cut} $\cfrak$  is a partition of $\Ical$ into two disjoint sets $\Ical_0,\Ical_1$ where for each $i\in \Ical_0$ and $j\in \Ical_1$, $i<j$. We say $\cfrak$ is \emph{Dedekind} if $\Ical_0$ has infinite cofinality and $\Ical_1$ has infinite coinitiality as subsequences of $\Ical$. 
        \item Given a set $A$ and a cut $\cfrak = (\Ical_0,\Ical_1)$ in an $A$-indiscernible sequence of tuples $\Ical$, we say that a tuple $a$ \emph{inserts into $\Ical$ preserving $A$-indiscernibility} if the sequence $\Ical_0\frown (a) \frown \Ical_1$ is $A$-indiscernible. Informally, we call this new sequence \emph{the result of inserting $a$ into $\cfrak$}.
        \item  Given an infinite sequence of tuples $\Ical$, a formula $\varphi(x_0,\dots,x_{n-1})\in \Lcal$ is in the \emph{Ehrenfeucht-Mostowski} type of $\Ical$ if for each strictly increasing tuple $(i_0,\dots, i_{n-1})$ from $\Ical$, $\models \varphi(i_0,\dots, i_{n-1})$. We refer to this set of formulas by $EM(\Ical)$.  
    \end{enumerate}
\end{definition}

\begin{definition}
    \begin{enumerate}
        \item Given $n \geq 1$, an infinite indiscernible sequence $\Ical$ is \emph{strongly $n$-distal} if for any tuples $a, b_0,\dots,b_{n-1}$, $\Ical'\models EM(\Ical)$ indiscernible over $b_0,\dots,b_{n-1}$  and $\cfrak$ a Dedekind cut in $\Ical'$, if  $a$ inserts into $\cfrak$ preserving $(b_i)_{i\neq j, i< n}$-indiscernibility for each $1\leq j \leq n-1$, then $a$ inserts into $\cfrak$ preserving $(b_i)_{i< n}$-indiscernibility.
        \item $T$ is \emph{strongly $n$-distal} if every infinite indiscernible sequence is strongly $n$-distal.
    \end{enumerate}
\end{definition}

%\begin{definition}\label{distality def}
%    \begin{enumerate}
%        \item For any set $A$, an infinite $A$-indiscernible sequence $\Ical$ is \emph{distal} if whenever there is $A$-indiscernible $\Ical' \models EM(\Ical)$, $\cfrak$ a Dedekind cut in $\Ical'$ and $a$ that inserts into $\cfrak$ preserving $\es$-indiscernibility, we have that $a$ inserts into $\cfrak$ preserving $A$-indiscernibility.
%        \item $T$ is \emph{distal} if all infinite indiscernible sequences are distal.
%    \end{enumerate}
%\end{definition}

\begin{remark}
    For $n=1$, this generalizes the so-called ``external characterization of distality'' in NIP theories \cite{simon2013distal}.
  \end{remark}
  
$N$-distality, as defined in \cite{walker2023distality}, generalizes the ``internal characterization of distality''. It was originally defined for a single sequence with multiple cuts, but a version using mutually indiscernible sequences is easily shown to be equivalent at the level of theories. In particular, we incorporate into the definition here a simplification afforded by \cite[Proposition 3.12]{walker2023distality} for the NIP context.
% Since the only sequences we are interested in are indexed by the interval $[0,1]$, we give the following definition.

\begin{definition}\label{def: insterts}
\begin{enumerate}
    \item Given a set $A$, a tuple of sequences $(\Ical_i)_{i<n}$ is \emph{mutually $A$-indiscernible} if for each $j<n$, $\Ical_j$ is indiscernible over $A\cup [\bigcup (\Ical_{i\neq j})]$. A \emph{tuple of (Dedekind) cuts in $(\Ical_i)_{i<n}$} is an $n$-tuple $(\cfrak_i)_{i<n}$ where each $\cfrak_i$ is a (Dedekind) cut in $\Ical_i$. 
    \item Given $m\leq n$, $(\Ical_i)_{i<n}$ mutually $A$-indiscernible and $(\cfrak_i)_{i<n}$ a tuple of cuts in $(\Ical_i)_{i<n}$, we say that a tuple of parameters $(a_i)_{i<n}$ \emph{$m$-inserts into $(\Ical_i)_{i<n}$ preserving $A$-indiscernibility} if for each strictly increasing subtuple of length $m$ from $n$, $(j_0,\dots,j_{m-1})$, the result of  simultaneously  inserting $a_{j_k}$ into $\cfrak_{j_k}$ leaves $(\Ical_i)_{i<n}$ mutually $A$-indiscernible.
\end{enumerate}
\end{definition}

\begin{definition}\label{def: n-dist mut ind seqs}
    \begin{enumerate}
        \item A mutually indiscernible $(n+1)$-tuple of dense endless sequences $(\Ical_i)_{i\leq n}$ equipped with a tuple $(\cfrak_i)_{i<n}$ of Dedekind cuts $\cfrak_i$ in $\Ical_i$ is \emph{$n$-distal} if for any $(a_i)_{i\leq n}$ that $n$-inserts preserving mutual indiscernibility, we have that the whole tuple $(a_i)_{i\leq n}$ $(n+1)$-inserts preserving mutual indiscernibility.
        \item A mutually indiscernible $(n+1)$-tuple of dense endless sequences $(\Ical_i)_{i\leq n}$ is called \emph{$n$-distal} if any tuple of sequences $(\Ical'_i)_{i\leq n}$ formed by taking cuts in $(\Ical_i)_{i\leq n}$ and removing any realizations so the cuts become Dedekind is (together with these cuts) $n$-distal.
        \item $T$ is $n$-distal if any mutually indiscernible $(n+1)$-tuple of sequences is $n$-distal.
    \end{enumerate}
\end{definition}

Recall also a more general notion of $n$-dependence introduced by Shelah (with $1$-dependence corresponding to NIP):
\begin{definition}
A partitioned formula $\varphi\left(x;y_{1},\ldots,y_{n}\right)$ has the \emph{$n$-independence property} (with respect to a theory $T$), if in some model of $T$ there is a sequence of tuples $\left(a_{1,i},\ldots,a_{n,i}\right)_{i\in\omega}$
such that for every $s\subseteq\omega^{n}$ there is a tuple $b_{s}$ with the following property:
\[
\models\varphi\left(b_{s};a_{1,i_{1}},\ldots,a_{n,i_{n}}\right)\Leftrightarrow\left(i_{1},\ldots,i_{n}\right)\in s\mbox{.}
\]
Otherwise we say that $\varphi\left(x,y_{1},\ldots,y_{n}\right)$ is \emph{$n$-dependent}.
A theory is \emph{$n$-dependent} if it implies that every formula is
$n$-dependent. 
\end{definition}

Recall a characterization of $n$-dependence using random ordered hypergraph indiscernibles. We let $G_{n,p} = (G,R, P_0, \ldots, P_{n-1},<)$ denote the countable generic ordered $n$-partite $n$-hypergraph, we denote its language by $\mathcal{L}_{\opg}$.

\begin{fact}\cite[Proposition 5.2]{chernikov2014n}\label{fac: Gnp indisc char}
	Assume that $\varphi(x;y_0, \ldots, y_{n-1})$ is not $n$-dependent. Then in $\cU$ there is a $G_{n+1,p}$-indiscernible $(a_g)_{g \in G_{n+1,p}}$ so that $\varphi$ encodes the edge relation on it, i.e.~for any $g_0 \in P_0, \ldots, g_{n} \in P_n$ we have $\models \varphi(a_{g_0}, \ldots, a_{g_{n}}) \iff (g_0, \ldots, g_{n}) \in R$.
\end{fact}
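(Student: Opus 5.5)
The plan is to build a witness indexed by $G_{n+1,p}$ straight from the definition of the $n$-independence property, and then make it indiscernible with a structural Ramsey argument. Any quantifier-free $\mathcal{L}_{\opg}$-type preserving modification of the indexing keeps the edge-encoding property. So the edge-encoding property will survive the passage to an indiscernible.

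\emph{Step 1 (a non-indiscernible witness).} Fix $\varphi(x;y_0,\ldots,y_{n-1})$ with the $n$-independence property, as witnessed by $(a_{1,i},\ldots,a_{n,i})_{i\in\omega}$ and the $b_s$ for $s\subseteq\omega^n$. By compactness I will replace $\omega$ by any infinite index set and work in $\cU$. I then define $(c_g)_{g\in G_{n+1,p}}$ as follows.
\begin{itemize}
\item Enumerate each part $P_k$, for $1\le k\le n$, injectively into the index set, and put $c_g := a_{k,i(g)}$ for $g\in P_k$.
\item For $g\in P_0$, let $s_g$ be the set of tuples $(i(g_1),\ldots,i(g_n))$ with $(g,g_1,\ldots,g_n)\in R$, and put $c_g := b_{s_g}$.
\end{itemize}
Then $\models\varphi(c_{g_0},c_{g_1},\ldots,c_{g_n}) \iff (g_0,\ldots,g_n)\in R$ for all $g_k\in P_k$. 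The variable $x$ plays the role of the $P_0$-coordinate, and $y_{k-1}$ that of $P_k$; this matches the $(n+1)$-partite indexing up to relabeling. Different tuples can be padded by dummy coordinates so that all $c_g$ have the same length, with $P_k$ telling us which coordinates matter.

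\emph{Step 2 (Ramsey property and modeling property).} The class $\mathcal{K}$ of finite ordered $(n+1)$-partite $(n+1)$-hypergraphs, in the language $\mathcal{L}_{\opg}$, is a Ramsey class. This follows from the Ne\v{s}et\v{r}il--R\"odl theorem for ordered hypergraphs, in its version with unary predicates and partite structure with ordered parts. I will check that the predicates $P_k$ and the requirement that edges be transversal do not destroy the Ramsey property. One way is to note that $\mathcal{K}$ is a hereditary class of ordered structures defined by forbidding finitely many irreducible substructures, or to appeal directly to the partite Ramsey lemma. Since $G_{n+1,p}$ is the Fra\"iss\'e limit of $\mathcal{K}$, Scow's theorem (Ramsey class $\iff$ modeling property for the associated indiscernibles) gives a $G_{n+1,p}$-indiscernible $(a_g)_{g\in G_{n+1,p}}$ that is locally based on $(c_g)$. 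That is, for every finite $A\subseteq G_{n+1,p}$ and finite $\Delta\subseteq\mathcal{L}$, there is $B\subseteq G_{n+1,p}$ with $\mathrm{qftp}_{\opg}(B)=\mathrm{qftp}_{\opg}(A)$ and $\tp_\Delta(c_B)=\tp_\Delta(a_A)$. Alternatively, one can prove this directly by compactness plus the finite Ramsey statement, coloring copies of a finite $A$ by $\Delta$-types.

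\emph{Step 3 (edge encoding is preserved).} Take $g_k\in P_k$ and apply local basedness with $A=\{g_0,\ldots,g_n\}$ and $\Delta=\{\varphi\}$. This yields $B=\{h_0,\ldots,h_n\}$ with $h_k\in P_k$, with $R(h_0,\ldots,h_n)\iff R(g_0,\ldots,g_n)$, and with $\varphi(a_{\bar g})\iff\varphi(c_{\bar h})$. By Step 1, $\varphi(c_{\bar h})\iff R(\bar h)$, so $\varphi(a_{\bar g})\iff R(\bar g)$, as required.

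The main obstacle is Step 2: making sure that the precise class (ordered, with the partition predicates, edges only across parts) is Ramsey, and that Scow's correspondence applies to it, given that its Fra\"iss\'e limit has the appropriate qf-type structure. The rest is routine compactness.
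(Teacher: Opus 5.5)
Your proposal is correct and follows essentially the standard argument behind the cited result; the paper only quotes it from \cite{chernikov2014n}. That argument realizes the $n$-independence pattern along the countable $G_{n+1,p}$, uses that finite ordered partite hypergraphs form a Ramsey class (Ne\v{s}et\v{r}il--R\"odl) to get the modeling property for $G_{n+1,p}$-indiscernibles (Scow), and notes that local basedness preserves $P_k$-membership and $R$, hence the edge encoding. On the point you flag: if the parts are required to be convexly ordered ($P_0<\cdots<P_n$), this is not a forbidden-irreducible-substructure condition, but the Ramsey property still follows from Ne\v{s}et\v{r}il--R\"odl for the freely ordered class (a free amalgamation class) by reordering a Ramsey witness so that its parts are convex, since the two orders agree on every copy of a convexly ordered structure.
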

The following was already included in \cite[Proposition 6.7]{walker2023distality}, but we include it here for completeness:
\begin{prop}\label{prop: n-dist implies n-dep}
	Let $T$ be a complete theory in a language $\mathcal{L}$. If $T$ is $n$-distal, then $T$ is $n$-dependent.
\end{prop}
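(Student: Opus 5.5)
We argue by contraposition. Suppose $T$ is not $n$-dependent, witnessed by $\varphi(x;y_0,\ldots,y_{n-1})$; rename variables so that the formula is $\varphi(x_0,\ldots,x_n)$, with $x_n$ the ``$x$''. By Fact \ref{fac: Gnp indisc char} we obtain a $G_{n+1,p}$-indiscernible family $(a_g)_{g \in G_{n+1,p}}$ with $\models \varphi(a_{g_0},\ldots,a_{g_n}) \iff (g_0,\ldots,g_n)\in R$ for $g_i \in P_i$. From it we will extract a mutually indiscernible $(n+1)$-tuple of sequences together with Dedekind cuts and insertion points that violate $n$-distality (Definition \ref{def: n-dist mut ind seqs}).

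\textbf{Construction.} Inside the generic ordered hypergraph we choose, for each $i\le n$, a dense endless sequence $(g^i_t : t \in \mathbb{Q}\setminus\{0\})$ in $P_i$ (ordered by $<$) together with an extra vertex $h_i\in P_i$ occupying the cut at $0$. We choose these so that the only $(n+1)$-tuple involving any $h_i$ that lies in $R$ is $(h_0,\ldots,h_n)$; no edges occur among the $g$'s at all, and tuples mixing $g$'s with some but not all $h$'s are also non-edges. Genericity of $G_{n+1,p}$, via the extension property, allows this configuration. Set $\mathcal{I}_i := (a_{g^i_t})_t$ and $a_i := a_{h_i}$.

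\textbf{Key steps.} First we check mutual indiscernibility of $(\mathcal{I}_i)_{i\le n}$. For any finite increasing subtuples, the quantifier-free $\mathcal{L}_{\opg}$-type of the underlying vertices depends only on the order type within each part: the part predicates and the order are fixed, and there are no edges. Hence $G_{n+1,p}$-indiscernibility gives mutual indiscernibility. To keep the order between different parts consistent, we arrange all vertices of $P_i$ to lie below those of $P_{i+1}$, and within each part we place the $h_i$ at the correct position in $<$.

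Second, for any $j\le n$, inserting $a_i$ for $i\neq j$ (simultaneously) into the cuts preserves mutual indiscernibility. Every finite configuration involving these inserted points together with sequence elements still has no edges and has the same ordered type as a configuration consisting purely of sequence elements, so indiscernibility applies again.

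Third, inserting all $a_0,\ldots,a_n$ breaks mutual indiscernibility. We have $\models \varphi(a_0,\ldots,a_n)$, whereas $\neg\varphi(a_{g^0_{t_0}},\ldots,a_{g^n_{t_n}})$ holds for any sequence elements. So after the insertion, $\mathcal{I}_n$ with $a_n$ inserted is not indiscernible over the other sequences with their inserted points: the formula $\varphi(a_0,\ldots,a_{n-1},y)$ holds at the inserted position and fails at the others.

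This contradicts $n$-distality of $T$. The sequences are already dense and endless with Dedekind cuts, so the clause of Definition \ref{def: n-dist mut ind seqs}(2) about removing realizations causes no trouble.

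\textbf{Main obstacle.} The main difficulty is the careful choice of vertices in $G_{n+1,p}$. We must realize exactly one edge, among the $h$'s, while ensuring that every sub-configuration obtained by dropping one $h_j$ has the same quantifier-free type as some configuration of pure sequence elements. Both requirements hold because $R$ only relates full $(n+1)$-tuples with one vertex from each part, so missing any single $h_j$ means no edge is witnessed; the existence of such vertices follows from the extension property of the generic structure.
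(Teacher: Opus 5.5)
Your proof is correct and is essentially the paper's argument: embed in $G_{n+1,p}$ a countable configuration whose only hyperedge is formed by the $n+1$ distinguished vertices. Then $G_{n+1,p}$-indiscernibility gives indiscernibility whenever one distinguished vertex is missing, while $\varphi$ detects the edge once all of them are present. The only difference is packaging: the paper puts the distinguished vertices at distinct indices $0,1,\ldots,n$ of a single sequence of $(n+1)$-tuples $\bar a_j=(a_{g_{0,j}},\ldots,a_{g_{n,j}})$ with $n+1$ cuts (Walker's single-sequence formulation), whereas you refute the mutually indiscernible formulation of Definition \ref{def: n-dist mut ind seqs} directly, using one sequence per part with one cut in each.
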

\begin{proof}
	Assume that $T$ is not $n$-dependent, and let $\varphi$ and $(a_g)_{g \in G_{n+1,p}}$ be as given by Fact \ref{fac: Gnp indisc char}.
	As $G_{n,p}$ embeds every countable ordered partite hypergraph, we can choose $g_{0,j} \in P_0, \ldots, g_{n,j} \in P_n $ for $j \in \mathbb{Q}$ such that:
	\begin{align}
	&g_{i,j} < g_{i,j'} \iff j < j'\textrm{, for all }i<n \textrm{ and } j,j' \in \mathbb{Q};\\
		&(g_{0,j_0}, \ldots, g_{n,j_n}) \in R \iff (j_0, \ldots, j_{n}) = (0,1, \ldots, n).\label{eq: choice of R}
	\end{align}
	Let $I := \mathbb{Q}\setminus\{0, 1, \ldots, n\}$. Then we have the following:
	\begin{itemize}
		\item for any $j^* \in \{0,1, \ldots, n\}$, the sequence $(g_{0,j}, \ldots ,g_{n,j})_{j \in \mathbb{Q} \setminus \{j^*\}}$ is clearly quantifier-free indiscernible in the language $\mathcal{L}_{\opg}$; as $(a_g)_{g \in G_{n+1,p}}$ is $G_{n+1,p}$-indiscernible, this implies that the sequence $(a_{g_{0,j}}, \ldots, a_{g_{n,j}})_{j \in \mathbb{Q}\setminus\{j^*\}}$ is $\mathcal{L}$-indiscernible;
		\item the sequence $(a_{g_{0,j}}, \ldots, a_{g_{n,j}})_{j \in \mathbb{Q}}$ is not indiscernible, as by (\ref{eq: choice of R}) we have 
		$$\models \varphi(a_{g_{0,j_0}}, \ldots, a_{g_{n,j_n}}) \iff (j_0, \ldots, j_n) = (0,1, \ldots, n).$$
	\end{itemize}
	Then $T$ is not $n$-distal, witnessed by the sequence $(\bar{a}_j)_{j \in I}$ with $\bar{a}_j := (a_{g_{0,j}}, \ldots, a_{g_{n,j}})$.
\end{proof}

%
%We note the following basic properties, the first two of which are clear. 
%
%\begin{fact}
%\begin{enumerate}
%    \item If $T$ is strongly $n$-distal and $k\geq n$, then $T$ is $k$-strongly distal. 
%    \item $T$ is distal if and only if it is $1$-strongly distal. 
%    \item There are $2$-strongly distal theories with IP \cite{Walker2019}.
%\end{enumerate}
%\end{fact}
We will further discuss the (strong) $n$-distality hierarchy, restricting to NIP or even stable theories, in Section \ref{sec: n-dist and n-triv}.

\subsection{Cylinder intersection sets}\label{sec: cyl inters sets}

Let $X_1,\dots,X_k$ be  sets and write $X := X_1\times\cdots\times X_k$. For $x=(x_1,\dots,x_k)\in X$ and $i\in[k]:=\{1,\dots,k\}$, write
\[
x_{-i}:=(x_1,\dots,x_{i-1},x_{i+1},\dots,x_k)\in \prod_{j\neq i}X_j.
\]

\begin{defn}\label{def: cylinder inters sets}
Fix $i\in[k]$. A set $C_i\subseteq X$ is called a \emph{cylinder in direction $i$} if there exists a function 
$c_i:\prod_{j\neq i}X_j\to\{0,1\}$
such that 
$\1_{C_i}(x)=c_i(x_{-i})$.  A set $E\subseteq X$ is called a \emph{$k$-ary cylinder intersection} if it can be written as
$
E=\bigcap_{i=1}^k C_i$
for some cylinders $C_i$ in direction $i$. If $c_i$ is the indicator function of a set $A_i \subseteq \prod_{j\neq i}X_j$, we will also write $E = \bigwedge_{i=1}^{k} A_i$ to denote the cylinder intersection.
\end{defn}

\begin{defn}
Given $E \subseteq X$ and $C \subseteq X$, we say that $C$ is \emph{$E$-homogeneous} if $C \subseteq E$ or $C \subseteq X \setminus E$.	In particular, we will be interested in $E$-homogeneous cylinder intersection sets $C$.
\end{defn}

We will also consider an analog restricted to Boolean algebras of definable sets:

\begin{definition}\label{def: cylinder Bool alg}
	 As usual, $\Lcal_x(A)$ refers to the set of $\Lcal$-formulas with free variables from $x$ and parameters from $A$, which we identify, up to logical equivalence, with definable sets in $\cU$. Given a partitioned tuple of variables $x_0,\dots,x_{n-1}$ and $0 \leq k\leq n$, we denote by $\Lcal_{x_0,\dots,x_{n-1}}^k(A)$ the Boolean algebra generated by $\bigcup_{I\sub n, |I|=k} [\Lcal_{x_I}(A)]$. When the variable partition is clear from context we write $\Lcal^k_n(A)$ for $\Lcal_{x_0,\dots,x_{n-1}}^k(A)$.
\end{definition}

\section{$N$-determinacy for measures and $n$-distal regularity lemma}\label{sec: n-determinacy for measures and n-distal regularity lemma}

%\subsection{$N$-determinacy for measures and $n$-distal regularity lemma}

%\begin{cor}
%	Let $T$ be $m$-distal, $\mu_0(x_0), \ldots, \mu_{n-1}(x_{n-1})$ generically stable and $R(x_0, \ldots, x_{n-1})$ a definable relation. Then for any $\varepsilon > 0$ there exist some sets $D^\varepsilon_{-}, D^{\varepsilon}_+ \in \mathcal{B}^x_{n,m}$ such that $D^\varepsilon_{-} \subseteq R \subseteq D^{\varepsilon}_+$ and $\mu( D^{\varepsilon}_+ \setminus D^\varepsilon_{-}) < \varepsilon$.
%\end{cor}
%\begin{proof}
%	Immediate by Proposition \ref{prop: m-det} and e.g. \cite[Theorem 3.7]{starchenko2016nip}.
%\end{proof}

%***
%Prove this at least in NIP theories first? Or we expect there are no strictly $n$-distal NIP theories? Well, there should be NIP theories of arity $n$ for each $n$ I guess. So trees are strictly $3$-distal? Take an ordered $C$-relation indiscernible based on a $C$-relation.
%
%For stable - Hrushovski constructions of varying arities?

%In stable (or NIP), would show that triviality of forking is equivalent to determinacy for measures...

%N-amalgamation/n-stationarity/ n-dependence/n-distality, etc for measures in simple theories

%***
%*** for g.s. measures - always have ``higher amalgamation'' by hypergraph removal lemma.

%
%
%Given a 2-dependent formula $\varphi(x,y;z)$ and a pair of mutually indiscernible sequences indexed by $[0,1]$, does $\varphi$ cut out measurable sets? (up to arbitrary unary sets somehow, maybe fix some ultrafilters on those)

\subsection{Indiscernible measures in strongly $n$-distal NIP theories}\label{sec: Indiscernible measures in strongly n-distal NIP theories}

In this section we generalize some results in NIP and strongly $n$-distal theories from indiscernible sequences to indiscernible measures.

 Typically we used $\Ical$ to denote a sequence of tuples in $\cU$, but we may also use it for the indexing sequence, for example using the notation $(b_i)_{i\in \Ical}$ for a sequence of tuples. For shorthand, whenever $\Jcal$ is a subsequence of $\Ical$, we may write $b_\Jcal := (b_i)_{i\in \Jcal}$. The following fact is standard (see e.g.~\cite[Proposition 3.3]{hrushovski2008groups}).
\begin{fact}\label{bounding disjoint positive measure sets}
    For all $\varepsilon>0$ and $m\in \omega$, there is $n\in \omega$ such that whenever $\mu(x)\in \Mfrak(\Mbb)$ and $(\varphi_i(x))_{i<n} \in \Lcal(\Mbb)^n$ with $\mu(\varphi_i)\geq \varepsilon$ for all $i$, for some $I\sub [n]$ with $|I|= m$ we have that $\mu(\bigwedge\limits_{i\in I}\varphi_i)>0$.
\end{fact}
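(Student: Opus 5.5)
The plan is a counting argument via the Stone representation, in the style of the proof of Ramsey-type lemmas for measures. Fix $\varepsilon>0$ and $m$. I will take $n$ large compared to $m/\varepsilon$ and suppose, for contradiction, that $\mu(\varphi_i)\geq\varepsilon$ for all $i<n$ but every $m$-fold conjunction $\bigwedge_{i\in I}\varphi_i$ with $|I|=m$ has $\mu$-measure $0$.

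First I pass to the regular Borel extension $\tilde\mu$ on the Stone space $S_x(\Mbb)$ (Section \ref{sec: meas on Bool alg}), where each $\varphi_i$ corresponds to a clopen set $[\varphi_i]$. Consider the counting function $f:=\sum_{i<n}\1_{[\varphi_i]}$. On one hand, integrating gives
\[
\int f\,d\tilde\mu=\sum_{i<n}\mu(\varphi_i)\geq n\varepsilon .
\]
On the other hand, let $B:=\bigcup_{|I|=m}\bigcap_{i\in I}[\varphi_i]$. This is a finite union of clopen sets, each of measure $0$ by assumption, so $\tilde\mu(B)=0$. Off $B$, every type lies in at most $m-1$ of the sets $[\varphi_i]$, so $f\leq m-1$ there. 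Hence $\int f\,d\tilde\mu\leq m-1$.

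Choosing $n>(m-1)/\varepsilon$, for instance $n:=\lceil m/\varepsilon\rceil$, gives $n\varepsilon>m-1$, which is a contradiction. So some $I\subseteq[n]$ with $|I|=m$ has $\mu\bigl(\bigwedge_{i\in I}\varphi_i\bigr)>0$.

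The Stone-space integral is not essential. The same argument can be carried out finitely by working in the finite Boolean algebra generated by $\varphi_0,\dots,\varphi_{n-1}$. Its atoms have the form $\bigwedge_{i<n}\varphi_i^{\epsilon_i}$, and finite additivity gives $\sum_i\mu(\varphi_i)=\sum_{\text{atoms }a}\mu(a)\cdot|\{i:\epsilon_i=1\}|$. Every atom with at least $m$ positive coordinates is contained in some $m$-fold conjunction and so has measure $0$. Each remaining atom contributes at most $(m-1)\mu(a)$, and these sum to at most $m-1$. I would present this finite version, since it avoids any measure-theoretic machinery.

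I do not expect a real obstacle. The only point requiring care is that the bound $n$ depends only on $\varepsilon$ and $m$, not on $\mu$, the formulas, or $T$; this holds since $n=\lceil m/\varepsilon\rceil$.
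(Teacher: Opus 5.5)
Your argument is correct: once every $m$-fold conjunction is $\mu$-null, summing over the atoms of the finite Boolean algebra generated by $\varphi_0,\dots,\varphi_{n-1}$ gives $n\varepsilon\le m-1$, so $n=\lceil m/\varepsilon\rceil$ works uniformly in $\mu$ and the formulas. The degenerate cases are trivial: for $m=0$ take $I=\emptyset$, and for $\varepsilon>1$ the hypothesis is vacuous. The paper gives no proof of its own and only cites this as a standard fact from Hrushovski--Peterzil--Pillay; your double-counting argument is the usual proof of that fact.
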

   \noindent  Given a measure $\mu\in \Mfrak_x(A)$, a type $p\in S_x(A)$ is in the \emph{support} of $\mu$ if $\mu(\varphi)>0$ for each $\varphi \in p$. We let $S(\mu) \subseteq S_x(A)$ be the (closed, nonempty) set of types in the support of $\mu$.

\begin{definition}
Fix an infinite indexing sequence $\Ical$, a global measure $\mu$ and $B \subseteq \cU$.
\begin{enumerate}
    \item A sequence $b_\Ical$ is \emph{$\mu(x)$-indiscernible} if for each same-length increasing tuples $\ib$, $\jb$ from $\Ical$ and each compatible formula $\varphi(x,\zb) \in \mathcal{L}$, $\mu(\varphi(x,b_{\ib}))= \mu(\varphi(x,b_{\jb}))$.
    \item A measure $\mu(x_\Ical)$ (in an infinite tuple of variables $x_\Ical$) is \emph{indiscernible over $B$} if for each same-length increasing tuples $\ib$, $\jb$ from $\Ical$, and each compatible formula $\varphi(y) \in \Lcal(B)$, $\mu(\varphi(x_{\ib}))=\mu(\varphi(x_{\jb}))$.
    \item A measure $\mu(x_\Ical)$ is \emph{totally indiscernible over $B$} if for each same-length tuples $\ib$, $\jb$ from $\Ical$ (not necessarily increasing), and each compatible formula $\varphi(y) \in \Lcal(B)$, $\mu(\varphi(x_{\ib}))=\mu(\varphi(x_{\jb}))$.
\end{enumerate}
\end{definition}

\begin{lemma} \label{Artem lemma}
    Suppose $T$ has NIP and fix $\mu(x)\in \Mfrak(\Mbb)$. If $\Ical$ indexes an infinite $\mu$-indiscernible sequence $(b_i)_{i\in\Ical}$, then whenever $p\in S(\mu)$ and $a\models p|_{b_\Ical}$ (the global type $p$ restricted  to parameters from $b_\Ical$), we have that $b_\Ical$ is $a$-indiscernible.
\end{lemma}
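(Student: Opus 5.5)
We argue by contradiction. Suppose $b_\Ical$ is not $a$-indiscernible. Then there are a formula $\varphi(x,\zb)\in\Lcal$ and same-length increasing tuples $\ib,\jb$ from $\Ical$ with $\models \varphi(a,b_{\ib})\wedge\neg\varphi(a,b_{\jb})$. Since $a\models p|_{b_\Ical}$, the formula $\psi(x) := \varphi(x,b_{\ib})\wedge\neg\varphi(x,b_{\jb})$ belongs to $p$. As $p\in S(\mu)$, this gives $\mu(\psi)=\varepsilon>0$.

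The plan is to use $\mu$-indiscernibility to spread this positive-measure configuration along the sequence, producing many sets of measure $\geq\varepsilon$. Fact \ref{bounding disjoint positive measure sets} will then force a large intersection of them to be consistent, which contradicts NIP through unbounded alternation.

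First, we reduce to an order-convenient situation. Choosing finitely many intermediate increasing tuples changing one coordinate at a time, we may assume $\ib$ and $\jb$ differ in one coordinate and have the same order type relative to all other coordinates. We may also replace $\Ical$ by a longer sequence (e.g.\ of order type $\mathbb{Q}$ or $\omega$) with the same $\mu$-EM-type. This replacement is harmless because the statement concerns only the values $\mu(\theta(x,b_{\bar k}))$, which depend only on the order type of $\bar k$.

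Next, we build the alternating configuration. For each $m$, choose increasing tuples $\bar k_0<\bar k_1<\dots<\bar k_{2m}$ in $\Ical$, interleaved so that every subsequence obtained by selecting a subset of indices has the same order type. Consider the formulas $\psi_\ell(x):=\varphi(x,b_{\bar k_{2\ell}})\wedge\neg\varphi(x,b_{\bar k_{2\ell+1}})$. By $\mu$-indiscernibility, each $\psi_\ell$ has $\mu(\psi_\ell)=\varepsilon$. For the contradiction, we want some $x$ alternating $\varphi$ many times along a sequence. The cleanest version: for a large $N$ (from Fact \ref{bounding disjoint positive measure sets} with $\varepsilon$ and target $m$), consider $\psi_0,\dots,\psi_{N-1}$ arranged along a sequence $\bar k_0<\bar k_1<\dots<\bar k_{2N-1}$. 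The fact yields a set $I$ of size $m$ with $\mu(\bigwedge_{\ell\in I}\psi_\ell)>0$. In particular, some $x$ satisfies $\varphi(x,b_{\bar k_{2\ell}})$ and $\neg\varphi(x,b_{\bar k_{2\ell+1}})$ for all $\ell\in I$. Along the subsequence $(b_{\bar k_{2\ell}},b_{\bar k_{2\ell+1}})_{\ell\in I}$, which is still an (ordinary) indiscernible-like sequence of tuples, this $x$ alternates $\varphi$ at least $2m$ times.

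To turn this into a contradiction with NIP, we need the sequence $(b_{\bar k})$ to be genuinely indiscernible, or at least need the alternation bound to apply. Here I would first try to show that a $\mu$-indiscernible sequence can be taken indiscernible. Alternatively, and more robustly, we can avoid indiscernibility altogether: use that NIP bounds the alternation number of $\varphi(x,\zb)$ on any indiscernible sequence, and note that the only indiscernibility needed here is that of the tuples $\bar k$. To secure it, extract via Ramsey and compactness an indiscernible sequence realizing the EM-type of $(b_i)$ while preserving the $\mu$-values. This is the main obstacle: $\mu$ is a global measure fixed in advance, so the extraction must keep $\mu$ fixed. I would handle it by applying Ramsey to colorings of tuples by both their $\Lcal$-type restricted to finitely many formulas and the (finitely discretized) $\mu$-values, staying inside $b_\Ical$ rather than moving to a new sequence. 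Since $\mu$-values are already constant on same-order-type tuples, Ramsey only needs to homogenize the finitely many formulas. An infinite subsequence homogeneous for $\varphi$-patterns of bounded arity then suffices, because the alternation argument involves only $\varphi$ and $2m$ tuples. For $m$ exceeding the alternation bound of $\varphi$ on homogeneous sequences, which is finite by NIP, we obtain the contradiction.
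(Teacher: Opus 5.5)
Your argument is correct in substance and follows the paper's skeleton. A formula $\varphi(x,b_{\ib})\wedge\neg\varphi(x,b_{\jb})$ in $p$ has positive measure $\varepsilon$. $\mu$-indiscernibility transports it to many copies, Fact \ref{bounding disjoint positive measure sets} gives a point lying in many of them, and the resulting alternation contradicts NIP.

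The real difference is how you obtain an indiscernible sequence carrying the alternation.
\begin{itemize}
\item The paper makes no reduction. It places copies $J_i\cup J'_i$ of the whole configuration in increasing disjoint blocks. It then takes $c_i:=b_{J_i}$ for even $i$ and $c_i:=b_{J'_i}$ for odd $i$, using only one member of each pair. So $(c_i)$ is automatically indiscernible, and compactness yields infinite alternation.
\item You reduce to $\ib,\jb$ differing in one coordinate and vary that coordinate inside one gap. Then the whole family $(b_{\bar k_s})_s$ is indiscernible, both members of each pair can be used, and the uniform NIP alternation bound replaces compactness.
\end{itemize}
Both work. The paper's device avoids the chain reduction; yours makes the order-indiscernibility of the family transparent.

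Three points need fixing.

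First, state explicitly that each pair $(\bar k_{2\ell},\bar k_{2\ell+1})$ has the same joint order type as $(\ib,\jb)$. With block-increasing $\bar k_0<\bar k_1<\cdots$, the equality $\mu(\psi_\ell)=\varepsilon$ would fail. It is exactly your one-coordinate reduction that makes this compatible with order-indiscernibility of the family.

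Second, the ``main obstacle'' does not exist. Apply $\mu$-indiscernibility to formulas in which $x$ does not occur; their measure is $0$ or $1$. This shows $b_\Ical$ is itself indiscernible, hence so is $(b_{\bar k_s})_s$, which is what the paper uses directly. Your Ramsey workaround is valid, since subsequences inherit $\mu$-indiscernibility, but it is superfluous.

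Third, your remark does not justify replacing $\Ical$ by a longer sequence with the same ``$\mu$-EM-type''. The measure $\mu$ is a fixed global measure, and $\mu$-values are not first-order conditions on the parameters. Fortunately the step is unnecessary. Once $\mu(\psi)=\varepsilon$ is known you no longer need $a$ or the original tuples. For each $m$, the finite configuration of $2N$ tuples can be placed on any $|\ib|-1+2N$ elements of $\Ical$, and $\mu$-indiscernibility transfers the value $\varepsilon$.
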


\begin{proof}
    Take $\psi(x,y)\in \Lcal$ and $J,J'\sub \Ical$ of length $|y|$. 

 \noindent   \textit{Claim.} $\mu(\psi(x,b_J)\triangle \psi(x,b_{J'}))=0$.

   \noindent  \textit{Proof of claim.} Suppose otherwise. We can assume that $\Ical$ has no maximal element (inverting the order and passing to a subsequence if necessary). Hence we can find, for each $i\in \omega$, $J_i,J_i'$ copies of $J,J'$ (i.e.~the order type of $J\cup J'$ is identical to the order type of $J_i\cup J_i'$) such that $J_i\cup J'_i < J_{i+1}\cup J_{i+1}'$ for all $i \in \omega$. By $\mu$-indiscernibility, for each $i$, $\mu(\psi(x,b_{J_i})\triangle \psi(x,b_{J_i'}))=\varepsilon$ for some fixed $\varepsilon >0$. Hence, by Fact \ref{bounding disjoint positive measure sets}, compactness and indiscernibility of $\Ical$, we can choose $a\models \bigwedge\limits_{i\in \omega} (\psi(x,b_{J_i})\triangle \psi(x,b_{J'_i}))$. By passing to a subsequence, we may further assume that for some $t \in \{0,1\}$, $a\models \bigwedge\limits_{i\in \omega} (\psi^t(x,B_{J_i})\wedge \psi^{1-t}(x,B_{J_i}))$. Let $c_i:=b_{J_i}$ if $i$ is even, and $c_i=b_{J_i'}$ if $i$ is odd. This sequence $(c_i : i \in \omega)$ is indiscernible (by our choice of $J_i,J_i'$ and indiscernibility of $\Ical$) and admits infinite alternation with respect to $\psi(a,y)$, contradicting NIP.
    \hfill$\blacksquare_{\textit{Claim }}$

    As  $p \in S(\mu)$, $a\models p|_{b_\Ical}$ and $\neg(\psi(x,b_J)\triangle\psi(x,b_{J'}))\equiv \psi(x,b_J)\leftrightarrow\psi(x,b_{J'})$, by the Claim we have  $\models \psi(a,b_J)\leftrightarrow\psi(a,b_{J'})$ for all $J,J'\sub \Ical$ of the same length. And as $\psi$ was arbitrary, this implies that $\mathcal{I}$ is $a$-indiscernible. 
\end{proof}

\noindent Using this, we see that strong $n$-distality lifts from sequences indiscernible over parameters to sequences indiscernible over measures:
\begin{lemma}\label{measure strong 2-distality}
    Assume $T$ has NIP. Suppose that $\Ical := \Ical_0+*+\Ical_1$ indexes an infinite strongly $n$-distal indiscernible sequence $(b_i)_{i\in \Ical}$ and $\omega(x_0,\dots, x_{n-1})\in \Mfrak(\Mbb)$ (where each $x_i$ is possibly an infinite tuple of variables). If $b_{\Ical_0+\Ical_1}$ is $\omega$-indiscernible and $b_\Ical$ is $\omega|_{(x_i)_{i\neq j}}$-indiscernible for each $j<n$, then $b_\Ical$ is $\omega$-indiscernible.
\end{lemma}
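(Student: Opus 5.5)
The plan is to argue by contradiction. We fix a formula and two index tuples on which $\omega$-indiscernibility of $b_\Ical$ fails, and pass from the measure $\omega$ to a single realization $\bar a=(a_0,\dots,a_{n-1})$ of a type in the support of $\omega$. We then use Lemma \ref{Artem lemma} to turn the measure-indiscernibility hypotheses into honest indiscernibility over $\bar a$ and over the subtuples $(a_i)_{i\neq j}$. Finally, strong $n$-distality of $b_\Ical$ yields the contradiction.

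\textbf{Step 1: a formula separating the two measures.} Suppose $b_\Ical$ is not $\omega$-indiscernible. Since $b_{\Ical_0+\Ical_1}$ is $\omega$-indiscernible, there are a formula $\varphi(x_0,\dots,x_{n-1},\bar z)\in\Lcal$, an increasing tuple $\jb$ from $\Ical$ passing through $*$, and an increasing tuple $\jb'$ of the same order type lying in $\Ical_0+\Ical_1$, such that $\omega(\varphi(\bar x,b_{\jb}))\neq\omega(\varphi(\bar x,b_{\jb'}))$. We choose $\jb'$ to agree with $\jb$ off $*$ and to replace $*$ by an element of $\Ical_0$ or $\Ical_1$ close to the cut. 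Then one of $\varphi(\bar x,b_{\jb})\wedge\neg\varphi(\bar x,b_{\jb'})$ or its reverse has positive $\omega$-measure; call it $\psi$. Consider $\omega|_{b_\Ical}$, a measure over the parameters $b_\Ical$. Its support has full measure, so we may choose a type $p\in S(\omega|_{b_\Ical})$ containing $\psi$, and let $\bar a\models p$.

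\textbf{Step 2: indiscernibility over $\bar a$ and over its subtuples.} We apply Lemma \ref{Artem lemma} to $\omega$ and the infinite $\omega$-indiscernible sequence $b_{\Ical_0+\Ical_1}$. Its proof only uses that $p|_{b_\Ical}$ is in the support of $\omega$ restricted to formulas over $b_\Ical$, so it applies here and gives that $b_{\Ical_0+\Ical_1}$ is $\bar a$-indiscernible. Next, for each $j<n$, the restriction of $p$ to the variables $(x_i)_{i\neq j}$ lies in the support of $\omega|_{(x_i)_{i\neq j}}$. By hypothesis $b_\Ical$ is $\omega|_{(x_i)_{i\neq j}}$-indiscernible, so the same lemma gives that the whole sequence $b_\Ical$ is $(a_i)_{i\neq j}$-indiscernible. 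In other words, $b_*$ inserts into the cut $(\Ical_0,\Ical_1)$ of $b_{\Ical_0+\Ical_1}$ preserving $(a_i)_{i\neq j}$-indiscernibility, for every $j$.

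\textbf{Step 3: strong $n$-distality and the main obstacle.} Now apply strong $n$-distality of $b_\Ical$, taking $\Ical':=b_{\Ical_0+\Ical_1}$ (which satisfies $\EM(b_\Ical)$ and is indiscernible over $a_0,\dots,a_{n-1}$), the cut $(\Ical_0,\Ical_1)$, and the parameters $a_0,\dots,a_{n-1}$. This gives that $b_\Ical$ is $\bar a$-indiscernible. Hence $\models\varphi(\bar a,b_{\jb})\leftrightarrow\varphi(\bar a,b_{\jb'})$, contradicting $\psi\in p$.

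The main obstacle I expect is that the definition of strong $n$-distality requires the cut to be Dedekind. I would first check whether the intended reading of ``$\Ical_0+*+\Ical_1$'' already makes $(\Ical_0,\Ical_1)$ Dedekind, for instance since $\Ical$ can be taken dense without endpoints. If it does not, I would reduce to the Dedekind case. The failure of $\omega$-indiscernibility involves only finitely many indices, so one can pass to a subsequence of $\Ical_0+\Ical_1$ with no last element of $\Ical_0$ and no first element of $\Ical_1$ still containing $\jb$ and $\jb'$, using an infinite tail on the relevant side. If a side is essentially finite, there is no formula-level way to stretch it while keeping the fixed global $\omega$. In that case I would argue directly at the level of $\bar a$: use compactness to elongate $\Ical'$ to an $\bar a$-indiscernible sequence realizing the same $\EM$-type over $\bar a$ with a Dedekind cut, and insert $b_*$ there. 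Everything is now a statement about types over $\bar a$, so no measure needs to be transported.
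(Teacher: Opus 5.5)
Your argument is correct and is essentially the paper's proof: take a type in the support of $\omega$ containing the symmetric difference, realize it over $b_\Ical$, use Lemma \ref{Artem lemma} to get $\bar a$-indiscernibility of $b_{\Ical_0+\Ical_1}$ and $(a_i)_{i\neq j}$-indiscernibility of $b_\Ical$, and then apply strong $n$-distality. On your Dedekind concern, the paper simply reads $(\Ical_0,\Ical_1)$ as a Dedekind cut, which is how the lemma is applied in Lemma \ref{key lemma with strong indiscernibility} (finitely many points removed from a dense order), and you should adopt that reading and drop your fallbacks, because they cannot work in general: in DLO with $n=1$, $\Ical_1=\emptyset$ and $\omega$ the Dirac measure at a point $a$ with $b_{\Ical_0}<a<b_*$, all hypotheses hold but the conclusion fails.
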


\begin{proof}
    Suppose otherwise, let $\bar{x} := (x_0,\dots, x_{n-1})$. Then there are finite $J,J'\sub \Ical$ of the same order type and $\psi \in \mathcal{L}$ with $\omega(\psi(\xb,b_J))-\omega(\psi(\xb,b_{J'}))>0$. In particular $\omega(\psi(\xb,b_J)\triangle\psi(\xb,b_{J'}))>0$. Hence, there is $p(\bar{x})\in S(\omega)$ with $\psi(\xb,b_J)\triangle\psi(\xb,b_{J'})\in p$. Easily, $p|_{(x_i)_{i\neq j}}\in S(\omega|_{(x_i)_{i\neq j}})$ for each $j$. So, by Lemma \ref{Artem lemma}, fixing $\ab = (a_i)_{i<n}\models p|_{b_\Ical}$, $\Ical$ is $(a_i)_{i\neq j}$-indiscernible for each $j$ and $\Ical_0+\Ical_1$ is $\ab$-indiscernible. By strong $n$-distality, then, $\Ical$ is $\ab$-indiscernible. But this is a contradiction since $\models \psi(\ab,b_J)\triangle\psi(\ab,b_{J'})$.
\end{proof}

%\begin{proof}
%Say $\mathcal{I}_t = (b_i : i \in I_t)$ and $\mathcal{I} = (b_i : i \in I)$ for $I = I_1 + (\ast) + I_2$ and $b_{\ast} = b$. 
%	Assume the conclusion fails, then there are some  $\varphi(x,\bar{y}) \in L$ and finite $J,J' \subseteq I$ with $|J| = |J'|$ so that $|\omega(\varphi(\bar{x}; b_{J})) - \omega (\varphi(\bar{x}; b_{J'}))| > 0$. Then $\omega \left(\varphi(\bar{x}; b_{J}, b) \triangle \varphi(\bar{x}; b_{J'}, b) \right) > 0$, so there exists $p(\bar{x}) \in S(\omega)$ so that $\left(\varphi(\bar{x}; b_{J}, b) \triangle \varphi(\bar{x}; b_{J'}, b) \right) \in p$. Note that $ p|_{x_{[k] \setminus \{t\}}}  \in S \left(\omega|_{x_{[k] \setminus \{t\}}} \right)$ for all $t \in [k]$.
%	 Let $\bar{a} = (a_1, \ldots, a_k) \models p|_{\mathcal{I}}$, in particular $\mathcal{I}_1 + b + \mathcal{I}_2$ is not $\bar{a}$-indiscernible. By assumption and Lemma \ref{lem: in support implies indiscernible} we have that $\mathcal{I}_1 + \mathcal{I}_2$ is $\bar{a}$-indiscernible and $\mathcal{I}_1 + b + \mathcal{I}_2$ is $(a_{[k] \setminus t})$-indiscernible, so by strong $k$-distality we must have $\mathcal{I}_1 + b + \mathcal{I}_2$ is $\bar{a}$-indiscernible, a contradiction.
%\end{proof}

We introduce generalizations of some notions for sequences of tuples from \cite[Section 2]{simon2013distal}  to measures. Given $\mu$ a measure in the variables $(x_i)_{i\in \Ical}$ and $\Jcal \sub \Ical$, we refer to the natural restriction $\mu((x_i)_{i\in \Jcal})$ by $\mu_\Jcal$.

\begin{definition} Fix an infinite indexing sequence $\Ical$. 
    \begin{enumerate}
        \item  An indiscernible measure $\mu_\Ical$ and an indiscernible sequence $b_\Ical$ are called \emph{$\mu_\Ical$-weakly linked} if for each $\Ical_1, \Ical_2 \sub \Ical$ with $\Ical_1 \cap  \Ical_2 = \emptyset$, $\mu_{\Ical_1}$ is indiscernible over $b_{\Ical_2}$. (Note that we require $\Ical_1\cap\Ical_2 =\es$ but not $\Ical_1 < \Ical_2$.)
        \item  An indiscernible measure $\mu_\Ical$ and an indiscernible sequence $(b_i)_{i\in \Ical}$ are called \emph{$b_{\Ical}$-weakly linked} if  for each $\Ical_1, \Ical_2 \sub \Ical$ with $\Ical_1 \cap  \Ical_2 = \emptyset$, $b_{\Ical_2}$ is $\mu_{\Ical_1}$-indiscernible.
    \end{enumerate}
\end{definition}

Generalizing a standard fact for indiscernible sequences, NIP also gives a bound on alternation for indiscernible measures. It is noted in \cite[Corollary 2.12]{hrushovski2013generically}, as a corollary of the two-sided measure theoretic result from \cite[Theorem 5.3]{ben2009continuous}:
\begin{fact}\label{alternation for a measure over a tuple}
    Assume that $T$ has NIP. Suppose $\mu\in \Mfrak_{(x_i)_{i\in \omega}}(\Mbb)$ is indiscernible, $b\in \Mbb$, $\varphi(x,y)\in \Lcal$ and $\varepsilon>0$. Then there is $N\in \omega$ such that whenever $i_0<\dots <i_{N-1} \in \omega$, there is $0<k<N$ such that $|\mu(\varphi(x_{i_{k-1}},b))-\mu(\varphi(x_{i_k},b))|<\varepsilon$.
\end{fact}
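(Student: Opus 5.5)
Since this is a cited fact, I only outline how one would reprove it; the route I would follow is the one behind the citation, namely passing to the randomization of $T$ and using that NIP is preserved there. I argue by contradiction. Suppose that for some $\mu$, $b$, $\varphi$ and $\varepsilon$ no such $N$ exists. Then for every $N$ there are $i_0<\dots<i_{N-1}$ with $|\mu(\varphi(x_{i_{k-1}},b))-\mu(\varphi(x_{i_k},b))|\geq\varepsilon$ for all $0<k<N$. Writing $r_k:=\mu(\varphi(x_{i_k},b))\in[0,1]$, I would first do a pigeonhole extraction. Round each $r_k$ to a grid of mesh $\varepsilon/4$. Each step then moves across at least two grid cells, and only finitely many grid values exist. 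So, for $N$ large, I expect to find a fixed threshold $\alpha$ and a subsequence $j_0<j_1<\dots<j_{2M-1}$ of length $2M$ (with $M\to\infty$ as $N\to\infty$) such that $r_{j_{2k}}\leq\alpha$ and $r_{j_{2k+1}}\geq\alpha+\varepsilon/4$. By indiscernibility of $\mu$, the restriction of $\mu$ to $x_{j_0},\dots,x_{j_{2M-1}}$ equals its restriction to $x_0,\dots,x_{2M-1}$. By compactness, I then obtain an indiscernible measure $\mu'$ in variables indexed by $\omega$ and a parameter $b'$ such that $\mu'(\varphi(x_{2k},b'))\leq\alpha$ and $\mu'(\varphi(x_{2k+1},b'))\geq\alpha+\varepsilon/4$ for all $k$.

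The naive next step would be to sample a single point of the support of $\mu'$. For such a sample $\omega$, the expected number of indices $k$ with $\neg\varphi(\omega_{2k},b')\wedge\varphi(\omega_{2k+1},b')$ is at least about $M\varepsilon/4$. Hence some realization alternates $\varphi(\cdot,b')$ many times along $(\omega_k)$. This is the step I expect to be the \emph{main obstacle}: a single support type of an indiscernible measure need not be an indiscernible sequence, since $\mu'$ could be a mixture of non-indiscernible types. Alternation along a non-indiscernible sequence gives no contradiction with NIP.

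To handle this I would follow \cite{ben2009continuous}, which is where the cited argument comes from. I view $\mu'$ as a point of the randomization $T^R$, with the variables $x_i$ interpreted as random elements $X_i$ and $b'$ as a (constant) random element. Then $(X_i)_{i\in\omega}$ is an honest indiscernible sequence in $T^R$, because indiscernibility of $\mu'$ says exactly that the $T^R$-type of any increasing $n$-tuple is the same. Moreover, $\mu'(\varphi(x_i,b'))$ is the value of the continuous formula $\mathbb{E}[\varphi(X_i,b')]$. The key input is Ben Yaacov's theorem that $T^R$ is NIP (in the continuous sense) whenever $T$ is NIP. In a continuous NIP theory, for an indiscernible sequence $(X_i)$ and a fixed parameter, the real-valued function $i\mapsto\mathbb{E}[\varphi(X_i,b')]$ cannot oscillate by $\geq\varepsilon/4$ infinitely often. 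This is the continuous analogue of the finite alternation bound, and it contradicts the construction.

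Finally, to recover a uniform $N$ from the contradiction, I would phrase the negation as a set of conditions of the form ``$M$-fold $\varepsilon$-oscillation'' for all $M$. These conditions are expressible in $T^R$ by closed conditions, so compactness of $T^R$ gives a single counterexample with infinitely many oscillations from counterexamples with arbitrarily many. Thus the failure of a uniform $N$ already produces the infinite oscillation ruled out above.
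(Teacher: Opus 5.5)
Your proposal is correct and follows the route behind the citation: the paper gives no proof of its own but invokes \cite[Corollary 2.12]{hrushovski2013generically}, which derives the bound from Ben Yaacov's theorem \cite[Theorem 5.3]{ben2009continuous} that randomizing an NIP theory preserves NIP. As in your outline, an indiscernible Keisler measure is read as an indiscernible sequence of random elements there. One minor point: the restrictions of $\mu$ to $x_{j_0},\dots,x_{j_{2M-1}}$ and to $x_0,\dots,x_{2M-1}$ agree only on $\emptyset$-formulas, not over $b$. This does no harm, since your compactness step only uses the $\emptyset$-indiscernibility of the subsequence together with its oscillation over $b$.
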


\noindent We will need some auxiliary lemmas generalizing various results about indiscernible sequences in NIP theories to indiscernible measures.

\begin{lemma}\label{shrinking of indiscernibles and total indiscernibility for measures}
    Assume that $T$ has NIP and $\Ical$ is infinite. If $\mu_\Ical$ is totally indiscernible and $b\sub \Mbb$ finite, then there is $\Ical_1\sub \Ical$ with $|\Ical_1|\leq |T|$ such that $\mu_{\Ical\sm \Ical_1}$ is totally indiscernible over $b$.
\end{lemma}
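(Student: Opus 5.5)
We follow the classical argument for indiscernible sequences in NIP theories (shrinking of indiscernibles): for each formula only finitely many indices can be ``bad'', and there are only $|T|$ formulas.

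\textbf{Reduction to a single formula.} Fix $\varphi(y_0,\dots,y_{k-1};z)\in\Lcal$ with $z$ matching $b$. Call $i\in\Ical$ \emph{$\varphi$-bad} if $i$ belongs to some $k$-tuple $\ib$ of distinct indices such that $\mu(\varphi(x_{\ib},b))\neq\mu(\varphi(x_{\jb},b))$ for some other tuple $\jb$ of distinct indices. The goal is a set $\Ical_\varphi\subseteq\Ical$ of bounded (in fact finite) size such that $\mu_{\Ical\setminus\Ical_\varphi}$ is totally $\varphi(\cdot,b)$-indiscernible. Then $\Ical_1:=\bigcup_\varphi\Ical_\varphi$ has $|\Ical_1|\le|T|$, and $\mu_{\Ical\setminus\Ical_1}$ is totally indiscernible over $b$. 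Note that tuples with repeated indices reduce to formulas in fewer variables, so it suffices to consider tuples of distinct indices.

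\textbf{Key step: finiteness via alternation.} Fix $\varepsilon>0$. We first show that there are no infinitely many pairwise disjoint $k$-tuples $\ib_0,\ib_1,\dots$ and $\jb_0,\jb_1,\dots$ with all the $\ib_m\cup\jb_m$ pairwise disjoint and $\mu(\varphi(x_{\ib_m},b))-\mu(\varphi(x_{\jb_m},b))\ge\varepsilon$ for all $m$.

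Suppose such tuples exist. By total indiscernibility, the measure $\nu$ in variables $(w_n)_{n\in\omega}$, where $w_{2m}:=x_{\ib_m}$ and $w_{2m+1}:=x_{\jb_m}$, is an indiscernible (indeed totally indiscernible) measure obtained by restricting and renaming $\mu$. Apply Fact \ref{alternation for a measure over a tuple} to $\nu$ and the formula $\varphi(w,z)$ with parameter $b$. It yields $N$ such that any $N$ increasing indices contain a consecutive pair where the values differ by less than $\varepsilon$. But along $w_0,w_1,\dots,w_{N-1}$ consecutive values alternate by at least $\varepsilon$, a contradiction.

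\textbf{Extracting the exceptional set.} Now take a maximal family of pairs $(\ib_m,\jb_m)$ with pairwise disjoint supports and difference $\ge 1/r$. By the key step it is finite; let $F_r$ be the union of the supports. For tuples outside $F_r$, any two distinct-index tuples $\ib,\jb$ satisfy $|\mu(\varphi(x_{\ib},b))-\mu(\varphi(x_{\jb},b))|<1/r$. Otherwise, taking the union of $\ib$ and $\jb$ would extend the family, using disjointness from $F_r$. Put $\Ical_\varphi:=\bigcup_r F_r$, which is countable. Outside $\Ical_\varphi$ all values $\mu(\varphi(x_{\ib},b))$ coincide. Hence $|\Ical_1|\le|T|\cdot\aleph_0=|T|$.

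\textbf{Main obstacle.} The delicate point is the application of Fact \ref{alternation for a measure over a tuple}. The fact is stated for a measure indexed by $\omega$ in single variables $x_i$, so we must regroup the tuples into single (finite-tuple) variables and check that the renamed measure $\nu$ is still indiscernible. This uses total indiscernibility of $\mu$ together with the disjointness of the supports; it is exactly why we need total, not merely order, indiscernibility. We also need $\Ical\setminus\Ical_1$ to be infinite when $\Ical$ is uncountable, or else interpret the statement accordingly; if $|\Ical|\le|T|$ the statement is trivial anyway.
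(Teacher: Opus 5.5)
Your overall strategy is the paper's. For each formula $\varphi(\bar y;z)$ and each threshold $1/r$ you isolate finitely many indices, using a maximal family of disjoint witnesses together with Fact~\ref{alternation for a measure over a tuple}, and then take a countable union per formula. However, two steps do not go through as written, though both are easily repaired.

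\textbf{The key step.} You set $w_{2m}:=x_{\ib_m}$ and $w_{2m+1}:=x_{\jb_m}$. Your hypothesis only controls the consecutive pairs $(w_{2m},w_{2m+1})$. Nothing is known about $(w_{2m+1},w_{2m+2})$, i.e.\ $\jb_m$ versus $\ib_{m+1}$. For instance, the values $1,\tfrac12,\tfrac12,0,1,\tfrac12,\tfrac12,0,\dots$ along $w_0,w_1,w_2,\dots$ satisfy your hypothesis with $\varepsilon=\tfrac12$. Yet $w_1$ and $w_2$ have the same value, so the Fact is satisfied and gives no contradiction.

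The repair is a pigeonhole step. Pass to a subsequence of $m$ along which $\mu(\varphi(x_{\ib_m},b))$ and $\mu(\varphi(x_{\jb_m},b))$ lie within $\varepsilon/4$ of limits $\alpha$ and $\gamma$ respectively; then $\alpha-\gamma\ge\varepsilon$. Along the interleaved subsequence every consecutive difference is at least $\varepsilon/2$, and you apply the Fact with $\varepsilon/2$.

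\textbf{The extraction step.} The offending tuples $\ib,\jb$ disjoint from $F_r$ may share indices. Then $(\ib,\jb)$ is not a pair your key step can handle. If $\ib_m\cap\jb_m\neq\emptyset$, then $w_{2m}$ and $w_{2m+1}$ share variables while $w_{2m+1}$ and $w_{2m+2}$ do not, so the renamed measure $\nu$ is not indiscernible. You can fix this in either of two ways.
\begin{enumerate}
\item Build the family with threshold $\tfrac1{2r}$. Given $\ib,\jb$ with difference $\ge\tfrac1r$, choose $\bar k$ disjoint from $F_r\cup\ib\cup\jb$, which is possible since $F_r$ is finite and $\Ical$ is infinite. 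By the triangle inequality, one of $(\ib,\bar k)$, $(\jb,\bar k)$ is a disjoint pair with difference $\ge\tfrac1{2r}$, so it extends the family.
\item After the subsequence step above, interleave $\ib_{2m}$ with $\jb_{2m+1}$. These come from different pairs and hence are disjoint, so you only need the supports to be disjoint across different $m$.
\end{enumerate}

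The paper sidesteps both problems. It works directly with chains of pairwise disjoint $k$-tuples whose \emph{consecutive} values differ by at least $\tfrac1{2n}$, which is exactly the configuration the Fact rules out. It then extends a maximal chain at its last element via the triangle inequality; this is what the $\tfrac1{2n}$ versus $\tfrac1n$ in its proof is for. With either patch your argument is complete. Your ``no infinite family, hence every maximal family is finite'' formulation is fine as is and needs no uniformity of the bound in the Fact.
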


\begin{proof}
    We may assume that $\Ical$ is dense and endless since re-enumerating the variables preserves total indiscernibility.  Fix $\varphi(x_0,\dots, x_{k-1},;b)\in \Lcal(b)$.     By Fact \ref{alternation for a measure over a tuple} and total indiscernibility, for each $n\in \Nbb$ there is an integer $N_n$ such that there exists a set of $N_n$ disjoint $k$-tuples $(\ib)_{i<N_n}$ from $\Ical$ with $|\mu(\varphi(x_{\ib},b))-\mu(\varphi(x_{\overline{i+1}},b))|\geq \frac{1}{2n}$, but there does not exist a set of $N_n+1$ disjoint $k$-tuples from $\Ical$ with the same property. Hence, removing finitely many indices, we may ensure that $|\mu(\varphi(x_{\ib},b))-\mu(\varphi(x_{\jb},b))|< \frac{1}{n}$ for each $\ib,\jb$ remaining $k$-tuples from $\Ical$. Hence, choosing these indices for each $n$ and removing the countably many variables corresponding to these indices, we can ensure that the value $\mu(\varphi(x_{\ib},b))$ is constant for all $k$-tuples $\ib$. Now, repeating this for every formula $\varphi \in \Lcal(b)$, the result follows. 
\end{proof}

\begin{lemma}\label{existence of funny measure}
    Fix an indexing sequence $\Ical$, a sequence  $c_\Ical$ of tuples in $\cU^z$, and a global measure $\mu(x_\Ical)$. Then we may define a measure $\eta_\Ical$ in the variables $(x_i,z_i)_{i\in \Ical}$ (where each $z_i$ is a copy of $z$) by $\eta(\varphi(x_\Ical,z_\Ical)) := \mu(\varphi(x_\Ical,c_\Ical))$. 
\end{lemma}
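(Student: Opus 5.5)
The plan is to realize $\eta_\Ical$ as a pushforward of $\mu$, so that well-definedness and finite additivity come for free. The underlying fact is simple: substituting the fixed parameters $c_\Ical$ for the variables $z_\Ical$ turns each formula in $(x_\Ical,z_\Ical)$ into a formula in $x_\Ical$ with parameters. This substitution respects logical equivalence and all Boolean operations.

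First fix $\varphi(x_\Ical,z_\Ical)\in\Lcal_{(x_i,z_i)_{i\in\Ical}}(\cU)$. It mentions only finitely many variables, say $x_{\bar i}, z_{\bar j}$ for finite tuples $\bar i,\bar j$ from $\Ical$, together with some parameter tuple $d$. Then $\varphi(x_\Ical,c_\Ical)$ denotes $\varphi(x_{\bar i},c_{\bar j};d)$, which is a formula in $\Lcal_{x_\Ical}(\cU)$. So $\mu(\varphi(x_\Ical,c_\Ical))$ makes sense.

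The well-definedness step is to check that the value depends only on the definable set $\varphi(\cU)$, not on the particular formula. Suppose $\varphi \equiv \varphi'$, i.e.\ $\models\forall x_\Ical z_\Ical(\varphi\leftrightarrow\varphi')$. Instantiating $z_\Ical := c_\Ical$ gives $\models\forall x_\Ical(\varphi(x_\Ical,c_\Ical)\leftrightarrow\varphi'(x_\Ical,c_\Ical))$. Hence both formulas define the same subset of $\cU^{x_\Ical}$, and $\mu$ assigns them the same value. The same substitution argument shows that the map $\varphi\mapsto\varphi(x_\Ical,c_\Ical)$ is a homomorphism of Boolean algebras $\Lcal_{(x_i,z_i)_\Ical}(\cU)\to\Lcal_{x_\Ical}(\cU)$. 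It commutes with $\wedge$, $\vee$ and $\neg$, and it sends $\top\mapsto\top$ and $\bot\mapsto\bot$, because substitution commutes with connectives.

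Finally, $\eta$ is the composition of $\mu$ with this homomorphism, so it inherits the measure axioms directly. Disjoint $\varphi,\psi$ map to disjoint sets, so $\eta(\varphi\vee\psi)=\eta(\varphi)+\eta(\psi)$. We also get $\eta(\top)=\mu(\top)=1$ and $\eta\ge 0$. Thus $\eta_\Ical$ is a finitely additive probability measure on the global definable sets in the variables $(x_i,z_i)_{i\in\Ical}$, i.e.\ a global Keisler measure.

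There is no real obstacle here. The only point needing care is the variables in infinite tuples: one must observe that every formula uses finitely many of them, so the substitution is a finite operation.
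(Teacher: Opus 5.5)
Your proposal is correct and matches the paper, whose proof simply says that each axiom of a finitely additive probability measure is easy to check. Your observation is the verification the paper leaves to the reader: substituting $c_\Ical$ for $z_\Ical$ respects logical equivalence and gives a Boolean algebra homomorphism $\Lcal_{(x_i,z_i)_{i\in\Ical}}(\cU)\to\Lcal_{x_\Ical}(\cU)$, so $\eta$ is $\mu$ composed with it.
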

\begin{proof}
    It is easy to check each condition of the definition of a finitely additive probability measure. 
\end{proof}
\noindent The following is a generalization of \cite[Corollary 2.15]{simon2013distal} from $1$-distality and sequences to $n$-distality and measures:
\begin{lemma} \label{key lemma with strong indiscernibility}
    Assume that $T$ has NIP and $\Ical$ is infinite. Fix a tuple of variables, $\ub$, partitioned into subtuples $\ub = x_0\dots x_{n-1}$ and form a sequence of copies of $\ub$, $(\ub_i)_{i\in \Ical}$. Denote by $\ub_i^j$ the copy of $x_j$ in $\ub_i$. Suppose that a global measure $\omega(\ub_\Ical)$ is totally indiscernible (over $\emptyset)$, an indiscernible sequence $c_\Ical$ is strongly $n$-distal and the measure $\eta((\ub_i,z_i)_{i\in \Ical})$ defined by $\eta(\varphi(\ub_\Ical,z_{\Ical})):=\omega(\varphi(\ub_\Ical,c_\Ical))$ (Lemma \ref{existence of funny measure}) is indiscernible. If the sequence $c_\Ical$ is $\omega|_{(\ub_\Ical^j)_{j\neq k}}$-indiscernible for each $k<n$, then $c_\Ical$ is $\omega$-indiscernible and $\omega(\ub_\Ical)$ is indiscernible over $c_\Ical$.
\end{lemma}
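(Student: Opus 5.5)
The plan is to reduce both conclusions to statements about finitely many coordinates of $\omega$, and then push the sequence $c_\Ical$ past the variable positions one at a time using Lemma \ref{measure strong 2-distality}. Without loss of generality $\Ical$ is dense and endless: by compactness we can stretch the sequence while keeping total indiscernibility of $\omega$, indiscernibility of $\eta$, strong $n$-distality of $c_\Ical$, and the marginal hypotheses. Fix $\psi(\ub_K,\zb)\in\Lcal$ with $K\subseteq\Ical$ finite. We must show two things: that $\omega(\psi(\ub_K,c_J))$ depends only on the length of $J$, and (for the second conclusion) that it does not depend on $K$ either, among tuples of the same length.

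First I extract what $\eta$-indiscernibility gives directly. The value $\omega(\psi(\ub_K,c_J))=\eta(\psi(\ub_K,z_J))$ depends only on the order type of the pair $(K,J)$ inside $\Ical$. So for each fixed $K$, the sequence $c_{\Ical\sm K}$ is already $\omega_K$-indiscernible on tuples $J,J'$ that fall into the same convex pieces determined by $K$. The remaining task is to move points of $J$ across the points of $K$, or equivalently to move the variable positions of $K$ past the points of $J$.

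This is the step where strong $n$-distality enters, and it is done by induction on the number of crossings. Fix $k\in K$ and consider $c_\Ical$ as $\Ical_0+\{k\}+\Ical_1$, with $c_k$ at the distinguished position. I apply Lemma \ref{measure strong 2-distality} to a measure $\omega'$ in variables $(x_0,\dots,x_{n-1})$, where $x_j$ is the tuple of all $\ub^j_i$ for $i\in K$. Two hypotheses must be checked. The hypothesis that $c_\Ical$ is $\omega'|_{(x_i)_{i\neq j}}$-indiscernible is exactly the assumption of the lemma restricted to $K$. The hypothesis that $c_{\Ical_0+\Ical_1}$ is $\omega'$-indiscernible is where the work lies. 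Using total indiscernibility of $\omega$, together with Lemma \ref{shrinking of indiscernibles and total indiscernibility for measures} applied over the finitely many relevant parameters $c_J c_{J'}$, I can replace $K$ by a set $K'$ of the same size lying in one convex gap away from $J\cup J'$ without changing the measure of the relevant formulas. The $\eta$-indiscernibility then equalises the values for $J$ and $J'$ on either side of the removed point. Lemma \ref{measure strong 2-distality} then lets $c_k$ be reinserted. Iterating over the finitely many points of $K$ gives that $c_\Ical$ is $\omega_K$-indiscernible for every finite $K$, hence $\omega$-indiscernible.

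For the second conclusion, take finite $K,K'$ of the same order type and fix $J$. Again using Lemma \ref{shrinking of indiscernibles and total indiscernibility for measures} and density, I choose $K''$ of the same order type as $K$ with $(K'',J)$ and $(K',J)$ both of the same joint order type as variants of $(K,J)$. The first conclusion lets me shift $J$ freely, and $\eta$-indiscernibility then identifies all the values. The main obstacle is the middle step, namely justifying that $c_{\Ical_0+\Ical_1}$ is $\omega'$-indiscernible once the point $k$ is removed. Total indiscernibility of $\omega$ alone does not transfer to formulas with parameters $c$, so the relocation of variable positions must be done carefully through the shrinking lemma, applied over the finitely many parameters at hand, to avoid circularity. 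If this fails as stated, I would instead first prove that $\omega_{\Ical\sm\Ical_1}$ is totally indiscernible over $c_{\Ical_1}$ for a small $\Ical_1$, and run the distality argument only on the complement, using density to recover all order types.
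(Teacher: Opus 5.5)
Your overall architecture matches the paper's. You stretch $\Ical$ and apply Lemma \ref{measure strong 2-distality} to $\omega_K$ with blocks $x_j=(\ub^j_i)_{i\in K}$; its marginal hypothesis is indeed just the assumption restricted to $K$. Your second conclusion (shift $J$ using the first conclusion, then apply $\eta$-indiscernibility) is fine; putting $J$ to the left of $K\cup K'$ already suffices.

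\textbf{The middle step.} There is a genuine gap exactly where you suspected. As written you take $\Ical_0+\Ical_1=\Ical\sm\{k\}$. So you must show $\omega(\psi(\ub_K,c_J))=\omega(\psi(\ub_K,c_{J'}))$ for all increasing $J,J'\subseteq\Ical\sm\{k\}$ of equal length. Such $J,J'$ may contain other points $k'\in K\sm\{k\}$, and then $K$ cannot be relocated ``away from $J\cup J'$''. The variable $\ub_{k'}$ and the parameter $c_{k'}$ sit at the same index, and separating them is exactly a crossing that needs strong $n$-distality. So the hypothesis you try to verify is as hard as the lemma itself, and your ``iteration'' has no valid base.

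\textbf{The fix.} Delete all of $K=\{k_1<\dots<k_m\}$ at once. First show $c_{\Ical\sm K}$ is $\omega_K$-indiscernible. Then prove by induction on $t$ that $c_{(\Ical\sm K)\cup\{k_1,\dots,k_t\}}$ is $\omega_K$-indiscernible. Each step is one application of Lemma \ref{measure strong 2-distality} with $k_{t+1}$ at the distinguished position, which is a Dedekind cut by density. The $\omega_K$-indiscernibility hypothesis at each step now comes from the previous stage, not from the shrinking lemma.

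\textbf{The base case.} Even here your relocation is missing an ingredient. Lemma \ref{shrinking of indiscernibles and total indiscernibility for measures} applied over $c_{J\cup J'}$ only gives total indiscernibility of $\omega$ on $\Ical\sm\Ical''$ for some $|\Ical''|\le|T|$. The given $K$ may meet $\Ical''$, and total indiscernibility of $\omega$ over $\emptyset$ cannot move it out.

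\begin{itemize}
\item First use $\eta$-indiscernibility, via an order-preserving map fixing $J\cup J'$ pointwise, to replace $K$ by some $K''\subseteq\Ical\sm\Ical''$ with the same order type relative to $J\cup J'$.
\item This needs $K\cap(J\cup J')=\emptyset$, which is another reason to remove all of $K$.
\item It also needs intervals of $\Ical$ to contain more than $|T|$ points, so your stretching should make $\Ical$ large, not merely dense and endless.
\end{itemize}

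This is the content of the paper's Claims 2 and 3: for disjoint $\Ical_1,\Ical_2$, $\omega_{\Ical_1}$ is indiscernible over $c_{\Ical_2}$, and $c_{\Ical_2}$ is $\omega_{\Ical_1}$-indiscernible. Only after this can you push $K''$ into a single gap and let $\eta$ carry $J$ to $J'$. Your fallback (total indiscernibility of $\omega_{\Ical\sm\Ical_1}$ over $c_{\Ical_1}$) is just the shrinking lemma again and does not by itself remove this obstacle.
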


\begin{proof}
    Throughout, we use the consequence of strong $n$-distality given by Lemma \ref{measure strong 2-distality}.

\noindent    \textit{Claim 1.} We may assume that the order $\Ical$ is dense and arbitrarily large. 

\noindent    \textit{Proof of Claim 1.} 
    Assume that we have a counterexample to the lemma indexed by $\Ical$, and let $\Jcal$ be an arbitrary linear order. Assume that the counterexample has $c_\Ical$ not $\omega$-indiscernible (the case that the counterexample is given instead by $\omega$ not being indiscernible over $c_\Ical$ is exactly analogous). This means we can find some finite increasing subtuple $\mathcal{I}_0$ of $\mathcal{I}$, say of length $l$, formula $\psi(\bar{u}_{\mathcal{I}_0},\yb) \in \mathcal{L}$ where $\yb$ is the order type of a finite subtuple of $c_\Ical$, and $\cb, \db$ subtuples of $c_{\Ical_0}$ corresponding to $\yb$, and $\varepsilon>0$ such that $
    \omega(\psi(\ub_{\Ical_0},\cb))-\omega(\psi(\ub_{\Ical_0},\db))>\varepsilon$. 

 By indiscernibility of $\eta$, given any increasing $l$-tuple $\mathcal{I}'_0$ from $\mathcal{I}$, it also witnesses that $c_\Ical$ is not $\omega$-indiscernible with respect to $\psi$ and $\varepsilon$, in the same way as $\mathcal{I}_0$ (i.e.~the copies of $\cb$, $\db$ are chosen in $c_{\mathcal{I}'_0}$ in the same way)  --- call this observation $(*)$.

 By Ramsey and compactness, let $(c'_i : i \in \Jcal)$  be an indiscernible sequence realizing $\EM(c_\Ical)$. Fix an arbitrary increasing $l$-tuple $\Jcal_0$ from $\Jcal$. 
    
    We use compactness of the space of global Keisler measures in the variables $(\ub_j)_{j \in \Jcal}$. Recall (see e.g.~\cite[Proposition 2.11]{chernikov2022definable}) that the topology on this space is generated by the basic open sets of the form
    \[
    \left\{\omega' \in \mathfrak{M}_{\ub_\Jcal}(\cU) : \bigwedge_{i =1}^{N} r_i<\omega'(\varphi_i(\ub_\Jcal)) <s_i \right\}
    \]
   for some $N\in \Nbb$, $r_i, s_i\in [0,1]$ and $\varphi_i\in \Lcal(\Mbb)$. Observe that the set of $\omega' \in \mathfrak{M}_{\ub_\Jcal}(\cU)$ satisfying the following  conditions is closed in the topology (expressed by the intersection of complements of basic open sets):
   \begin{enumerate}
        \item the measure $\eta'((\ub_i,z_i)_{i\in \Jcal})$ defined by $\eta'(\varphi(\ub_\Jcal,z_{\Jcal})):=\omega'(\varphi(\ub_\Jcal,c'_\Jcal))$ is indiscernible;
        \item $\omega'(\ub_\Jcal)$ is totally indiscernible;
        \item $(c'_\Jcal)$ is $\omega'|_{(\ub_\Jcal^j)_{j\neq k}}$-indiscernible for each $k<n$;
        \item $\Jcal_0$ witnesses that $c'_\Jcal$ is not $\omega'(\ub_\Jcal)$-indiscernible with respect to $\psi$ and $\varepsilon$. 
    \end{enumerate}

    Thus, by compactness of $\mathfrak{M}_{\ub_\Jcal}(\cU)$, we  only need to show that there is a measure $\omega'$  satisfying these conditions restricted to a finite set of indices $\Jcal_1 \subseteq \Jcal$, which without loss of generality we assume contains $\Jcal_0$ (closed sets are intersections of complements of basic open sets whose conditions only concern finitely many indices). So let $\Ical_1$ be a copy of $\Jcal_1$ in $\Ical$ (exists as $\Ical$ is infinite) and fix an automorphism $\sigma$ of $\cU$ mapping $c'_{\Ical_1}$ to $c_{\Jcal_1}$. Then we let $\omega''\in \Mfrak_{(\ub_i)_{i\in \Jcal_1}}(\Mbb)$ be given by $\omega''(\varphi((\bar{u}_i : i \in \Jcal_1), \bar{a})) := \omega(\varphi((\bar{u}_i : i \in \Ical_1), \sigma(\bar{a})))$ for all $\varphi \in \mathcal{L}$ and tuples $\bar{a}$ in $\cU$. Let $\omega' \in \mathfrak{M}_{\ub_\Jcal}(\cU)$ be obtained by extending $\omega''$ arbitrarily (e.g.~we can first take the product measure with arbitrary measures on the new variables $\bar{u}_{\Jcal \setminus \Jcal_1}$, e.g.~measures concentrated on a point, and then extend from the product subalgebra to the full algebra $\mathcal{L}_{\bar{u}_{\Jcal}}(\cU)$ by Fact \ref{existence of measure extensions}). Using observation $(*)$ and  preservation under automorphism we conclude that $\omega'$ satisfies the given finite set of conditions involving indices from $\Jcal_1$. 
    \hfill$\blacksquare_{\textit{Claim }1}$

Hence, we continue with the proof assuming that $\Ical$ is dense and $|\Ical| \geq |T|^{+}$.

\noindent    \textit{Claim 2.} $\omega_\Ical$, $(c_i)_\Ical$ are $\mu_\Ical$-weakly linked.

 \noindent    \textit{Proof of Claim 2.}
 Let $\Ical_1, \Ical_2 \sub \Ical$ with $\Ical_1 \cap  \Ical_2 = \emptyset$ be given. Without loss of generality $\Ical_1, \Ical_2$ are finite. Let $C := (c_i)_{i\in \Ical_2}$. By Lemma \ref{shrinking of indiscernibles and total indiscernibility for measures}, we can find a set $\Ical' \sub \Ical$ of size at most $|T|$ such that $\mu|_{\Ical\sm \Ical'}$ is (totally) indiscernible over $C$. As the order is dense, there is an order preserving partial bijection of $\mathcal{I}$ fixing $\mathcal{I}_2$ pointwise and sending $\mathcal{I}_1$ to a subset of $\Ical\sm \Ical'$. Then, by indiscernibility of $\eta$, it follows that  $\mu_{\Ical_1}$ is also indiscernible over $(c_i)_{i\in \Ical_2}$.
    \hfill$\blacksquare_{\textit{Claim }2}$

\noindent     \textit{Claim 3.} $\omega_\Ical$, $(c_i)_\Ical$ are $b_{\Ical}$-weakly linked.

 \noindent    \textit{Proof of Claim 3.} This follows combining Claim 2 and indiscernibility of $\eta$.
    \hfill$\blacksquare_{\textit{Claim }3}$
    
    It is now fairly easy to conclude. We again assume the order is dense. Then pick a finite $\Ical_1\sub\Ical$ and note that $(c_i)_{\Ical\sm \Ical_1}$ is $\omega_{\Ical_1}$-indiscernible by $b_{\Ical}$-weakly linked. By applying Lemma \ref{measure strong 2-distality} a finite number of times, the whole sequence $(c_i)_{i \in \Ical}$ is $\omega_{\Ical_1}$-indiscernible too. Hence, by finiteness of formulas, $(c_i)_{i \in \Ical}$ is $\omega$-indiscernible. 
    
    It remains to show that $\omega$ is $c_\Ical$-indiscernible. Fix finite tuples $\ib$, $\ib'$ from $\Ical$ with the same order type, and fix $\varphi(\bar{u}_{\ib},\cb)\in \Lcal \left( (c_i)_\Ical \right)$.  Then, by $\mu_\Ical$-weakly linked, we have $\omega(\varphi(\ub_{\ib},\cb'))=\omega(\varphi(\ub_{\ib'},\cb'))$ for any copy $\cb'$ of $\cb$ in $\mathcal{I}$ away from $\ub_{\ib},\ub_{\ib'}$.  Then, moving $\cb'$  to $\cb$  by $\omega$-indiscernibility of $(c)_{\Ical}$, we conclude  that  $\omega(\varphi(\ub_{\ib},\cb))= \omega(\varphi(\ub_{\ib'},\cb))$.
\end{proof}

\subsection{$N$-smooth measures in strongly $n$-distal NIP theories}\label{sec: n-smooth measures in strongly n-distal NIP theories}

In this section we generalize Simon's result that in a distal theory all generically stable measures are smooth \cite[Proposition 2.27]{simon2013distal} to strongly $n$-distal NIP theories.

%\begin{definition}
%    Fix a Boolean subalgebra $\Afrak$ of $\Lcal_x(\Mbb)$. A global measure $\omega(x)$ is \emph{$\Afrak$-smooth} if there is a small model $\M$ such that for any $\omega'(x) \in \Mfrak(\cU)$ extending $\omega|_{\Bfrak(\Afrak \cup \Lcal_x(\M))}$ we have that $\omega'=\omega$. In this case we say $\omega$ is \emph{$\Afrak$-smooth over $\M$}.
%\end{definition}

\begin{definition}
   Given a subset $\Gamma$ of a larger ambient Boolean algebra, we denote by $\Bfrak(\Gamma)$ the smallest Boolean subalgebra containing $\Gamma$ --- i.e.~the Boolean algebra generated by $\Gamma$. 
    Given Boolean subalgebras $\Afrak_i$ of an ambient larger Boolean algebra and measures $\mu_i$ on $\Afrak_i$ for $i \in I$, we say that the tuple of measures $(\mu_i : i \in I)$ is \emph{compatible} if there is a measure $\nu$ on $\Bfrak(\bigcup_{i \in I} \Afrak_i)$ extending $\mu_i$ for all $i \in I$.
\end{definition}

We introduce a  higher arity generalization of the notion of a \textit{smooth} measure:
\begin{definition}\label{def: n-smooth}
     A global measure $\omega(x_0,\dots,x_{n-1})\in \Mfrak(\Mbb)$ is \emph{$n$-smooth over $A \subseteq \cU$} if for any $\omega'(x_0,\dots,x_{n-1})\in \Mfrak(\Mbb)$ so that $\omega'|_{A} = \omega|_{A}$ and $\omega'|_{(x_i : i \neq j)} = \omega|_{(x_i : i \neq j)}$ for all $0 \leq j < n$, we must have $\omega' = \omega$. And $\omega$ is \emph{$n$-smooth} if it is $n$-smooth over some small $A \subset \cU$.
\end{definition}

\begin{remark}\label{rem: weak n-smooth}
	We will also say that $\omega(x_0,\dots,x_{n-1})\in \Mfrak(\Mbb)$ is \emph{weakly $n$-smooth over $A$} if for any $\omega'(x_0,\dots,x_{n-1})\in \Mfrak(\Mbb)$, if
	$$\omega'|_{\mathfrak{B} \left(\mathcal{L}^{n-1}_{x_0, \ldots, x_{n-1}}(\cU) \cup \mathcal{L}_{x_0, \ldots, x_{n-1}}(A) \right)} = \omega|_{\mathfrak{B} \left(\mathcal{L}^{n-1}_{x_0, \ldots, x_{n-1}}(\cU) \cup \mathcal{L}_{x_0, \ldots, x_{n-1}}(A) \right)}$$
	then $\omega' = \omega$. 
	
	And we say that $\omega(x_0,\dots,x_{n-1})\in \Mfrak(\Mbb)$ is \emph{strongly $n$-smooth over $A \subseteq \cU$} if for any small tuple $a$ and any $\omega'(x_0,\dots,x_{n-1})\in \Mfrak(A a)$ so that $\omega'|_{A} = \omega|_{A}$ and $\omega'|_{(x_i : i \neq j)} = (\omega|_{(x_i : i \neq j)})|_{Aa}$ for all $0 \leq j < n$, we must have $\omega' = \omega|_{Aa}$.

	Note that obviously strongly $n$-smooth implies $n$-smooth implies weakly $n$-smooth.
\end{remark}

\begin{remark}
    A  global measure $\omega(x_0)$ is smooth, in the usual sense, exactly when it is $1$-smooth, if and only if it is weakly $1$-smooth (in which case the subalgebra $\Lcal^{0}_{x_0}(\Mbb)$ is trivial), if and only if it is strongly $n$-smooth.
\end{remark}

The following lemma is a generalization of  \cite[Theorem 3.16]{keisler1987measures} (see also \cite[Proposition 7.9]{simon2015guide}) in the case $n=1$:

\begin{lemma}\label{existence of smooth extensions of products}
    Assume $T$ has NIP, $n \geq 1$ and $\M \models T$. 
    Given any global measure $\omega'(x_0, \ldots, x_{n-1})$, there is a  global measure $\omega(x_0, \ldots, x_{n-1})$  which is $n$-smooth over some small $\mathcal{N} \succeq \M$ and satisfies $\omega|_{\M} = \omega'|_{\M}$ and $\omega|_{(x_i : i \neq j)} = \omega'|_{(x_i:i \neq j)}$ for all $j < n$. 
\end{lemma}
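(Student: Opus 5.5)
We will imitate Keisler's construction of smooth extensions (\cite[Theorem 3.16]{keisler1987measures}, \cite[Proposition 7.9]{simon2015guide}): build an increasing chain of small models and measures, at each stage extending to more formulas while keeping the constraints fixed, and pass to the limit. Let $\Afrak_A$ denote the Boolean algebra $\Bfrak\left(\mathcal{L}^{n-1}_{x_0,\ldots,x_{n-1}}(\cU)\cup \mathcal{L}_{x_0,\ldots,x_{n-1}}(A)\right)$. The constraint set is: agree with $\omega'$ on $\Afrak_{\M}$. Any measure agreeing with $\omega'$ on the restrictions to $\M$ and to each $(x_i:i\neq j)$ agrees with it on $\Afrak_{\M}$, because the union of these algebras generates $\Afrak_{\M}$. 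Conversely, the target is a measure $\omega$ agreeing with $\omega'$ on $\Afrak_{\M}$ such that $\omega$ is the unique extension of $\omega|_{\Afrak_{\Ncal}}$ to $\mathcal{L}_{x_0,\ldots,x_{n-1}}(\cU)$. That is, $\omega$ is $\Afrak_{\Ncal}$-determined in the sense of Definition \ref{def: A-determined measure}, which is exactly $n$-smoothness over $\Ncal$.

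Step 1 (one-formula improvement). Suppose $\nu$ is a global measure and $\Ncal$ is a small model, and $\nu$ is not $\Afrak_{\Ncal}$-determined. By Proposition \ref{border and determination} there is a formula $\varphi(\bar x,b)$ whose $\Afrak_{\Ncal}$-border has positive $\tilde\nu$-measure. Equivalently, $\sup\{\nu(D): D\in\Afrak_{\Ncal}, D\subseteq \varphi\} < \inf\{\nu(D): D\in\Afrak_{\Ncal}, \varphi\subseteq D\}$. Enlarge $\Ncal$ to $\Ncal'\ni b$. Using Fact \ref{existence of measure extensions}, choose a new global measure $\nu'$ that extends $\nu|_{\Afrak_{\Ncal}}$ and pushes the value of $\varphi$ to an extreme (say the infimum). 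The key quantity is
\[
r(\nu,\Ncal):=\sup_{\varphi}\left(\inf_{D\supseteq\varphi}\nu(D)-\sup_{D\subseteq\varphi}\nu(D)\right),
\]
where $\varphi$ ranges over formulas with parameters in $\Ncal$ and $D$ over $\Afrak_{\Ncal}$. Keisler's original argument does not run with this gap directly. Instead it chooses formulas whose measure is maximized or minimized among extensions, and uses NIP to show the process must stop. Concretely, for a fixed $\varphi(\bar x;y)$, in NIP one cannot keep finding $b_0,b_1,\ldots$ in successive stages each carrying a uniform positive gap, by an argument like Fact \ref{bounding disjoint positive measure sets}/VC-theory.

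Step 2 (transfinite iteration). Run the construction for $|T|^+$ steps. Set $\Ncal_0=\M$ and $\nu_0=\omega'$. At successor stages apply Step 1 for every $\varphi(\bar x;y)\in\mathcal{L}$ simultaneously, choosing witnesses $b$ with a gap of at least $1/k$ whenever they exist. At limits take unions of models, together with a limit measure (an accumulation point in the compact space of global measures) compatible with all earlier restrictions to $\Afrak_{\Ncal_\alpha}$. This compatibility is a closed condition, so a limit exists. Since $\Afrak_{\M}\subseteq\Afrak_{\Ncal_\alpha}$, every $\nu_\alpha$ still agrees with $\omega'$ on $\Afrak_{\M}$.

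Main obstacle. The hard step is showing that some stage $\alpha<|T|^+$ has no witnesses, so that $\nu_\alpha$ is $\Afrak_{\Ncal_\alpha}$-determined. Following Keisler, assume toward a contradiction that for a fixed $\varphi(\bar x;y)$ and $k$ there are witnesses $b_\alpha$ at cofinally many stages. The plan is to use the fact that each $\varphi(\bar x,b_\alpha)$ is pinned, relative to $\Afrak_{\Ncal_{\alpha+1}}$, at an extreme value. From this one derives a long sequence of sets $\varphi(\bar x,b_\alpha)\triangle D_\alpha$ of measure at least $1/k$ that are almost disjoint in measure. Here the "unary" pieces $D_\alpha$ come from $\mathcal{L}^{n-1}$ with parameters in $\Ncal_{\alpha+1}$. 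This contradicts Fact \ref{bounding disjoint positive measure sets} combined with NIP of $\varphi$; the NIP input is needed to handle the presence of the $D_\alpha$. If this direct argument fails, the fallback is to reduce to the $n=1$ case. One would work in the Stone space of $\Afrak_{\cU}$, viewed as a theory with new predicates for the $(n-1)$-ary definable sets, and apply the classical proof of \cite[Proposition 7.9]{simon2015guide} relativized to the subalgebra.
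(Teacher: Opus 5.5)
Your proposal has a genuine gap: it proves a strictly weaker statement than the lemma, and the key separation step is missing.

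\textbf{The wrong target.} You claim that a measure agreeing with $\omega'$ on $\Lcal_{\xb}(\M)$ and on each marginal $\Lcal_{(x_i : i\neq j)}(\cU)$ must agree with $\omega'$ on the algebra $\Afrak_{\M}$ that these generate. This is false. A finitely additive measure is not determined on $\Bfrak(\bigcup_i \Afrak_i)$ by its restrictions to the separate $\Afrak_i$. Already for $n=2$, equal marginals do not force equal values on rectangles $\psi_0(x_0)\wedge\psi_1(x_1)$.

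Consequently, $\Afrak_{\Ncal}$-determinacy is \emph{not} $n$-smoothness over $\Ncal$. It is what the paper calls weak $n$-smoothness (Remark~\ref{rem: weak n-smooth}), which is strictly weaker. A competitor in Definition~\ref{def: n-smooth} only has to match $\omega$ on $\Lcal_{\xb}(\Ncal)$ and on each marginal separately, so there are more competitors than extensions of $\omega|_{\Afrak_{\Ncal}}$.

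Your plan in fact fails on a concrete instance. Work in DLO with $x_0,x_1$ single variables. The theory is binary, so $\Afrak_{\M}=\Lcal_{x_0x_1}(\cU)$, and your construction stops at stage $0$, returning $\omega'$ with $\Ncal=\M$. Now take:
\begin{itemize}
\item an increasing sequence $(c_t)_{t\in[0,1]}$ lying entirely above $\M$;
\item $\mu:=\Av(c_{[0,1]})$ and $\omega':=\mu\otimes\mu$;
\item the competitor given by pushing Lebesgue measure forward under $s\mapsto(c_s,c_{s\oplus 1/2})$, with $\oplus$ addition mod $1$.
\end{itemize}
The competitor has the same marginals as $\omega'$. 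It also has the same restriction to $\M$: both give $x_0<x_1$ measure $1/2$ and $x_0=x_1$ measure $0$. But it gives $x_0<c_{1/2}\wedge x_1<c_{1/2}$ measure $0$ instead of $1/4$.

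So even if completed, your argument, and likewise the fallback through the Stone space of $\Afrak_{\cU}$, proves only the weak version recorded in the remark after the lemma. For the same reason, Proposition~\ref{border and determination} and Fact~\ref{existence of measure extensions} are the wrong tools here. The competitors form the set of measures extending a compatible tuple of measures on several subalgebras, not the set of extensions of one measure on one algebra.

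\textbf{The fix.} Work with the definition directly.
\begin{enumerate}
\item Build $\omega_\alpha\in\Mfrak_{\xb}(\M_\alpha)$ along a continuous chain of models, each compatible with all marginals of $\omega'$. At limit stages use compactness; compatibility is a closed condition.
\item If the lemma fails, a global extension of $\omega_\gamma$ compatible with the marginals is not $n$-smooth over $\M_\gamma$. This gives global $\omega^0,\omega^1$ with the same marginals and the same restriction to $\M_\gamma$, but with $\omega^1(\varphi(\xb,b))-\omega^0(\varphi(\xb,b))\geq 4\varepsilon$.
\item Let $\omega_{\gamma+1}$ be the restriction of $\frac12(\omega^0+\omega^1)$ to a model $\M_{\gamma+1}\ni b$. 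Since $\omega^0,\omega^1$ agree on every $\theta\in\Lcal(\M_\gamma)$, you get $\omega_{\gamma+1}(\theta\triangle\varphi(\xb,b))\geq\varepsilon$.
\end{enumerate}
Step 3 is the idea missing from your ``main obstacle'' discussion. Only separation from formulas over the previous model is needed, so no $(n-1)$-ary pieces $D_\alpha$ and no almost-disjointness enter.

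To finish, fix $\varphi$ and $\varepsilon$ on $|T|^+$ many stages. Then $\omega^*(\varphi(\xb,b_\alpha)\triangle\varphi(\xb,b_\beta))\geq\varepsilon$ for all $\alpha<\beta$. Extracting an indiscernible sequence and combining NIP with Fact~\ref{bounding disjoint positive measure sets} gives the contradiction.
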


\begin{proof}
    Suppose otherwise, and let $\bar{x} = (x_0, \ldots, x_{n-1})$. We choose an increasing continuous sequence of small models $(\M_\alpha)_{\alpha< |T|^+}$ and a sequence of measures $(\omega_\alpha(\bar{x}))_{\alpha< |T|^+}$ by with $\omega_\alpha \in \Mfrak_{\bar{x}}(\M_\alpha)$,  $\omega_\alpha$ compatible with $(\omega'|_{(x_i : i \neq j)} : j < n)$ and $\omega_{\alpha}|_{\M_\beta} = \omega_\beta$ for all $\beta \leq  \alpha < |T|^{+}$, as follows.

    Let $\M_0 := \M$ and $\omega_0 := \omega'|_\M$. Suppose $1 \leq \alpha < |T|^{+}$ and we have chosen $\M_{\beta}, \omega_{\beta}$ for $\beta < \alpha$. Let also $\omega'_{\beta}(\bar{x})$ be a global extension of $\omega_{\beta}$ and of $(\omega'|_{(x_i : i \neq j)} : j < n)$ (exists by the compatibility requirement and Fact \ref{existence of measure extensions}).
    
    Assume first that $\alpha = \gamma + 1$ is a successor ordinal. By supposition, there exist $\varphi_\alpha(\bar{x},b_{\alpha})\in \Lcal(\Mbb)$, $\varepsilon_\alpha>0$ and global extensions $\omega^0_{\alpha}, \omega^1_{\alpha} \in \Mfrak(\Mbb)$ of $\omega_{\gamma}$, with each $\omega^i_\alpha$ also extending each of $(\omega'|_{(x_i : i \neq j)} : j < n)$, and so that $\omega^1_\alpha(\varphi_\alpha(x,b_\alpha))-\omega^0_\alpha(\varphi_\alpha(x,b_\alpha))\geq 4\varepsilon_\alpha$ (note that there exists at least one such global extension $\omega'_{\gamma}$, hence there have to be two distinct ones to fail the conclusion of the lemma). Let $\omega'_{\alpha} :=  \frac{1}{2}(\omega_\alpha^0 + \omega_\alpha^1) \in \mathfrak{M}_{\bar{x}}(\cU)$,   note that $\omega'_{\alpha}$ extends each of $(\omega'|_{(x_i : i \neq j)} : j < n)$. Take $\M_{\alpha}$ to be a small model containing $\M_{\gamma} b_\alpha$ and let $\omega_{\alpha} := \frac{1}{2}(\omega_\alpha^0 + \omega_\alpha^1)|_{\M_{\alpha}} \in \mathfrak{M}_x(\M_{\alpha})$. Clearly $\omega_\alpha$ extends $\omega_{\gamma}$, and is compatible with $(\omega'|_{(x_i : i \neq j)} : j < n)$ witnessed by $\omega'_{\alpha}$.
    
    Assume now that $\alpha$ is a limit ordinal. Then we let $\M_{\alpha} := \bigcup_{\beta < \alpha}\M_{\beta}$ and $\omega_{\alpha} := \bigcup_{\beta < \alpha}\omega_{\beta} \in \mathfrak{M}_{\bar{x}}(\M_{\alpha})$. Note that the set of measures in $\mathfrak{M}_{\bar{x}}(\cU)$ that both  extend $\omega_{\alpha}$ and each of $(\omega'|_{(x_i : i \neq j)} : j < n)$ is closed, given by the intersection of complements of basic open sets (see the proof of Lemma  \ref{key lemma with strong indiscernibility}). Any finite list of these basic open sets only involves finitely many parameters from $\M_{\alpha}$, hence there is some $\beta < \alpha$ so that $\omega'_{\beta}$ is in the intersection of their complements, hence by compactness of $\mathfrak{M}_{\bar{x}}(\cU)$, we find a global measure $\omega'_{\alpha}$ extending both $\omega_{\alpha}$ and each of $(\omega'|_{(x_i : i \neq j)} : j < n)$.

    Now, for each $\alpha < |T|^{+}$ and $\theta(x)\in \Lcal(\M_\alpha)$, we have that  $\omega^i_{\alpha+1}(\theta(x)\triangle \varphi_{\alpha+1}(x,b_{\alpha+1}))\geq 2\varepsilon_{\alpha+1}$ for at least one $i \in \{0,1\}$.
      Indeed, we have $\omega^0_{\alpha+1}(\theta(x)) = \omega^1_{\alpha+1}(\theta(x))$ (as  $\omega^0_{\alpha+1}|_{\M_\alpha} = \omega^1_{\alpha+1}|_{\M_\alpha}$), so if $\omega^i_{\alpha+1}(\theta(x)\triangle \varphi_{\alpha+1}(x,b_{\alpha+1})) < 2\varepsilon_{\alpha+1}$ for both $i \in \{0,1\}$ we would have $\omega^1_{\alpha+1}(\varphi_{\alpha+1}(x,b_{\alpha+1}))-\omega^0_{\alpha+1}(\varphi_{\alpha+1}(x,b_{\alpha+1})) < 4\varepsilon_{\alpha+1}$, contradicting the assumption. Hence $\omega_{\alpha+1}(\theta(x)\triangle \varphi_{\alpha+1}(x,b_{\alpha+1}))\geq \varepsilon_{\alpha+1}$. 

    Noting that we may assume that all $\varepsilon_\alpha$ are rational, and $|T|^+$ is uncountable, passing to a subsequence we may assume that $\varphi_\alpha =\varphi$ and $\varepsilon_\alpha =\varepsilon$ for all $\alpha$. Let $\omega^*:= \cup_{\alpha<|T|^+} \omega_\alpha$, a measure over a small model $\cup_{\alpha<|T|^+} \M_\alpha$. Also, for any $\alpha<\beta$, $\omega^*(\varphi(x,b_\alpha)\triangle \varphi(x,b_\beta))\geq \varepsilon$. Using Ramsey and compactness (with the measure incorporated into the language), we may assume that the sequence $(b_\alpha)$ is indiscernible. Then, by NIP, the partial type $\{\varphi(x,b_{2\alpha})\triangle \varphi(x,b_{2\alpha+1}): \alpha<\omega\}$ is inconsistent.  Using Fact \ref{bounding disjoint positive measure sets} and compactness, this is a contradiction.
\end{proof}

\begin{remark}
 The same proof shows that if $T$ has NIP, given any small model $\M \models T$ and a global measure $\omega'(x_0, \ldots, x_{n-1})$, there is a  global measure $\omega(\bar{x})$  which is weakly $n$-smooth over some small $\mathcal{N} \succeq \M$ and satisfies $\omega|_{\mathfrak{B} \left(\mathcal{L}^{n-1}_{x_0, \ldots, x_{n-1}}(\cU) \cup \mathcal{L}_{x_0, \ldots, x_{n-1}}(M) \right)} = \omega'|_{\mathfrak{B} \left(\mathcal{L}^{n-1}_{x_0, \ldots, x_{n-1}}(\cU) \cup \mathcal{L}_{x_0, \ldots, x_{n-1}}(M) \right)}$. 
\end{remark}

\begin{lemma}\label{smooth extensions of products are invariant}
    If a global measure $\omega(x_0,\dots,x_{n-1})$ is $n$-smooth over  a small set $A$ and $\omega|_{(x_i)_{i\neq j}}$ is $A$-invariant for each $j<n$, then $\omega$ is $A$-invariant.
\end{lemma}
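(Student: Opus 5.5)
The natural approach is to move $\omega$ by an automorphism fixing $A$ and check that the image still satisfies the hypotheses of Definition \ref{def: n-smooth}. Smoothness then forces the image to equal $\omega$, which is exactly invariance.

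Fix $\sigma \in \Aut(\cU/A)$ and consider the global measure $\omega' := \sigma(\omega)$ from Remark \ref{rem: pushforw under aut}, defined by
$$\omega'(\varphi(\bar{x},c)) := \omega(\varphi(\bar{x},\sigma^{-1}(c)))$$
for all $\varphi \in \mathcal{L}$ and $c$ in $\cU$. This is again a global Keisler measure in $\bar{x} = (x_0,\dots,x_{n-1})$. $A$-invariance of $\omega$ is equivalent to $\omega' = \omega$ for every such $\sigma$. Indeed, if $b \equiv_A b'$, then by strong homogeneity of the monster there is $\sigma \in \Aut(\cU/A)$ with $\sigma(b') = b$, and so $\omega(\varphi(\bar{x},b)) = \sigma(\omega)(\varphi(\bar{x},b)) = \omega(\varphi(\bar{x},b'))$.

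To show $\omega' = \omega$, it suffices by $n$-smoothness of $\omega$ over $A$ to verify the two conditions in Definition \ref{def: n-smooth}.
\begin{enumerate}
\item $\omega'|_A = \omega|_A$. For $\varphi(\bar{x},a) \in \mathcal{L}(A)$ we have $\sigma^{-1}(a) = a$, hence $\omega'(\varphi(\bar{x},a)) = \omega(\varphi(\bar{x},a))$.
\item $\omega'|_{(x_i : i \neq j)} = \omega|_{(x_i : i \neq j)}$ for each $j < n$. Restriction commutes with pushing forward by $\sigma$, so $\omega'|_{(x_i : i\neq j)} = \sigma(\omega|_{(x_i : i \neq j)})$. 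By hypothesis $\omega|_{(x_i : i \neq j)}$ is $A$-invariant, and $c \equiv_A \sigma^{-1}(c)$ for any $c$. Therefore $\omega(\psi((x_i)_{i\neq j}, \sigma^{-1}(c))) = \omega(\psi((x_i)_{i \neq j}, c))$, which gives the equality.
\end{enumerate}

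With both conditions in hand, $n$-smoothness over $A$ yields $\omega' = \omega$, and $A$-invariance follows from the preceding paragraph.

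There is no real obstacle here. The only point needing a word of care is that $\omega'$ is genuinely a finitely additive probability measure on $\mathcal{L}_{\bar{x}}(\cU)$. This holds because $\sigma$ induces a Boolean algebra automorphism of the definable sets, and Remark \ref{rem: pushforw under aut} covers it.
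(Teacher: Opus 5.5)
Your proposal is correct and matches the paper's proof: push $\omega$ forward by an arbitrary $\sigma \in \Aut(\cU/A)$, observe that $\sigma(\omega)$ agrees with $\omega$ on $\mathcal{L}(A)$ and on each $(n-1)$-variable marginal (using $A$-invariance of the marginals), and conclude $\sigma(\omega)=\omega$ by $n$-smoothness over $A$. Your extra paragraph checking that this is equivalent to $A$-invariance (via strong homogeneity) is a detail the paper leaves implicit.
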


\begin{proof}

Let $\sigma\in \Aut(\Mbb/A)$ be arbitrary. Then, by assumption, $\sigma(\omega)$ extends  $\sigma(\omega|_{(x_i)_{i\neq j}}) = \omega|_{(x_i)_{i\neq j}}$ for all $j < n$ and $\sigma(\omega|_{A}) = \omega|_{A}$, hence $\sigma(\omega) = \omega$ by $n$-smoothness of $\omega$ over $A$.
\end{proof}

The following is a key proposition generalizing \cite[Proposition 2.27]{simon2013distal} in the case $n=1$.

\begin{proposition}\label{2-determinacy with a type}
    Assume $T$ is NIP and strongly $n$-distal, and $\mu_0(x_0),\ldots,\mu_{n-1}(x_{n-1})$ are global measures generically stable over a small model $\M$. Then $\mu_0\otimes\dots\otimes\mu_{n-1}$ is $n$-smooth over $\M$.
\end{proposition}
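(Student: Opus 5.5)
We follow Simon's proof of \cite[Proposition 2.27]{simon2013distal}, with Lemma \ref{key lemma with strong indiscernibility} in place of \cite[Corollary 2.15]{simon2013distal}. Write $\mu := \mu_0\otimes\dots\otimes\mu_{n-1}$ and $\bar{x} = (x_0,\dots,x_{n-1})$. By Fact \ref{basic tensor facts}, $\mu$ is generically stable over $\M$, and each restriction $\mu|_{(x_i : i\neq j)}$ is a tensor product of generically stable measures, hence $\M$-invariant.

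Suppose $\omega'$ agrees with $\mu$ on $\M$ and on all $(n-1)$-variable restrictions, but $\omega'\neq\mu$. Apply Lemma \ref{existence of smooth extensions of products} to $\omega'$ to get $\omega$ that is $n$-smooth over some small $\mathcal{N}\succeq\M$, with the same restrictions to $\M$ and to $(x_i : i\neq j)$. By Lemma \ref{smooth extensions of products are invariant}, applied over $\mathcal{N}$, $\omega$ is $\mathcal{N}$-invariant. Fix a $|\mathcal{N}|^+$-saturated $\mathcal{N}'\succ\mathcal{N}$. Then $\omega|_{\mathcal{N}'}$ is $\Aut(\mathcal{N}'/\mathcal{N})$-invariant, and we aim to show $\omega|_{\mathcal{N}'}=\mu|_{\mathcal{N}'}$.

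\textbf{Step 1 (the reduction).} Choosing the extension $\omega$ suitably is subtle, so I expect this step needs care. Granting the equality $\omega|_{\mathcal{N}'}=\mu|_{\mathcal{N}'}$, Fact \ref{fact: gen stable measures}(2) would give $\omega=\mu$ if $\omega$ were genuinely a measure over $\mathcal{N}'$ invariant over a model over which $\mu$ is generically stable. Since $\mu$ is generically stable over $\mathcal{N}$ too, the remaining task is to show $\omega|_{\mathcal{N}'} = \mu|_{\mathcal{N}'}$. That is, $\mathcal{N}$-invariant measures agreeing with $\mu$ in arity $n-1$ must agree with $\mu$. Note that $\mu$ itself is such a measure, so the real content is uniqueness. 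We then conclude that $\omega'$ and $\mu$ agree. Alternatively, we run the argument directly against $\omega'$, which is the cleaner version.

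\textbf{Step 2 (constructing the indiscernible data).} Suppose $\omega(\varphi(\bar{x},b))\neq\mu(\varphi(\bar{x},b))$. Generate a Morley sequence in the $\mathcal{N}$-invariant measure $\omega$ together with $b$. Concretely, consider the measure $\omega^{(\Ical)} = \bigotimes_{i\in\Ical}\omega(\bar{u}_i)$ over $\mathcal{N}$, with $\Ical$ dense. Since $\mu$ is generically stable, $\mu^{(\Ical)}$ is totally indiscernible. The measures $\omega^{(\Ical)}|_{(\bar{u}^j_\Ical)_{j\neq k}}$ agree with the corresponding restrictions of $\mu^{(\Ical)}$, hence are totally indiscernible; this uses that the tensor product depends only on the restriction data. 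As in Simon's proof, take the sequence $c_\Ical$ to be an indiscernible sequence built from $b$ and $\omega$, e.g.\ a Morley sequence of a global $\mathcal{N}$-invariant type extending $\tp(b/\mathcal{N})$. Then arrange, via Ramsey and compactness in the measure-enriched sense of Claim 1 of Lemma \ref{key lemma with strong indiscernibility}, that three conditions hold: $\eta$ is indiscernible, $\omega^{(\Ical)}$ is totally indiscernible, and $c_\Ical$ is $\omega|_{(\bar{u}^j)_{j\neq k}}$-indiscernible. The last condition is the key point, coming from generic stability of the $(n-1)$-fold restrictions, which commute with everything by Fact \ref{fact: gen stable measures}(1).

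\textbf{Step 3 (applying strong $n$-distality and deriving the contradiction).} Lemma \ref{key lemma with strong indiscernibility} then yields that $c_\Ical$ is $\omega$-indiscernible and $\omega$ is indiscernible over $c_\Ical$. Arrange the diagonal pairing so that $\omega(\varphi(\bar{u}_i,c_j))$ takes the value $\omega(\varphi(\bar{x},b))$ for $i<j$ (realising $\omega\otimes\tp$) and the value $\mu(\varphi(\bar{x},b))$ for $i>j$; the latter uses commutation of generically stable $\mu$. Indiscernibility over $c_\Ical$ forces these two values to coincide, a contradiction. The main obstacle is Step 2: producing a single configuration in which total indiscernibility of the measure and $(n-1)$-restricted indiscernibility of $c_\Ical$ hold simultaneously, while the asymmetry $i<j$ versus $i>j$ still records the discrepancy between $\omega$ and $\mu$. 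I would first try the $n=1$ template of Simon verbatim, inserting $\omega$ on one side of the cut and $\mu$ on the other.
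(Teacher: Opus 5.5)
Your proposal has a genuine gap, located in Steps 1 and 2.

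\textbf{Step 1 discards the witness.} You apply Lemma~\ref{existence of smooth extensions of products} to $\omega'$ with base $\M$ only. This yields an $\omega$ agreeing with $\omega'$ on $\M$ and on the $(n-1)$-fold restrictions. But $\mu$ itself satisfies exactly these conditions, so nothing prevents $\omega=\mu$. Hence your Step~2 hypothesis $\omega(\varphi(\bar x,b))\neq\mu(\varphi(\bar x,b))$ is unjustified, and proving $\omega=\mu$ would say nothing about $\omega'$. Running the argument ``directly against $\omega'$'' is not an option either, since $\omega'$ is neither invariant nor smooth. The reduction to $\omega|_{\mathcal N'}=\mu|_{\mathcal N'}$ merely restates the goal. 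You must apply the lemma over a model containing $\M b$, where $b$ witnesses $\omega'\neq\mu$. This gives $\omega$ that is $n$-smooth over some small $B\supseteq\M b$ with $\omega|_{\M b}=\omega'|_{\M b}$, and the task becomes $\omega|_{\M b}=\mu|_{\M b}$.

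\textbf{Step 2 does not deliver the hypotheses of Lemma~\ref{key lemma with strong indiscernibility}.} That lemma needs the \emph{whole} measure on $\bar u_\Ical$ to be totally indiscernible, not just its $(n-1)$-fold restrictions. The power $\bigotimes_{i\in\Ical}\omega(\bar u_i)$ of a single $\mathcal N$-invariant $\omega$ is only indiscernible over $\mathcal N$. Since $\omega$ is not known to be generically stable, it need not commute with itself. Even its restriction to $\M$ cannot be compared with $\mu^{\otimes\Ical}|_{\M}$, because $\omega\otimes\omega$ integrates against $\omega|_{\mathcal N}$, which is exactly the unknown.

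Step~3 fails for the same reason. If all copies of $\omega$ sit over the same base $\mathcal N$ and $c_j\equiv_{\mathcal N}b$, then $\mathcal N$-invariance gives $\omega(\varphi(\bar u_i,c_j))=\omega(\varphi(\bar x,b))$ for all $i,j$. So there is no $i<j$ versus $i>j$ asymmetry, and $\mu(\varphi(\bar x,b))$ never appears.

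\textbf{The missing idea is to move the base of smoothness together with the measure.}
\begin{enumerate}
\item Take a global coheir $p$ of $\tp(B/\M)$ and a Morley sequence $B_0=B$, $B_{i+1}\models p|_{\M_i}$, over increasingly saturated and homogeneous models $\M_i\supseteq\M_{i-1}B_i$.
\item Let $\omega_i$ be the conjugate of $\omega$ by an automorphism over $\M$ sending $B$ to $B_i$. Each $\omega_i$ is $n$-smooth over $B_i$, hence $B_i$-invariant by Lemma~\ref{smooth extensions of products are invariant}, hence non-forking over $B_i$.
\item Since $\tp(B_i/\M_{i-1})$ is finitely satisfiable in $\M$, left transitivity shows that $\omega_i|_{\M_{i-1}}$ does not fork over $\M$. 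Fact~\ref{fact: gen stable measures}(2), together with $\omega_i|_{\M}=\mu|_{\M}$, then gives $\omega_i|_{\M_{i-1}}=\mu|_{\M_{i-1}}$.
\item Consequently $\lambda=\bigotimes_i\omega_i$ agrees with $\mu^{\otimes\mathbb N}$ on $\M$, hence is totally indiscernible over $\M$.
\item Using $n$-smoothness to transport measures along automorphisms, the joint measure built from $(\omega_i,B_i)_i$ is indiscernible.
\end{enumerate}

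The conclusion is then not an asymmetry argument. Lemma~\ref{key lemma with strong indiscernibility} makes $(B_i)_i$ $\lambda$-indiscernible, so $\omega_1(\varphi(\bar x,B_0))=\omega_1(\varphi(\bar x,B_1))=\omega_0(\varphi(\bar x,B_0))$. By $n$-smoothness of $\omega_0$ over $B_0$, this forces $\omega_1=\omega_0$. Since $B_0\subseteq\M_0$, we get $\omega_0|_{B_0}=\omega_1|_{B_0}=\mu|_{B_0}$, and in particular $\omega'|_{\M b}=\mu|_{\M b}$.
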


\begin{proof}
   Let $\xb: = (x_0,\dots,x_{n-1})$. Fix $a$ an arbitrary tuple in $\Mbb$. Let  $\omega'(\xb)\in \Mfrak(\M a)$ be arbitrary with $\omega'\supseteq (\mu_0\otimes \dots \otimes\mu_{n-1})|_\M$ and compatible with $(\bigotimes_{i \neq j}\mu_i(x_i) : j < n)$. Since $T$ has NIP, by Lemma \ref{existence of smooth extensions of products}, there is a global measure $\omega(\bar{x}) \in \Mfrak(\Mbb)$ which is  $n$-smooth over some small set $B\supseteq \M a$ and so that  $\omega|_{\M a} = \omega'$ and $\omega|_{(x_i : i \neq j)} = \bigotimes_{i \neq j}\mu_i(x_i)$ for all $j < n$. We will show that $\omega' = (\mu_0\otimes \dots \otimes\mu_{n-1})|_{\M a}$, which is sufficient.

     We choose a particular Morley sequence in $\tp(B/\M)$ as follows. Let $p$  be a global coheir of $\tp(B/\M)$. Let $B_0 := B$ and fix a small $|B_0|^+$-saturated, $|B_0|^+$-homogeneous model $\M_0 \supseteq B_0$. For each $i  \in \mathbb{N}$, by induction on $i$, choose a $|\M_{i-1} \cup B_i|^+$-saturated and $|\M_{i-1} \cup B_i|^+$-homogeneous small model $\M_i \supseteq \M_{i-1} \cup B_i$. Then, choose $B_{i+1}$ to be a realization of $p|_{\M_i}$
    
    For each $i$, there is a canonical $\tau_i\in \Aut(\Mbb/\M)$ satisfying $\tau_i(B)=B_i$ according to the enumeration of the variables of $p$ (in particular, $\tau_0|_{B_0} =\id|_{B_0}$). Hence, we can define a global measure $\omega_i(\bar{x}_i)$ via $\omega_i(\varphi(\xb_i;\bb)):=\omega(\varphi(\xb;\tau_i^{-1}(\bb)))$  for all $\varphi \in \mathcal{L}$ and $\bar{b}$ tuple in $\cU$ (identifying $\xb = \xb_0$ and $\omega = \omega_0$ from now on).

   For all $i \in \mathbb{N}$, we have $\omega_i|_{(x_k : k \neq j)} = \bigotimes_{k \neq j} \mu_k(x_k)$ for all $j < n$ (since each $\mu_k$ is $\M$-invariant, hence also $\bigotimes_{k \neq j} \mu_k(x_k)$ is $M$-invariant for all $j<n$, see Fact \ref{basic tensor facts})

    Furthermore, $\omega_i$ is $n$-smooth over $B_i$ for all $i \in \mathbb{N}$. Indeed, let $\omega_i''\in \Mfrak_{\bar{x}}(\Mbb)$  be arbitrary such that $\omega''_i|_{B_i} = \omega_i|_{B_i}$ and $\omega''_i|_{(x_k : k \neq j)} = \omega_i|_{(x_k : k \neq j)} = \bigotimes_{k \neq j} \mu_k(x_k)$ for all $j < n$.  Then $\tau_i^{-1}(\omega''_i)$  satisfies $\tau_i^{-1}(\omega''_i)|_{B_0} = \omega_0|_{B_0}$ and $\tau_i^{-1}(\omega''_i)|_{(x_k : k \neq j)} = \omega_0|_{(x_k : k \neq j)}$ for all $j < n$, so $\tau_i^{-1}(\omega''_i) = \omega_0$ as $\omega_0$ is $n$-smooth over $B_0$. So $\omega''_i = \tau_i(\omega_0) = \omega_i$. 

    Now, for each $i \in \mathbb{N}$, as  $\bigotimes_{k \neq j} \mu_k(x_k)$ is  $\M$-invariant,  hence $B_i$-invariant ($\M \subseteq B_i$), and $\omega_i$ is $n$-smooth over $B_i$, Lemma \ref{smooth extensions of products are invariant} implies that $\omega_i$ is $B_i$-invariant. Hence, we can form a global measure $\lambda((\xb_i)_{i\in \omega}):=\bigotimes\limits_{i\in \omega} \omega_i(\xb_i)$ (Remark \ref{rem: tens prod inv meas NIP}).

  \noindent  \textbf{Claim 1.} $\lambda((\xb_i)_{i\in \omega})$ is totally indiscernible over $\M$.

   \noindent \textbf{Proof of Claim 1.} 
%    We aim to show, by induction on the exponent, that $\lambda|_\M =(\mu_0\otimes \dots \otimes\mu_{n-1})^\omega|_\M$. 
    We show that for each $i$,  $(\bigotimes\limits_{j<i} (\omega_j(\bar{x}_j))|_\M = (\bigotimes\limits_{j<i} (\mu_0\otimes \dots \otimes\mu_{n-1} (\bar{x}_j)))|_\M$. This is sufficient: each $\mu_i$ is generically stable over $\M$, so is each product $\mu_0 \otimes \dots \otimes \mu_{n-1}$; since generically stable measures in NIP theories commute (see Fact \ref{fact: gen stable measures}), this implies $\lambda$ is totally indiscernible over $\M$.

    The base case $i=1$ follows from the fact that
    $\omega_0|_{\M} = \omega'|_\M = \mu_0\otimes\dots\otimes \mu_{n-1}|_{\M}$ by assumption. So, suppose inductively that $(\bigotimes\limits_{j<i} \omega_j)|_\M = (\bigotimes\limits_{j<i} (\mu_0\otimes \dots \otimes\mu_{n-1}))|_\M$, and we prove the claim for $i+1$.

 \noindent   \textit{Subclaim 1.} $\omega_i|_{\M_{i-1}} = (\mu_0\otimes \dots \otimes\mu_{n-1})(\xb_i)|_{\M_{i-1}}$
    
 \noindent    \textit{Proof of Subclaim 1.} As $\omega_i$ is $B_i$-invariant, by Fact \ref{invariant and non-forking measures}, $\omega_i$ does not fork over $B_i$. Suppose that $\varphi(\xb_i;\M_{i-1})$ has positive $\omega_i$ measure, so $\varphi(\xb_i;\M_{i-1})$ does not fork over $B_i$. Choose $a_{\xb}$ from $\Mbb$ such that $\models \varphi(a_{\xb};\M_{i-1})$ and $\tp(a_{\xb}/\M_{i-1})$ does not fork over $B_i$. Now, we also know that $\tp(B_i/\M_{i-1})$ does not fork over $\M$ (by finite satisfiability, Fact \ref{fac: forking in NIP}). By left-transitivity of forking (Fact \ref{fac: forking in general T}), we conclude that $\tp(a_{\xb}/\M_{i-1})$ does not fork over $\M$. In particular, $\varphi(\xb_i;\M_{i-1})$ does not fork over $\M$. Since $\varphi$ was arbitrary, we see that $\omega_i|_{\M_{i-1}}$ does not fork over $\M$. Then, again by Fact \ref{invariant and non-forking measures} and $|\M|^+$-saturation of $\M_{i-1}$, $\omega_i|_{\M_{i-1}}$ is $\Aut(\M_{i-1}/\M)$-invariant. Also, $(\mu_0\otimes \dots \otimes\mu_{n-1})(\xb_i)$ is $\M$-invariant and generically stable, and $\omega_i|_{\M} = (\mu_0\otimes \dots \otimes\mu_{n-1})|_{\M}$. So, by Fact \ref{fact: gen stable measures}, $\omega_i|_{\M_{i-1}} = (\mu_0\otimes \dots \otimes\mu_{n-1})(\xb_i)|_{\M_{i-1}}$.

\medskip

  \noindent  \textit{Subclaim 2.} $((\bigotimes\limits_{j<i}\omega_j)\otimes \omega_i)|_{B_{<i}} = ((\bigotimes\limits_{j<i}\omega_j)\otimes (\mu_0\otimes \dots \otimes \mu_{n-1})_{\bar{x}_i})|_{B_{<i}}$, where $B_{<i} = \bigcup_{j<i} B_j$. 
    
 \noindent   \textit{Proof of Subclaim 2.} Now, $\bigotimes\limits_{j<i}\omega_j$ is $B_{<i}$-invariant (as $\omega_j$ is $B_j$-invariant) and by Subclaim 1 we have that $\omega_i|_{\M_{i-1}} = (\mu_0\otimes \dots \otimes \mu_{n-1})|_{\M_{i-1}}$. So for any formula $\varphi(\bar{x}_{\leq i}, B_{<i})$ we have (note that $B_{<i} \subseteq \M_{i-1}$):
    \begin{gather*}((\bigotimes\limits_{j<i}\omega_j)\otimes \omega_i)(\varphi(\xb_{<i},\xb_i;B_{<i})) = \int_{q\in S_{\xb_i}(\M_{i-1})} (\bigotimes\limits_{j<i}\omega_j)(\varphi(\xb_{<i};q,B_{<i}))d\omega_i\\ = \int_{q\in S_{\xb_i}(\M_{i-1})} (\bigotimes\limits_{j<i}\omega_j)(\varphi(\xb_{<i};q,B_{<i}))d((\mu_0\otimes \dots \otimes \mu_{n-1})(\xb_i))\\ = ((\bigotimes\limits_{j<i}\omega_j)\otimes (\mu_0\otimes \dots \otimes \mu_{n-1})(\varphi(\xb_{<i},\xb_i;B_{<i})).
    \end{gather*}

\medskip

  \noindent  \textit{Subclaim 3.} $(\bigotimes\limits_{j<i}\omega_j) \otimes (\mu_0\otimes \dots \otimes \mu_{n-1}) = (\mu_0\otimes \dots \otimes \mu_{n-1}) \otimes (\bigotimes\limits_{j<i}\omega_j)$.

\noindent    \textit{Proof of Subclaim 3.} Since $(\mu_0\otimes \dots \otimes \mu_{n-1})$ is generically stable (Fact \ref{basic tensor facts}) and each $\omega_j$ is invariant, $(\mu_0\otimes \dots \otimes \mu_{n-1})$ commutes with each $\omega_j$ (see Fact \ref{fact: gen stable measures}). The subclaim follows by associativity of the tensor product.

\medskip

 \noindent   \textit{Subclaim 4.} $(\mu_0\otimes \dots \otimes \mu_{n-1})\otimes (\bigotimes\limits_{j<i}\omega_j))|_\M = ((\mu_0\otimes \dots \otimes \mu_{n-1})\otimes (\bigotimes\limits _{j<i}(\mu_0\otimes \dots \otimes \mu_{n-1})))|_\M$. 

  \noindent  \textit{Proof of Subclaim 4.} We have $(\bigotimes\limits_{j<i}\omega_j)|_\M=(\bigotimes\limits _{j<i}(\mu_0\otimes \dots \otimes \mu_{n-1}))|_\M$ by the inductive hypothesis, and $\mu_0\otimes \dots \otimes \mu_{n-1}$ is $\M$-invariant, so the subclaim follows by the same argument as Subclaim 2. 

\medskip

    Applying Subclaims 2--4 consecutively (as $\M\sub B_{<i}$), we conclude that $(\bigotimes\limits_{j\leq i}\omega_j)|_\M = (\bigotimes\limits _{j\leq i}(\mu_0\otimes \dots \otimes \mu_{n-1}))|_\M$.  By induction, Claim 1 follows. 
    \hfill$\blacksquare_{\textit{Claim }1}$

    Now, define a new measure $\eta((\xb_i,y_i)_{i\in \omega})\in \Mfrak(\M)$, where $y_i$ is a variable of the same sort as $B_i$, via $\eta(\varphi((\xb_i,y_i)_{i\in \omega})) := \lambda(\varphi((\xb_i,B_i)_{i\in \omega}))$ for all $\varphi \in \mathcal{L}(\M)$ (see Lemma \ref{existence of funny measure}).

\medskip

   \noindent \textbf{Claim 2.} $\eta((\xb_i,y_i)_{i\in \omega})$ is an indiscernible measure (over $\M$).

  \noindent  \textbf{Proof of Claim 2.}
    Choose  any $l$, $k_0<\dots<k_i<\dots < k_l$ and $k_i<j<k_{i+1}$ in $\mathbb{N}$. For convenience, we denote $k_i =: i$. We also index tuples of parameters, sets or variables as follows: $(\alpha_{k_n})_{n<i} =: \alpha_-$ and $(\alpha_{k_n})_{i < n \leq l} =: \alpha_+$. We also let $\omega_- := \bigotimes_{n < i} \omega_{k_n}, \omega_{+} := \bigotimes_{i < n \leq l} \omega_{k_n}$. The cases that $i=0$ or $i=l$ are obvious modifications of the following. Fix $\varphi(\xb_-,\xb_i,\xb_+,B_-,B_i,B_+)\in \Lcal(B_-B_iB_+)$ with $\varphi(\xb_-,\xb_i,\xb_+,y_-,y_i,y_+)\in \Lcal(\M)$. It suffices to show that: \[
    (\omega_-\otimes \omega_i \otimes \omega_+)(\varphi(\xb_-,\xb_i,\xb_+,B_-,B_i,B_+)) = (\omega_-\otimes \omega_j \otimes \omega_+)(\varphi(\xb_-,\xb_j,\xb_+,B_-,B_j,B_+)).
    \]
    First, choose $N\in \Nbb$ bigger than the index of any of the parameters. Recall that we chose each $\M_n$ to be $|\M_{n-1}|^+$-saturated and $|\M_{n-1}|^+$-homogeneous, and $B_{-} B_i B_{+} \subseteq \M_N$. So, fix $\sigma \in \Aut(\M_N/\M)$ with $\sigma|_{B_-,B_+} = \id|_{B_-,B_+}$ and $\sigma(B_i)= B_j$ canonically.
  Now, by definition of the tensor product, 
  \begin{gather*}
  	(\omega_-\otimes \omega_i \otimes \omega_+)(\varphi(\xb_-,\xb_i,\xb_+,B_-,B_i,B_+)) \\
  	= \int_{q\in S_{\xb_+}(\M_N)} (\omega_-\otimes \omega_i)(\varphi(\xb_-,\xb_i,q,B_-,B_i,B_+))d\omega_+|_{\M_N}.
  \end{gather*}

    We note here that the extension of each $\omega_i$ to a regular Borel measure is the pushforward of the Borel extension of $\omega_0$ by the continuous map on the space of types induced by $\tau_i$ (see Remark \ref{rem: pushforw under aut}). We use variants of this observation together with the change of variable formula for pushforwards to obtain the result. (Specifically, the change of variables formula applies to the pushforward of the corresponding Borel measures by the continuous map on the spaces of types induces by the automorphism.) 
    
    Fix $q\in S_{\xb_+}(\M_N)$. Note that $\sigma(q) \in S_{\xb_+}(\M_N)$, and $q$ is realized by some $c$ in $\M_{N+1}$ (since $\M_{N+1}$ is $|\M_N|^+$-saturated) and $\sigma$ extends to an automorphism of $\M_{N+1}$ ($\M_{N+1}$ is $|\M_N|^+$-homogeneous), which then extends to an automorphism of $\cU$. We also denote this extended automorphism by $\sigma$. Then we have \begin{align*}
        (\omega_-\otimes \omega_i)(\varphi(\xb_-,\xb_i,q,B_-,B_i,B_+))
        &= \int_{r\in S_{\xb_i}(\M_{N+1})} \omega_-(\varphi(\xb_-,r,c,B_-,B_i,B_+))d\omega_i \\
        &= \int_{r\in S_{\xb_i}(\M_{N+1})} \omega_-(\varphi(\xb_-,\sigma(r),\sigma(c),B_-,B_j,B_+))d\omega_i\\
        &= \int_{r\in S_{\xb_j}(\M_{N+1})} \omega_-(\varphi(\xb_-,r,\sigma(c),B_-,B_j,B_+))d\omega_j\\
        &= (\omega_-\otimes \omega_j)(\varphi(\xb_-,\xb_j,\sigma(q),B_-,B_j,B_+)).
    \end{align*}

\noindent In the above, the first equality is by definition. The second equality follows from $B_-$-invariance of $\omega_-$ and $\sigma(B_{-} B_i B_{+})= B_{-} B_j B_{+}$. The third equality follows from the following observation. For any $\psi(\xb_i,b) \in \Lcal(\Mbb)$, define a new measure via  $\omega_i'(\psi(\xb_i,b)) := \omega_j(\psi(\xb_j,\sigma(b)))$. Then $\omega_i'|_{B_i}=\omega_i|_{B_i}$ since  $\sigma|_{B_i} = (\tau_j \circ \tau_i^{-1})|_{B_i}$. And $\omega'_i|_{(x_k : k \neq j)} = \sigma^{-1} \left( \bigotimes_{k \neq j} \mu_{k}(x_k) \right) = \bigotimes_{k \neq j} \mu_{k}(x_k)$ for all $k<n$,  by $\M$-invariance of $\bigotimes_{k \neq j} \mu_{k}$. So, $\omega_i'=\omega_i$ because $\omega_i$ is $n$-smooth over $B_i$, hence $\omega_i(\psi(\xb_i,b)) = \omega_j(\psi(\xb_j,\sigma(b)))$. Since the measure on Borel sets is determined by its value on clopens, this shows that the Borel extension of $\omega_j$ is the pushforward of the Borel extension of $\omega_i$ by the continuous map $\sigma$ induces on types, $\widetilde{\omega_j|_{\M_{N+1}}} = \sigma'_{\ast}(\widetilde{\omega_i|_{\M_{N+1}}})$ (see Remark \ref{rem: pushforw under aut} and the notation there). We can then apply the change of variable formula for pushforward measures to obtain the third equality. The final equality is by definition of the tensor product again.

We also have that, \begin{align*}
    & \int_{q\in S_{\xb_+}(\M_N)} (\omega_-\otimes \omega_j)(\varphi(\xb_-,\xb_j,\sigma(q),B_-,B_j,B_+))d\omega_+ \\&= \int_{q\in S_{\xb_+}(\M_N)} (\omega_-\otimes \omega_j)(\varphi(\xb_-,\xb_j,q,B_-,B_j,B_+))d\sigma_*(\omega_+)
    \\ &= \int_{q\in S_{\xb_+}(\M_N)} (\omega_-\otimes \omega_j)(\varphi(\xb_-,\xb_j,q,B_-,B_j,B_+))d\omega_+.
\end{align*}

\noindent Here, the first equality is the change of variable formula for pushforward measures (again, applied with respect to the Borel extensions and the continuous map on types over $\mathcal{M}_N$ induced by the automorphism). The second equality is given by the following. Since each $\omega_k$ is $B_k$-invariant, $\omega_+$ is $B_+$-invariant. So, since $\sigma$ fixes $B_+$, $\omega_+$ is invariant under the action of $\sigma_*$. Putting these together, we find that 
\begin{gather*}      
	(\omega_-\otimes \omega_i \otimes \omega_+)(\varphi(\xb_-,\xb_i,\xb_+,B_-,B_i,B_+)) \\
	= (\omega_-\otimes \omega_j \otimes \omega_+)(\varphi(\xb_-,\xb_j,\xb_+,B_-,B_j,B_+)).
\end{gather*}
    \hfill$\blacksquare_{\textit{Claim }2}$
    
    Now, $\lambda$ is totally indiscernible over $\M$ and $\eta$ is indiscernible over $\M$ (by Claims 1 and 2). Define $\xb_i^j$ to be the $j$th coordinate of $\xb_i$. Then for each $k<n$, $\lambda|_{(\xb_i^j)_{j\neq k, i\in \omega}} = \bigotimes\limits_{i\in \omega}(\bigotimes\limits_{j\neq k} \mu_j(\xb_i^j)))$. This measure is $\M$-invariant and $(B_i)_{i\in \omega}$ is $\M$-indiscernible, so the sequence $(B_i)_{i\in \omega}$ is $\lambda|_{(\xb_i^j)_{j\neq k, i\in \omega}}$-indiscernible for each $k<n$. Hence, by strong $n$-distality and Lemma  \ref{key lemma with strong indiscernibility}, we have that $(B_i)_{i \in \mathbb{N}}$ is $\lambda$-indiscernible. In particular  $\omega_i(\varphi(\xb_i,B_0)) = \omega_i(\varphi(\xb_i,B_i)) = \omega_0(\varphi(\xb_0,B_0))$, for any $\varphi \in \mathcal{L}$. Hence, recalling that $\omega_i|_{(x_k : k \neq j)} = \bigotimes_{k \neq j} \mu_k(x_k)$ for all $j < n$, by $n$-smoothness of $\omega_0$ over $B_0$ we get  $\omega_i=\omega_0$. But, we have already seen in Subclaim 1 that $\omega_1|_{B_0}=(\mu_0\otimes \dots \otimes \mu_{n-1})|_{B_0}$, so $\omega_0|_{B_0}= (\mu_0\otimes \dots \otimes \mu_{n-1})|_{B_0}$. Hence, $\omega' = \omega_0|_{\M a} = (\mu_0 \otimes \dots \otimes \mu_{n-1})|_{\M a}$, as required.
\end{proof}

\begin{problem}\label{prob: str form of gs implies n-smooth}
	In Proposition \ref{2-determinacy with a type}, can we conclude that $\mu_0 \otimes \ldots \otimes \mu_{n-1}$ is \emph{strongly} $n$-smooth over $\M$ (see Remark \ref{rem: weak n-smooth})? We will say that $T$ \emph{satisfies strong form of Proposition \ref{2-determinacy with a type}} if this holds. 
\end{problem}
\begin{remark}
We note that at least this stronger conclusion holds for types in stable strongly $n$-distal theories. Namely, let $p_0(x_0), \ldots, p_{n-1}(x_{n-1})$ be global types generically stable over $\M$, $b$ is any tuple in $\cU$, and   assume that $(a_0, \ldots, a_{n-1})$ in $\cU$ are arbitrary so that $a_i \models p_i|_{\M b}$ and $(a_0, \ldots, a_{n-1}) \models (p_0 \otimes \ldots \otimes p_{n-1})|_{\M}$. In particular $a_i \ind_{\M} b$ for all $i$. By total $(n-1)$-triviality (using Proposition \ref{prop: total triv in stable}) this implies $a_0 \ldots a_{n-1} \ind_{\M} b$, hence $(a_0, \ldots, a_{n-1}) \models (p_0 \otimes \ldots \otimes p_{n-1})|_{\M b}$ by stationarity of $(p_0 \otimes \ldots \otimes p_{n-1})|_{\M}$.
\end{remark}

\subsection{N-distal packing lemma, $n$-determinacy for measures and hypergraph regularity lemma} \label{sec: N-distal packing lemma, n-determinacy for measures and hypergraph regularity lemma}

\begin{fact}\cite[Proposition 7.6, Theorem 7.7]{walker2023distality} \label{fac: det for types}
	Assume $T$ is $k$-distal, for $1 \leq k < \omega$, and $p_i \in S_{x_i}(\cU)$ are global invariant types that are pairwise commuting, for $i < k+1$. Then the $(k+1)$-tuple $(p_0, \ldots, p_k)$ is \emph{$k$-determined}, i.e.~$\bigcup_{u \subseteq k+1, |u|=k} \bigotimes_{i \in u} p_i(x_i) \vdash p_0(x_0) \otimes \ldots \otimes p_k(x_k)$. And if $T$ is NIP, this actually characterizes $k$-distality.
\end{fact}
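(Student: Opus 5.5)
The plan for the forward direction is to show directly that a tuple realizing all the $k$-fold products realizes the full $(k+1)$-fold product; it suffices to do this over small models. Fix a small $\M$ over which every $p_i$ is invariant, and let $(a_0,\ldots,a_k)$ satisfy $a_u \models \bigl(\bigotimes_{i\in u}p_i\bigr)|_{\M}$ for every $u\subseteq k+1$ with $|u|=k$. It is enough to show $(a_0,\ldots,a_k)\models (p_0\otimes\cdots\otimes p_k)|_{\M}$. Since $\M$ can be taken to contain any given parameters, this gives $\bigcup_{|u|=k}\bigotimes_{i\in u}p_i \vdash p_0\otimes\dots\otimes p_k$ by compactness. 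Pairwise commutativity matters here: it makes the product independent of the order of the factors, so ``$a_u$ realizes the product'' is unambiguous.

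Next I will construct Morley sequences around $a$. Set $q:=p_0\otimes\dots\otimes p_k$, an $\M$-invariant type. Realize a right part $R=(\bar b_t)_{t\in(0,1)\cap\mathbb{Q}}$ as a Morley sequence of $q$ over $\M a$. Then realize a left part $L=(\bar b_t)_{t\in(-1,0)\cap\mathbb{Q}}$ over $\M aR$, built in the reverse order as a Morley sequence of $q$, so that each $\bar b_t$ with $t<0$ realizes $q$ over everything indexed to its right together with $a$. Let $\Ical_i$ be the $i$-th coordinate sequence of $L+R$, and put the cut $\cfrak_i$ of $\Ical_i$ at $0$; these cuts are Dedekind and the sequences are dense and endless. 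By invariance and commutativity, $(\Ical_i)_{i\le k}$ is mutually indiscernible over $\M$. I then check that for each $u$ with $|u|=k$, inserting $(a_i)_{i\in u}$ at the cuts preserves mutual $\M$-indiscernibility. The reason is that $a_u\models(\bigotimes_{i\in u}p_i)|_\M$, that $R$ is Morley over $\M a$, and that $L$ is Morley over $\M aR$. Together these show that the enlarged configuration (with $a_u$ at position $0$) is, restricted to the coordinates present, a Morley-type configuration of commuting invariant types over $\M$. Mutual indiscernibility of such configurations follows from invariance together with pairwise commutation of the factors.

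The main obstacle is this insertion step, and more precisely the placement of $a_i$ relative to the $b_{i,t}$ with $t<0$. The issue is that $a_u$ was given first, yet it must look as if it had been realized \emph{before} the left part of the sequence in the tensor order. Choosing $L$ over $\M aR$, as above, is exactly what makes this work. Still, I expect to have to verify carefully that the resulting type of $L\,a_u\,R$ coincides with that of a genuine Morley sequence of the product. The check uses associativity of $\otimes$ and the fact that reversed Morley sequences of invariant types are still indiscernible. Granting this, $k$-distality of $T$ (Definition \ref{def: n-dist mut ind seqs}, with $\M$ absorbed into the language) shows that the whole tuple $(a_0,\ldots,a_k)$ $(k+1)$-inserts preserving mutual $\M$-indiscernibility. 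It follows that $\tp(a_0,\ldots,a_k/\M)=\tp(b_{0,t},\ldots,b_{k,t}/\M)=q|_\M$, as desired.

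For the converse under NIP, I will start from a failure of $k$-distality: mutually indiscernible dense sequences $(\Ical_i)_{i\le k}$ with Dedekind cuts $\cfrak_i$, and a tuple $(a_i)$ that $k$-inserts but does not $(k+1)$-insert. Let $p_i$ be the global limit type of $\Ical_i$ at $\cfrak_i$ from the left; by NIP these exist and are invariant over the sequences. Mutual indiscernibility makes these limit types pairwise commute, and it also makes $\bigotimes_{i\in u}p_i$ restricted to the sequences be exactly the type of inserting elements at the cuts. Hence $a_u$ realizes $\bigotimes_{i\in u}p_i$ over the sequences for every $|u|=k$, while $(a_0,\ldots,a_k)$ does not realize the full product, contradicting $k$-determinacy.
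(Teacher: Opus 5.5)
The forward direction has a genuine gap: the statement you reduce it to is false. Over a small model, writing $p_u:=\bigotimes_{i\in u}p_i$, the set $\bigcup_{|u|=k}p_u|_{\M}$ need not imply $(p_0\otimes\dots\otimes p_k)|_{\M}$. This already fails for $k=1$ in DLO, which is distal.

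Let $(A,B)$ be a gap in $M$. Let $p^+$ be the global type ``$x>A$ and $x<d$ for every $d\in\cU$ with $d>A$'', and let $p^-$ be ``$x<B$ and $x>d$ for every $d\in\cU$ with $d<B$''. Both are $\M$-invariant, and they commute, since both products contain $x_0<x_1$. But $p^+|_{\M}=p^-|_{\M}$ only says $A<x<B$. So any $a_0>a_1$ in the gap realize the two restrictions without realizing $(p^+_{x_0}\otimes p^-_{x_1})|_{\M}$. Globally this cannot happen, because $p^+(x_0)\cup p^-(x_1)\vdash x_0<d<x_1$ for any $d\in\cU$ in the gap. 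That is exactly the information you discard by assuming only $a_u\models p_u|_{\M}$.

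Correspondingly, your insertion step fails. $R$ is Morley left-to-right over $\M\ab$, while $L$ is realized afterwards, right-to-left, over $\M\ab R$. So $L+R$ is not even indiscernible: in the example, the $L$-part of $\Ical_0$ is increasing and lies below the decreasing $R$-part.

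The repair is to keep $\ab$ realizing the \emph{global} types $p_u$ in some $\cU'\succ\cU$.
Take $L=(\bb_t)_{t<0}$ inside $\cU$ as an ordinary Morley sequence of $q$ over $\M$.
Realize only $R$ in $\cU'$, as a Morley sequence of $q$ over $\cU\ab$.
Then $a_u\models p_u|_{\M L}$ comes for free, and $R$ is Morley over $\M L a_u$.
So for $u=(k+1)\setminus\{j\}$, the configuration $L\,a_u\,R$ has the same type over $\M$ as a Morley sequence of $q$ with the $j$-th coordinate of its middle term deleted.
Pairwise commutation gives mutual indiscernibility, and $k$-distality over $\M$ yields $\ab\models q|_{\M}$.
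Since $\M$ can contain any given parameter, $\ab\models q$.

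The converse has the mirror-image gap. From $k$-insertion you only get $a_u\models p_u|_{\Ical}$ and $\ab\not\models q|_{\Ical}$. As the DLO example shows, a failure at the level of restrictions does not contradict the global statement $\bigcup_u p_u\vdash q$. You need to replace $\ab$ by a tuple in a bigger monster that realizes each global $p_u$ and still fails to $(k+1)$-insert. This is a base change argument, such as \cite[Lemma 3.11]{walker2023distality} (based on \cite[Lemma 2.8, Corollary 2.9]{simon2013distal}), and it uses NIP essentially. The paper invokes exactly this step in the analogous converse for average measures in Section \ref{sec: n-distality vs av meas}.

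Also, pairwise commutation of the limit types does not follow from mutual indiscernibility alone, since a parameter $c\in\cU$ destroys mutual indiscernibility. You need NIP, e.g.\ the finite box decomposition of Fact \ref{NIP implies boxes on mut ind}, to see that the two iterated limits agree.
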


Here we are interested in an analog of this for Keisler measures, and connections to hypergraph regularity. The following definition generalizes the corresponding notion for types:
\begin{definition}\label{def: n-det for tuples of measures}
    Given $1 \leq k<n$, an $n$-tuple of global invariant (Keisler) measures $(\mu_0(x_0),\dots,\mu_{n-1}(x_{n-1}))$ is \emph{$k$-determined} if $\bigotimes\limits_{i<n} \mu_i(x_i)$ is $\Lcal^k_{x_0,\dots,x_{n-1}}(\Mbb)$-determined (see Definitions \ref{def: A-determined measure} and \ref{def: cylinder Bool alg}).
\end{definition}

\begin{remark}
    A pair of global measures is $1$-determined if and only if they are orthogonal in the sense of \cite[Corollary 2.5]{hrushovski2013generically}.
\end{remark}

\begin{conjec}\label{prop: m-det}
	Suppose $T$ is $m$-distal and $n>m>0$. If $\mu_0(x_0), \ldots, \mu_{n-1}(x_{n-1})$ are generically stable, then the product $\mu:=\mu_1 \otimes \ldots \otimes\mu_{n-1}$ is $m$-determined.
\end{conjec}

Proposition \ref{2-determinacy with a type} should be sufficient to 
derive $n$-determinacy for $\otimes$-products of generically stable measures (and it is a standard way to do this in the case $n=1$, see \cite[Corollary 2.5]{hrushovski2013generically}). Here we consider a higher arity version.

The following is an easy consequence of definability of measures (see e.g.~ \cite[Fact 2.3]{chernikov2022definable}):
\begin{lemma}\label{gs and formula defining measure}
Suppose $T$ has NIP and $\mu(x)\in\Mfrak(\Mbb)$ is definable over a small model $\M$. Then for any formulas $\gamma_0(x,z), \gamma_1(x,z)\in\Lcal(\M)$ and $\varepsilon>0$, there is a formula $\psi(z)\in \Lcal(\M)$ such that $(1)\implies (2) \implies (3)$ for all $c\in \Mbb^z$, where:
\begin{enumerate}
    \item  $|\mu(\gamma_0(x,c))-\mu(\gamma_1(x,c))|<\varepsilon$;
    \item $\models \psi(c)$;
    \item $|\mu(\gamma_0(x,c))-\mu(\gamma_1(x,c))|<2\varepsilon$.
\end{enumerate}
\end{lemma}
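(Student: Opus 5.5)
The plan is to reduce everything to a single definable family. Set $\theta(x,z) := \gamma_0(x,z)\triangle\gamma_1(x,z)$; this is not quite what the statement asks, though. The quantity in question is $|\mu(\gamma_0(x,c))-\mu(\gamma_1(x,c))|$, a difference of two measures, not the measure of a symmetric difference. So I would work directly with the two functions $f_t(c) := \mu(\gamma_t(x,c))$ for $t\in\{0,1\}$. These functions factor through $\tp(c/\M)$ by $\M$-invariance. By $\M$-definability of $\mu$, each $f_t$ induces a continuous map $S_z(\M)\to[0,1]$, since the preimages of the basic open sets $(r,1]$ and $[0,s)$ are open. The relevant definition gives the first directly and the second after applying it to $\neg\gamma_t$. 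Consequently $g(q) := |f_0(q)-f_1(q)|$ is a continuous function on the compact Stone space $S_z(\M)$.

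Next, consider the disjoint sets $U := \{q : g(q)<\varepsilon\}$ and $F := \{q : g(q)\geq 2\varepsilon\}$. By continuity of $g$, the closure of $U$ is contained in $\{q : g(q)\leq\varepsilon\}$, so it is disjoint from the closed set $F$. Since $S_z(\M)$ is a totally disconnected compact Hausdorff space, disjoint closed sets can be separated by a clopen set. Take a clopen $[\psi(z)]$ with $\overline{U}\subseteq[\psi]$ and $[\psi]\cap F=\emptyset$. Clopen subsets of $S_z(\M)$ are exactly sets of the form $[\psi]$ with $\psi(z)\in\Lcal(\M)$.

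Finally, check the implications for $c\in\Mbb^z$, writing $q := \tp(c/\M)$. If (1) holds, then $q\in U\subseteq[\psi]$, so $\models\psi(c)$, which is (2). If (2) holds, then $q\notin F$, so $g(q)<2\varepsilon$, which is (3).

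The only step needing care is the continuity of $f_t$, which I take from the definition of $\M$-definable measures (equivalently \cite[Fact 2.3]{chernikov2022definable}). An alternative route avoids the separation argument: use uniform approximation of $f_t$ by finitely many $\M$-formulas at precision $\varepsilon/4$, then let $\psi$ be the disjunction of the cells on which the approximate values differ by less than $3\varepsilon/2$. This gives the same implications.
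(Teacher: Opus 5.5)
Your proof is correct and follows the paper's intended route. The paper gives no argument beyond calling the lemma an easy consequence of $\M$-definability of $\mu$, and your continuity-plus-clopen-separation argument on $S_z(\M)$ (or equally your alternative via uniform approximation of $c\mapsto\mu(\gamma_t(x,c))$ by finitely many $\Lcal(\M)$-formulas) is exactly the routine verification it omits; NIP plays no role, only $\M$-invariance and the openness condition in the definition of definability.
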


\begin{prop}\label{prop: dist packing lemma}
	Let $T$ be NIP and strongly $n$-distal. For every $\varphi(x_0, \ldots,x_{n-1};x_{n})$ and $\varepsilon>0$ there exist some $k \in \mathbb{N}$ and formulas  $\gamma_{i}(x_0, \ldots, x_{n-1}; z_0) \in \Lcal$, $\rho_i(x_{n}; z_0) \in \Lcal$ and $\chi_i^-(x_0,\dots, x_{n-1};z), \chi_i^+(x_0,\dots, x_{n-1};z) \in \Lcal^{n}_{x_0, \ldots, x_{n-1}, z}$ for $i < k$, where $z = z_0 z_1$ and each $\chi_i^-, \chi_i^+$ is given by a Boolean combination of $\gamma_{j}(x_0, \ldots, x_{n-1}; z_0), j <k$ and some formulas of the form $ \delta_{u}((x_t : t \in u ); z_1) \in \mathcal{L}$ for $u \subsetneq \{0, \ldots, n-1\}$, satisfying the following.   Given a tuple of global generically stable measures $\bar{\mu} = (\mu_0(x_0), \ldots, \mu_{n-1}(x_{n-1}))$ there exists some $e_{\bar{\mu}} \in \cU^{z_0}$ so that: 
	$(\rho_i(\cU;e_{\bar{\mu}}) : i < k)$ partition $\cU^{x_n}$ and for every  $b \in \rho_i(\cU^{x_{n}})$ there is some $d_b \in \cU^{z_1}$ with 
     \begin{enumerate}
         \item $\chi_i^-(x_0,\dots, x_{n-1}; e_{\bar{\mu}}, d_b)\rightarrow \varphi(x_0,\dots, x_{n-1},b) \rightarrow \chi_i^+(x_0,\dots, x_{n-1}; e_{\bar{\mu}}, d_b)$,
        \item $\mu_0\otimes \dots \otimes\mu_{n-1}(\chi_i^+(x_0,\dots, x_{n-1}; e_{\bar{\mu}}, d_b)) - \mu_0\otimes \dots \otimes\mu_{n-1}(\chi_i^-(x_0,\dots, x_{n-1}; e_{\bar{\mu}}, d_b))< \varepsilon$.
     \end{enumerate}
\end{prop}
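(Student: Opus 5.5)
The plan is to first obtain the conclusion for a single tuple $\bar\mu$ and a single $b$ via Proposition \ref{2-determinacy with a type}, and then make it uniform using compactness together with Lemma \ref{gs and formula defining measure}.

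\textbf{Step 1: approximation for one $\bar\mu$ and one $b$.} Fix global generically stable $\bar\mu$, all over some small model $\M$, and write $\mu:=\mu_0\otimes\dots\otimes\mu_{n-1}$. By Proposition \ref{2-determinacy with a type}, $\mu$ is $n$-smooth over $\M$. Let $\Afrak$ be the Boolean algebra generated by $\Lcal_{\bar x}(\M)$ together with $\Lcal_{(x_i:i\neq j)}(\cU)$ for $j<n$. Any measure on $\Afrak$ extending $\mu|_{\Afrak}$ coincides with $\mu$ on $\Lcal_{\bar x}(\M)$ and on each $\Lcal_{(x_i : i \neq j)}(\cU)$. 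So $n$-smoothness says exactly that $\mu$ is $\Afrak$-determined. By Proposition \ref{border and determination} and Fact \ref{existence of measure extensions} (the sup/inf gap must vanish), for every $b$ there are $A^-\subseteq\varphi(\bar x,b)\subseteq A^+$ in $\Afrak$ with $\mu(A^+)-\mu(A^-)<\varepsilon$.

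Each $A^\pm$ is a Boolean combination of three kinds of formulas: formulas $\gamma(\bar x;m)$ with $m\in\M$, formulas $\delta_u(x_u;d)$ with $u\subsetneq n$ and $d$ arbitrary, and formulas in fewer variables.

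\textbf{Step 2: finitely many shapes.} Fix $\bar\mu$ and $\M$, and consider all $b\in\cU^{x_n}$. I want finitely many Boolean shapes $\chi_i^\pm(\bar x;z_0,z_1)$ and one parameter $e\in\M$ such that every $b$ is served by some $i$ and some $d_b$. For this I use compactness over $b$. The condition ``$\exists d$ with $\chi^-(\bar x;e,d)\to\varphi(\bar x,b)\to\chi^+(\bar x;e,d)$ and $\mu(\chi^+)-\mu(\chi^-)<\varepsilon$'' should be made a definable condition on $b$. By definability of $\mu$ over $\M$ (via Lemma \ref{gs and formula defining measure}), the measure clause is replaced, up to $2\varepsilon$, by a formula $\psi(z_1;m)$ over $\M$. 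The inclusion clause is first order. So the set of $b$ handled by shape $i$ is $\M$-definable, and these sets cover $\cU^{x_n}$. Here a realization of a type $p(x_n)$ that is not covered would contradict Step 1 applied to that realization. By compactness, finitely many of these sets suffice. I then disjointify them to obtain the $\rho_i(x_n;e)$.

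\textbf{Step 3: uniformity in $\bar\mu$.} This step should be the main obstacle. The formulas must not depend on $\bar\mu$; only $e_{\bar\mu}$ may. I argue by contradiction with an ultraproduct or compactness over measures. Suppose that for every finite family of candidate shapes (enumerating $\mathcal L$-formulas, with $k\to\infty$) some tuple $\bar\mu^{(k)}$ fails. I would pass to an elementary extension in which the $\bar\mu^{(k)}$ are coded, for instance in an expansion by predicates for the measures, or via a limit measure in the compact space $\mathfrak M(\cU)$. The limit must again be a tuple of generically stable measures. Since generic stability is not closed, I expect this is the delicate point.

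A cleaner alternative is the standard trick of Chernikov--Starchenko. Fix $\varphi$. Step 1 applied in a sufficiently saturated model gives, for each $\bar\mu$, shapes from a countable list. I then apply a ``uniform definability of generically stable measures'' principle: for NIP, $\mu(\gamma(\bar x;c))$ is approximated uniformly by averages over $\varepsilon$-approximations of bounded size (VC theorem). I therefore take $e_{\bar\mu}$ to code a finite $\varepsilon$-approximating sample for $\mu$, whose size is bounded in terms of the VC dimension alone. With that choice, the measure clause becomes a uniform formula in $z_0$, and the compactness in Step 2 can be carried out uniformly over all samples of that size. This delivers $k$, the $\gamma_i$, the $\rho_i$ and the $\chi_i^\pm$ independent of $\bar\mu$.
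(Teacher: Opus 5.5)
Your Steps 1 and 2 are essentially the paper's non-uniform argument. You cover $\cU^{x_n}$ directly by the $\M$-definable sets attached to each $b$, while the paper first passes to types over $\M$ using $\M$-invariance; both work. One small correction: $n$-smoothness does not say \emph{exactly} that $\mu$ is $\Afrak$-determined. It implies it ($\Afrak$-determinacy is weak $n$-smoothness over $\M$), and that implication is all you use.

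The genuine gap is Step 3, which you flag but do not close. Your ``cleaner alternative'' does not close it either. Coding a bounded-size $\varepsilon$-approximation in $e_{\bar\mu}$ at best makes the measure clause $\mu(\chi^+)-\mu(\chi^-)<\varepsilon$ uniformly definable, a uniform substitute for Lemma \ref{gs and formula defining measure}. It gives no bound on $k$ or on the shapes $\chi^\pm_i$. Those come from Step 1, a pure existence statement (Fact \ref{existence of measure extensions} applied to the $n$-smoothness from Proposition \ref{2-determinacy with a type}), and it can only be invoked for a genuine tuple of generically stable measures. A compactness argument ``over all samples of that size'' ranges over finite tuples that need not approximate any product $\mu_0\otimes\dots\otimes\mu_{n-1}$. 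Even a genuine sample of bounded size only controls finitely many families of formulas up to $\varepsilon$, so it does not determine the measure to which Step 1 must be applied. Either way you are back to your first route: from counterexamples $\bar\mu^{(k)}$ you must produce a \emph{single genuine} tuple of generically stable measures that defeats every candidate family.

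That is the missing idea, and it needs real input. The paper uses the formalism of \cite{chernikov2025externally}, which supplies three facts:
\begin{enumerate}
\item In NIP, $\mathfrak{M}^{\gs}_x(\cU)$ is identified with an $\emptyset$-hyperdefinable set $\widetilde{\mathcal M}_x$. This is where the VC theorem enters: the relevant ``limit'' is not a topological limit in $\mathfrak{M}(\cU)$ but a new element of $\widetilde{\mathcal M}_x$ obtained by saturation, and it is automatically a generically stable measure.
\item The map $([\mu_0],\dots,[\mu_{n-1}])\mapsto[\mu_0\otimes\dots\otimes\mu_{n-1}]$ is $\emptyset$-type-definable. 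This is not automatic, since $\otimes$ is defined by integration.
\item For fixed $k,\gamma_i,\rho_i,\chi^\pm_i$, the set of tuples $([\mu_0],\dots,[\mu_{n-1}])$ for which these formulas fail the conclusion is $\emptyset$-type-definable.
\end{enumerate}
If the uniform statement failed, any finitely many of these failure sets would intersect, since finitely many candidate families can be coded into one. By saturation there is then $([\mu^\ast_0],\dots,[\mu^\ast_{n-1}])$ lying in all of them, and the genuine generically stable measures $\mu^\ast_i$ contradict your Steps 1--2. Without these three ingredients your Step 3 remains a heuristic.
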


\begin{remark}\label{rem: n-dependent packing lemma}
	We will refer to Proposition \ref{prop: dist packing lemma} as the \emph{$n$-distal packing lemma}, since it is a stronger form of the $n$-dependent packing lemma (i.e.~the packing lemma for families of sets of finite $\VC_n$-dimension, generalizing the classical Haussler's packing lemma in the case $n=1$) established in \cite{chernikov2020hypergraph}:
	\begin{fact}(No assumption on $T$.) 
		For every $n$-dependent formula $\varphi(x_0, \ldots,x_{n-1};x_{n})$ and $\varepsilon>0$ there exists some $k \in \mathbb{N}$ and formulas $\gamma_{i}(x_0, \ldots, x_{n-1}; z_0) \in \Lcal$ (each given by an  instance of the form $\varphi(x_0, \ldots, x_{n-1}; z_{0,j})$ where $z_0 = (z_{0,0}, \ldots, z_{0,k-1})$), $\rho_i(x_{n}; z_0) \in \Lcal$ and $\chi_i(x_0,\dots, x_{n-1};z) \in \Lcal^{n}_{x_0, \ldots, x_{n-1}, z}$ for $i < k$, where $z = z_0 z_1$ and each $\chi_i$ is given by a Boolean combination of $\gamma_{j}(x_0, \ldots, x_{n-1}; z_0)$ and some formulas 	 $ \delta_{u}((x_t : t \in u ); z_1) \in \mathcal{L}$ for $u \subsetneq \{0, \ldots, n-1\}$ (each given by replacing some, and at least one, of $x_i$ by $z_{1,j}$ in $\varphi(x_0, \ldots, x_{n-1})$, where $z_1 = (z_{1,j})$) satisfying the following.  Given a tuple of global generically stable measures $\bar{\mu} = (\mu_0(x_0), \ldots, \mu_{n-1}(x_{n-1}))$ (or just definable measures so that $\mu_i$ commutes with $\mu_j$ for all $0 \leq i,j <n$) there exists some $e_{\bar{\mu}} \in \cU^{z_0}$ so that: $(\rho_i(\cU;e_{\bar{\mu}}) : i < k)$ partition $\cU^{x_n}$ and for every  $b \in \rho_i(\cU^{x_{n}})$ there is some $d_b \in \cU^{z_1}$ with 
     \begin{itemize}
         \item $\mu_0\otimes \dots \otimes\mu_{n-1} \Big( \varphi(x_0,\dots, x_{n-1},b) \  \triangle \  \chi(x_0,\dots, x_{n-1}; e_{\bar{\mu}}, d_b) \Big) < \varepsilon$.
     \end{itemize}
	\end{fact}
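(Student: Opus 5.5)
The plan is to first prove a non-uniform version, for a single fixed tuple $\bar\mu$ and a single $b$, and then to obtain the uniform statement by compactness over the space of tuples of generically stable measures together with the parameter $b$.

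\textbf{Step 1 (pointwise approximation).} Fix global generically stable $\mu_0,\dots,\mu_{n-1}$, all generically stable over some small model $\M$, and any $b\in\cU^{x_n}$. By Proposition \ref{2-determinacy with a type}, $\mu:=\mu_0\otimes\dots\otimes\mu_{n-1}$ is $n$-smooth over $\M$. This means $\mu$ is determined by its restriction to the Boolean algebra generated by $\Lcal_{\bar x}(\M)$ and the algebras $\Lcal_{(x_i:i\neq j)}(\cU)$, $j<n$; the latter generate $\Lcal^{n-1}_{\bar x}(\cU)$. By Proposition \ref{border and determination}, or more directly by Fact \ref{existence of measure extensions}, uniqueness of the extension gives formulas $\chi^-\to\varphi(\bar x,b)\to\chi^+$ in that algebra with $\mu(\chi^+)-\mu(\chi^-)<\varepsilon$. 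Each $\chi^\pm$ is a Boolean combination of formulas $\gamma(\bar x;m)$ with $m\in\M$ and formulas $\delta_u((x_t:t\in u);d)$ with $u\subsetneq n$ and $d\in\cU$.

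\textbf{Step 2 (uniformity in $b$ for fixed $\bar\mu$).} By compactness on $S_{x_n}(\M)$ we get finitely many shapes $(\chi_i^\pm)_{i<k}$ covering all $b$. For each shape, the set of $b$ admitting suitable $d$ is definable over $\M$, and this is the key point. The condition ``$\chi^-(\bar x;m,d)\to\varphi(\bar x,b)\to\chi^+(\bar x;m,d)$'' is first order in $(b,d)$. The condition ``$\mu(\chi^+)-\mu(\chi^-)<\varepsilon$'' becomes definable in $d$ by applying Lemma \ref{gs and formula defining measure} to $\mu$, which is definable over $\M$ by Fact \ref{basic tensor facts}, with $\varepsilon/2$ in place of $\varepsilon$. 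Taking $\exists d$ over the conjunction yields $\M$-definable sets covering $\cU^{x_n}$. Refining them into a partition gives $\rho_i(x_n;e)$, with $e$ a finite tuple from $\M$. The formulas $\gamma_j$ are the $\M$-instances appearing, with parameters collected into $e$.

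\textbf{Step 3 (uniformity in $\bar\mu$), the main obstacle.} The formulas obtained so far depend on $\bar\mu$, so I expect the hardest step to be making them depend only on $\varphi$ and $\varepsilon$. I would argue by contradiction and compactness. Suppose no finite list of candidate shapes $(\gamma,\rho,\chi^\pm)$ works. Then for every finite list there is a tuple $\bar\mu$ that defeats it. Take an ultraproduct or limit of these counterexample tuples: for example, pass to an expanded language in which the measures are coded, or use the fact that generically stable measures are approximated by averages of finitely many points (fsg/VC-type approximation in NIP). This produces a tuple of generically stable measures that defeats every candidate list, contradicting Steps 1--2. The delicate part is controlling the errors. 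The measure-definability formula $\psi$ from Lemma \ref{gs and formula defining measure} has to be replaced by a uniform expression. A natural route is to approximate $\mu_0\otimes\dots\otimes\mu_{n-1}$ on the relevant families by uniform averages over $N=N(\varphi,\varepsilon)$-tuples of points, using the VC theorem for generically stable measures. Then the measure conditions become first-order conditions on an averaging tuple, which can be absorbed into $e_{\bar\mu}$. The contradiction-by-compactness argument then ranges over these tuples in $\cU$, which is legitimate. The $\varepsilon$ versus $2\varepsilon$ losses are absorbed by starting with $\varepsilon/4$.
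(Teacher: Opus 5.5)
\textbf{There is a genuine gap: Step 1 fails.} The paper gives no proof of this Fact. It quotes it from \cite{chernikov2020hypergraph}, and it is stated with \emph{no assumption on $T$}: only the single formula $\varphi$ is assumed $n$-dependent. Your Step 1 invokes Proposition \ref{2-determinacy with a type}, which needs $T$ to be NIP and strongly $n$-distal, and neither hypothesis is available here. In fact your Steps 1--3 reproduce the paper's proof of Proposition \ref{prop: dist packing lemma}, the $n$-distal packing lemma. That result gives a two-sided sandwich $\chi^-\to\varphi(\bar x,b)\to\chi^+$, a strictly stronger conclusion under strictly stronger hypotheses (Remark \ref{rem: n-dependent packing lemma}).

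Without distality the sandwich is false, so it cannot be a route to the Fact. Already for $n=1$, take the theory of an infinite set. There $\varphi(x;y)$ given by $x=y$ is NIP, and the unique non-algebraic global type $\mu$ is generically stable but not smooth (the paper's example in Section \ref{sec: Invariant generically stable measures in n-distal groups}). For $b\notin\M$, any $\M$-definable $\chi^+\ni b$ is cofinite, and any $\M$-definable $\chi^-\subseteq\{b\}$ is empty. Hence $\mu(\chi^+)-\mu(\chi^-)=1$. Yet the Fact holds there trivially with $\chi=\bot$, since $\mu(x=b)=0$ for all $b$. The same failure occurs for every $n$ in $\ACF_p$, a stable theory. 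There your Steps 2--3 would go through, so Step 1 for the generalized inner product would give $n$-sEH by the arguments of Corollaries \ref{cor: str n-dist reg lemma} and \ref{cor: n-str EH}. That contradicts Theorem \ref{thm: no inf fields n-dist}.

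\textbf{The rest of the proposal also relies on NIP of the whole theory.} Steps 2--3 use Lemma \ref{gs and formula defining measure}, Fact \ref{basic tensor facts}, and the VC-theorem approximation or hyperdefinable coding of generically stable measures from \cite{chernikov2025externally}. None of these is available when only $\varphi$ is tame. Nor does your argument cover merely definable commuting measures, which the statement also allows.

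\textbf{The output also has the wrong shape.} The statement prescribes that each $\gamma_j$ is an instance $\varphi(\bar x;z_{0,j})$, and each $\delta_u$ is obtained from $\varphi$ by substituting parameters. Smoothness only produces arbitrary $\Lcal(\M)$-formulas and arbitrary lower-arity formulas. This syntactic control is exactly what lets the result hold for a single $n$-dependent formula in an arbitrary theory.

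\textbf{What is missing.} You need the higher-arity VC theory of \cite{chernikov2020hypergraph}. There, finite $\VC_n$-dimension of $\varphi$ is used to approximate fibers $\varphi(\bar x;b)$ in product measure by Boolean combinations of boundedly many fixed fibers $\varphi(\bar x;e_j)$ and lower-arity instances of $\varphi$, a higher-arity Haussler packing. The approximation is up to small symmetric difference, not a sandwich. An argument of that kind, using only the combinatorics of $\varphi$, is what the statement requires.
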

\end{remark}

\begin{proof}[Proof of Proposition \ref{prop: dist packing lemma}]
	First we prove a non-uniform version of the statement where the formulas in the conclusion are allowed to depend on the measures. So fix some $\mu_0, \ldots, \mu_{n-1}$ generically stable, and let $\M$ be a small model so that all of $\mu_i$ are generically stable over it. 
    Fix $\varphi(x_0,\dots,x_{n-1},x_n)\in \Lcal(\Mbb)$ and $\varepsilon>0$.

    By Proposition \ref{2-determinacy with a type}, $\mu_0(x_0) \otimes \ldots \otimes \mu_{n-1}(x_{n-1})$ is $n$-smooth over $\M$, so in particular weakly $n$-smooth over $\M$ (Remark \ref{rem: weak n-smooth}). Then, using   Fact \ref{existence of measure extensions}, we have that for any $b \in \cU^{x_n}$ there are some 
    $\chi_b^-(x_0,\dots, x_{n-1},z), \chi_b^+(x_0,\dots, x_{n-1},z) \in   \Lcal^{n}_{x_0,\dots,x_{n-1},z}$, $e_b \in \M^{z_0}$ and $c_b\in \Mbb^{z_1}$ with so both $\chi_b^-(x_0,\dots, x_{n-1}; e_b, c_b)$, $\chi_b^+(x_0,\dots, x_{n-1}; e_b, c_b)$ are  Boolean combinations of some $\gamma_{b,i}(x_0, \ldots, x_{n-1}; e_b) \in \Lcal(\M)$ and some $ \delta_{u,b}((x_t : t \in u ); c_b)$,     
    so that:
    \begin{enumerate}
        \item $\chi_b^-(x_0,\dots, x_{n-1};e_b, c_b)\rightarrow \varphi(x_0,\dots, x_{n-1},b) \rightarrow \chi_b^+(x_0,\dots, x_{n-1};e_b, c_b)$;
        \item $\mu_0\otimes \dots \otimes\mu_{n-1}(\chi_b^+(x_0,\dots, x_{n-1};e_b,c_b))- \mu_0\otimes \dots \otimes\mu_{n-1}(\chi_b^-(x_0,\dots, x_{n-1};e_b,c_b))<\varepsilon$.
    \end{enumerate}

     \noindent By  $\M$-invariance of $\mu_0\otimes \dots \otimes \mu_{n-1}$, if $\chi_b^-,\chi_b^+, e_b,c_b$ satisfy the above conditions 1,2 for some $b\models q\in S_{x_n}(\M)$ in $\cU$, then for every $b'\models q$ there is some $c_{b'}$ such that $\chi_b^+,\chi_b^-, e_b, c_{b'}$ satisfy the conditions 1,2. So, for each $q\in S_{x_n}(\M)$ we choose some $b\models  q$ and denote $\chi_q^+:=\chi_b^+$, $\chi_q^-:=\chi_b^-$ and $e_q := e_b$.

     Now, for each $\chi_q^+(x, z), \chi_q^-(x,z)$ we can choose a formula $\psi_q(z)\in \Lcal(\M)$ as in Lemma \ref{gs and formula defining measure} with respect to $\mu_0 \otimes \ldots \otimes \mu_{n-1}$, and  let $\rho_q(x_n) \in \Lcal(\M)$ be the formula 
     \begin{gather*}
     	\exists z_1( \forall x_0,\dots,x_{n-1}(\chi^-_q(x_0,\dots,x_{n-1}; e_q, z_1) \rightarrow \varphi(x_0,\dots,x_n)\rightarrow \\
     	\chi_q^+(x_0,\dots,x_{n-1}, e_q, z_1)) \wedge \psi_q(e_q,z_1)).
     \end{gather*}
      By the previous paragraph and $(1)\Rightarrow(2)$ in Lemma \ref{gs and formula defining measure}, the clopen sets $([\rho_q] : q \in S_{x_n}(\M))$ cover the compact space $S_{x_n}(\M)$. So we can choose $q_0,\dots,q_{k-1}\in S_{x_n}(\M)$ such that $([\rho_{q_i}])_{i<k}$ still covers $S_{x_n}(\M)$. For $i<k$, define $\rho_i(x_n) := \rho_{q_i}(x_n)\wedge (\wedge_{j<i}\neg \rho_{q_j}(x_n))$. Then $(\rho_i(x_n))_{i<k}$ partitions $\cU^{x_n}$. In an abuse of notation, for each $i<k$ we denote $\chi^-_{i}:= \chi^-_{q_i}$, $\chi^+_{i}:= \chi^+_{q_i}$, $\psi_i(z):= \psi_{q_i}(z)$ and $e_i := e_{q_i}$.

     So, by $(2) \Rightarrow (3)$ from Lemma \ref{gs and formula defining measure}, for each $b \in \cU^{x_n}$ there is a unique $i<k$ (such that $\models \rho_i(b)$) and some $d_b \in \cU^z$ satisfying the following:
     \begin{enumerate}
         \item $\chi_i^-(x_0,\dots, x_{n-1},e_i, d_b)\rightarrow \varphi(x_0,\dots, x_{n-1},b) \rightarrow \chi_i^+(x_0,\dots, x_{n-1},e_i,d_b)$,
        \item $\mu_0\otimes \dots \otimes\mu_{n-1}(\chi_i^+(x_0,\dots, x_{n-1},e_i,d_b)) - \mu_0\otimes \dots \otimes\mu_{n-1}(\chi_i^-(x_0,\dots, x_{n-1},e_i,d_b))<2\varepsilon$.
     \end{enumerate}

Now we show that $k, \psi_i, \rho_i, \chi_i^{-}, \chi_i^{+}$ can be chosen independently of $\bar{\mu}$. This follows by a standard compactness/``ultraproduct of counterexamples'' argument, using that ultraproducts of generically stable measures in NIP theories remain generically stable (thanks to the VC-theorem). We use here the formalism from \cite[Section 3.4]{chernikov2025externally}. Namely, in an NIP theory $T$, given a tuple of variables $x$ we identify the set of global generically stable measures $\mathfrak{M}_{x}^{\gs}(\cU)$ with an $\emptyset$-hyperdefinable set $\widetilde{\mathcal{M}}_{x}$, and let $\mu \in \mathfrak{M}_x^{\gs}(\cU) \mapsto [\mu] \in \widetilde{\mathcal{M}}_x$ denote the bijection.  By \cite[Proposition 3.29]{chernikov2025externally}, the map 
$$([\mu_0], \ldots, [\mu_{n-1}]) \in \widetilde{\mathcal{M}}_{x_0} \times \ldots \times \widetilde{\mathcal{M}}_{x_{n-1}} \mapsto [\mu_0 \otimes \ldots \otimes \mu_{n-1}] \in \widetilde{\mathcal{M}}_{x_0, \ldots, x_{n-1}}$$
is $\emptyset$-type-definable. Using this and \cite[Proposition 3.27]{chernikov2025externally}, for every fixed  $\varphi(x_0, \ldots, x_{n}) \in \Lcal$, $\varepsilon > 0$, $k \in \mathbb{N}$, and $\gamma_i, \rho_i, \chi_i^{+}, \chi_i^{-}$, the set $X_{\varphi, \bar{\gamma}, \bar{\rho}, \bar{\chi}^+, \bar{\chi}^{-}}$ of tuples $([\mu_0], \ldots, [\mu_{n-1}]) \in \widetilde{\mathcal{M}}_{x_0} \times \ldots \times \widetilde{\mathcal{M}}_{x_{n-1}}$ so that $\gamma_i, \rho_i, \chi_i^{+}, \chi_i^{-}$ do not satisfy the conclusion with respect to it is $\emptyset$-type-definable. And, if the conclusion of the proposition fails, the intersection of any finitely many sets of this form is non-empty (as otherwise it would hold by coding finitely many formulas into one). Hence, by saturation of $\cU$, the intersection of all sets of this form is non-empty, say it contains a tuple $([\mu^{\ast}_0], \ldots, [\mu^{\ast}_{n-1}])$. But then the generically stable measures $\mu^{\ast}_0, \ldots, \mu^{\ast}_{n-1}$ fail the non-uniform version of the conclusion proved above.
%The sets
%\begin{gather*}
%	 X_i := \left\{[\mu] \in \widetilde{\mathcal{M}}_y: \forall a \in \M^x \mu(\varphi(a,y)) \leq \frac{1}{i} \right\},\\
%	  Y := \left\{ [\mu] \in \widetilde{\mathcal{M}}_y: \mu^{\otimes d}\left(\exists x\bigwedge_{t \in [d]}\varphi\left(x,y_{t}\right)\right)\geq\alpha \right\}
%\end{gather*}
%\noindent are type-definable  (by \cite[Remark 3.27, Proposition 3.29]{chernikov2025externally}). Note that $X_{i+1} \subseteq X_{i}$ and $[\mu_i]  \in X_i \cap Y$. It follows by saturation of $\M$ that there exists some $\mu \in \mathfrak{M}_x(\M)$ with $[\mu] \in Y \cap \bigcap_{i \in \omega} X_i$. That is, $\mu^{\otimes d}\left(\exists x\bigwedge_{t \in [d]}\varphi\left(x,y_{t}\right)\right)\geq\alpha $ and $\mu(\varphi(a,y)) = 0$ for all $a \in \M^x$. But this contradicts the choice of $d$.
\end{proof}

\begin{remark}\label{rem: stronger form packing lemma}
	If $T$ satisfies strong form of Proposition \ref{2-determinacy with a type} (Problem \ref{prob: str form of gs implies n-smooth}), then in Proposition \ref{prop: dist packing lemma} we could use $d_b = b$. 
\end{remark}

\begin{problem}
	Does Proposition \ref{prop: dist packing lemma} hold in arbitrary (strongly)  $n$-distal theories (without the NIP assumption)?
\end{problem}

As one consequence of the $n$-distal packing lemma, we obtain $n$-determinacy for measures under an additional assumption.

\begin{definition}\label{def: Skolem functions}
    A theory $T$ in a language $\Lcal$ has \emph{definable Skolem functions} if for each formula $\varphi(x,y)$ possibly with parameters there is a formula $\theta(x,y)$ also possibly with parameters satisfying the following:\begin{enumerate}
        \item $\models \forall x,y (\theta(x,y) \rightarrow \varphi(x,y))$
        \item $\models \forall x \exists z \forall y (\theta(x,y) \rightarrow y=z)$
    \end{enumerate}
\end{definition}

\begin{proposition}\label{2-determinacy}
     Assume $T$ is NIP, strongly $n$-distal and either has definable Skolem functions, or satisfies strong form of Proposition \ref{2-determinacy with a type} (Problem \ref{prob: str form of gs implies n-smooth}). Then for any global generically stable measures   $\mu_0(x_0),\dots,\mu_{n-1}(x_{n-1})$ and any global measure $\mu_n(x_n)$, the $(n+1)$-tuple $(\mu_0,\dots,\mu_{n-1},\mu_n)$ is $n$-determined (Definition \ref{def: n-det for tuples of measures}). 
\end{proposition}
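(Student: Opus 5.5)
By Proposition \ref{border and determination}, it suffices to show the following. Let $\mu := \mu_0\otimes\dots\otimes\mu_n$, a Keisler measure on $\Lcal_{x_0,\dots,x_n}(\Mbb)$, and let $\mathfrak{A} := \Lcal^n_{x_0,\dots,x_n}(\Mbb)$. For every formula $\varphi(x_0,\dots,x_n)\in\Lcal(\Mbb)$ and every $\varepsilon>0$, we want sets $A^-\subseteq \varphi\subseteq A^+$ in $\mathfrak{A}$ with $\mu(A^+)-\mu(A^-)<\varepsilon$. This sandwiching pins down the value of any extension of $\mu|_{\mathfrak A}$ on $\varphi$, and that is exactly $\mathfrak{A}$-determinacy. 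Absorbing parameters into a small model, we may assume $\varphi\in\Lcal(\M)$ and that the $\mu_i$ for $i<n$ are generically stable over $\M$. Write $\nu := \mu_0\otimes\dots\otimes\mu_{n-1}$; it is generically stable, hence commutes with $\mu_n$ by Fact \ref{fact: gen stable measures}. We therefore compute $\mu$ as $\nu\otimes\mu_n$ or $\mu_n\otimes\nu$, whichever is more convenient, integrating in $x_n$ against $\mu_n$ the fiberwise $\nu$-measures.

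\textbf{Applying the packing lemma.} Apply Proposition \ref{prop: dist packing lemma} to $\varphi(x_0,\dots,x_{n-1};x_n)$ and $\varepsilon$. This yields $e=e_{\bar\mu}$, the partition $(\rho_i(x_n;e))_{i<k}$, and for each $b\in\rho_i$ a witness $d_b$ with
\[
\chi_i^-(\bar x;e,d_b)\to\varphi(\bar x,b)\to\chi_i^+(\bar x;e,d_b), \qquad \nu(\chi_i^+)-\nu(\chi_i^-)<\varepsilon.
\]
The difficulty is to make $d_b$ depend definably on $b$, so that the fiberwise sandwiches glue into subsets of the product. We treat the two hypotheses separately.

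\textbf{Under the strong form of Proposition \ref{2-determinacy with a type}.} By Remark \ref{rem: stronger form packing lemma} we may take $d_b=b$. Define
\[
A^\pm := \bigvee_{i<k}\bigl(\rho_i(x_n;e)\wedge\chi_i^\pm(\bar x;e,x_n)\bigr).
\]
Each $\chi_i^\pm(\bar x;e,x_n)$ is a Boolean combination of $\gamma_j(\bar x;e)$, which involves only $x_0,\dots,x_{n-1}$, and of formulas $\delta_u((x_t)_{t\in u};x_n)$ with $u\subsetneq n$, which involve at most $n$ of the $n+1$ variables. The conjuncts $\rho_i(x_n;e)$ are unary. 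Hence $A^\pm\in\mathfrak A$, and fiberwise $A^-\subseteq\varphi\subseteq A^+$.

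\textbf{Under definable Skolem functions.} Let $\theta_i(x_n,z_1)$ be the formula expressing that $z_1$ witnesses the sandwich in (1) for $x_n$ together with $\psi_i(e,z_1)$, where $\psi_i$ is the formula from the proof of Proposition \ref{prop: dist packing lemma}, so that (2) holds with $2\varepsilon$ in place of $\varepsilon$. A definable Skolem function $f_i$ for $\theta_i$ gives $d_b := f_i(b)$. Substituting $z_1 := f_i(x_n)$ into each $\delta_u$ yields a formula in the variables $(x_t)_{t\in u}\cup\{x_n\}$, which is still at most $n$ variables since $|u|<n$. So $A^\pm$, defined as above with $f_i(x_n)$ in place of $x_n$, again lies in $\mathfrak A$.

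\textbf{Bounding the measure.} Using the Fubini-type formula for $\mu=\nu\otimes\mu_n$ (which is legitimate by commutation), we get
\[
\mu(A^+)-\mu(A^-)=\int \Bigl(\nu\bigl(A^+(\bar x,b)\bigr)-\nu\bigl(A^-(\bar x,b)\bigr)\Bigr)\,d\mu_n(b)\le 2\varepsilon .
\]
The fiberwise bound $2\varepsilon$ holds for every $b$, so it survives the integration; this also requires that the integrand is Borel in $\tp(b/\cdot)$, which holds by definability of $\nu$.

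\textbf{Expected main obstacle.} The step I expect to be hardest is the bookkeeping that the glued sets really lie in $\Lcal^n$. In particular, after substituting $x_n$ or $f_i(x_n)$ for $z_1$, one must check that every resulting atom omits at least one of $x_0,\dots,x_n$. One must also check that the fiberwise bound is uniform in $b$ (for instance over a model containing $e$ and the parameters of the $f_i$), so that the integral estimate is valid.
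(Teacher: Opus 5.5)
Your proposal is correct and follows essentially the same route as the paper. You apply the $n$-distal packing lemma, make the witness $d_b$ a definable function of $b$ (either $d_b=b$, or a Skolem function for the formula that includes $\psi_i$), substitute it to get $\Lcal^n$-sandwiches $A^-\subseteq\varphi\subseteq A^+$, and integrate the fiberwise $2\varepsilon$ bound against $\mu_n$. One minor point: the appeal to $\nu$ commuting with $\mu_n$ is unnecessary, and unjustified as stated, since Fact \ref{fact: gen stable measures} requires $\mu_n$ to be invariant and that is not assumed; $\mu_0\otimes\cdots\otimes\mu_n$ is by definition $\nu\otimes\mu_n$, which is exactly the Fubini-type formula you use.
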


\begin{proof}  
Let $\M$ be a small model so that all $\mu_i$ are generically stable over $\M$.    
   Fix $\varphi(x_0, \ldots,x_{n-1};x_{n})$ and $\varepsilon>0$, and let $k \in \mathbb{N}$ and formulas  $\gamma_{i}(x_0, \ldots, x_{n-1}; z_0) \in \Lcal$, $\rho_i(x_{n}; z_0) \in \Lcal$ and $\chi_i^-(x_0,\dots, x_{n-1};z), \chi_i^+(x_0,\dots, x_{n-1};z) \in \Lcal^{n}_{x_0, \ldots, x_{n-1}, z}$ for $i < k$ be as given by Proposition \ref{prop: dist packing lemma}, we will also follow the notation from its proof. If $T$ has definable Skolem functions, for each $i<k$ we may find formulas $\theta_i(x_n,z)\in \Lcal$ as in Definition \ref{def: Skolem functions} for the formula 
   \begin{gather*}
   	\rho'_i(x_n,z): = \\
   	\forall x_0,\dots,x_{n-1}(\chi^-_i(x_0,\dots,x_{n-1}, z) \rightarrow \varphi(x_0,\dots,x_n)\rightarrow \chi_i^+(x_0,\dots,x_{n-1}, z)) \wedge \psi_i(z)
   \end{gather*}
     (that is, $\rho_i(x_n)$ dropping quantification over $z$); or, if $T$ satisfies strong form of Proposition \ref{2-determinacy with a type}, we take  $\theta_i(x_n,z) := (x_n = z)$ (using Remark \ref{rem: stronger form packing lemma}). So, from now on, we denote by $d_b$ the unique element of $\Mbb^z$ satisfying $\models\theta_i(b,d_b)$ where $i$ is such that $\models \rho_i(b)$. We note that $d_b$ satisfies (1),(2) from the proof of Proposition \ref{prop: dist packing lemma}.

     Fix $i<k$. The formula $\chi^-_i(x_0,\dots,x_{n-1},z)$ may be written as a disjunction of a conjunction of formulas using only $n$ of the variables $x_0,\dots,x_{n-1},z$. We will refer to this as the disjunctive normal form of $\chi^-_i$. We obtain a new formula by replacing each of these basic formulas, $\alpha(x_0,\dots,x_{n-1},z) \in \Lcal^n_{x_0,\dots,x_{n-1},z}(\M)$ with a new formula $\tilde\alpha(x_0,\dots,x_{n-1},x_n)\in \Lcal^n_{x_0,\dots,x_{n-1},x_n}(\M)$ as follows.
     \begin{itemize}
         \item If the basic formula $\alpha$ does not mention $z$, let $\tilde\alpha:=\alpha$.
         \item If the basic formula $\alpha$ does mention $z$, it must mention at most $n-1$ of the variables $x_0,\dots,x_{n-1}$ --- without loss of generality suppose $\alpha = \alpha(x_1,\dots,x_{n-1}, z)$. Define $\tilde\alpha:= \exists z (\theta_i(x_n,z)\wedge \alpha(x_1,\dots,x_{n-1},z))$. Note that this is a formula using only $n$ of the variables $x_0,\dots,x_n$.
     \end{itemize}

    \noindent  The formula $\xi^-_i(x_0,\dots,x_n)$ that results from replacing basic formulas in the above manner is in $\Lcal^n_{x_0,\dots,x_n}(\M)$. 
     
    Consider a basic formula $\alpha(x_0,\dots,x_{n-1},z)$ from $\chi^-_i$. Assume $\models \alpha(a_0,\dots,a_{n-1},d_b)$. Since $\models \theta_i(b,d_b)$, in either case we see that $\models \tilde\alpha(a_0,\dots,a_{n-1},b)$. Suppose conversely that $\models \tilde\alpha(a_0,\dots,a_{n-1},b)$. Then, since $d_b$ is the unique element of $\Mbb^z$ satisfying $\theta_i(b,z)$, in fact $\models \alpha(a_0,\dots,a_{n-1},d_b)$. Thus, $\models \forall x_0,\dots,x_{n-1}(\alpha(x_0,\dots,x_{n-1},d_b) \leftrightarrow \tilde \alpha(x_0,\dots,x_{n-1},b))$ for each basic $\alpha$. So, in fact $\models \forall x_0,\dots,x_{n-1}(\chi^-_i(x_0,\dots,x_{n-1},d_b) \leftrightarrow \xi^-_i(x_0,\dots,x_{n-1},b))$.

     We may carry out precisely the same process to construct $\xi^+_i(x_0,\dots,x_n)$, and conclude the following for each $b\models \rho_i(x_n)$ (call these conditions (\textdagger)):
    \begin{enumerate}
         \item $\models \forall x_0,\dots, x_{n-1}(\xi_i^-(x_0,\dots, x_{n-1},b)\rightarrow \varphi(x_0,\dots, x_{n-1},b) \rightarrow \xi_i^+(x_0,\dots, x_{n-1},b))$;
        \item $(\mu_0\otimes \dots \otimes\mu_{n-1})(\xi_i^+(x_0,\dots, x_{n-1},b))- (\mu_0\otimes \dots \otimes\mu_{n-1})(\xi_i^-(x_0,\dots, x_{n-1},b))< \varepsilon$.
     \end{enumerate}

     Now assume $\omega(x_0,\dots, x_n)\in \Mfrak(\Mbb)$ is an arbitrary extension of $(\mu_0\otimes \dots\otimes \mu_n)|_{\Lcal_{x_0,\dots,x_n}^n(\Mbb)}$. From (1) in conditions (\textdagger) we see that: \[
     \omega(\xi_i^-(x_0,\dots, x_n)\wedge \rho_i(x_n)) \leq \omega(\varphi(x_0,\dots, x_n) \wedge \rho_i(x_n)) \leq \omega(\xi^+_i(x_0,\dots, x_n) \wedge \rho_i(x_n)).
     \]

     \noindent The formulas on the left and right hand sides are elements of $\Lcal^n_{x_0,\dots,x_n}(\M)$, hence $\omega$ agrees with $\mu_0\otimes \dots \otimes \mu_n$ on each. So, 
     \begin{gather*}
     	\omega(\xi_i^-(x_0,\dots, x_n)\wedge \rho_i(x_n)) =\\
     	 \int_{q\in S_{x_n}(\M)} (\mu_0\otimes \dots \otimes \mu_{n-1})(\xi_i^-(x_0,\dots,x_{n-1}, q)\wedge \rho_i(q))d\mu_n|_\M\\ =
          \int_{q\in [\rho_i(x_n)]} (\mu_0\otimes \dots \otimes \mu_{n-1})(\xi_i^-(x_0,\dots, x_{n-1}, q))d\mu_n|_\M \textrm{ and}\\
            \omega(\xi_i^+(x_0,\dots, x_n)\wedge \rho_i(x_n)) =\\
             \int_{q\in S_{x_n}(\M)} (\mu_0\otimes \dots \otimes \mu_{n-1})(\xi_i^+(x_0,\dots, x_{n-1}, q)\wedge \rho_i(q))d\mu_n|_\M\\ =
          \int_{q\in [\rho_i(x_n)]} (\mu_0\otimes \dots \otimes \mu_{n-1})(\xi_i^+(x_0,\dots, x_{n-1}, q))d\mu_n|_\M.
     \end{gather*}

  \noindent  Hence, by (2) from conditions (\textdagger), \[\omega(\xi_i^+(x_0,\dots, x_n)\wedge \rho_i(x_n)) - \omega(\xi_i^-(x_0,\dots, x_n)\wedge \rho_i(x_n)) < \mu_n(\rho_i)\varepsilon.\]

  Letting 
  \begin{gather*}
  	\xi^+(x_0,\dots, x_n) := \bigvee_{i<k} \left( \xi_i^+(x_0,\dots, x_n) \land \rho_i(x_n) \right),\\
  	\xi^-(x_0,\dots, x_n) := \bigvee_{i<k} \left( \xi_i^-(x_0,\dots, x_n) \land \rho_i(x_n) \right),
  \end{gather*}
   we have $\xi^-(x_0,\dots, x_n)  \rightarrow \varphi(x_0,\dots, x_{n-1}, x_{n}) \rightarrow \xi^+(x_0,\dots, x_n) $, and since $([\rho_i])_{i<k}$ partitions $S_{x_n}(\M)$, from the above we get 
  \[\omega( \xi^+(x_0,\dots, x_n) ) - \omega(\xi^-(x_0,\dots, x_n)) < \varepsilon.\]

  \noindent As $\varepsilon>0$ and $\varphi$ were arbitrary, this shows that $\omega = \mu_0 \otimes \dots \otimes \mu_n$. 
\end{proof}

\begin{problem}
	 Does Proposition \ref{2-determinacy} hold without the assumption of definable Skolem functions?
\end{problem}

%\begin{cor}
%    Suppose $T$ has NIP, is strongly $n$-distal and has definable Skolem functions. Take global measures $\mu_0(x),\dots ,\mu_{n-1}(x_{n-1})$ generically stable and $\mu_n(x_n)$ arbitrary, and $R(x_0,\dots,x_{n-1},x_n)$ a definable relation. Then for any $\varepsilon>0$ there are definable sets $D_\varepsilon^-, D_\varepsilon^+\in \Lcal_{x_0,\dots,x_{n-1},x_n}^2(\Mbb)$ such that $D_\varepsilon^-\sub R \sub D_\varepsilon^+$ and $(\mu\otimes\dots \otimes \mu_{n-1}\otimes\mu_n)(D^+_\varepsilon\sm D^-_\varepsilon)<\varepsilon$.
%\end{cor}
%
%\begin{proof}
%    Easy consequence of \ref{2-determinacy} and \ref{determination of measures via squashing}.
%\end{proof}

However, we still get a  (not necessarily definable) $n$-distal regularity lemma   generalizing the case $n=1$ from \cite{chernikov2018regularity}:
\begin{cor}\label{cor: str n-dist reg lemma}
	Let $T$ be NIP and strongly $n$-distal. Given $\varphi(x_0, \ldots, x_{n}) \in \Lcal$ and $\varepsilon > 0$ there exists $k \in \mathbb{N}$ satisfying the following. Given any finitely supported measures $\mu_0(x_0), \ldots, \mu_n(x_n)$, for every $0 \leq i \leq n$ there is a  partition $(P^i_{\ell} : \ell < k)$ of $\prod_{j \neq i}\cU^{x_j}$ so that $\sum_{(\ell_0, \ldots, \ell_n)} \mu_0 \otimes \ldots \otimes \mu_n \left( \bigwedge_{i =0}^{n} P^i_{\ell_i} \right) < \varepsilon$, where the sum is over all cylinder intersections sets   $\bigwedge_{i =0}^{n} P^i_{\ell_i} \subseteq \prod_{0 \leq j \leq n}\cU^{x_j}$ (Definition \ref{def: cylinder inters sets}) that are \emph{not $\varphi$-homogeneous}.
	
	If $T$ has Skolem functions or satisfies strong form of Proposition \ref{2-determinacy with a type}, this holds for all generically stable $\mu_i$, with $P^i_{\ell}$ uniformly definable (i.e.~each defined by an instance of the same fixed formula depending only on $\varphi$ and $\varepsilon$). 
\end{cor}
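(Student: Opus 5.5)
The plan is to derive the regularity lemma from the $n$-distal packing lemma (Proposition \ref{prop: dist packing lemma}), singling out the variable $x_n$ and then reading off the cylinder partitions from the Boolean structure of the sandwiching formulas $\chi_i^\pm$.

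First, finitely supported measures are generically stable (they are finitely satisfiable in, and definable over, any small model containing their supports). So, given $\varphi(x_0,\ldots,x_n)$ and $\varepsilon>0$, we apply Proposition \ref{prop: dist packing lemma} to $\varphi(x_0,\ldots,x_{n-1};x_n)$ and $\varepsilon$. This gives $k$, together with the formulas $\gamma_j(x_0,\ldots,x_{n-1};z_0)$, $\rho_i(x_n;z_0)$, $\chi_i^\pm$, and the finitely many $\delta_u((x_t)_{t\in u};z_1)$ for $u\subsetneq\{0,\ldots,n-1\}$ occurring in them. All of this data depends only on $\varphi$ and $\varepsilon$.

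Now fix finitely supported $\mu_0,\ldots,\mu_n$ and let $e=e_{\bar\mu}$ for $\bar\mu=(\mu_0,\ldots,\mu_{n-1})$. Using choice, fix for every $b\in\cU^{x_n}$ a witness $d_b$ as in the packing lemma. The partitions are then defined as follows.
\begin{itemize}
\item In direction $n$, on $\prod_{j<n}\cU^{x_j}$, take the atoms of the Boolean algebra generated by the sets $\gamma_j(x_0,\ldots,x_{n-1};e)$, $j<k$.
\item In direction $i<n$, on $\prod_{j\neq i}\cU^{x_j}$, take the atoms generated by the following sets:
\begin{itemize}
\item the sets $\rho_l(x_n;e)$, for $l<k$;
\item for each $u\subsetneq\{0,\ldots,n-1\}$ with $i\notin u$, the set $\{((x_t)_{t\in u},b): \models\delta_u((x_t)_{t\in u};d_b)\}$.
\end{itemize}
The case $u=\emptyset$ gives a set depending only on $b$, which is harmless.
\end{itemize}
Since every $u\subsetneq\{0,\ldots,n-1\}$ omits some $i<n$, each $\delta_u$-set is recorded in at least one direction. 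The number of generators is bounded in terms of $\varphi,\varepsilon$ only, so the number of atoms is bounded by some $K=K(\varphi,\varepsilon)$. Padding with empty pieces, we get partitions $(P^i_\ell:\ell<K)$, and $K$ is the number required in the statement.

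The key observation is that on any cylinder intersection $C=\bigwedge_i P^i_{\ell_i}$ the following hold:
\begin{itemize}
\item the index $l$ with $\models\rho_l(b)$ is constant;
\item the truth values of all $\gamma_j(\bar x;e)$ and of all $\delta_u(x_u;d_b)$ are constant;
\item hence the truth values of $\chi_l^-(\bar x;e,d_b)$ and $\chi_l^+(\bar x;e,d_b)$ are constant on $C$.
\end{itemize}
If $C$ lies in $\chi_l^-$, then $C\subseteq\varphi$; if $C$ lies outside $\chi_l^+$, then $C\cap\varphi=\emptyset$. Thus every non-homogeneous $C$ is contained in
\[
D:=\{(\bar a,b): \models \rho_l(b;e)\wedge\chi_l^+(\bar a;e,d_b)\wedge\neg\chi_l^-(\bar a;e,d_b)\}.
\]
The cells are disjoint, so the sum in the statement is at most $(\mu_0\otimes\cdots\otimes\mu_n)(D)$. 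For finitely supported measures the product is the honest finite product measure, so Fubini applies without any definability issues (this is why the first part need not be definable). Hence
\[
(\mu_0\otimes\cdots\otimes\mu_n)(D)=\sum_b\mu_n(b)\,(\mu_0\otimes\cdots\otimes\mu_{n-1})\bigl(\chi^+_{l(b)}(\bar x;e,d_b)\setminus\chi^-_{l(b)}(\bar x;e,d_b)\bigr)<\varepsilon
\]
by clause (2) of the packing lemma. This proves the first part.

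For the uniformly definable version (definable Skolem functions, or the strong form of Proposition \ref{2-determinacy with a type}), replace the non-definable choice $b\mapsto d_b$ by the definable one given by $\theta_l$, as in the proof of Proposition \ref{2-determinacy}. The sets $\delta_u(x_u;d_{x_n})$ then become the formulas $\tilde\alpha$, which are uniformly definable with parameters $e$. The partition pieces are atoms of Boolean combinations of instances of finitely many fixed formulas, so each piece is an instance of one fixed formula depending only on $\varphi,\varepsilon$. The measure estimate is the integral computation from that proof, which is valid for arbitrary generically stable $\mu_n$ because everything is now $\M$-definable.

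The main obstacle I expect is this uniform case. One has to check carefully that the integral over $S_{x_n}(\M)$ is legitimate, and that the pieces involving the unbounded parameters $d_b$ are genuinely cylinder sets in the right directions.
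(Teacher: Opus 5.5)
Your proposal is correct and follows essentially the paper's argument. You apply the packing lemma, record each $\delta_u$-set (with the witness $d_b$ plugged in) as a cylinder in a direction $i\notin u$, take atoms, and bound the non-homogeneous cells by $\chi^+\setminus\chi^-$ using the honest finite product measure. The only difference is cosmetic: you choose $b\mapsto d_b$ by a bare choice function and take atoms of the generators directly, whereas the paper passes to a Skolemization $T^{\Sk}$ (which amounts to the same choice) and takes atoms of the conjuncts of $\xi^\pm$ written in disjunctive normal form.
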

\begin{proof}
Let $T^{\Sk}$ be a Skolemization of $T$ in the language $\Lcal^{\Sk}$ (note that $T^{\Sk}$ need not be NIP/strongly $n$-distal). 
	By the packing lemma in $T$ (Proposition \ref{prop: dist packing lemma}),  $k \in \mathbb{N}$ and formulas  $\gamma_{i}, \rho_i, \chi_i^-, \chi_i^+$ can be chosen in $\Lcal$ only depending on $\varphi$ and $\varepsilon$, and independently of $\bar{\mu} = (\mu_0, \ldots, \mu_{n-1})$.	Hence, by the proof of Proposition \ref{2-determinacy}, $\xi_i^-, \xi_i^{+}$ are given by instances of a formula in $\Lcal^{\Sk}$ chosen only depending on $\varphi$ and $\varepsilon$ (with parameters depending on $\bar{\mu}$). 
	
	But now, given any finitely supported (so in particular generically stable)  measures $\mu_0, \ldots, \mu_{n}$, they extend uniquely to measures in $T^{\Sk}$ and $\otimes$ corresponds to just the product measure, so even though $\xi_i^-, \xi_i^{+}$ need not be $\Lcal$-definable, the integrals with respect to $\mu_n$ over the $\mu_0 \otimes \ldots \otimes \mu_{n-1}$-measure of the fibers of $\xi_i^-, \xi_i^{+}$ are still well defined (and correspond simply to a weighted finite sum over the support of $\mu_n$), so we still have $\xi^-(x_0,\dots, x_n)  \rightarrow \varphi(x_0,\dots, x_{n-1}, x_{n}) \rightarrow \xi^+(x_0,\dots, x_n) $ and $\mu_0 \otimes \ldots \otimes \mu_n ( \xi^+(x_0,\dots, x_n) ) - \mu_0 \otimes \ldots \otimes \mu_n(\xi^-(x_0,\dots, x_n)) < \varepsilon$.
	
	So both $\xi^+, \xi^-$ can be written as $\bigvee_{t < k_1} \bigwedge_{0 \leq i \leq n} \psi^t_i((x_j : j \in \{0, \ldots, n\} \setminus \{i\}))$
	for some $k_1 = k_1(\varphi, \varepsilon)$ and $\psi^t_i \in \Lcal^{\Sk}(\cU)$.	For each fixed $0 \leq i \leq n$, let $(P^i_{\ell})_{\ell < k_2}$ list the atoms of the Boolean algebra of subsets of $\prod_{j \neq i}\cU^{x_j}$ generated by the formulas $\psi^t_i$ for all $t$ that appear in $\xi^+, \xi^-$, we have $k_2 = k_2(\varphi, \varepsilon)$. Now let $(\ell_0, \ldots, \ell_n)$ be arbitrary. We identify formulas with the sets they define. If $\bigwedge_{i =0}^{n} P^i_{\ell_i}$ is not disjoint from $\xi^-$, then it is contained in $\xi^-$, hence  contained in $\varphi$. If it is not contained in $\xi^{+}$, then it is disjoint from $\xi^{+}$, hence contained in $\neg \varphi$. It follows that all $\bigwedge_{i =0}^{n} P^i_{\ell_i}$ that are not $\varphi$-homogeneous are contained in $\xi^+ \setminus \xi^-$, so their total measure is at most $\varepsilon$.
	
	And if $T$ had Skolem functions, the proof above goes through for arbitrary generically stable measures $\mu_0, \ldots, \mu_{n}$, showing that additionally $P^i_{\ell}$ are definable by instances of formulas chosen depending only on $\varphi, \varepsilon$ (as Boolean combinations of bounded size of such formulas).
\end{proof}

\begin{problem}
	Does Corollary \ref{cor: str n-dist reg lemma} hold without assuming NIP?
		In the case $n=1$ (distal regularity lemma established in \cite[Proposition 5.3]{chernikov2018regularity}), one has uniform definability of the partition and polynomial dependence of its size on $\frac{1}{\varepsilon}$ without any additional assumption. Does this still hold for higher $n$? \cite[Theorem 5.8]{chernikov2018regularity} also establishes a regularity lemma for hypergraphs of any arity in $1$-distal theories. Does  Corollary \ref{cor: str n-dist reg lemma} generalize to from $(n+1)$-ary hypergraphs to $n'$-ary hypergraphs with $n' \geq n+1$ in strongly $n$-distal theories?
\end{problem}

\begin{remark}\label{rem: n-dep reg lemma}
	We note that Corollary \ref{cor: str n-dist reg lemma} is a (non-definable)  strengthening of the $n$-dependent hypergraph regularity lemma established in \cite{chernikov2020hypergraph} (to which we refer for a more precise version of the statement):
	\begin{fact}
	(No assumption on $T$.)  Given an $n$-dependent formula $\varphi(x_0, \ldots, x_{n}) \in \Lcal$ and $\varepsilon > 0$ there exists $k \in \mathbb{N}$ satisfying the following. Given any global generically stable Keisler  measures $\mu_0(x_0), \ldots, \mu_n(x_n)$ (or just definable measures so that $\mu_i$ commutes with $\mu_j$ for all $0 \leq i,j \leq n$), for every $0 \leq i \leq n$ there is a  partition $(P^i_{\ell} : \ell < k)$ of $\prod_{j \neq i}\cU^{x_j}$, with each $P^i_{\ell}$ defined by an instance of the same fixed formula (in fact, a Boolean combination of instances of $\varphi$) depending only on $\varphi$ and $\varepsilon$, so that $\sum_{(\ell_0, \ldots, \ell_n)} \mu_0 \otimes \ldots \otimes \mu_n \left( \bigwedge_{i =0}^{n} P^i_{\ell_i} \right) < \varepsilon$, where the sum is over all cylinder intersections sets   $\bigwedge_{i =0}^{n} P^i_{\ell_i} \subseteq \prod_{0 \leq j \leq n}\cU^{x_j}$ (Definition \ref{def: cylinder inters sets}) that are not \emph{$\varepsilon$-homogeneous} with respect to $\varphi$, i.e.~for which $\frac{\mu_0 \otimes \ldots \otimes \mu_n \left( \varphi \, \cap \, \bigwedge_{i =0}^{n} P^i_{\ell_i} \right)}{\mu_0 \otimes \ldots \otimes \mu_n \left(\bigwedge_{i =0}^{n} P^i_{\ell_i} \right)} \notin [0, \varepsilon) \cup (1-\varepsilon, 1]$.
	\end{fact}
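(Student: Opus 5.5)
The plan is to deduce the statement from the $n$-dependent packing lemma (the Fact in Remark \ref{rem: n-dependent packing lemma}), following the same pattern as the proof of Corollary \ref{cor: str n-dist reg lemma}. The difference is that we only get an approximation of $\varphi$ in measure, not a sandwich $\xi^-\to\varphi\to\xi^+$. A Markov-type averaging step then turns this into $\varepsilon$-homogeneity of most cells.

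\emph{Step 1.} Fix $\varphi$ and $\varepsilon$, and set $\varepsilon' := \varepsilon^2/2$. Apply the $n$-dependent packing lemma to $\varphi(x_0,\ldots,x_{n-1};x_n)$ with $\varepsilon'$. This gives $k$ and formulas $\gamma_i,\rho_i,\chi_i$ independent of the measures. Given $\bar\mu=(\mu_0,\ldots,\mu_{n-1})$, we get $e_{\bar\mu}$ and, for each $b\models\rho_i$, some $d_b$ with
\[
\mu_0\otimes\cdots\otimes\mu_{n-1}\bigl(\varphi(\bar x,b)\,\triangle\,\chi_i(\bar x;e_{\bar\mu},d_b)\bigr)<\varepsilon'.
\]
Each $\chi_i$ is a Boolean combination of $\gamma_j(x_0,\ldots,x_{n-1};e_{\bar\mu})$ and of the $\delta_u$. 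The $\gamma_j$ are cylinders in direction $n$. Each $\delta_u$ is an instance of $\varphi$ in which at least one $x_t$ ($t<n$) is replaced by a parameter and $x_n$ is set to $b$.

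\emph{Step 2 (main obstacle).} We must make the dependence on $b$ through $d_b$ uniform, so that we get a single formula $\xi(x_0,\ldots,x_n)$ in $\Lcal^n_{x_0,\ldots,x_n}$. I would first try to arrange that the parameters $d_b$ are not arbitrary but are drawn from a finite set $D$ of bounded size. These parameters come from $\varepsilon$-approximations (via the VC theorem for the $\mu_t$) in the construction of the packing lemma, so $D$ should be a fixed sample $S$ of bounded size, with $d_b$ a subtuple of $S$. Then $\delta_u((x_t)_{t\in u};d_b)$, viewed as a formula in $(x_t)_{t\in u}\cup\{x_n\}$ with parameters from $S$, omits some coordinate $t\notin u$, $t<n$. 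Here is how we handle the selection of the subtuple. Partition $\cU^{x_n}$ further by the finitely many (at most $|S|^{|z_1|}$) definable conditions "$d$ is the lexicographically first subtuple of $S$ that works for $x_n$". The conjunction of these conditions with $\rho_i(x_n)$ is a unary formula in $x_n$, hence also a cylinder.

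This produces $\xi(x_0,\ldots,x_n)=\bigvee_{t<k_1}\bigwedge_{i\le n}\psi^t_i(x_{-i})$. Here $k_1$ and the $\psi^t_i$ depend only on $\varphi$ and $\varepsilon$, and their parameters are $e_{\bar\mu}$ and $S$. Integrating the displayed bound over $\mu_n$, using Fubini for the product of definable commuting measures, gives $\mu_0\otimes\cdots\otimes\mu_n(\varphi\,\triangle\,\xi)<\varepsilon'$.

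\emph{Step 3.} For each $i\le n$, let $(P^i_\ell)_{\ell<k}$ be the atoms of the Boolean algebra generated by the $\psi^t_i$. Then $k=2^{k_1}$ depends only on $\varphi,\varepsilon$, and each $P^i_\ell$ is an instance of a fixed formula. Every cylinder intersection $C=\bigwedge_i P^i_{\ell_i}$ is $\xi$-homogeneous, so $\mu(C\cap\varphi)/\mu(C)$ lies within $\mu(C\cap(\varphi\triangle\xi))/\mu(C)$ of $0$ or $1$. If $C$ is not $\varepsilon$-homogeneous for $\varphi$, then $\mu(C)\le\varepsilon^{-1}\mu(C\cap(\varphi\triangle\xi))$. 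Summing over the disjoint cells gives total measure at most $\varepsilon'/\varepsilon<\varepsilon$.

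The delicate point is Step 2, namely ensuring the $d_b$ range over a bounded definable family. If the sample-set argument fails, the fallback is a uniform compactness argument over the hyperdefinable space $\widetilde{\mathcal M}$, as at the end of the proof of Proposition \ref{prop: dist packing lemma}.
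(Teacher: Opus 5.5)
The paper gives no proof of this Fact; it imports it from \cite{chernikov2020hypergraph}. Your reduction to the $n$-dependent packing lemma has sound first and last steps. In particular, the atom/Markov computation in Step 3 is correct: every cylinder intersection of atoms is $\xi$-homogeneous, and a cell $C$ that is not $\varepsilon$-homogeneous satisfies $\mu(C)\le\varepsilon^{-1}\mu(C\cap(\varphi\triangle\xi))$.

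\textbf{The gap is Step 2, and that is where the whole content of the statement lies.} The packing lemma, as stated in Remark \ref{rem: n-dependent packing lemma}, only says that for each $b$ there is \emph{some} $d_b\in\cU^{z_1}$. Nothing bounds or organizes these parameters as $b$ varies. Your claim that $d_b$ can be taken to be a subtuple of a fixed bounded sample $S$ is a guess about the internals of a black-box proof (``should be''), not a consequence of the statement.

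Without that claim, $b\mapsto\chi_i(\bar x;e_{\bar\mu},d_b)$ does not define a set in $\mathcal{L}^n_{x_0,\ldots,x_n}(\cU)$. It is not even a definable set, so $\mu_0\otimes\cdots\otimes\mu_n$ cannot be applied to it, and neither the integration over $\mu_n$ nor the atoms of Step 3 are available. Producing a single definable $\xi\in\mathcal{L}^n_{x_0,\ldots,x_n}$ of bounded complexity with $\mu_0\otimes\cdots\otimes\mu_n(\varphi\triangle\xi)$ small is essentially the cited theorem itself; Step 3 is the routine part.

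This is exactly the uniformization of $b\mapsto d_b$ that the paper could only carry out in Proposition \ref{2-determinacy} under definable Skolem functions (or the strong form of Proposition \ref{2-determinacy with a type}). Corollary \ref{cor: str n-dist reg lemma} avoids it only by Skolemizing, at the cost of definability of the partition and a restriction to finitely supported measures.

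Your supporting remarks do not close the gap:
\begin{enumerate}
\item The VC theorem is not available for the $\delta_u$. Under $n$-dependence, a partitioned formula such as $\varphi(x_0;x_1x_2)$ can have IP. Whatever sampling you can use comes from generic stability (fim) of the $\mu_j$, and uniform approximations are unavailable for merely definable commuting measures.
\item The selection ``the lexicographically first $d$ in $S$ that works for $x_n$'' is not first-order, because ``works'' is a measure inequality. Making it definable requires an approximation as in Lemma \ref{gs and formula defining measure}, whose formula depends on $\bar\mu$. Uniformity in $(\varphi,\varepsilon)$ then has to be restored separately, for instance by counting on a finite approximating sample.
\item The fallback via $\widetilde{\mathcal{M}}$ is an NIP device, while the Fact assumes nothing on $T$. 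It also addresses uniformity in $\bar\mu$, not the dependence of $d_b$ on $b$.
\end{enumerate}

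A plausible repair uses the fact that regularity only needs approximation on $\mu_n$-average. Show that for each $b$, fibers $\varphi(\ldots,c,\ldots,b)$ at points $c$ of a random sample drawn from the relevant $\mu_j$'s approximate $\varphi(\bar x,b)$ with high probability. Then use Fubini to fix one sample that works for $\mu_n$-most $b$. That argument has to be supplied, since the packing lemma gives no reason why a random $c$ should do as well as the given $d_b$.
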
 
\end{remark}

As an immediate application of the $n$-distal hypergraph regularity lemma (Corollary \ref{cor: str n-dist reg lemma}), we see that $(n+1)$-hypergraphs definable in strongly $n$-distal NIP theories satisfy the following hypergraph version of the strong Erd\H{o}s-Hajnal property:

\begin{cor}\label{cor: n-str EH}
	Let $T$ be NIP and strongly $n$-distal. Then every definable relation $\varphi(x_0, \ldots, x_{n}) \in \Lcal$ satisfies the \emph{$n$-strong Erd\H{o}s-Hajnal property}, or \emph{$n$-sEH}: there exists $\alpha > 0$ satisfying the following. Given  any finitely supported measures $\mu_0(x_0), \ldots, \mu_n(x_n)$, there exists a cylinder intersection set $C = \bigwedge_{i =0}^{n} C_{i} \subseteq \prod_{0 \leq j \leq n}\cU^{x_j}$ (with $C_i \subseteq \prod_{j \neq i}\cU^{x_j}$) so that $\mu_0 \otimes \ldots \otimes \mu_n \left(C \right) \geq \alpha$ and $C$ is $\varphi$-homogeneous.
	
	If $T$ has Skolem functions or satisfies strong form of Proposition \ref{2-determinacy with a type}, this holds for all generically stable $\mu_i$, with $C$ uniformly definable (i.e.~defined by an instance of the same fixed formula depending only on $\varphi$). 
\end{cor}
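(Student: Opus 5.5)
The plan is to derive this directly from Corollary \ref{cor: str n-dist reg lemma} by pigeonhole. Fix $\varphi(x_0,\ldots,x_n)$ and apply Corollary \ref{cor: str n-dist reg lemma} with $\varepsilon := \frac12$. This gives $k = k(\varphi)$ such that, for any finitely supported $\mu_0,\ldots,\mu_n$ and each $0\le i\le n$, there is a partition $(P^i_\ell : \ell<k)$ of $\prod_{j\ne i}\cU^{x_j}$ for which the non-$\varphi$-homogeneous cylinder intersections $\bigwedge_{i=0}^n P^i_{\ell_i}$ have total $\mu_0\otimes\ldots\otimes\mu_n$-measure $<\frac12$. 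We will set $\alpha := \frac{1}{2k^{n+1}}$.

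The key observation is that the sets $\bigwedge_{i=0}^n P^i_{\ell_i}$, as $(\ell_0,\ldots,\ell_n)$ ranges over $k^{n+1}$ indices, partition $\prod_{0\le j\le n}\cU^{x_j}$. This holds because for each point $\bar a$ and each $i$, the projection $\bar a_{-i}$ lies in exactly one $P^i_{\ell_i}$. Their measures therefore sum to $1$. The homogeneous ones carry total measure $>\frac12$, and there are at most $k^{n+1}$ of them, so some homogeneous $C = \bigwedge_i P^i_{\ell_i}$ has measure $\ge \frac{1}{2k^{n+1}} = \alpha$. Each $C_i := P^i_{\ell_i}\subseteq \prod_{j\ne i}\cU^{x_j}$, so $C$ is a cylinder intersection in the sense of Definition \ref{def: cylinder inters sets}, as required. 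Since $\alpha$ depends only on $k$, and $k$ depends only on $\varphi$, the constant is uniform.

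For the second part, assume $T$ has Skolem functions or satisfies the strong form of Proposition \ref{2-determinacy with a type}. The second clause of Corollary \ref{cor: str n-dist reg lemma} then applies to arbitrary generically stable $\mu_i$, with each $P^i_\ell$ an instance of a fixed formula $\theta_i$ depending only on $\varphi$. The chosen $C=\bigwedge_i P^i_{\ell_i}$ is then an instance of the fixed formula $\bigwedge_i\theta_i$. The pigeonhole argument is unchanged, since $\mu_0\otimes\ldots\otimes\mu_n$ is a finitely additive probability measure on the Boolean algebra containing these finitely many definable cells, so the cell measures still sum to $1$.

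There is essentially no obstacle here. The one point to check is that in the non-definable (finitely supported) case the cells $P^i_\ell$ may only be $\Lcal^{\Sk}$-definable, so we must confirm that their measure makes sense. It does, because finitely supported measures assign a measure to every subset, as noted in the proof of Corollary \ref{cor: str n-dist reg lemma}.
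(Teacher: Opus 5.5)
Your proposal is correct and follows the paper's own proof: apply Corollary \ref{cor: str n-dist reg lemma} with $\varepsilon = \frac{1}{2}$, then pigeonhole over the at most $k^{n+1}$ homogeneous cells to get $\alpha = \frac{1}{2k^{n+1}}$. Your extra remarks, that the cells partition the product space and that the uniformly definable case carries over verbatim, spell out details the paper leaves implicit.
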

\begin{proof}
	Let $k = k(\varphi)$ be as given by Corollary \ref{cor: str n-dist reg lemma} with $\varepsilon = \frac{1}{2}$. Then for any finitely supported $\mu_0, \ldots, \mu_n$ we have partitions $(P^i_{\ell} : \ell < k)$ of $\prod_{j \neq i}\cU^{x_j}$ so that $\sum_{(\ell_0, \ldots, \ell_n)} \mu_0 \otimes \ldots \otimes \mu_n \left( \bigwedge_{i =0}^{n} P^i_{\ell_i} \right) < \frac{1}{2}$, where the sum is over all cylinder intersections sets   $\bigwedge_{i =0}^{n} P^i_{\ell_i} \subseteq \prod_{0 \leq j \leq n}\cU^{x_j}$ that are \emph{not $\varphi$-homogeneous}. Then there has to exist some $(\ell_0, \ldots, \ell_n)$ so that $C := \bigwedge_{i =0}^{n} P^i_{\ell_i}$ is $\varphi$-homogeneous and $\mu_0 \otimes \ldots \otimes \mu_n \left( C \right) \geq  \frac{1}{2 k^{n+1}}$. So the claim holds with $\alpha = \alpha(\varphi) := \frac{1}{2 k^{n+1}}$.
\end{proof}

\begin{problem}
	Corollary \ref{cor: n-str EH} provides a partial generalization of \cite[Theorem 3.1]{chernikov2018regularity} in the case $n=1$. The result there also concludes that $C$ uniformly definable (without any additional assumptions), and moreover that this \emph{definable} strong Erd\H{o}s-Hajnal property characterizes distality \cite[Theorem 6.10]{chernikov2018regularity}. We leave open a generalization of this part for higher $n$.
\end{problem}

We can rephrase Corollary \ref{cor: n-str EH} as a statement for families of finite hypergraphs:
\begin{defn}\label{def: k-sEH for finite}
Fix $k \geq 1$, and let $\mathcal{H}$ be a family of finite (uniform) $(k+1)$-partite $(k+1)$-hypergraphs. We say that $\mathcal{H}$ satisfies the \emph{$k$-strong Erd\H{o}s-Hajnal property}, or \emph{$k$-sEH}, if there is $\alpha >0$ so that: for every $H = (V_0, \ldots, V_k; E) \in \mathcal{H}$ with $E \subseteq V := V_0 \times \ldots \times V_k$ there exists a cylinder intersection subset $C \subseteq V$ with $|C| \geq \alpha |V|$ so that either $C \subseteq E$ or $C \subseteq V \setminus E$.
\end{defn}

\begin{remark}
Note that $1$-sEH corresponds to the usual strong 	Erd\H{o}s-Hajnal property for families of finite graphs considered in the literature (see \cite{fox2008erdHos}, and also \cite{chernikov2018regularity} for discussion). Indeed, in this case a cylinder intersection set is just a rectangle of the form $C = C_1 \times C_2$ for some $C_i \subseteq V_i$, and  $|C| \geq \alpha |V_1||V_2|$ if and only if $|C_i| \geq \sqrt{\alpha} |V_i|$ for both $i$ ---    which corresponds to an $E$-homogeneous pair of sets in the usual formulation.
\end{remark}

\noindent Now given a structure $M$ and a partitioned formula $\varphi(x_0, \ldots, x_{n})$ with all $x_i$ tuples of variables, we have the associated family $\mathcal{H}_{\varphi}$ of all finite $(n+1)$-partite $(n+1)$-hypergraphs $H = (V_0, \ldots, V_n; E)$ where $V_i \subseteq M^{x_i}$ are finite subsets and $E = \{(a_0, \ldots, a_n) \in V_0 \times \ldots \times V_n : M \models \varphi(a_0, \ldots, a_n) \}$. Then Corollary \ref{cor: n-str EH} says in particular that if $\Th(M)$ is NIP and strongly $n$-distal, then for every formula $\varphi(x_0, \ldots, x_{n})$  the family of finite hypergraphs $\mathcal{H}_{\varphi}$ satisfies the $n$-strong Erd\H{o}s-Hajnal property in the sense of Definition \ref{def: k-sEH for finite}.

\subsection{Average measures and $n$-distality}\label{sec: n-distality vs av meas}

In this section we demonstrate that $n$-distal (rather than strongly $n$-distal) NIP theories are characterized by $n$-determinacy for $(n+1)$-tuples of measures obtained by averaging over mutually indiscernible sequences, generalizing the results in  \cite[Proposition 2.21]{simon2013distal} in the case $n=1$.

\begin{example}\label{average measures example}\cite{simon2012finding, hrushovski2013generically}
    Assume $T$ is NIP and $(b_t)_{t\in [0,1]}$ is an indiscernible sequence indexed by the interval $[0,1]$. Let $\lambda$ denote the Lebesgue measure on $[0,1]$. Take $\varphi(x)\in \Lcal(\Mbb)$. By NIP, the set $\{t\in [0,1]: \models \varphi(b_t)\}$ is a finite union of intervals. So, we can define the \emph{average measure of $(b_i)$} by $\Av(b_{[0,1]})(\varphi) :=  \lambda(\{t\in [0,1]: \models \varphi(b_t)\})$. Clearly, $\Av(b_{[0,1]})$ is f.s. (hence invariant) over $b_{[0,1]}$. By compactness and the bound on alternation given by NIP, the measure $\Av(b_{[0,1]})$ is also definable over $b_{[0,1]}$, hence it is generically stable.
\end{example}

%We will use average measures to show that, provided $T$ has NIP, $n$-distality is necessary for $(n+1)$-tuples of generically stable measures to be $n$-determined. We also show that under the assumption of $n$-distality and NIP, $(n+1)$-tuples of average measures formed over mutually indiscernible sequences are $n$-determined.

%The approach of this section essentially Indeed, Proposition 2.21 is essentially the case $n=1$ in the following.

\begin{notation}
    In this section $\Ical$ will typically denote a sequence of parameters (rather than its indexing set), since the sequences we consider in this section are indexed by $[0,1]$ (though sometimes it will be convenient to abuse the notation by conflating elements of the sequences with their indices). If $\Ical$ is a sequence of parameters indexed by $[0,1]$ and $A\sub [0,1]$, we will denote by $\Ical_A$ the subsequence of $\Ical$ indexed by $A$. 
    Given a formula $\varphi(x)\in \Lcal$ and a partition $x = yz$ (despite the notation, we do not assume that the partition respects the ordering on $x$), a \emph{$(\varphi, y)$-type over a set $A \subseteq \cU^z$} is a maximal consistent set of formulas of the form $\varphi(y,c)$ for $c\in A$. 
\end{notation}

We first note that the tensor product of average measures over  mutually indiscernible sequences behaves as expected.

\begin{fact}\label{NIP implies boxes on mut ind}(see e.g.~\cite[Lemma 3.5]{chernikov2024n}) 
    Assume $T$ is NIP and $(\Ical_i)_{i<n}$ are mutually indiscernible sequences with $\Ical_i$ a sequence of elements in $\cU^{x_i}$ indexed by $[0,1]$.
    Then for  each $\varphi(x)\in \Lcal(\Mbb)$  with $x = x_0 \ldots x_{n-1}$ 
    there are finite partitions of each $\Ical_i$ into intervals $(V_{i, l} : l <k)$ so that  $\varphi(\prod\limits_{i<n}\Ical_i)$ is given by a finite disjoint union of (some of the) products of intervals $\prod\limits_{i<n}V_{i,l_i}$ --- we will informally refer to such sets as \emph{boxes}. 
   \end{fact}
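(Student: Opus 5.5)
The plan is to argue by induction on $n$, with the case $n=1$ as the base. For a single indiscernible sequence $\Ical$ indexed by $[0,1]$ and $\varphi(x) \in \Lcal(\Mbb)$, NIP bounds the number of alternations of $\varphi(b_t)$ along the sequence. Hence $\{t : \models \varphi(b_t)\}$ is a finite union of intervals, and these intervals may be degenerate (points). Taking all endpoints gives the required partition $(V_{0,l})_{l<k}$. Here we must allow singleton intervals in the partition so that ``box'' means product of intervals including points.

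For the inductive step, write $x = x' x_{n-1}$ with $x' = x_0\ldots x_{n-2}$. The first step is to show that for each fixed $\bar b \in \prod_{i<n-1}\Ical_i$, the fiber $\{t : \models \varphi(\bar b, c_t)\}$ for $\Ical_{n-1} = (c_t)$ is a finite union of intervals. This holds because $\Ical_{n-1}$ is indiscernible over $\Ical_{<n-1}$. Moreover, the number of intervals is bounded by a uniform $N$ depending only on $\varphi$, namely the alternation number of the formula $\varphi(x';y)$ with the parameters absorbed.

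The second step is to control where the endpoints can lie. I would show that, as $\bar b$ varies, the endpoints of these fibers take only finitely many values overall. Suppose infinitely many distinct endpoints $t_0 < t_1 < \dots$ occurred, witnessed by $\bar b^{(j)}$. Using mutual indiscernibility together with the finite alternation, one can choose the witnesses so that a single $\bar b$ realizes cuts at arbitrarily many points. This follows the standard argument in \cite[Lemma 3.5]{chernikov2024n}: finitely many parameters from the sequences can only produce a definable set on $\Ical_{n-1}$ whose cut points lie among the indices of those parameters, plus the bounded set coming from the parameters of $\varphi$. Concretely, since $\Ical_{n-1}$ is indiscernible over $\Ical_{<n-1}$ together with the finitely many parameters $e$ of $\varphi$, the cut points of any fiber are determined by $e$ alone. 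By the $n=1$ case applied over each tuple, the cut points are therefore uniformly a finite set $F_{n-1} \subseteq [0,1]$.

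Once this is established, I partition $\Ical_{n-1}$ into intervals $(W_m)$ using $F_{n-1}$ (including the singletons from $F_{n-1}$). On each piece $W_m$, the truth value of $\varphi(\bar b, c_t)$ is constant in $t$ and equals $\varphi(\bar b, c_{t_m})$ for a fixed $t_m \in W_m$. This is a formula in $x'$ with an extra parameter, and the remaining sequences $(\Ical_i)_{i<n-1}$ are mutually indiscernible over that parameter after removing finitely many indices, or better, directly over $c_{t_m}$ by mutual indiscernibility. The induction hypothesis then partitions each $\Ical_i$, $i<n-1$, into intervals for each of the finitely many formulas $\varphi(x', c_{t_m})$. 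Taking common refinements across $m$ yields the required boxes.

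The main obstacle is the uniform finiteness of the cut points in the second step, i.e.\ that parameters from the other sequences cannot shift the cuts. This is exactly where mutual indiscernibility, rather than mere indiscernibility, is essential, since $\Ical_{n-1}$ is indiscernible over all of $\Ical_{<n-1}$, and I expect this step to require the most care.
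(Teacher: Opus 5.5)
Your overall architecture is fine: the base case by finite alternation, then a finite set $F_{n-1}$ of cut points in the last coordinate, then the induction hypothesis applied to the finitely many formulas $\varphi(x', c_{t_m})$ followed by a common refinement. Your third paragraph goes through once $F_{n-1}$ exists. The gap is exactly the step you flag as the main obstacle, and the justification you give for it is not correct.

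The sequences are only mutually indiscernible over $\emptyset$, while the parameters $e$ of $\varphi$ are arbitrary. So $\Ical_{n-1}$ is \emph{not} indiscernible over $\Ical_{<n-1}e$. If it were, every fiber $X_{\bar b}=\{t : \models \varphi(\bar b, c_t; e)\}$ would be empty or all of $[0,1]$, and there would be no cut points at all.

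Applying the case $n=1$ to each $\bar b$ separately only bounds the number of cut points per fiber, not their union over the continuum of fibers. The set $\{(s,t): s<t\}\subseteq[0,1]^2$ has exactly one cut point in each fiber, yet infinitely many overall, and it is not a finite union of boxes. Excluding such configurations is the entire content of the lemma beyond $n=1$.

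Your claim that one can move the witnesses so that a single $\bar b$ has arbitrarily many cut points is also unsupported. Mutual indiscernibility gives automorphisms fixing $\Ical_{n-1}$, but these need not fix $e$, so they need not preserve the fibers.

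What is missing is an alternation argument along a sequence in which $\bar b$ and the $\Ical_{n-1}$-coordinate move simultaneously. Call $t$ a cut point of $X_{\bar b}$ if $X_{\bar b}$ is not constant on any neighbourhood of $t$. Suppose there are distinct $t^{(j)}$, $j\in\omega$, with $t^{(j)}$ a cut point of $X_{\bar b^{(j)}}$.

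Passing to a subsequence, you may assume $(t^{(j)})_j$ is strictly increasing; the decreasing case is symmetric. You may also assume that for each $i<n-1$ the index of the $i$-th coordinate of $\bar b^{(j)}$ is strictly increasing, strictly decreasing, or constant in $j$.

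Choose pairwise disjoint open intervals $O_j\ni t^{(j)}$ with $O_0<O_1<\cdots$. Pick $w_j\in O_j$ with $w_j\in X_{\bar b^{(j)}}$ for $j$ even and $w_j\notin X_{\bar b^{(j)}}$ for $j$ odd; this is possible because $X_{\bar b^{(j)}}$ is not constant near $t^{(j)}$.

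By mutual indiscernibility, the type of a finite tuple from $\bigcup_i \Ical_i$ depends only on the order types of its index tuples, coordinate by coordinate. Hence $(\bar b^{(j)} c_{w_j})_{j\in\omega}$ is an indiscernible sequence. But $\models\varphi(\bar b^{(j)}, c_{w_j}; e)$ holds iff $j$ is even, contradicting NIP.

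This yields the finite set $F_{n-1}$, and the rest of your argument then works. In the inductive step you do not even need indiscernibility over $c_{t_m}$, since the induction hypothesis already allows arbitrary parameters.
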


\begin{lemma}\label{tensor product of average measures is lebesgue}
    Suppose $T$ has NIP and $(\Ical_i)_{i<m}$ are mutually indiscernible with each $\Ical_i$ indexed by $[0,1]$. Let $\mu_i:= \Av(\Ical_i)\in \mathfrak{M}_{x_i}(\cU)$. Then $(\bigotimes\limits_{i<m} \mu_i)(\varphi(x_0, \ldots, x_{m-1})) = \lambda_m(\{(i_0,\dots,i_{m-1}): \models \varphi(a_{i_0},\dots,a_{i_{m-1}})\})$ for all $\varphi \in \Lcal(\cU)$, where $\lambda_m$ is the Lebesgue measure on $[0,1]^m$. 
\end{lemma}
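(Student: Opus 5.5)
We argue by induction on $m$; the case $m=1$ is just the definition of $\Av(\Ical_0)$ in Example \ref{average measures example}. Write $a^i_t$ for the $t$-th element of $\Ical_i$. By Fact \ref{NIP implies boxes on mut ind}, for any $\varphi\in\Lcal(\cU)$ the set $X_\varphi:=\{(t_0,\dots,t_{m-1}) : \models\varphi(a^0_{t_0},\dots,a^{m-1}_{t_{m-1}})\}$ is a finite union of boxes. It is therefore Lebesgue measurable, so the right-hand side makes sense. For the inductive step, let $x_{<m-1}=(x_0,\dots,x_{m-2})$ and $\nu:=\bigotimes_{i<m-1}\mu_i$. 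Associativity (Fact \ref{basic tensor facts}) gives $\bigotimes_{i<m}\mu_i=\nu\otimes\mu_{m-1}$.

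Unwinding Definition \ref{def: tens prod of meas}, we have
\[
(\nu\otimes\mu_{m-1})(\varphi)=\int_{S_{x_{m-1}}(\Ncal)}\nu(\varphi(x_{<m-1},q))\,d\mu_{m-1}|_{\Ncal},
\]
where $\Ncal$ is a small model containing the parameters of $\varphi$ and all of the $\Ical_i$. The function $q\mapsto\nu(\varphi(x_{<m-1},q))$ is Borel by Fact \ref{inv iff borel def}, since $\nu$ is invariant over $\bigcup_i\Ical_i$. The plan is to push $\mu_{m-1}|_{\Ncal}$ forward from $[0,1]$.

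Consider the map $\pi:[0,1]\to S_{x_{m-1}}(\Ncal)$ given by $t\mapsto\tp(a^{m-1}_t/\Ncal)$. For any clopen $[\psi]$, the preimage $\pi^{-1}[\psi]=\{t:\models\psi(a^{m-1}_t)\}$ is a finite union of intervals by NIP, and its Lebesgue measure is $\mu_{m-1}(\psi)$. Hence $\pi_*\lambda$ agrees with $\mu_{m-1}|_{\Ncal}$ on clopens. Both measures are regular, and $\pi$ is Borel measurable because preimages of clopens are Borel. Therefore $\pi_*\lambda=\widetilde{\mu_{m-1}|_{\Ncal}}$ by the uniqueness discussion in Section \ref{sec: meas on Bool alg}. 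The change of variables formula then gives
\[
(\nu\otimes\mu_{m-1})(\varphi)=\int_0^1\nu(\varphi(x_{<m-1},a^{m-1}_t))\,dt.
\]

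For each fixed $t$, the formula $\varphi(x_{<m-1},a^{m-1}_t)$ has parameters in $\cU$. The sequences $(\Ical_i)_{i<m-1}$ remain mutually indiscernible over $\emptyset$ (the parameter $a^{m-1}_t$ is simply part of the formula). So the inductive hypothesis applies to $\varphi(x_{<m-1},a^{m-1}_t)$ and gives
\[
\nu(\varphi(x_{<m-1},a^{m-1}_t))=\lambda_{m-1}(X_\varphi^{t}),
\]
where $X_\varphi^t$ is the fiber of $X_\varphi$ over the last coordinate $t$. Since $X_\varphi$ is a finite union of boxes, Fubini gives $\int_0^1\lambda_{m-1}(X^t_\varphi)\,dt=\lambda_m(X_\varphi)$, which completes the induction.

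The main obstacle is the pushforward identification $\pi_*\lambda=\mu_{m-1}|_{\Ncal}$, together with the fact that $q\mapsto\nu(\varphi(x_{<m-1},q))$ evaluated at $\pi(t)$ equals $\nu(\varphi(x_{<m-1},a^{m-1}_t))$. The latter holds by the invariance convention in the definition, since $\nu$ is $\Ncal$-invariant. The measurability of $\pi$ needs a short check, but it is routine given the interval structure supplied by NIP.
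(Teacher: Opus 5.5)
The gap is in the step you flagged as routine, and it is more than a missing check. The identity $\pi_*\lambda=\widetilde{\mu_{m-1}|_{\Ncal}}$ is false, and $\pi$ is not Borel measurable.

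Since $\Ncal$ contains every $a^{m-1}_t$, each realized type $\tp(a^{m-1}_t/\Ncal)$ is an isolated point $[x_{m-1}=a^{m-1}_t]$ of $S_{x_{m-1}}(\Ncal)$. So the image of $\pi$ is the open set $U:=\bigcup_{t\in[0,1]}[x_{m-1}=a^{m-1}_t]$. Assume $\Ical_{m-1}$ is non-constant. Every closed subset of the type space contained in $U$ is compact, hence finite, and each point has measure $\mu_{m-1}(x_{m-1}=a^{m-1}_t)=0$. By regularity, $\widetilde{\mu_{m-1}|_{\Ncal}}(U)=0$, whereas $\lambda(\pi^{-1}(U))=1$. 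Taking the union only over a non-measurable $A\subseteq[0,1]$ gives an open $U_A$ with $\pi^{-1}(U_A)=A$, so $\pi$ is not measurable.

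The underlying reasons are these. $S_{x_{m-1}}(\Ncal)$ is not second countable, so the clopens do not generate its Borel $\sigma$-algebra. Moreover, $\pi_*\lambda$ is not regular, so the uniqueness statement of Section \ref{sec: meas on Bool alg} does not apply. Agreement on clopens only gives $\int f\,d\mu_{m-1}|_{\Ncal}=\int_0^1 f(\pi(t))\,dt$ for $f$ measurable with respect to the $\sigma$-algebra generated by clopens. Borel definability (Fact \ref{inv iff borel def}) does not provide that: for the Borel function $f=\mathbf{1}_U$ the formula fails.

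What is missing is a reason why your particular integrand is controlled by clopens. You can supply it within your approach:
\begin{enumerate}
\item Each $\Av(\Ical_i)$ is definable (Example \ref{average measures example}), hence so is $\nu$ (Fact \ref{basic tensor facts}).
\item Therefore $q\mapsto\nu(\varphi(x_{<m-1},q))$ is continuous on the Stone space $S_{x_{m-1}}(\Ncal)$, and so it is a uniform limit of finite linear combinations of indicators of clopens.
\item For such simple functions the change of variables is exactly your agreement on clopens, and it passes to the uniform limit.
\end{enumerate}
After that, your pointwise use of the induction hypothesis and Fubini are fine.

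The paper never pushes $\lambda$ forward. It restricts the integral to the support $S(\mu_{m-1})$ and observes that only the finitely many $(\varphi,x_{m-1})$-types over $\Ical_{<m-1}$ realized on the intervals of the box decomposition occur there. By the induction hypothesis the integrand is then a simple function on a clopen partition of the support, which it evaluates as a finite sum over boxes. Note that $\pi(t)\notin S(\mu_{m-1})$ for every $t$, which is another way to see that $\pi$ cannot transport $\lambda$ onto $\mu_{m-1}$ at the level of Borel sets.
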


\begin{proof}
    The case $m=1$ is trivial, we then argue by induction on $m \geq 2$.  For any small model $\Ncal$ containing the parameters of $\varphi(x_0, \ldots, x_{m-1})$ and $\Ical_{<m}$, we have 
     \[\left( \left(\bigotimes\limits_{i<m-1} \mu_i \right) \otimes \mu_{m-1} \right)(\varphi((x_i)_{i<m}))= \int_{q\in S_{x_{m-1}}(\Ncal)} \left(\bigotimes\limits_{i<m-1}\mu_i \right)(\varphi((x_i)_{i<{m-1}};q)d\mu_{m-1}.\] 

    By Fact \ref{NIP implies boxes on mut ind},  there are finite partitions of each $\Ical_i$ into intervals $(V_{i, l} : l <k)$ so that  $\varphi(\prod\limits_{i<m}\Ical_i)$ is given by a finite disjoint union of (some of the) products of intervals $\prod\limits_{i<m}V_{i,l_i}$.

%    
%     a finite disjoint union of boxes $(U_k)_{k<l}$. For each $k<l$, we have $U_k = V_k\times W_k$ for a box $V_k\sub \prod\limits_{i<m-1} \Ical_i$ and an interval $W_k\sub \Ical_{m-1}$. For each $\eta \in 2^l$, let $W_\eta:= \bigcap\limits_{k<l} W_k^{\eta(i)}$ (where $W_i^1 := W_i$ and $W_i^0 := \Ical_{m-1}\sm W_i$).
    
     Now, for each $k < l$, all elements of $V_{m-1,l}$ have the same $(\varphi,x_{m-1})$-type over $\Ical_{<m-1}$. Thus, there are finitely many $(\varphi,x_{m-1})$-types over $\Ical_{<m-1}$ realized in $\Ical_{m-1}$, enumerate them as $(p_j)_{j<N}$. Now, suppose some $q\in S_{x_{m-1}}(\Ncal)$ entails some other $(\varphi,x_{m-1})$-type over $\Ical_{<m-1}$. Then, by definition of the average measure, necessarily $q$ contains some instance of $\varphi$ that has $\mu_{m-1}$-measure $0$, so $q\notin S(\mu_{m-1})$. In the integral above we can restrict to types in the support of $\mu_{m-1}$ (the support is a closed set of full measure in the type space). We pick formulas $(\psi_j(x_m))_{j<N}$ from $\Lcal(\Ical_{<m-1})$ such that $\psi_j$ isolates $p_j$ among $(p_j)_{j<N}$ (given by Boolean combinations of instances of $\varphi$). Now, the sets $Z_j := [\psi_j(x_{m-1})]\cap S(\mu_{m-1})$ form a partition of $S(\mu_{m-1})$. The integrand $(\bigotimes\limits_{i<m-1}\mu_i)(\varphi((x_i)_{i<m-1};q))$ is constant on each $Z_j$: by the induction hypothesis, for a given $Z_j$ this constant value is given by  $\lambda_{m-1}(\varphi(\prod\limits_{i<m-1}\Ical_i;b_j))$ for some fixed $b_j$ realizing $\psi_j(x_{m-1})$ and $\tp(b_j/\Ncal)\in S(\mu_{m-1})$. 
     And for each $j$, $\mu_{m-1}(\psi_j(x_{m-1}))$ is given by the Lebesgue measure $\lambda_1$ of the corresponding union of intervals that it defines in $\Ical_{m-1}$ by the case  $m=1$. We can then calculate the integral of this simple function with respect to this partition of the domain summing over boxes, which gives the required value.
\end{proof}

\begin{definition}
\begin{enumerate}
	\item  Given a sequence $\Ical$, a \emph{polarized Dedekind cut} (abbreviated as \emph{p.D.~cut}) is a cut $(A,B)$ of $\Ical$ (i.e.~$A \cap B = \emptyset, A \cup B = \Ical$) with a distinguished polarity $\cfrak \in \{A,B\}$ for which the $\cfrak$ has infinite cofinality if $\cfrak = A$ and infinite coinitiality if $\cfrak = B$. If the sequence is indexed by a Dedekind-complete linear order, we denote by $\End(\cfrak) \in \Ical$ the least upper bound of $A$ if $\cfrak = A$, or the greatest lower bound of $B$ if $\cfrak = B$ (note here that $\End(\cfrak)$ is not an element of $\cfrak$).  
	\item  Given an $m$-dimensional product of ordered sets $\prod\limits_{i<m} U_i$ and  $\ub\in \prod\limits_{i<m} U_i$, we say a set $S$ is an \emph{$m$-ant with respect to the center $\ub$} if $S=\prod\limits_{i<m} V_i$ where $V_i\in \{\{v\in U_i: v>\ub(i)\},\{v\in U_i: v<\ub(i)\}\}$.
\end{enumerate}
   
\end{definition}

\begin{remark}
    Given $(\Ical_i)_{i<m}$ mutually indiscernible with p.D.~cuts $\cfrak_i$ in $\Ical_i$ and each sequence Dedekind complete, $\prod\limits_{i<m} \cfrak_i$ is an $m$-ant with center $(\End(\cfrak_i))_{i<m}$. We call this the \emph{induced $m$-ant}.
\end{remark}

\begin{definition}
    Suppose $T$ has NIP, $(\Ical_i)_{i<m}$ are mutually indiscernible and $\bar \cfrak:=(\cfrak_i)_{i<m}$ is a tuple of p.D.~cuts $\cfrak_i$ in $\Ical_i$. Given any set $A$, let $\lim_A(\bar\cfrak)$ consist of the formulas $\varphi(x_0,\dots,x_{m-1})\in \Lcal(A)$ such that there are $a_i\in \cfrak_i$ with $\models \varphi(b_0,\dots,b_{m-1})$ whenever each $b_i$ is an element of $\Ical_i$ strictly between $a_i$ and $\End(\cfrak_i)$ in the ordering. 
\end{definition} 

\begin{remark}
    In the case that the sequences are indexed by $[0,1]$ it is equivalent to define the limit with respect to the Euclidean metric restricted to the induced $m$-ant. We will use this formulation when convenient.
\end{remark}

\begin{lemma}\label{NIP limit type complete}
    If $T$ has NIP, then for any $(\Ical_i)_{i<m}$ mutually indiscernible with a tuple of p.D.~cuts $(\cfrak_i)_{i<m}$, $\lim_A(\bar\cfrak)$ is a complete type over $A$. 
\end{lemma}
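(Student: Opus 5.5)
Two things have to be checked: that $\lim_A(\bar\cfrak)$ is consistent, and that it decides every formula over $A$. For consistency, fix finitely many $\varphi_0,\dots,\varphi_{r-1}\in\lim_A(\bar\cfrak)$ with witnesses $a^t_i\in\cfrak_i$. For each $i$, take $a_i$ to be the witness closest to $\End(\cfrak_i)$; this works because polarity makes these witnesses linearly ordered toward the end of the cut. Then any choice of $b_i$ strictly between $a_i$ and $\End(\cfrak_i)$ satisfies every $\varphi_t$. Such $b_i$ exist because $\cfrak_i$ has infinite cofinality (respectively coinitiality). So the conjunction is realized, and $\lim_A(\bar\cfrak)$ is finitely satisfiable, hence consistent. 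The same observation shows the set is closed under finite conjunctions. For completeness it therefore suffices to show: for every $\varphi(x_0,\dots,x_{m-1})\in\Lcal(A)$, either $\varphi$ or $\neg\varphi$ belongs to $\lim_A(\bar\cfrak)$.

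This is the key step, and I would derive it from NIP via Fact \ref{NIP implies boxes on mut ind}. That fact is stated for sequences indexed by $[0,1]$, so first reduce to it. I expect this reduction to be the main obstacle. By compactness and mutual indiscernibility, extend/stretch the sequences so that each $\Ical_i$ embeds order-preservingly into a mutually indiscernible family $\Ical'_i$ indexed by a dense complete order. Alternatively, avoid the fact entirely and argue directly: the proof of the box decomposition only uses NIP through finite alternation along mutually indiscernible sequences, and that can be run on arbitrary index orders.

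Given the decomposition, each $\Ical_i$ is split into finitely many convex pieces $(V_{i,l})_{l<k}$ such that $\varphi(\prod_i\Ical_i)$ is a union of boxes $\prod_i V_{i,l_i}$. Because $\cfrak_i$ has infinite cofinality or coinitiality toward $\End(\cfrak_i)$, some single piece $V_{i,l_i}$ contains a final segment of $\cfrak_i$ near the end (it is a finite partition into convex sets). Choose $a_i$ in $\cfrak_i$ beyond which everything up to the cut lies in $V_{i,l_i}$. Then every tuple $(b_i)$ with $b_i$ strictly between $a_i$ and $\End(\cfrak_i)$ lies in the single box $\prod_i V_{i,l_i}$. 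That box is either contained in $\varphi$ or disjoint from it, so $\varphi$ or $\neg\varphi$ lies in $\lim_A(\bar\cfrak)$.

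If I avoid the fact, the direct argument goes as follows. Suppose neither $\varphi$ nor $\neg\varphi$ is eventually constant on the induced $m$-ant. Then one can find, inductively along the cuts, a sequence of tuples $\bar b^{(s)}=(b^{(s)}_0,\dots,b^{(s)}_{m-1})$ marching monotonically toward $(\End(\cfrak_i))_i$ on which $\varphi$ alternates. One subtlety: mutual indiscernibility gives indiscernibility of the "diagonal" sequence $(b^{(s)}_0\dots b^{(s)}_{m-1})_s$ over $A$ only after taking $A$ inside the base of indiscernibility. I would handle this by Ramsey/compactness, extracting an $A$-indiscernible subsequence with the same EM-alternation pattern. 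Infinite alternation of $\varphi(\bar x;a)$ on an indiscernible sequence contradicts NIP, and this gives completeness.
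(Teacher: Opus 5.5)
Your proposal is correct. Your fallback ``direct'' argument is essentially the paper's proof. The paper assumes some $\varphi\in\Lcal(A)$ alternates along a coordinatewise monotone sequence $\jb_k$ in the induced $m$-ant, notes that this diagonal sequence is indiscernible by mutual indiscernibility, and contradicts NIP. You additionally spell out two things the paper leaves implicit: the finite-satisfiability check for consistency, and the inductive construction of the alternating sequence. Your version also only needs monotonicity, not the Euclidean convergence $\jb_k\to\rb$ the paper invokes.

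The ``subtlety'' you raise about $A$-indiscernibility is not one. The diagonal sequence is indiscernible over $\emptyset$, and NIP already forbids infinite alternation of $\varphi(\bar x;\bar a)$ along any $\emptyset$-indiscernible sequence, for any parameter tuple $\bar a$ (here from $A$). So the Ramsey/compactness extraction is unnecessary. As phrased it is also imprecise: you would get a new sequence rather than a subsequence, and an even/odd pattern is not part of an EM-type unless you pass to consecutive pairs.

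Your primary route through Fact~\ref{NIP implies boxes on mut ind} is a valid alternative. It makes completeness a one-line consequence of the box decomposition, since a single box contains a final segment of the $m$-ant. The cost is invoking a stronger cited result whose own proof is an alternation argument of the same kind.

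The reduction you flag as the main obstacle is not needed in the paper's setting. There the sequences are indexed by $[0,1]$, and the definition of $\lim_A$ uses $\End(\cfrak_i)$, which already presupposes a Dedekind-complete index order. For arbitrary index orders, your embedding would not in general land in the Fact's hypotheses; for instance $\omega_1$ does not order-embed into $[0,1]$. This is why the direct argument is the more robust route.
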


\begin{proof}  
Let $\rb := (\End(\cfrak_i))_{i<m}$. Suppose for contradiction that there is $\varphi(x_0, \ldots, x_{m-1}) \in \Lcal(A)$ and a (coordinate-wise) monotone  sequence $\jb_k$ from the induced $m$-ant with $\models \varphi(\jb_k)$ if and only if $k$ is even, and $\jb_k \rightarrow \rb$ in the Euclidean metric (of the indices). By mutual indiscernibility, $(\jb_k)_{k < \omega}$ is indiscernible, and hence this contradicts NIP.
\end{proof}

\begin{lemma}\label{ordering cuts}
    Suppose $\Ical$ is indiscernible indexed by $[0,1]$ and not totally indiscernible. Then all p.D.~cuts have different limit types over some small set containing $\Ical$. In fact, all p.D.~cuts $\cfrak$ with $\End(\cfrak) \in \Ical_{(0,1)}$ have different limit types over $\Ical$. 
\end{lemma}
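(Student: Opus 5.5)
The plan is to reduce to a single formula witnessing non-total-indiscernibility and then transport it along the sequence by indiscernibility. Since $\Ical = (b_t)_{t\in[0,1]}$ is indiscernible but not totally indiscernible, in NIP there is a formula $\varphi(y_0,\dots,y_{m-1})\in\Lcal$ and a permutation such that $\varphi$ holds on increasing tuples but fails on some reordering. By a standard reduction (writing any permutation as a product of adjacent transpositions, and using indiscernibility together with density of $[0,1]$ to shift the other coordinates), we may assume the failure comes from swapping two adjacent coordinates. So there is $\psi(y_0,\dots,y_{k-1};x,x')$ with $\models\psi(\bar c;b_s,b_t)$ and $\models\neg\psi(\bar c;b_t,b_s)$ for $s<t$ lying in a gap of the increasing tuple $\bar c$, which is otherwise placed anywhere in $\Ical$ in a fixed order type.

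Now fix two distinct p.D.~cuts $\cfrak\ne\cfrak'$ with $\End(\cfrak),\End(\cfrak')\in(0,1)$, and consider the limit types over $\Ical$, which are complete by Lemma \ref{NIP limit type complete} with $m=1$. There are two cases.

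\textbf{Case 1: $\End(\cfrak)\ne\End(\cfrak')$.} Say $r:=\End(\cfrak)<r':=\End(\cfrak')$. Choose $u\in(r,r')$ in the open interval. Elements approaching $r$ satisfy $x<b_u$ in the sense of the order type, while elements approaching $r'$ satisfy $b_u<x$. Take $\bar c$ from $\Ical$ away from $[r,r']$ and positioned so that the gap condition holds. Then $\psi(\bar c;x,b_u)$ lies in $\lim(\cfrak)$, since for $b_s$ with $s$ near $r$ we have $s<u$ in the correct gap. Dually, $\neg\psi(\bar c;x,b_u)$ lies in $\lim(\cfrak')$, since for $s$ near $r'$ the point $b_u$ precedes $b_s$ and we get the swapped pattern. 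Here we use the assumption $r,r'\in(0,1)$ to leave room to place $\bar c$ on the correct sides when the gap of $\bar c$ is interior.

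\textbf{Case 2: $\End(\cfrak)=\End(\cfrak')=r$ with opposite polarities.} The limit from the left and the limit from the right at $r$ are separated by $\psi(\bar c;x,b_r)$ versus $\psi(\bar c;b_r,x)$, using $b_r$ itself as the parameter, with $\bar c$ chosen away from $r$. This is where $r\in(0,1)$ is needed: we must place the elements of $\bar c$ that lie outside the gap on both sides of $r$ (strictly), and we need left and right neighbourhoods of $r$ for the two cut types to approach it.

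For the first sentence of the statement, concerning cuts at $0$ or $1$ or over a larger set: a cut whose endpoint is $0$ or $1$ can only have one polarity, so any two distinct cuts have distinct endpoints. We separate them using parameters outside $\Ical$ by extending the sequence to an indiscernible one indexed by a larger order, for example $[-1,2]$, and then running Case 1 inside that extension, taking the small set to contain the new elements.

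The main obstacle is the initial reduction to the adjacent-swap formula $\psi$ with $\bar c$ freely placeable. The concern is making sure the gap containing the swapped pair can be interior, or handled otherwise, so that the rest of $\bar c$ can be moved on both sides of the cut points. I would handle this carefully using indiscernibility and density of the index set.
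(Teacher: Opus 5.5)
Your proposal is correct and follows essentially the same route as the paper. Both arguments reduce the failure of total indiscernibility to an adjacent-swap formula and shift the remaining parameters away from the cut endpoints, so that $\psi(\bar c;x,x')$ orders a neighbourhood of them. Both then separate the two polarities at $r$ using $b_r$, separate distinct endpoints using an intermediate element, and handle endpoints $0,1$ by extending the sequence on both sides; the paper does this last step by pulling back along an automorphism sending $\Ical$ onto $\Ical_{[1/4,3/4]}$.

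The obstacle you flag is not real. Whichever of the outer blocks of $\bar c$ is nonempty only needs room below $r$ or above $r'$, and $r,r'\in(0,1)$ provides this regardless of where the swapped gap sits. Also, NIP is not needed for the initial reduction to the adjacent swap.
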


\begin{proof}
    Suppose to begin that $\End(\cfrak)\in (0,1)$. Then suppose $\Jcal$ is a subinterval containing an open neighborhood of $\End(\cfrak)$ but omitting open neighborhoods of $0$ and $1$. Since $\Ical$ is not totally indiscernible, there are $0\leq j <k$,  $i_0<\dots<i_k$ from $\Ical$ and $\varphi \in \Lcal$ such that $\models \varphi(i_0,\dots, i_j, i_{j+1},\dots,i_k)$ and $\models \neg\varphi(i_0,\dots, i_{j+1}, i_{j},\dots,i_k)$ (using indiscernibility and the fact that every permutation is a composition of transpositions of adjacent elements). By using indiscernibility to shift the parameters $i_{
    <j}$ and $i_{>j+1}$ outside of $\Jcal$, we have a formula with parameters from $\Ical\sm \Jcal$ that orders $\Jcal$. So, there is a formula with parameters from $\Jcal \cup \{\End(\cfrak)\}$ that distinguishes the p.D.~cut $\cfrak$ from the p.D.~cut with the same endpoint and opposite polarity. In addition, for any $\alpha\in (0,1)\sm \{\End(\cfrak)\}$, there is a formula $\varphi_\alpha(x)$ with parameters from $\Ical$ such that $\models \varphi_\alpha(a_{\End(\cfrak)})$ but $\models \neg \varphi_\alpha(a_{\beta})$ for any $\beta$ in some open neighborhood of $\alpha$. So, if $\dfrak$ is a p.D. cut with $\End(\dfrak)\in (0,1)\sm \{\End(\cfrak)\}$, then $\lim_\Ical(\cfrak)\neq \lim_\Ical(\dfrak)$. This shows that p.D.~cuts with distinct endpoints in $\Ical_{(0,1)}$ have distinct limit types over $\Ical$.

    To conclude the full result, consider an automorphism $\sigma\in \Aut(\Mbb)$ mapping $\Ical$ onto the subinterval indexed by $[\frac{1}{4}, \frac{3}{4}]$. Then consider $\sigma^{-1}(\Ical)$. This is an indiscernible sequence containing $\Ical$ as a convex subset that omits open neighborhoods of $0$ and $1$. By the argument above, p.D. cuts of $\Ical$ have distinct limit types over $\sigma^{-1}(\Ical)$.
\end{proof}

\begin{lemma} \label{support of tensor product of averages is the limit types precise lemma}
    Suppose $T$ has NIP, $(\Ical_i)_{i<m}$ are mutually indiscernible with each $\Ical_i$ indexed by $[0,1]$. Denote $\mu_i:= \Av_\Mbb(\Ical_i)$. If $p\in S(\bigotimes\limits_{i<m} \mu_i)$, then there are p.D.~cuts $\cfrak_i$ in $\Ical_i$ such that $p$ is the limit type (over $\Mbb$) of the tuple of p.D.~cuts $(\cfrak_i)_{i<m}$. Moreover, if $p\in  S(\bigotimes\limits_{i<m} \mu_i)$ and $p|_{x_i} = \lim(\cfrak_i)$, then $p = \lim((\cfrak_i)_{i<m})$. 
\end{lemma}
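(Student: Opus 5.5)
The plan is to use the explicit description of $\mu := \bigotimes_{i<m}\mu_i$ given by Lemma \ref{tensor product of average measures is lebesgue}: $\mu(\varphi) = \lambda_m(\{\ib \in [0,1]^m : \models \varphi(a_{\ib})\})$, where $a_{\ib}$ denotes $(a_{i_0},\dots,a_{i_{m-1}})$ with $a_{i_k} \in \Ical_k$. Fix $p \in S(\mu)$. For each $\varphi \in p$, the set $D_\varphi := \{\ib : \models \varphi(a_{\ib})\}$ has positive Lebesgue measure. By Fact \ref{NIP implies boxes on mut ind}, each $D_\varphi$ is a finite union of boxes, so it contains a box with nonempty interior. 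The family $\{\overline{D_\varphi} : \varphi \in p\}$ of closed sets has the finite intersection property, since $p$ is closed under conjunction and each $D_\varphi$ is nonempty. By compactness of $[0,1]^m$, there is a point $\rb = (r_0,\dots,r_{m-1})$ lying in every $\overline{D_\varphi}$.

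Being in the closure alone is not enough, so I will refine this using the box structure. For each $\varphi \in p$, consider the $2^m$ ``$m$-ants'' at $\rb$, with the $m=1$ degenerate one-sided cases handled the same way. By Fact \ref{NIP implies boxes on mut ind}, near $\rb$ the set $D_\varphi$ is, up to boundary hyperplanes, a union of some of these $m$-ants intersected with a small cube around $\rb$. Let $O_\varphi \subseteq 2^m$ be the set of orthant directions $s$ such that $D_\varphi$ contains a full neighborhood-piece of the $s$-ant at $\rb$. The key claim is that $O_\varphi \neq \emptyset$ for all $\varphi \in p$. Moreover, $O_{\varphi\wedge\psi} = O_\varphi \cap O_\psi$, because the boxes for finitely many formulas can be refined simultaneously. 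Since $2^m$ is finite, some direction $s$ then lies in every $O_\varphi$, and $s$ determines p.D.~cuts $\cfrak_i$ with $\End(\cfrak_i) = r_i$ and polarity given by $s(i)$. It follows that every $\varphi \in p$ belongs to $\lim_{\Mbb}(\bar\cfrak)$. Since $\lim_{\Mbb}(\bar\cfrak)$ is a complete type by Lemma \ref{NIP limit type complete}, we get $p = \lim(\bar\cfrak)$.

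The main obstacle is $O_\varphi \neq \emptyset$. It is possible that $\rb \in \overline{D_\varphi}$ only along lower-dimensional faces. To avoid this, I will choose $\rb$ more carefully. For each $\varphi \in p$, let $F_\varphi$ be the set of points $\rb$ such that $D_\varphi$ contains an open orthant-piece at $\rb$; equivalently, $F_\varphi$ is the closure of the interior of $D_\varphi$, a finite union of closed boxes with nonempty interior. We have $F_{\varphi\wedge\psi} \subseteq F_\varphi \cap F_\psi$, and $F_{\varphi\wedge\psi} \neq \emptyset$ because $\varphi\wedge\psi \in p$ has positive measure, and positive-measure finite unions of boxes have nonempty interior. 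Compactness then gives $\rb \in \bigcap_{\varphi} F_\varphi$, and at such an $\rb$ each $O_\varphi$ is nonempty. Applying the intersection property to $\varphi\wedge\psi$ finishes this step.

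For the ``moreover'' clause, suppose $p|_{x_i} = \lim(\cfrak_i)$ for each $i$, and let $\bar\dfrak$ be the cuts found above. Restricting to $x_i$ gives $p|_{x_i} = \lim(\dfrak_i)$. Limit types of distinct p.D.~cuts can coincide, for instance when $\Ical_i$ is totally indiscernible, so I cannot conclude $\dfrak_i = \cfrak_i$ directly. Instead, I will show $\lim(\bar\cfrak) = \lim(\bar\dfrak)$. I expect this to follow from mutual indiscernibility: the limit type of the tuple depends only on the coordinatewise limit types. To see this, replace one coordinate at a time, say $\dfrak_0$ by $\cfrak_0$. Take $a$ realizing $\lim(\cfrak_0)$ over $\Mbb$, restricted to a model containing all the $\Ical_j$. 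Then $a$ and a realization of $\lim(\dfrak_0)$ have the same type over $\Ical_{\neq 0}$ and the parameters, and the limits in the other coordinates are computed over this common type. Iterating over the coordinates gives $p = \lim(\bar\dfrak) = \lim(\bar\cfrak)$.
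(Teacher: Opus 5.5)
Your proof is correct, and it takes a different route from the paper in both parts.

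\textbf{First part.} The paper picks $\rb$ in $\bigcap_{\varphi\in p}\cl(\{\bar j : \models\varphi(a_{\bar j})\})$ and uses $x\neq a_{\rb}\in p$ to get approximating sequences. It then fixes the side of $r_i$ coordinate by coordinate through a case split. If $\Ical_i$ is totally indiscernible over $\Ical_{\neq i}$, it uses an order-reversing automorphism. Otherwise it takes an ordering formula from the shifting argument of Lemma \ref{ordering cuts} and adds whichever of $x_i<a_{r_i}$, $a_{r_i}<x_i$ lies in $p$ as a conjunct. Finally it invokes the alternation argument of Lemma \ref{NIP limit type complete}.

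You instead combine positivity of $\mu(\varphi)$ with the box decomposition to place $\rb$ in $\bigcap_{\varphi}\overline{\operatorname{int} D_\varphi}$. You then run a finite-intersection argument over the $2^m$ orthant directions, using $O_{\varphi\wedge\psi}=O_\varphi\cap O_\psi$. This avoids the dichotomy, the ordering formulas and the element $a_{\rb}$ altogether, and it is cleaner.

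\textbf{Moreover part.} The paper splits again: in the totally indiscernible case the cut is irrelevant, and otherwise $\cfrak_i$ is recovered from $p|_{x_i}$. Your claim that $\lim(\bar\cfrak)$ depends only on the tuple $(\lim(\cfrak_i))_i$ is uniform and needs no case distinction. The paper's route buys the extra information that in the non-totally-indiscernible case the cut itself is determined by $p|_{x_i}$.

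\textbf{The step to prove rather than expect.} The one thing you should prove is the interchange of limits behind ``the limits in the other coordinates are computed over this common type.'' What makes it work is NIP, through Fact \ref{NIP implies boxes on mut ind}, not mutual indiscernibility alone. Concretely, fix $\varphi\in\Lcal(\Mbb)$. The box decomposition gives a piece $\prod_k V_k$ of the $m$-ant at $(\End(\cfrak_k))_k$ on which $D_\varphi$ is either everything or nothing. Hence for every $\bar j_{\neq i}\in\prod_{k\neq i}V_k$,
\[
\varphi(a_{\bar j_{\neq i}},x_i)\in\lim(\cfrak_i) \iff \varphi\in\lim(\bar\cfrak).
\]
Since $a_{\bar j_{\neq i}}\in\Mbb$, the left-hand side depends only on the global type $\lim(\cfrak_i)$. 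So replacing $\cfrak_i$ by $\dfrak_i$ with the same limit type leaves $\lim(\bar\cfrak)$ unchanged, and iterating over coordinates finishes. The same identity justifies your version with $a\models\lim(\cfrak_0)|_N$, since $\models\varphi(a,a_{\bar j_{>0}})$ iff $\varphi(x_0,a_{\bar j_{>0}})\in\lim(\cfrak_0)$. It also shows you need no realizations outside $\Mbb$.
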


\begin{proof}
Suppose $p(x) \in S(\bigotimes\limits_{i<m} \mu_i)$ and consider the set 
$\Gamma:=\bigcap\limits_{\varphi\in p} \cl \left(\left\{\jb \in [0,1]^m:  \models \varphi(a_{\jb})\right\} \right)$.
 This is an intersection of closed sets in a compact space whose finite subintersections are non-empty (by the fact that $p$ is in the support and Lemma \ref{tensor product of average measures is lebesgue}), so it a non-empty set. Choose $\rb\in \Gamma$. Since $p$ is a complete global type in the support, $x\neq a_{\rb} \in p$. So, for each $\varphi\in p$, the formula $\psi_\varphi:=\varphi \land (x\neq a_{\rb})$ is also in $p$. But $\rb \in \cl(\{\jb \in [0,1]^m: \models \psi_\varphi(a_{\jb})\})$. Hence, there is a sequence $(\jb^\varphi_k)_{k\in 
\omega} \sub [0,1]^m$ with $\models \varphi(a_{\jb_k^\varphi})$ and $\jb^\varphi_k \rightarrow \rb$ in the Euclidean metric.  

We want to choose these sequences from the same $m$-ant  for all $\varphi$ simultaneously. This involves choosing a polarity (with respect to $\rb(i)$) for each coordinate $i$. Now, if $\rb(i)\in \{0,1\}$ for any $i<m$, there is only one choice of polarity for the coordinate $i < m$. So, fix $i$ and assume $\rb(i)\notin\{0,1\}$. Note that if any sequence $\Ical_i$ is totally indiscernible over $\Ical_{\neq i}$, applying an automorphism reversing the order on $\Ical_i$ and fixing $\bar{r}(i)$ and $\Ical_{\neq i}$ shows that for any $\varphi$ we can choose a sequence  $(\jb^\varphi_k)_{k\in 
\omega}$ with $i$th coordinates on the same side of $\rb(i)$. So, we can disregard this case too. Now, if $\Ical_i$ is not totally indiscernible over the other sequences, there is a formula (with parameters) that orders any subsequence of $\Ical_i$ omitting an infinite initial segment and an infinite terminal segment (by the shifting argument in the proof of Lemma \ref{ordering cuts}). Since $\rb(i)\notin \{0,1\}$, and $\jb_k^\varphi(i)\rightarrow \rb(i)$, we can just assume that $\Ical_i$ is ordered by such a formula, $x_i<x_i'$. Now, either $x_i<a_{\rb(i)}\in p$ or $a_{\rb(i)}<x_i\in p$. By modifying each $\psi_\varphi$ with this formula as another conjunct, we can force the $i$th coordinate of $\jb^\varphi_k$ to have the same polarity with respect to $\rb(i)$, uniformly in $\varphi$. Concluding, we can do this for every coordinate and hence restrict the $\jb^\varphi_k$ to a single $m$-ant, uniformly in $\varphi$. Then by the argument given in the proof of Lemma \ref{NIP limit type complete}, by NIP this suffices to show that $p$ is the limit type of the tuple of cuts inducing the $m$-ant with the common polarization and center $\rb$.

Now we show the moreover part. If the sequence $\Ical_i$  is totally indiscernible over the other sequences, we can replace $\cfrak_i$ with any other p.D. cut $\dfrak$ in $\Ical_i$ without changing the limit type of the entire tuple of cuts (taking automorphisms). If $\Ical_i$ is not totally indiscernible over the other sequences, the shifting argument from Lemma \ref{ordering cuts} shows that the cut $\cfrak_i$ is uniquely determined by the restriction to the variable $x_i$. Thus, we are done.
\end{proof}

We thus get a generalization of \cite[Lemma 2.20]{simon2013distal}:
\begin{cor}
    With notation as in Lemma \ref{support of tensor product of averages is the limit types precise lemma}, 
    $$S \left(\bigotimes\limits_{i<n}\mu_i \right)=\left\{\lim((\cfrak_i)_{i<n}): \cfrak_i\text{ a p.D. cut in }\Ical_i\right\}.$$
\end{cor}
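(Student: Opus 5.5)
The statement is a set equality, and I will prove the two inclusions separately. The inclusion $\subseteq$ is exactly the first part of Lemma \ref{support of tensor product of averages is the limit types precise lemma}. Indeed, any $p \in S(\bigotimes_{i<n}\mu_i)$ is of the form $\lim((\cfrak_i)_{i<n})$ for suitable p.D.~cuts $\cfrak_i$ in $\Ical_i$. So the real work is the reverse inclusion $\supseteq$.

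Fix p.D.~cuts $\cfrak_i$ in $\Ical_i$ and set $p := \lim((\cfrak_i)_{i<n})$, taken over $\Mbb$. By Lemma \ref{NIP limit type complete}, $p$ is a complete global type, so it only remains to show that $p$ is in the support. Take $\varphi \in p$. By the definition of the limit type, there are $a_i \in \cfrak_i$ such that $\models \varphi(b_0,\dots,b_{n-1})$ whenever each $b_i$ lies strictly between $a_i$ and $\End(\cfrak_i)$ in $\Ical_i$. Let $V_i \subseteq [0,1]$ be the open interval of indices strictly between $a_i$ and $\End(\cfrak_i)$. Since $\cfrak_i$ is a polarized Dedekind cut and $\Ical_i$ is indexed by $[0,1]$, this interval is nondegenerate. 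This is where the polarity is used: the cut side $\cfrak_i$ has infinite cofinality (or coinitiality), so $a_i \neq \End(\cfrak_i)$.

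The box $\prod_{i<n} V_i$ is then contained in $\{\jb \in [0,1]^n : \models \varphi(a_{\jb})\}$. By Lemma \ref{tensor product of average measures is lebesgue}, this gives
$$\Big(\bigotimes_{i<n}\mu_i\Big)(\varphi) \geq \lambda_n\Big(\prod_{i<n}V_i\Big) > 0.$$
As $\varphi \in p$ was arbitrary, every formula of $p$ has positive measure, so $p \in S(\bigotimes_{i<n}\mu_i)$.

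The only point needing care is checking that the definition of $\lim$ gives a genuine open box of positive Lebesgue measure, including in the edge cases where $\End(\cfrak_i) \in \{0,1\}$. There the polarity is forced, but the interval between $a_i$ and the endpoint is still nonempty and open. There is no real obstacle beyond this, since the heavy lifting was already done in Lemma \ref{support of tensor product of averages is the limit types precise lemma} and Lemma \ref{tensor product of average measures is lebesgue}.
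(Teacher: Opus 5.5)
Your proof is correct and follows the paper's argument: the inclusion $\subseteq$ is the first part of Lemma \ref{support of tensor product of averages is the limit types precise lemma}, and the inclusion $\supseteq$ comes from Lemma \ref{tensor product of average measures is lebesgue}. The paper simply calls $\supseteq$ immediate, whereas you spell it out by showing that each formula of the limit type holds on a nondegenerate open box of indices, which has positive Lebesgue measure.
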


\begin{proof}
    We have $S(\bigotimes\limits_{i<n}\mu_i)\sub\{\lim((\cfrak_i)_{i<n}: \cfrak_i\text{ a p.D. cut in }\Ical_i\}$ by Lemma \ref{support of tensor product of averages is the limit types precise lemma}. And $\{\lim((\cfrak_i)_{i<n}): \cfrak_i\text{ a p.D. cut in }\Ical_i\}\sub S(\bigotimes\limits_{i<n}\mu_i)$ is immediate by Lemma \ref{tensor product of average measures is lebesgue}.
\end{proof}

For the remainder of this section we let $1 \leq n < \omega$ and denote by $(\Ical_i)_{i\leq n}$ a tuple of mutually indiscernible sequences each indexed by $[0,1]$ and $\mu_i:=\Av_\Mbb(\Ical_i)$ for each $i$. We will further denote $\nu:=(\bigotimes\limits_{i\leq n} \mu_i)|_{\Lcal^n_{x_0,  \ldots, x_{n}}(\Mbb)}$ (we will simply write $\Lcal^n_{n+1}$ from now on).

\begin{lemma}\label{realising the box type of averages is to insert into some cuts}
    Suppose $T$ has NIP. Suppose $p((x_i)_{i\leq n}) \in S(\nu)$ (the support of $\nu$, see Section \ref{sec: meas on Bool alg}). Then, for any $\ab \models p|_{(\Ical_i)_{i\leq n}}$, $\ab$ $n$-inserts (see Definition \ref{def: insterts}) into some tuple of Dedekind cuts $(\cfrak_i)_{i\leq n}$ where $\cfrak_i$ is a cut in $\Ical_i$. 
\end{lemma}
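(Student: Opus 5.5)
The plan is to reduce to the previous lemma, applied to each $n$-element subfamily of the sequences. Fix $p \in S(\nu)$, where $p$ is an ultrafilter on $\Lcal^n_{n+1}(\Mbb)$, and fix $\ab = (a_i)_{i\le n}$ realizing $p$ restricted to formulas with parameters from $(\Ical_i)_{i\le n}$.

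\textbf{Step 1: $n$-element projections.} For each $j \le n$, let $u_j := \{0,\dots,n\}\setminus\{j\}$. The restriction $p|_{x_{u_j}}$ is a complete global type in the variables $(x_i)_{i\in u_j}$, since $\Lcal_{x_{u_j}}(\Mbb)\subseteq \Lcal^n_{n+1}(\Mbb)$. It lies in the support of $(\bigotimes_{i\le n}\mu_i)|_{x_{u_j}} = \bigotimes_{i\in u_j}\mu_i$. For the equality of the marginal with this product, use Lemma \ref{tensor product of average measures is lebesgue} together with commutativity of generically stable measures.

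\textbf{Step 2: limit types and cuts.} By Lemma \ref{support of tensor product of averages is the limit types precise lemma}, there are p.D.~cuts $\cfrak^j_i$ in $\Ical_i$, for $i\in u_j$, such that $p|_{x_{u_j}} = \lim((\cfrak^j_i)_{i\in u_j})$. The key point is that these cuts can be chosen independently of $j$. Each one-variable restriction $p|_{x_i}$ lies in $S(\mu_i)$, so it equals $\lim(\cfrak_i)$ for some p.D.~cut $\cfrak_i$. If $\Ical_i$ is not totally indiscernible over the other sequences, the moreover part of Lemma \ref{support of tensor product of averages is the limit types precise lemma} (via the argument of Lemma \ref{ordering cuts}) shows that $\cfrak_i$ is determined by $p|_{x_i}$. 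If it is totally indiscernible, any cut gives the same limit type. In either case, the moreover clause gives $p|_{x_{u_j}} = \lim((\cfrak_i)_{i\in u_j})$ for every $j$.

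\textbf{Step 3: insertion.} Let $\cfrak_i$ also denote the underlying Dedekind cut of $\Ical_i$ at $\End(\cfrak_i)$. Remove the point $\End(\cfrak_i)$ if it belongs to the sequence, and handle the endpoints $0,1$ by allowing end cuts, as in Definition \ref{def: n-dist mut ind seqs}(2). I claim that simultaneously inserting $(a_i)_{i\in u_j}$ into these cuts preserves mutual indiscernibility. Take a formula over finitely many elements of the sequences, with $a_i$ placed in position $\cfrak_i$. Since $\ab_{u_j}$ realizes the limit type over $(\Ical_i)_{i\le n}$, the formula holds of $\ab_{u_j}$ if and only if it holds of all tuples $(b_i)_{i\in u_j}$ taken from the induced $n$-ant sufficiently close to the center. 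Such $b_i$ lie on the correct side of every other parameter. By mutual indiscernibility of the original sequences, the truth value then agrees with that of any other tuple in the same order configuration. This gives mutual indiscernibility of the extended sequences.

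\textbf{Main obstacle.} I expect Step 2 to be the hard part. One must check that the cut chosen for $\Ical_i$ is the same across all $n$-subsets $u_j$ containing $i$, especially in the degenerate cases: total indiscernibility over the other sequences, and cuts at the endpoints $0$ or $1$. My approach is to fix the $\cfrak_i$ from the one-variable restrictions first and then invoke the moreover clause for each $u_j$. A secondary point is that $\ab$ realizes only the restriction to parameters from the sequences, so Step 3 must rely on limit types over the sequences. That suffices, because the limit type over $\Mbb$ restricts to the limit type over the sequences.
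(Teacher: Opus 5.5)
Your proposal is correct and follows essentially the same route as the paper. The paper also fixes the cuts $\cfrak_i$ from the one-variable restrictions $p|_{x_i}\in S(\mu_i)$, applies the moreover clause of Lemma \ref{support of tensor product of averages is the limit types precise lemma} to each $n$-element subfamily, and reads off the insertion from realizing the limit type; your Steps 1 and 3 spell out what it leaves implicit. One small caveat, which the paper glosses over equally: when $\End(\cfrak_i)\in\{0,1\}$ the resulting cut is an end cut rather than a Dedekind one, and Definition \ref{def: n-dist mut ind seqs}(2) does not actually supply such cuts, so that remark should not be attributed to it.
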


\begin{proof}
   Let $\ab \models p|_{(\Ical_i)_{i\leq n}}$ be given. By definition of $\nu$ we have $p_i:=p|_{x_i}\in S(\mu_i)$. So, $p_i=\lim(\cfrak_i)$ for some p.D.~cut $\cfrak_i$ in $\Ical_i$. Fix any injection $\sigma:n\rightarrow n+1$, but (without loss of generality, to ease notation) suppose $\sigma = \id_n$. We need to show that $\ab_{i<n}$ inserts into the $n$-tuple of cuts $(\cfrak_i)_{i<n}$ while preserving the mutual indiscernibility of the $(n+1)$-tuple of sequences. By the moreover part of Lemma \ref{support of tensor product of averages is the limit types precise lemma}, since $p|_{x_0\dots x_{n-1}} \in S(\bigotimes\limits_{i<n} \mu_i)$, $p|_{x_0\dots x_{n-1}} = \lim((\cfrak_i)_{i<n})$, so we are done.
\end{proof}

\begin{notation}
    In the following, we denote $\partial_{\Lcal^n_{x_0,  \ldots, x_{n}}(\Mbb)} \varphi$ by $\partial_n\varphi$ (identifying formulas with the subsets of $\cU$ they define, see Definition \ref{def: bored of a set}).
\end{notation}

Finally, we can deduce $n$-determinacy for measures in this special case: 
\begin{proposition}\label{determination of average measures}
    Suppose $T$ has NIP. If $(\Ical_i)_{i\leq n}$ is $n$-distal (Definition \ref{def: n-dist mut ind seqs}), then $(\mu_i)_{i\leq n}$ is $n$-determined (Definition \ref{def: n-det for tuples of measures}).
\end{proposition}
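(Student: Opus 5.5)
By Proposition \ref{border and determination}, applied with $\Afrak := \Lcal^n_{n+1}(\Mbb)$ and $\Bfrak := \Lcal_{x_0,\ldots,x_n}(\Mbb)$, it suffices to show that for every $\varphi(x_0,\dots,x_n) \in \Lcal(\Mbb)$ we have $\tilde\nu(\partial_n \varphi) = 0$. The plan is to show that the border $\partial_n\varphi$ meets the support $S(\nu)$ only in a set of types coming from a Lebesgue-null set of tuples of cuts. Then $\tilde\nu(\partial_n\varphi) = \tilde\nu(\partial_n\varphi \cap S(\nu))$ will vanish.

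\emph{Step 1: locating border points.} Take $p \in \partial_n\varphi \cap S(\nu)$, witnessed by global types $q_0 \ni \neg\varphi$ and $q_1 \ni \varphi$, both restricting to $p$. Let $\ab^0 \models q_0|_{(\Ical_i)_{i \le n}}$ and $\ab^1 \models q_1|_{(\Ical_i)_{i \le n}}$. Both tuples realize $p|_{(\Ical_i)}$, so by Lemma \ref{realising the box type of averages is to insert into some cuts} each $n$-inserts, preserving mutual indiscernibility, into the tuple of Dedekind cuts $(\cfrak_i)_{i\le n}$. These cuts are determined by $p$ alone: $p|_{x_i} = \lim(\cfrak_i)$, and by Lemma \ref{ordering cuts} the cut is unique, except in the totally indiscernible case, where the choice does not matter.

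\emph{Step 2: applying $n$-distality.} Suppose first that every endpoint $\End(\cfrak_i)$ lies in $(0,1)$. After removing $a_{\End(\cfrak_i)}$ where necessary, so that the cuts become Dedekind as in Definition \ref{def: n-dist mut ind seqs}(2), $n$-distality of $(\Ical_i)$ gives that both $\ab^0$ and $\ab^1$ $(n+1)$-insert preserving mutual indiscernibility. Then $\tp(\ab^\varepsilon/(\Ical_i)_i)$ is forced to be the limit type $\lim_{\Ical}((\cfrak_i)_{i\le n})$ for both $\varepsilon$. This requires $\varphi$ to have parameters among the $\Ical_i$, which we arrange first: by NIP and Fact \ref{NIP implies boxes on mut ind} we may enlarge the sequences, or we may replace $(\Ical_i)$ by mutually indiscernible extensions over the parameters of $\varphi$. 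The resulting equality contradicts $\varphi \in q_1$, $\neg\varphi \in q_0$, provided $\varphi$ is decided in the same way by the limit type.

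This is where care is needed. In general the point is that $p$ contains the formulas of $\Lcal^n_{n+1}(\Mbb)$ which, by Fact \ref{NIP implies boxes on mut ind} applied to $\varphi$, already locate the cut inside one box. For generic cut tuples, namely those whose centers avoid the finitely many grid hyperplanes from Fact \ref{NIP implies boxes on mut ind}, the value of $\varphi$ on $\ab$ is then the constant value of $\varphi$ on that box. So border points can only arise from cut tuples whose center has some coordinate among the finitely many partition points of the grid, or in $\{0,1\}$.

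\emph{Step 3: measure zero.} The set of limit types whose centers lie on such a hyperplane $\{r_i = c\}$ is contained in the support of a clopen set of $\nu$-measure at most $\delta$, for every $\delta>0$: take the $n$-ary formula (depending only on $x_i$) cutting out a small interval of $\Ical_i$ around $c$, whose $\mu_i$-measure is the Lebesgue length by Lemma \ref{tensor product of average measures is lebesgue}. Thus $\tilde\nu(\partial_n\varphi) \le \delta$ for all $\delta$, so it is $0$.

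The main obstacle is Step 2. We must correctly pass from ``$n$-inserts into Dedekind cuts'' to ``type over the sequences determined'', and then transfer this to global types, where $\varphi$ has external parameters. I would handle the latter by first absorbing the parameters of $\varphi$: extend the sequences to be mutually indiscernible over them, or use that the average measures are determined by their restriction to $\bigcup\Ical_i$ plus the grid structure from Fact \ref{NIP implies boxes on mut ind}. I would also verify that $n$-distality survives this step via Definition \ref{def: n-dist mut ind seqs}(2).
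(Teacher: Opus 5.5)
\textbf{There is a genuine gap in Step 2}, namely in how you handle the parameters of $\varphi$ that lie outside the sequences.

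\emph{What $n$-distality does and does not give.} $n$-distality of $(\Ical_i)_{i\le n}$ only says that a tuple which $n$-inserts has the limit type over $\bigcup_i\Ical_i$. But $\varphi=\varphi(\bar x;c)$ has parameters $c$ from $\Mbb$, and a border point $p$ is a type with two completions over $\Mbb$ that disagree on $\varphi(\bar x;c)$.

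\emph{Why the box argument is circular.} You argue that $p$ ``locates the cut in a box'', so $\varphi(\ab;c)$ takes the box value. Fact \ref{NIP implies boxes on mut ind} only describes $\varphi(\cdot;c)$ on tuples from $\prod_i\Ical_i$. Hence it only controls one completion of $p$, namely $\lim_{\Mbb}(\bar\cfrak)$. It says nothing about another completion $q_0\ni\neg\varphi$, and such a $q_0$ is exactly what makes $p$ a border point.

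\emph{Why your repairs fail.} The $\Ical_i$ are fixed, since they define the $\mu_i$, so you cannot replace them by sequences mutually indiscernible over $c$. NIP gives finitely many breakpoints for each single formula over $c$, but not for all formulas over $c$ simultaneously; those breakpoints can be dense.

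\emph{Why the pointwise claim is out of reach this way.} Grant that a bad realization $\ab$ (with $\neg\varphi(\ab;c)$) $(n+1)$-inserts over $\emptyset$ at the center $\bar r$. In the resulting configuration, $\varphi(\cdot;c)$ fails at exactly one point of an otherwise $\varphi$-homogeneous box. That contradicts nothing, because NIP only forbids infinite alternation. So your claim that no border point in $S(\nu)$ has its center off the grid is not established. It is also not needed.

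\textbf{What is needed is a global contradiction argument, which is the paper's route.} Assume $\tilde\nu(\partial_n\varphi)>0$.
\begin{enumerate}
\item Your Step 3 shows that border points whose centers lie on finitely many axis-parallel hyperplanes form a $\nu$-null set. So positive measure yields infinitely many border points in $S(\nu)$ whose centers are pairwise distinct in every coordinate.
\item Pass to a subsequence whose centers are monotone in each coordinate.
\item Choose realizations $\ab^k$ of completions alternating between $\varphi$ and $\neg\varphi$. Realize each one over $c$, the sequences, and the previously chosen tuples, not just over $\bigcup_i\Ical_i$ as in your Step 1.
\item By Lemma \ref{realising the box type of averages is to insert into some cuts}, each $\ab^k$ $n$-inserts. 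Apply $n$-distality one tuple at a time, each time replacing the sequence element at the center. This keeps the sequences indexed by $[0,1]$ with unchanged averages, so each new configuration is an automorphic copy of $(\Ical_i)_{i\le n}$ and is still $n$-distal. Hence all the $\ab^k$ $(n+1)$-insert simultaneously.
\item Then $(\ab^k)_k$ is indiscernible over $\emptyset$ while $\varphi(\ab^k;c)$ alternates, contradicting NIP.
\end{enumerate}
The point is that NIP absorbs the external parameter $c$ directly, so indiscernibility over $c$ is never required. That is exactly the ingredient your Step 2 lacks.
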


\begin{proof}
    Suppose that $(\Ical_i)_{i\leq n}$ is $n$-distal, but for contradiction $\nu(\partial_n\varphi)>0$ (applying Proposition \ref{border and determination}). By Lemma \ref{tensor product of average measures is lebesgue} we have that for every formula $\psi(\bar{x}) \in \Lcal(\cU)$, if $\bigotimes_{i \leq n} \mu_i(\psi(\bar{x})) > 0$, then $\psi(\cU)$ is infinite. Then, using that $\partial_n\varphi$ and $S(\nu)$ are both given by intersections of clopens, by compactness of the space $S_{(\bar{x}_i)_{i < \omega}}(\cU)$ (with $\bar{x}_i$ a copy of $\bar{x}$), we can find infinitely many elements in $\partial_n\varphi\cap S(\nu)$.  By Lemma \ref{realising the box type of averages is to insert into some cuts}, any realization of one of these types (over $(\Ical_i)_{i\leq n}$) will $n$-insert into some tuple of p.D. cuts. We can clearly assume these cuts do not share any endpoints. By choosing a subsequence of an arbitrary enumeration, we can ensure that the endpoints of these cuts are monotone in each coordinate. We can then choose realizations alternating with respect to $\varphi$, since each type is in the border of $\varphi$. By inductively applying $n$-distality to insert these $(n+1)$-tuples (removing the original endpoints of the cut so that the sequences remain dense), this contradicts NIP.    
\end{proof}

Now we consider the converse implication.

\begin{definition}
    Suppose $T$ is NIP and $(\Ical_i)_{i\leq n}$ is not $n$-distal. We will say that $((\cfrak_i)_{i\leq n},\ab,\varphi(\xb))$ \emph{witnesses} that $(\Ical_i)_{i\leq n}$ is not $n$-distal if $\ab$ $n$-inserts into $(\cfrak_i)_{i\leq n}$, $\models\varphi(\ab)$, and $\varphi(\xb)\in \Lcal(\Ical_{\leq n}) \sm \lim_{\Ical_{\leq n}}(\cfrak_{\leq n})$.
\end{definition}

\begin{remark}
    By indiscernibility, it is clear that $\lim_{\Ical_{\leq n}}(\cfrak_{\leq n})$ is well-defined even for non-polarized Dedekind cuts, and that $\ab$ inserts into $\Ical_{\leq n}$ just in case it realizes this type. 
\end{remark}

\begin{proposition}
     If $T$ is NIP and $(\mu_i)_{i\leq n}$ is $n$-determined, then $(\Ical_i)_{i\leq n}$ is $n$-distal.
\end{proposition}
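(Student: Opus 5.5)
We argue by contraposition: assuming $(\Ical_i)_{i\leq n}$ is not $n$-distal, we produce a formula $\varphi$ with $\nu(\partial_n\varphi)>0$. By Proposition \ref{border and determination} this shows that $\bigotimes_{i\leq n}\mu_i$ is not $\Lcal^n_{n+1}(\Mbb)$-determined, i.e.\ $(\mu_i)_{i\leq n}$ is not $n$-determined.

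First fix a witness $((\cfrak_i)_{i\leq n},\ab,\varphi(\xb))$ to non-$n$-distality, with $\varphi\in\Lcal(\Ical_{\leq n})$. After passing to subsequences with the cuts made Dedekind, and re-indexing by $[0,1]$ using indiscernibility and compactness, we may assume each $\cfrak_i$ has endpoint $r_i\in(0,1)$ and that the parameters of $\varphi$ avoid a neighbourhood of $\bar r=(r_i)_{i\leq n}$. Shrinking further if needed, we may also assume the parameters of $\varphi$ lie in $\Ical_{\leq n}$ away from all points near $\bar r$. Let $q:=\lim_{\Ical_{\leq n}}(\cfrak_{\leq n})$, so $\neg\varphi\in q$ and $\tp(\ab/\Ical_{\leq n})\neq q$. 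Since $\ab$ $n$-inserts, every $n$-element subtuple of $\ab$ realizes the corresponding limit type of the $n$ cuts. By the moreover part of Lemma \ref{support of tensor product of averages is the limit types precise lemma} and the Corollary after it, that limit type lies in the support of the relevant partial product of averages.

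The key step is to show that $\tp(\ab/\Ical_{\leq n})$ and $q$ have the same restriction to $\Lcal^n_{n+1}$ over the sequences. Both restrict on each $n$-subset of variables to the limit type of the corresponding $n$ cuts. Extend both to global types $p_0\ni\varphi$ and $p_1\ni\neg\varphi$ that agree on $\Lcal^n_{n+1}(\Mbb)$ and whose $\Lcal^n$-part lies in $S(\nu)$. Concretely, take $p_1$ to be the global limit type $\lim_{\Mbb}(\bar\cfrak)$, which is complete by Lemma \ref{NIP limit type complete}. Take $p_0$ to be an extension of $\tp(\ab/\Ical_{\leq n})\cup(p_1|_{\Lcal^n_{n+1}(\Mbb)})$. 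This set should be consistent because every $\Lcal^n$ formula in $p_1$ is, on each $n$-subset of variables, a limit formula satisfied by the corresponding subtuple of $\ab$ after moving parameters by an automorphism fixing $\Ical_{\leq n}$. The point $p_1|_{\Lcal^n}$ then lies in $\partial_n\varphi\cap S(\nu)$.

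A single point of the border need not carry positive $\nu$-measure, so this is the main obstacle: upgrading it to a set of positive measure. We would vary the cut. Mutual indiscernibility and the homogeneity of $[0,1]$ let us move $\bar r$ to any $\bar r'$ in an open box $U\subseteq(0,1)^{n+1}$ around $\bar r$ avoiding the parameters of $\varphi$. This gives witnesses $\ab_{\bar r'}$ and border points $p^{\bar r'}\in\partial_n\varphi$. Lemma \ref{tensor product of average measures is lebesgue} computes $\nu$ on clopens in $\Lcal^n$ via Lebesgue measure. Suppose $\nu(\partial_n\varphi)=0$. By closedness of the border (Lemma \ref{border is closed}) and regularity, we could then cover $\partial_n\varphi$ by an $\Lcal^n$ clopen $\theta$ with $\nu(\theta)<\lambda_{n+1}(U)$. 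But $\theta\in p^{\bar r'}=\lim(\bar\cfrak')$ for every $\bar r'\in U$, so by the limit definition $\theta(\prod\Ical_i)$ contains a box near each $\bar r'$. The NIP box decomposition of Fact \ref{NIP implies boxes on mut ind} for $\theta$ then forces $\theta$ to contain almost all of $U$, giving Lebesgue measure at least $\lambda_{n+1}(U)$, a contradiction. Hence $\nu(\partial_n\varphi)>0$.

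A secondary difficulty is choosing polarities coherently when some $\Ical_i$ is totally indiscernible over the others; we would handle this as in Lemma \ref{support of tensor product of averages is the limit types precise lemma}.
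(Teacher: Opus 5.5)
Your overall architecture matches the paper's. You take a witness to non-$n$-distality, show that the $\Lcal^n_{n+1}(\Mbb)$-part of the global limit type lies in $\partial_n\varphi$, move the cuts through a box $U$ avoiding the parameters of $\varphi$, and use regularity plus the NIP box decomposition to get $\nu(\partial_n\varphi)\geq\lambda_{n+1}(U)>0$. That last part is fine.

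The gap is the step you hedge with ``should be consistent'': the consistency of $\tp(\ab/\Ical_{\leq n})\cup p_1|_{\Lcal^n_{n+1}(\Mbb)}$. Reducing to finitely many formulas, you need the following. For $\theta_u(x_u,c)\in\lim_{\Mbb}(\cfrak_u)$ with $|u|=n$, and $\chi(y,e)\in\tp(c/\Ical_{\leq n})$, you need
\[
\models\exists y\,\Bigl(\chi(y,e)\wedge\bigwedge_{u}\theta_u(\ab_u,y)\Bigr).
\]
For a single $u$ this does follow from $\ab_u\models\lim_{\Ical}(\cfrak_u)$, and that is all your automorphism argument gives. But the automorphisms you obtain for different $u$ are different and do not glue. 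The displayed conjunction is a formula over $e$ in all of $x_0,\dots,x_n$, and it is \emph{not} in $\Lcal^n_{n+1}$: the shared $y$ couples the $n$-subsets. It does belong to $\lim_{\Ical}(\bar{\cfrak})$: take $y=c$ and points of the sequences near the cuts. However, $\tp(\ab/\Ical)$ agrees with $\lim_{\Ical}(\bar{\cfrak})$ only on $\Lcal^n$, and by assumption differs from it on some formula outside $\Lcal^n$. So $n$-insertion alone gives no reason for $\ab$ to satisfy this formula. Your justification of this step also never uses NIP, whereas the paper needs it exactly here.

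The paper handles this point with NIP together with the base change lemma \cite[Lemma 3.11]{walker2023distality}, which builds on \cite[Lemma 2.8, Corollary 2.9]{simon2013distal}. This lets one assume, in a larger monster model, that the witness $\ab$ realizes $p_n=\lim_{\Mbb}(\bar{\cfrak}^-)|_{\Lcal^n_{n+1}(\Mbb)}$; only then does $p_n\in\partial_n\varphi$ follow. You need to cite or prove such a base change statement; with it, the rest of your argument goes through.

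Two smaller points. First, do not re-index the sequences via compactness, since that changes the measures $\mu_i$. Simply delete the points at the cuts, as the paper does, which leaves the averages unchanged. Second, the polarity discussion is unnecessary: taking the lower polarity at every cut suffices.
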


\begin{proof}
    Suppose that $(\Ical_i)_{i\leq n}$ is not $n$-distal. Then we can fix any tuple of cuts $(\cfrak_i)_{i\leq n}$, remove the realization of each cut so they become Dedekind (note this has no effect on the average measure) and a tuple $\ab$ that $n$-inserts into these Dedekind cuts but does not insert into them.
    
    By NIP and base change in the form \cite[Lemma 3.11]{walker2023distality} (based on \cite[Lemma 2.8, Corollary 2.9]{simon2013distal}), we can ensure that $\ab$ (in a bigger monster model) realizes $p_n:=\lim_\Mbb((\cfrak^{-}_{i} : i \leq n ))|_{\Lcal^n_{(n+1)}(\Mbb)}$ (where $\cfrak^-$ denotes the lower portion of a cut $\cfrak$). Let $\varphi((x_i)_{i\leq n})$ with parameters from $(\Ical_i)_{i\leq n}$  witness the failure of $n$-distality, with respect to $(\ab, \cfrak_{\leq n})$. 
    
    Hence, $p_n \in \partial_n \varphi$. Since the $\cfrak_i$ are Dedekind, there are open intervals $U_i$ around each $\cfrak_i$ which avoid the parameters of $\varphi$. Hence, by indiscernibility and automorphisms, we can do the same process for any other tuple of Dedekind cuts $(\dfrak_i)_{i\leq n}$ whose endpoints come from the $U_i$. In this case, $\lim_\Mbb(\dfrak^-_{\leq n})|_{\Lcal^n_{n+1}(\Mbb)}\in \partial_n \varphi$ also. Note that $\lambda_{n+1}(\prod\limits_{i \leq n} U_i)>0$, and we aim to show that $\nu(\partial_n\varphi) \geq \lambda_{n+1}(\prod\limits_{i\leq n} U_i)>0$.

    Now, $\partial_n\varphi$ is a closed set (Lemma \ref{border is closed}), so by regularity, $\nu(\partial_n\varphi)=\inf\{\nu(D): D\supseteq \partial_n\varphi\text{ clopen}\}$. We will show that for any clopen $D\supseteq \partial_n\varphi$, i.e.~$D:=[\psi]$ for some $\psi\in \Lcal^n_{n+1}(\Mbb)$, $\neg \psi$ relatively defines a measure zero (in Lebesgue measure) subset of $\prod\limits_{i\leq n} U_i$. Suppose otherwise, then by NIP and Fact \ref{NIP implies boxes on mut ind}, $\neg \psi(\prod\limits_{i\leq n}U_i)$ contains an $(n+1)$-dimensional box of positive measure. Choose a tuple of Dedekind cuts $(\dfrak_i)_{i\leq n}$ with endpoints $\End(\dfrak_i)$ in the interior of this box. Then $\neg \psi \in \lim_\Mbb(\dfrak^{-}_{\leq n})$, which by the above contradicts $D\supseteq \partial_n\varphi$. Hence, the border has positive measure $\nu(\partial_n\varphi)>0$, so $(\mu_i)_{i\leq n}$ is not $n$-determined by Proposition \ref{border and determination}.
\end{proof}

\begin{cor}
    Suppose $T$ has NIP. If all $(n+1)$-tuples of generically stable measures  are $n$-determined, then $T$ is $n$-distal.
\end{cor}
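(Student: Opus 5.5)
The plan is to prove the contrapositive, reducing to the preceding proposition. Assume $T$ is NIP and not $n$-distal. By Definition \ref{def: n-dist mut ind seqs}, some mutually indiscernible $(n+1)$-tuple of dense endless sequences $(\Ical'_i)_{i\leq n}$ fails to be $n$-distal. We want to replace this witness by a tuple of sequences indexed by $[0,1]$, so that the average measures $\mu_i := \Av_{\Mbb}(\Ical_i)$ are defined. These $\mu_i$ are generically stable by Example \ref{average measures example}. Once this is done, the preceding proposition (if $(\mu_i)_{i\leq n}$ is $n$-determined then $(\Ical_i)_{i\leq n}$ is $n$-distal) shows that $(\mu_i)_{i\leq n}$ is not $n$-determined, which contradicts the hypothesis.

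The key step is the re-indexing. Failure of $n$-distality is witnessed by finitely many data:
\begin{itemize}
\item a tuple of cuts in $(\Ical'_i)_{i \le n}$;
\item a tuple $\ab$ that $n$-inserts into these cuts but does not $(n+1)$-insert;
\item a formula $\varphi$ with finitely many parameters from the sequences on which full insertion fails.
\end{itemize}
Both $n$-insertion and failure of insertion are expressed by conditions on the EM-type of $(\Ical'_i)_{i\le n}$ together with $\ab$. The $n$-insertion conditions are the mutual indiscernibility of the sequences with $n$ of the $a_i$ inserted. The failure is a single formula witnessing non-indiscernibility. So by compactness we may realize the same configuration with sequences indexed by $[0,1]$:
\begin{enumerate}
\item stretch each $\Ical'_i$, preserving mutual indiscernibility;
\item place the cuts at interior points, say $1/2$, removing the realizations there so that the cuts are Dedekind;
\item put the finitely many parameters of $\varphi$ away from the cuts.
\end{enumerate}
This uses the standard fact that mutually indiscernible sequences can be re-indexed by any dense order while keeping the EM-type, and that a witness to failure of $n$-distality depends only on finitely many elements near the cuts. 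The resulting $(\Ical_i)_{i\le n}$ indexed by $[0,1]$ is then not $n$-distal in the sense of Definition \ref{def: n-dist mut ind seqs}(2). Removing finitely or countably many points does not change $\Av$.

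The main obstacle is to check that the notion of ``not $n$-distal'' used in the preceding proposition matches this configuration. That proposition starts from an arbitrary non-$n$-distal $(\Ical_i)_{i\leq n}$ indexed by $[0,1]$, so after the re-indexing above we only need to verify that Definition \ref{def: n-dist mut ind seqs}(2) fails for the $[0,1]$-indexed tuple. This follows because insertion is a property of the type of $\ab$ over the sequences relative to the cut. Finally, generic stability of each $\mu_i$ makes $(\mu_0,\dots,\mu_n)$ an $(n+1)$-tuple of generically stable measures, so the hypothesis applies to it and gives the required contradiction.
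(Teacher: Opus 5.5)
Your proposal is correct and is essentially the paper's argument: the corollary is stated without proof as the contrapositive of the preceding proposition, combined with the fact that average measures of $[0,1]$-indexed indiscernible sequences are generically stable (Example \ref{average measures example}). The paper leaves implicit the compactness step that re-indexes a witness to non-$n$-distality by $[0,1]$, with Dedekind cuts at an interior point and removal of the cut realization not affecting $\Av$; you have supplied that step correctly.
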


 \section{Higher distality and triviality of forking}\label{sec: n-dist and n-triv}

 We consider the following refinement of higher distality (where for each $k \in \omega$, $k = \{0, 1, \ldots, k-1\}$):
 \begin{defn}\label{def: (k,l)-distality}
 For $k \in \mathbb{N}_{\geq 1}$ and $0 \leq \ell \leq k$, we say that $T$ is \emph{$(k,\ell)$-distal} if the following holds.	Given any $\emptyset$-indiscernible sequence $\mathcal{I} = \mathcal{I}_0 + \mathcal{I}_1 + \ldots + \mathcal{I}_{(k+1) - \ell }$ of tuples in $\cU^x$ with each $\mathcal{I}_i$ indexed by $\mathbb{Q}$, $(b_i : i < k +1 - \ell )$ tuples $\cU^x$ and $a_0, \ldots, a_{\ell-1}$ tuples in $\cU^y$, if
 \begin{itemize}
 	\item for any $u \subseteq k+1-\ell$ and $v \subseteq \ell$ with $|u| + |v| = k$, we have that $\mathcal{I}[b_i : i \in u]$ is indiscernible over $(a_i : i \in v)$ (where $\mathcal{I}[b_i : i \in u]$ denotes the sequence obtained from $\mathcal{I}$ by inserting $b_i$ in the cut between $\mathcal{I}_i$ and $\mathcal{I}_{i+1}$ for all $i \in u$ simultaneously),
 \end{itemize}
 then 
 \begin{itemize}
 	\item $\mathcal{I}[b_i : i \in k+1-\ell ]$ is indiscernible over $(a_i : i \in \ell)$.
 \end{itemize}
 \end{defn}

\begin{remark}
	For any $1 \leq k < \omega$, we refer to $(k,0)$-distality as \emph{$k$-distality} and to $(k,k)$-distality as \emph{strong $k$-distality}. This agrees with the corresponding notions considered by Walker \cite{walker2023distality}.
	\end{remark}

%
%
%\begin{remark}
%	Mutually indiscernible sequences ***
%\end{remark}

For simplicity of presentation we will sometimes restrict to the case $k=2$ (and point out when things generalize to arbitrary $k$). In this case Definition \ref{def: (k,l)-distality} specializes to:
\begin{defn}
\begin{enumerate}
\item $T$ is \emph{$2$-distal} (i.e.~$(2,0)$-distal) if for any $\emptyset$-indiscernible sequence $\mathcal{I}_0  + \mathcal{I}_1 + \mathcal{I}_2 + \mathcal{I}_3$, if each of $\mathcal{I}_0 + b_0 + \mathcal{I}_1 + b_1 + \mathcal{I}_2 + \mathcal{I}_3$, $\mathcal{I}_0 +  \mathcal{I}_1 + b_1 + \mathcal{I}_2 + b_2 + \mathcal{I}_3$ and $\mathcal{I}_0 + b_0 + \mathcal{I}_1  + \mathcal{I}_2 + b_2 + \mathcal{I}_3$ is $\emptyset$-indiscernible, then $\mathcal{I}_0 + b_0 + \mathcal{I}_1 + b_1 + \mathcal{I}_2 + b_2 + \mathcal{I}_3$ is indiscernible.

	\item $T$ is \emph{$2^+$-distal} (i.e.~$(2,1)$-distal) if for any $\emptyset$-indiscernible sequence $\mathcal{I}_0 + b_0 + \mathcal{I}_1 + b_1 + \mathcal{I}_2$ and tuple $a$, if each of $\mathcal{I}_0 + b_0 + \mathcal{I}_1 +\mathcal{I}_2, \mathcal{I}_0 + \mathcal{I}_1 + b_1 + \mathcal{I}_2$  is indiscernible over $a$, then $\mathcal{I}_0 + b_0 + \mathcal{I}_1 + b_1 + \mathcal{I}_2$ is also indiscernible over $a$.
	\item $T$ is \emph{strongly $2$-distal} (i.e.~$(2,2)$-distal) if for  any sequence $\mathcal{I}_0  + b_0 + \mathcal{I}_1$ 
	and tuples $a_0, a_1$, if $\mathcal{I}_0 + \mathcal{I}_1$ is indiscernible over $a_0 a_1$, $\mathcal{I}_0 + b_0 + \mathcal{I}_1$ is indiscernible over $a_0$ and $\mathcal{I}_0 + b_0 + \mathcal{I}_1$ is indiscernible over $a_1$, then  $\mathcal{I}_0 + b_0 + \mathcal{I}_1$ is indiscernible over $a_0 a_1$.
\end{enumerate}
\end{defn}

\begin{prop}\label{prop: refinements of n-distality}
	$T$ is strongly $2$-distal $\implies$ $T$ is $2^+$-distal $\implies$
 $T$ is $2$-distal.
 \end{prop}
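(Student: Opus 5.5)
Both implications are proved directly from the specialized definitions: from the hypotheses of the weaker notion we build the configuration required by the stronger one, apply the stronger one, and read off the conclusion.

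\emph{Strongly $2$-distal $\implies$ $2^+$-distal.} Let $\mathcal{I}_0 + b_0 + \mathcal{I}_1 + b_1 + \mathcal{I}_2$ be indiscernible and $a$ a tuple such that $\mathcal{I}_0 + b_0 + \mathcal{I}_1 + \mathcal{I}_2$ and $\mathcal{I}_0 + \mathcal{I}_1 + b_1 + \mathcal{I}_2$ are indiscernible over $a$. We apply strong $2$-distality to the cut where $b_1$ sits, with the two parameter tuples $a_0 := a$ and $a_1 := b_0$. Set $\mathcal{J}_0 := \mathcal{I}_0 + \mathcal{I}_1$ and $\mathcal{J}_1 := \mathcal{I}_2$, and consider $\mathcal{J}_0 + b_1 + \mathcal{J}_1$ (deleting $b_0$ and treating it as a parameter). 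The three hypotheses hold:
\begin{enumerate}
\item $\mathcal{J}_0 + \mathcal{J}_1$ is indiscernible over $a b_0$. This is a reformulation of indiscernibility of $\mathcal{I}_0 + b_0 + \mathcal{I}_1 + \mathcal{I}_2$ over $a$: a sequence $\mathcal{I}_0 + c + \mathcal{I}'$ is indiscernible over $A$ exactly when every increasing tuple from $\mathcal{I}_0$, followed by $c$, followed by a tuple from $\mathcal{I}'$, has the same type over $A$. One only compares tuples on the correct side of $c$, and when $\mathcal{I}_0$ has no last element and $\mathcal{I}_1$ has no first element the positions of $c$ relative to the other elements are fixed.
\item $\mathcal{J}_0 + b_1 + \mathcal{J}_1$ is indiscernible over $a_0 = a$; this is the second hypothesis.
\item $\mathcal{J}_0 + b_1 + \mathcal{J}_1$ is indiscernible over $a_1 = b_0$; this follows from indiscernibility of the whole sequence $\mathcal{I}_0 + b_0 + \mathcal{I}_1 + b_1 + \mathcal{I}_2$ over $\emptyset$.
\end{enumerate}
The main subtlety is in (1) and (3): a sequence that is indiscernible over $b_0$ is not literally a sequence with $b_0$ inserted. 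The point is that, $b_0$ being in a fixed cut, ``$\mathcal{I}_0 + b_0 + \mathcal{I}'$ indiscernible over $A$'' should be read as ``$\mathcal{I}_0+\mathcal{I}'$ indiscernible over $Ab_0$ in the sense of order-preserving maps fixing the cut.'' To make (1) and (3) hold literally, we first thin $\mathcal{I}_0 + \mathcal{I}_1$. By compactness and the fact that each $\mathcal{I}_i$ is indexed by $\mathbb{Q}$, we may use the variant with $\mathcal{J}_0 := \mathcal{I}_1$ only and absorb $\mathcal{I}_0$ into the parameters. Alternatively, we use that strong $2$-distality is equivalent to its version over a base set $A$ (add constants for $A$; the definition is about all sequences, so naming $\mathcal{I}_0 b_0$ is harmless provided we pass to $T_{A}$). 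Then strong $2$-distality gives that $\mathcal{J}_0 + b_1 + \mathcal{J}_1$ is indiscernible over $a b_0$. Unpacking this the same way yields that $\mathcal{I}_0 + b_0 + \mathcal{I}_1 + b_1 + \mathcal{I}_2$ is indiscernible over $a$.

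\emph{$2^+$-distal $\implies$ $2$-distal.} Given $\mathcal{I}_0 + \mathcal{I}_1 + \mathcal{I}_2 + \mathcal{I}_3$ and $b_0,b_1,b_2$ as in the definition of $2$-distality, apply $2^+$-distality with $a := b_2$ to the sequence $\mathcal{I}_0 + b_0 + \mathcal{I}_1 + b_1 + (\mathcal{I}_2 + \mathcal{I}_3)$, which is indiscernible by the first hypothesis. The hypotheses of $2^+$-distality read:
\begin{itemize}
\item[] $\mathcal{I}_0 + b_0 + \mathcal{I}_1 + \mathcal{I}_2 + \mathcal{I}_3$ is indiscernible over $b_2$ placed in the cut between $\mathcal{I}_2$ and $\mathcal{I}_3$; this is the third hypothesis.
\item[] $\mathcal{I}_0 + \mathcal{I}_1 + b_1 + \mathcal{I}_2 + \mathcal{I}_3$ is indiscernible over $b_2$ in that cut; this is the second hypothesis.
\end{itemize}
Both are again read with the same cut-relative convention. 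The conclusion gives the full sequence with $b_0,b_1,b_2$ inserted.

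The expected obstacle in both parts is exactly this bookkeeping: making precise that ``indiscernible over an element sitting in a Dedekind cut'' matches ``indiscernible with that element inserted,'' and using the base-change (naming parameters, the theory $T_A$) versions of the definitions. The rest is formal.
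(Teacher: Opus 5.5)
Your overall plan is right, but the argument as written has a real gap. It sits exactly in what you call bookkeeping, and that bookkeeping is the whole content of this proposition. The hypotheses you verify are false in general. In DLO with an increasing sequence $\mathcal{I}_0<b_0<\mathcal{I}_1<b_1<\mathcal{I}_2$, the formula $x<b_0$ separates $\mathcal{I}_0$ from $\mathcal{I}_1$. So neither $\mathcal{I}_0+\mathcal{I}_1+\mathcal{I}_2$ nor $\mathcal{I}_0+\mathcal{I}_1+b_1+\mathcal{I}_2$ is indiscernible over $b_0$, and your items (1) and (3) fail. Likewise, in the second part $\mathcal{I}_0+b_0+\mathcal{I}_1+\mathcal{I}_2+\mathcal{I}_3$ is not indiscernible over $b_2$. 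The definitions of strong $2$-distality and $2^+$-distality require literal indiscernibility over the parameters. There is no ``cut-relative convention'' you may invoke instead: reducing such a reading to the literal definitions is precisely what has to be proved.

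The repair is to move the part of the sequence on the far side of the parameter-element into the parameter, shortening the sequence. You then use that a final segment of a sequence indiscernible over $A$ is indiscernible over $A$ together with the initial segment, and vice versa.

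In the first part, your fallback (``$\mathcal{J}_0:=\mathcal{I}_1$, absorb $\mathcal{I}_0$'') works once made precise: apply strong $2$-distality to $\mathcal{I}_1+b_1+\mathcal{I}_2$ with $a_0:=a$ and $a_1:=\mathcal{I}_0b_0$. All three hypotheses then hold literally. The conclusion, indiscernibility of $\mathcal{I}_1+b_1+\mathcal{I}_2$ over $a\mathcal{I}_0b_0$, lets you replace the part of any increasing tuple lying in $\mathcal{I}_1+b_1+\mathcal{I}_2$ by one from $\mathcal{I}_1+\mathcal{I}_2$. Indiscernibility of $\mathcal{I}_0+b_0+\mathcal{I}_1+\mathcal{I}_2$ over $a$ then finishes. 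This is the mirror image of the paper's argument, which inserts $b_0$ instead, absorbs the tail $(b_i : i\in I_1,\ i>i^*)+b_1+\mathcal{I}_2$ into the parameter $a'_0$, and argues by contradiction. Your version avoids splitting $\mathcal{I}_1$.

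In the second part you give no fix, and keeping $\mathcal{I}_3$ inside the sequence with $a:=b_2$ does not work. You must apply $2^+$-distality to $\mathcal{I}_0+b_0+\mathcal{I}_1+b_1+\mathcal{I}_2$ with $a:=b_2\mathcal{I}_3$, exactly as the paper does. Both hypotheses then follow from the second and third assumptions (an initial segment is indiscernible over the final segment). The conclusion, combined with indiscernibility of $\mathcal{I}_0+\mathcal{I}_1+\mathcal{I}_2+b_2+\mathcal{I}_3$, gives indiscernibility of the full sequence.
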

 \begin{proof}

 	Assume $T$ is strongly $2$-distal, but not $2^+$-distal. Then there exist an $\emptyset$-indiscernible $\mathcal{I}_0 + b_0 + \mathcal{I}_1 + b_1 + \mathcal{I}_2$ and a tuple $a$, so that each of $\mathcal{I}_0 + b_0 + \mathcal{I}_1 +\mathcal{I}_2, \mathcal{I}_0 + \mathcal{I}_1 + b_1 + \mathcal{I}_2$  is indiscernible over $a$, but $\mathcal{I}_0 + b_0 + \mathcal{I}_1 + b_1 + \mathcal{I}_2$ is not indiscernible over $a$. Then there exist a formula $\varphi \in \mathcal{L}(\emptyset)$  and increasing finite tuples $\bar{i}_t \in I_t^{<\omega}$ so that $\models \varphi \left(b_{\bar{i}_0} , b_0, b_{\bar{i}_1}, b_1, b_{\bar{i}_2}, a \right)$, but $\models  \neg \varphi \left(b_{\bar{i}_0} , b_{i_0}, b_{\bar{i}_1}, b_{i_1}, b_{\bar{i}_2}, a  \right)$ for any $i_0, i_1 \in I_0 + I_1 + I_2$ with $\bar{i}_0 < i_0 < \bar{i}_1 < i_1 < \bar{i}_2$. 
 Fix some $i^* \in I_1$ with $i^* < \bar{i}_1$. Let $a'_0$ be the tuple consisting of all elements appearing in $\left( b_{i} : i \in I_1, i^* < i \right)  + b_1 + \mathcal{I}_2$, $a'_1 := a$, $\mathcal{I}'_0 := \mathcal{I}_0, \mathcal{I}'_1 := \left( b_i : i \in I_1,  i < i^*\right)$. It follows from the assumptions that $\mathcal{I}'_0 + \mathcal{I}'_1$ is indiscernible over $a'_0 a'_1$, $\mathcal{I}'_0 + b_0 + \mathcal{I}'_1$ is indiscernible over $a'_0$ and $\mathcal{I}'_0 + b_0 + \mathcal{I}'_1$ is indiscernible over $a'_1$. Hence $\mathcal{I}'_0 + b_0 + \mathcal{I}'_1$ is indiscernible over $a'_0 a'_1$ by strong $2$-distality. In particular, as $\models \varphi \left(b_{\bar{i}_0} , b_0, b_{\bar{i}_1}, b_1, b_{\bar{i}_2}, a \right)$, this implies $\models \varphi \left(b_{\bar{i}_0} , b_{i'}, b_{\bar{i}_1}, b_1, b_{\bar{i}_2}, a \right)$ for some/any $i'' < i^*$ in $I_1$, which by indiscernibility of $\mathcal{I}_0 + \mathcal{I}_1 + b_1 + \mathcal{I}_2$ over $a$ implies $\models \varphi \left(b_{\bar{i}_0} , b_{i'}, b_{\bar{i}_1}, b_{i''}, b_{\bar{i}_2}, a \right)$ for some/any $i'' < \bar{i}_2$ in $I_2$ --- contradicting the choice of $\varphi$.
 
 Similarly,  assume $T$ is $2^+$-distal, and assume that each of $\mathcal{I}_0 + b_0 + \mathcal{I}_1 + b_1 + \mathcal{I}_2 + \mathcal{I}_3$, $\mathcal{I}_0 +  \mathcal{I}_1 + b_1 + \mathcal{I}_2 + b_2 + \mathcal{I}_3$ and $\mathcal{I}_0 + b_0 + \mathcal{I}_1  + \mathcal{I}_2 + b_2 + \mathcal{I}_3$ is $\emptyset$-indiscernible. Let $a := b_2 + \mathcal{I}_3$. It follows by $2^+$-distality that $\mathcal{I}_0 + b_0 + \mathcal{I}_1 + b_1 + \mathcal{I}_2$ is indiscernible over $b_2 + \mathcal{I}_3$, and since also $\mathcal{I}_0  + \mathcal{I}_1  + \mathcal{I}_2 + b_2 + \mathcal{I}_3$ is indiscernible, this implies that $\mathcal{I}_0 + b_0 + \mathcal{I}_1 + b_1 + \mathcal{I}_2 + b_2 + \mathcal{I}_3$ is indiscernible --- so $T$ is $2$-distal.
 \end{proof}

\begin{remark}\label{rem: k,l dist implic}
	The proof easily generalizes to show that $(k,\ell)$-distality implies $(k',\ell')$-distality for any $k' \geq k$ and $\ell' < \ell$ (in any theory).
\end{remark}

 \begin{defn}\label{def: k-indisc triv}
 For $k \geq 1$, a theory $T$ is \emph{indiscernibly $k$-trivial} if for any infinite sequence $\mathcal{I}$ and tuples $(a_t : t < k + 1)$, if $\mathcal{I}$ is indiscernible over $(a_t : t \in s)$ for every $s \subseteq \{0,1,\ldots, k\}$ with $|s| = k$, then $\mathcal{I}$ is indiscernible over $(a_t : t < k + 1)$. We say that $T$ is \emph{endlessly} indiscernibly $k$-trivial if the same holds restricting to $\mathcal{I}$ indexed by an infinite linear order without the first or last element.
 \end{defn}
 
 \begin{remark}\label{rem: endless vs indisc triv}
 \begin{enumerate}
 	\item The case $k=1$ corresponds to (endless) indiscernible triviality considered in \cite{braunfeld2021characterizations}. 
 	\item Note that if $T$ is stable, then indiscernible $k$-triviality is equivalent to endless indiscernible $k$-triviality, using that every infinite $A$-indiscernible sequence is totally indiscernible over $A$.
 	\item Outside of stability, this is not the case already for dp-minimal theories. Indeed, let $T$ be the theory of infinitely branching dense trees, in the language with only the meet function $\land$ viewed as a semilattice (see e.g.~\cite[Section 3.5]{chernikov2025semi}). So the tree order is $x \leq y \iff x \land y = x$.
Let $(a_i : i \in \omega)$ be a sequence with $a_i > a_j$ in the tree order for $i < j$ and $b_1 \neq b_2$ so that $b_1 \land b_2 = a_0$.
By quantifier elimination  $(a_i)$ is indiscernible over each of $b_1$ and $b_2$ separately, but not over both. However, by \cite{parigot1982theories}, $T$ is monadically NIP, hence dp-minimal and endlessly indiscernibly $1$-trivial by \cite{braunfeld2021characterizations, braunfeld2024corrigenda}.
 \end{enumerate}
 \end{remark}

 \begin{remark}\label{rem: indisc triv def remarks}
 	\begin{enumerate}
 		\item If $T$ is indiscernibly $k$-trivial, then $T_A$ (obtained by naming an arbitrary small set of parameters $A$) is also indiscernibly $k$-trivial.
 		
 		Indeed, if $\mathcal{I}$ is indiscernible over $(a_t : t \in s) A$, let $\mathcal{I}' := \left(b_i b' : i \in I \right)$ for $b'$ a tuple enumerating $A$. Then $\mathcal{I}'$ is indiscernible over $(a_t : t \in s)$, hence over $(a_t : t < k+1)$ by indiscernible $k$-triviality, but this implies that $\mathcal{I}$ is indiscernible over $A(a_t : t < k+1)$.
 		
 		\item If $T$ is indiscernibly $k$-trivial, $m \geq k+1$, $(b_t : t < m)$ are tuples and $\mathcal{I}$ is indiscernible over $(b_t : t \in s)$ for each $s \subseteq \{0, \ldots, m -1 \}$ with $|s| = k$, then $\mathcal{I}$ is indiscernible over all $(b_t : t < m)$ simultaneously (by a simple induction). 
 	\end{enumerate}
 \end{remark}

We recall some variants of the triviality of forking in stable theories considered by Poizat \cite{goode1991some}.

\begin{defn}
	Let $T$ be a stable (or just simple) theory and $k \geq 1$. Then $T$ (or, rather, forking in $T$) is: 
	\begin{enumerate}
		\item \emph{$k$-trivial} if for any tuples $\left(a_i : i < k+2 \right)$ and a small set $A$, if every $k+1$ of the $a_i$'s form an independent set over $A$ (in the sense of forking), then $\{a_i : i < k+2\}$ is also an independent set over $A$.
		\item \emph{totally $k$-trivial} if for any tuples $a$, $\left(b_i : i < k+1 \right)$ and a set $A$,  if $a$ is independent from any $k$ of the $b_i$'s over $A$, then it is also independent from all $k+1$ of the $b_i$'s over $A$ (note that we are not requiring the $b_i$'s to be themselves independent over $A$).
	\end{enumerate}
\end{defn}

\begin{fact}\label{fac: walker $k$-triv vs $k$-dist}
	[Walker \cite{walker2023distality}] For $T$ stable, $k$-distality coincides with $(k-1)$-triviality of forking, for all $k \geq 2$.
\end{fact}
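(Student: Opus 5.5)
The plan is to derive both directions from the $n$-determinacy characterization for types, Fact \ref{fac: det for types}, together with the standard dictionary between forking and invariance in stable theories. In a stable theory every global type is definable over a small model and every two global invariant types commute. By Fact \ref{fac: det for types} (applicable since stable theories are NIP), $k$-distality of $T$ is therefore equivalent to the following statement $(\ast)$: for all global types $p_0,\dots,p_k$, every tuple realizing $\bigcup_{u\subseteq k+1,|u|=k}\bigotimes_{i\in u}p_i$ realizes $p_0\otimes\dots\otimes p_k$. We then translate $(\ast)$ into $(k-1)$-triviality, i.e.\ ``every $k$ of $k+1$ tuples independent $\Rightarrow$ all independent.'' The translation rests on two observations. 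First, a global type $p$ that does not fork over a model $M$ is the unique nonforking extension of $p|_M$ (stationarity over models; Fact \ref{fac: forking in NIP}(1)). Second, $(a_j)_{j\in u}\models\bigotimes_{j\in u}p_j|_N$, for $N\supseteq M$, holds iff each $a_j\models p_j|_N$ and $\{a_j:j\in u\}$ is $N$-independent.

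\emph{Reduction to models.} First I would show that $(k-1)$-triviality over arbitrary sets $A$ follows from $(k-1)$-triviality over models. Given $a_0,\dots,a_k$ with every $k$ of them $A$-independent, choose a model $M\supseteq A$ with $M\ind_A a_0\dots a_k$. By transitivity and symmetry of forking, every $k$-subfamily is then $M$-independent. If the whole family is $M$-independent, then combining this with $M\ind_A a_{\leq k}$ and transitivity gives $A$-independence of the whole family.

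\emph{$k$-distal $\Rightarrow$ $(k-1)$-trivial.} Fix a model $M$ and $a_0,\dots,a_k$ such that every $k$ of them are $M$-independent. Let $p_i$ be the unique global nonforking extension of $\tp(a_i/M)$. Choose a small model $N\supseteq M$ with $N\ind_M a_{\leq k}$, so that each $a_i\models p_i|_N$ and every $k$-subfamily is $N$-independent. Then every $k$-subtuple realizes $\bigotimes_{j\in u}p_j|_N$. The main obstacle is that $(\ast)$ is a statement about global types, whereas we only control realizations over a small $N$. To handle this, I would move $a_{\leq k}$ by an automorphism over $N$ to realize the type over $N$, and then take its $\bigotimes$-product with a suitable $N$-invariant extension. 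Alternatively, and this is what I would try first, I would redo the proof of Fact \ref{fac: det for types} locally: build Morley sequences $\mathcal I_i$ in $p_i$ over $N a_{\leq k}$, mutually indiscernible. Independence of $(a_j)_{j\in u}$ over $N$ then means exactly that $(a_j)_{j\in u}$ $k$-inserts into chosen Dedekind cuts preserving mutual indiscernibility, since each $\mathcal I_i$ is a Morley sequence, hence totally indiscernible, and insertion amounts to realizing the product of the $p_j$ over the rest. $k$-distality then yields that the whole tuple inserts, which gives independence of $a_{\leq k}$ over $N\cup\bigcup\mathcal I_i$, hence over $N$, hence over $M$.

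\emph{$(k-1)$-trivial $\Rightarrow$ $k$-distal.} Suppose $k$-distality fails. By Fact \ref{fac: det for types} there are global types $p_0,\dots,p_k$ and a tuple $a_{\leq k}$ realizing all the $k$-wise products but not the full product. Pick a small model $M$ over which all $p_i$ are definable, and then a small $N\supseteq M$ containing a parameter of some formula of $p_0\otimes\dots\otimes p_k$ that fails for $a_{\leq k}$. For each $u$, $a_u\models\bigotimes_{j\in u}p_j|_N$, so every $k$ of the $a_i$ are $N$-independent and $a_i\models p_i|_N$. If $a_{\leq k}$ were $N$-independent, stationarity of each $p_i|_N$ (whose nonforking extension is $p_i$) would force $a_{\leq k}\models(p_0\otimes\dots\otimes p_k)|_N$, a contradiction. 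Hence $a_{\leq k}$ witnesses the failure of $(k-1)$-triviality over $N$.
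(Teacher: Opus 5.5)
Your overall route is the one Walker uses; the paper only cites the result, noting it goes through determinacy for generically stable types. That route is: reduce $k$-distality to the determinacy statement $(\ast)$ via Fact~\ref{fac: det for types}, then translate using stationarity over models. Your reduction to models is correct, and so is your proof of $(k-1)$-trivial $\Rightarrow$ $k$-distal.

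The step you leave open is the localization in the forward direction. The sentence about moving $a_{\le k}$ by an automorphism and taking ``its $\bigotimes$-product with a suitable $N$-invariant extension'' does not specify an argument. The Morley-sequence/Dedekind-cut alternative would force you to re-verify insertion over $N$: you would need to name $N$ to get mutual indiscernibility over it, and to identify the average type of a Morley sequence in $p_i$ with $p_i$. None of this is needed.

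The obstacle disappears once you apply stationarity to the \emph{joint} type, and then you need no $N$ at all:
\begin{itemize}
\item Let $\hat q(x_0,\dots,x_k)$ be the unique global nonforking extension of $\tp(a_{\le k}/M)$, and let $a'_{\le k}$ realize it.
\item Fix $u\subseteq k+1$ with $|u|=k$. Both $\hat q|_{x_u}$ and $\bigotimes_{j\in u}p_j$ are global types not forking over $M$; the latter does not fork because it is $M$-invariant.
\item Both extend $\tp(a_u/M)=(\bigotimes_{j\in u}p_j)|_M$, so by stationarity over $M$ they coincide.
\item Hence $a'_{\le k}$ realizes $\bigcup_{|u|=k}\bigotimes_{j\in u}p_j$, and $(\ast)$ gives $\hat q=\bigotimes_{j\le k}p_j$.
\item Restricting to $M$ yields $\tp(a_{\le k}/M)=(\bigotimes_{j\le k}p_j)|_M$, i.e.\ $\{a_0,\dots,a_k\}$ is $M$-independent.
\end{itemize}
With this filled in, your argument is complete.
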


We consider a generalization to arbitrary theories:
\begin{defn}\label{def: tot k-triv forking NIP}
We say that a theory $T$	 has \emph{totally $k$-trivial forking} if given any $A$ and tuples $a_0, \ldots, a_{k+1}, b$ so that $(a_i : i \in u) \ind_{A} b$ for all $u \subseteq k+1, |u| = k$, then $a_0 \ldots a_{k+1} \ind_{A} b$. We say ``totally trivial forking'' to refer to ``totally $1$-trivial forking''.
\end{defn}

\begin{remark}
	The direction in which we require independence here is important for our later considerations to work when forking is not symmetric. Also, in DLO, if $a_0 < b < a_1$, then $b \ind a_0, b \ind a_1, a_0 \ind a_1$, but $b  \nind a_0 a_1$.
\end{remark}

\begin{problem}
	Do we get an equivalent definition if we restrict $A$ to models in the base? At least in extensible (NIP) theories?
\end{problem}

%\begin{prop}
%	If $T$ is binary, then $T$ has totally trivial forking (in the sense of Definition \ref{def: tot k-triv forking NIP}).
%\end{prop}
%\begin{proof}
%
%First assume that for each $i < k$, $(a_{i,j} : j \in I)$ is an $A$-indiscernible sequence of tuples. Then, using binarity of $T$, the sequence $(a_{0,j}, \ldots, a_{k-1,j})_{j \in I}$ is also indiscernible over $A$.
%
%This implies that forking equals dividing for formulas over arbitrary sets. Indeed, assume $\varphi(x,b) \vdash \bigvee_{i < k} \psi_i(x,a_{i})$ and each $\psi_i(x,a_{i})$ divides over $A$. 
%\end{proof}

\begin{lemma}\label{lem: tot triv implies indisc triv}
	If $T$ is an extensible resilient theory with totally $k$-trivial forking, then $T$ is endlessly indiscernibly $k$-trivial.
\end{lemma}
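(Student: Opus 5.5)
We argue by contraposition, turning a failure of indiscernible triviality into a failure of total triviality of forking. Let $\mathcal{I} = (b_i : i \in I)$ be indexed by an endless order $I$, and let $(a_t : t < k+1)$ be tuples with $\mathcal{I}$ indiscernible over $(a_t : t \in s)$ for every $s$ with $|s| = k$. By Remark \ref{rem: indisc triv def remarks} we may name parameters freely. By compactness we stretch $\mathcal{I}$ so that it is indexed by a dense endless order, for instance $\mathbb{Q}$ or a long saturated order, while preserving all the hypotheses. Suppose $\mathcal{I}$ is not indiscernible over $\bar a := (a_t : t < k+1)$. Choose a formula $\varphi(\bar x, \bar y)$ and increasing tuples $\bar i, \bar j$ with $\models \varphi(\bar a, b_{\bar i}) \wedge \neg\varphi(\bar a, b_{\bar j})$. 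Moving $\bar i$ and $\bar j$ apart one coordinate at a time, we may assume that $\bar i$ and $\bar j$ differ in a single coordinate. That coordinate lies in an interval disjoint from the other coordinates of $\bar i$.

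Next we name the parameters. Let $A$ consist of the elements of $\mathcal{I}$ at the common coordinates, together with a cofinal and a coinitial piece of $\mathcal{I}$ on both ends, which is possible because $I$ is endless. Then $\mathcal{J}$, the interval of $\mathcal{I}$ around the differing coordinate, remains indiscernible over $A (a_t : t \in s)$ for $|s| = k$. Working over this enlarged base, we take $A$ to be $\mathcal{I}$ minus a small open interval, and we take $c$ to be the element $b_{i^*}$ in that interval. Then $\tp(c/A a_s)$ is a limit type of a cut in an $A a_s$-indiscernible sequence. We want to show $c \ind_A a_s$, i.e.\ $a_s \ind$-style independence in the direction needed for Definition \ref{def: tot k-triv forking NIP}. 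We need $(a_t : t\in s) \ind_A c$, meaning $\tp(a_s/Ac)$ does not fork over $A$. For this we use that $\mathcal{J}$ is indiscernible over $A a_s$ and that $c$ sits in the middle of $\mathcal{J}$. By extensibility, forking equals dividing over $A$ (Fact \ref{fac: forking = div NTP2} together with resilience implying NTP$_2$, Fact \ref{fac: simple NIP resilient NTP2}). It therefore suffices to show that no formula $\psi(x_s, c) \in \tp(a_s/Ac)$ divides over $A$. Taking a subsequence $(c_m)$ of $\mathcal{J}$ through the cut, this sequence is $A$-indiscernible and satisfies $c_{\neq m} \ind_A c_m$, since each $c_m$ is finitely satisfiable in the sequences around it, which lie in $A$ up to automorphism; this uses Fact \ref{fac: forking in NIP}(4). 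By indiscernibility over $A a_s$, the tuple $a_s$ realizes $\{\psi(x_s, c_m)\}$. Hence Fact \ref{fac: resilient div witness} shows that $\psi(x_s,c)$ does not divide over $A$.

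Applying total $k$-triviality to the $k+1$ tuples $a_0, \ldots, a_k$ and the single tuple $c$ gives $\bar a \ind_A c$. The main obstacle is deriving the contradiction from this: we must show that $\varphi(\bar a, c)$ together with $\neg\varphi(\bar a, c')$, for $c'$ elsewhere in $\mathcal{J}$, forces $\tp(\bar a/Ac)$ to divide. The plan is as follows. Using the endlessness of $\mathcal{I}$, we arrange that the truth value of $\varphi(\bar a, b_j)$ alternates, or at least changes, across cuts, so that $\varphi(\bar x, c) \wedge \neg\varphi(\bar x, c^-)$, with $c^-$ adjacent in $A$, holds of $\bar a$. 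The corresponding instances along an $A$-indiscernible Morley-type sequence of pairs $(c^-_m, c_m)$ would then be $2$-inconsistent, by an alternation argument that uses only indiscernibility of $\mathcal{I}$. If needed, we add to $A$ a limit type from the side so that the sequence becomes indiscernible over $A$, and we check that $c$ is still finitely satisfiable in $A$. This yields $\bar a \nind_A c$, contradicting total $k$-triviality, and hence $\mathcal{I}$ is indiscernible over $\bar a$.
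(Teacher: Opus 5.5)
Your first half follows the paper's mechanism. Resilience (Fact \ref{fac: resilient div witness}) applied to an $A$-indiscernible, finitely satisfiable sequence of copies, together with forking $=$ dividing, gives $\bar a_s\ind_A c$. The gap is in your last paragraph, and it is structural: you put a single element $c$ on the right while the comparison point $c^-$ lies in the base $A$. Total $k$-triviality then only returns $\bar a\ind_A c$. This says nothing relating $\varphi(\bar a,c)$ to $\varphi(\bar a,c^-)$, because $c\not\equiv_A c^-$. For instance, in DLO take $\mathcal{I}$ increasing, $c$ in the removed interval $\mathcal{J}$, $c^-\in A$ below $\mathcal{J}$, and $a$ in the cut of $\mathcal{I}$ just below $\mathcal{J}$. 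Every formula of $\tp(a/Ac)$ is implied by some $e<x<c$ with $e\in A$, and none of these divides over $A$, so $a\ind_A c$; yet $x<y$ separates $c$ from $c^-$. Your proposed repair fails too. Parameters from $A$ stay fixed along any dividing witness over $A$, so there is no $A$-indiscernible ``sequence of pairs $(c^-_m,c_m)$''. Moreover, indiscernibility of $\mathcal{I}$ by itself yields no inconsistency, since bounded alternation is an NIP phenomenon.

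The missing idea is to put the whole middle segment (or at least both differing coordinates) on the right, outside the base, and to conclude directly rather than by contradiction. The paper extends to $\mathcal{I}_{-1}+\mathcal{I}_0+\mathcal{I}_1$, still indiscernible over each $\bar a_s$, with $\mathcal{I}_{\pm1}$ indexed by $\mathbb{Q}$, and works over $B=\mathcal{I}_{-1}\mathcal{I}_1$. For a finite $\bar b_0$ from $\mathcal{I}_0$, increasing copies $\bar b_\ell$ inside $\mathcal{I}_0$ are $B\bar a_s$-indiscernible with $\bar b_{\neq\ell}\ind^u_B\bar b_\ell$. This uses that $\mathcal{I}_{-1}$ has no last element and $\mathcal{I}_1$ no first element; your base ``$\mathcal{I}$ minus an open interval'' has endpoints, so you would need a Dedekind-type cut there. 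The paper then gets $\bar a_s\ind_B\mathcal{I}_0$, hence $\bar a\ind_B\mathcal{I}_0$ by total $k$-triviality, and Fact \ref{fac: forking in NIP}(3) makes $\mathcal{I}_0$ indiscernible over $\bar a$.

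Note that Fact \ref{fac: forking in NIP}(3) concerns $\ind^i$, so this step uses $\ind=\ind^i$, i.e.\ NIP via Fact \ref{fac: forking in NIP}(2). That is harmless for the paper's application, where $T$ is stable. Some such input is unavoidable, as the random $3$-hypergraph shows. It is simple, hence resilient and extensible. Its forking independence is $x\ind_A y$ iff $x,y$ share no elements outside $A$, so forking is totally trivial. Now take distinct vertices $(b_i)_{i\in\mathbb{Q}}$, $a_0$, $a_1$ whose only hyperedges are $R(a_0,a_1,b_i)$ for $i<0$. This sequence is indiscernible over $a_0$ and over $a_1$ but not over $a_0a_1$. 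So no argument from the stated hypotheses alone can close your final step.
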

\begin{proof}

Assume we are given an endless sequence $\mathcal{I}_0$ and tuples $(a_t : t < k + 1)$ so that $\mathcal{I}_0 = (b_i : i \in I_0)$ is indiscernible over $(a_t : t \in s)$ for every $s \subseteq \{0,1,\ldots, k\}$ with $|s| = k$. By compactness, we can find sequences $\mathcal{I}_{-1}, \mathcal{I}_1$ indexed by $\mathbb{Q}$ so that $\mathcal{I}_{-1} + \mathcal{I}_{0} + \mathcal{I}_{1}$ is also indiscernible over $\bar{a}_s := (a_t : t \in s)$ for every $s \subseteq \{0,1,\ldots, k\}$ with $|s| = k$. 

Fix such $s$. Given any finite $J_0 \subseteq I_0$, choose an arbitrary sequence $\left(J_\ell : \ell \in \omega \right) $ with $J_\ell \subseteq I_0, |J_\ell| = J_0$ and $J_0 < J_1 < \ldots $, and let $\bar{b}_{\ell} := \left( b_i : i \in J_{\ell} \right)$. Then $\left(\bar{b}_{\ell} : \ell \in \omega \right)$ is indiscernible over $\mathcal{I}_{-1}, \mathcal{I}_1$, and for every $\ell \in \omega$ by indiscernibility (and using that $\mathcal{I}_{-1}, \mathcal{I}_{1}$ have no endpoints) we have $ \left( \bar{b}_{\ell'} : \ell' \in \omega, \ell' \neq \ell \right) \ind^{u}_{\mathcal{I}_{-1}, \mathcal{I}_1} \bar{b}_{\ell}$.
Let $p \left(x, \bar{b}_{0} \right) := \tp(\bar{a}_s / \bar{b}_0 \mathcal{I}_{-1} \mathcal{I}_{1})$. As $\left( \bar{b}_{\ell} : \ell \in \omega \right)$ is indiscernible over $\bar{a}_s \mathcal{I}_{-1} \mathcal{I}_{1}$, we have in particular that $\left\{ p \left(x, \bar{b}_{\ell} \right) : \ell \in \omega \right\}$ is consistent.
As $T$ is resilient, by Fact \ref{fac: resilient div witness} this implies that $p \left(x, \bar{b}_{0} \right)$ does not divide over $\mathcal{I}_{-1} \mathcal{I}_{1}$. As $T$ is extensible, we get $\bar{a}_s \ind_{\mathcal{I}_{-1} \mathcal{I}_{1}} \bar{b}_0$ by Fact \ref{fac: forking = div NTP2}. And as $\bar{b}_0$ lists an arbitrary finite subset of $\mathcal{I}_0$, we get $\bar{a}_s \ind_{\mathcal{I}_{-1} \mathcal{I}_{1}} \mathcal{I}_0$. As this holds for every $s \subseteq \{0,1,\ldots, k\}, |s| = k$, it follows by total $k$-triviality that $a_0 \ldots a_{k} \ind_{\mathcal{I}_{-1} \mathcal{I}_{1} } \mathcal{I}_0$.  As $\mathcal{I}_0$ is indiscernible over $\mathcal{I}_{-1} \mathcal{I}_1$, by Fact \ref{fac: forking in NIP}(3)  we conclude that $\mathcal{I}_0$ is indiscernible over $a_0 \ldots a_{k}$.
\end{proof}

\begin{remark}
Does the converse of Lemma \ref{lem: tot triv implies indisc triv} hold, at least in NIP theories? 
\end{remark}

\begin{prop}\label{prop: total triv in stable}
	If $T$ is stable, then the following are equivalent for all $k \geq 1$: 
	\begin{enumerate}
	\item $T$ is strongly $(k+1)$-distal,
	\item  $T$ is (endlessly) indiscernibly $k$-trivial,
	\item  $T$ has totally $k$-trivial forking.
	\end{enumerate}

\end{prop}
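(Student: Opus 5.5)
The plan is to prove the cycle $(3)\Rightarrow(2)\Rightarrow(1)\Rightarrow(3)$. Throughout, stability is used in three ways: indiscernible sequences are totally indiscernible, forking is symmetric, and types over $\acl^{\eq}(A)$ are stationary.

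\emph{$(3)\Rightarrow(2)$.} A stable theory is simple, hence resilient by Fact \ref{fac: simple NIP resilient NTP2}, and it is extensible. So Lemma \ref{lem: tot triv implies indisc triv} applies directly and gives endless indiscernible $k$-triviality. By Remark \ref{rem: endless vs indisc triv}(2), in a stable theory this is the same as indiscernible $k$-triviality.

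\emph{$(2)\Rightarrow(1)$.} This is essentially immediate. Suppose we are given $\mathcal{I}_0+b+\mathcal{I}_1$ and tuples $a_0,\ldots,a_k$, where $\mathcal{I}_0+b+\mathcal{I}_1$ is indiscernible over $(a_t : t\in s)$ for every $s\subseteq k+1$ with $|s|=k$. Apply indiscernible $k$-triviality to the infinite sequence $\mathcal{J}:=\mathcal{I}_0+b+\mathcal{I}_1$. It yields that $\mathcal{J}$ is indiscernible over all of $a_0\ldots a_k$, which is exactly the conclusion of strong $(k+1)$-distality; the hypothesis about $\mathcal{I}_0+\mathcal{I}_1$ is not even needed. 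The sequence $\mathcal{J}$ has no endpoints whenever $\mathcal{I}_0,\mathcal{I}_1$ are indexed by $\mathbb{Q}$, so the endless version suffices.

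\emph{$(1)\Rightarrow(3)$.} This is the substantive direction. First, strong $(k+1)$-distality is preserved when naming a small set of parameters $B$: replace each sequence element $c$ by $cB'$, where $B'$ enumerates $B$, exactly as in Remark \ref{rem: indisc triv def remarks}(1). We may therefore name $\acl^{\eq}(A)$ and assume $A=\acl^{\eq}(A)$, so that every type over $A$ is stationary. (This uses Fact \ref{fac: forking in general T}(2) and Fact \ref{fac: forking in NIP}(1).)

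Now let $a_0,\ldots,a_k,b$ be such that $\bar a_s:=(a_t:t\in s)\ind_A b$ for all $|s|=k$. Let $p$ be the unique global non-forking extension of $\tp(b/A)$. Choose $\mathcal{I}_0+\mathcal{I}_1$, indexed by $\mathbb{Q}+\mathbb{Q}$, such that each element realizes $p$ restricted to $A\bar a b$ together with all previously chosen elements. Then:
\begin{itemize}
\item $\mathcal{I}_0+\mathcal{I}_1$ is a Morley sequence of $p$ over $A\bar a$, hence indiscernible over $A\bar a$.
\item For each $s$, symmetry gives $b\ind_A \bar a_s$. By stationarity, $b\models p|_{A\bar a_s}$. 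Hence $\{b\}\cup\mathcal{I}_0\cup\mathcal{I}_1$ is an $A\bar a_s$-independent set of realizations of $p|_{A\bar a_s}$, so it is a Morley sequence over $A\bar a_s$.
\item Morley sequences in stable theories are indiscernible sets, so the ordering is irrelevant, and $\mathcal{I}_0+b+\mathcal{I}_1$ is indiscernible over $A\bar a_s$.
\end{itemize}
Strong $(k+1)$-distality, applied in $T_A$ with the parameters $a_0,\ldots,a_k$, then yields that $\mathcal{I}_0+b+\mathcal{I}_1$ is indiscernible over $A\bar a$. Consequently $\tp(b/A\bar a)$ equals the type over $A\bar a$ of any element of $\mathcal{I}_0$, which is $p|_{A\bar a}$. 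Thus $b\ind_A \bar a$, and by symmetry $a_0\ldots a_k\ind_A b$.

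I expect the main obstacle to be the bookkeeping in this last step. One point is checking that naming $\acl^{\eq}(A)$, which consists of imaginaries, is harmless both for the distality hypothesis and for the forking statements. The other is verifying that $b$ followed by the Morley sequence really is independent over each $A\bar a_s$. The latter needs each element of $\mathcal{I}$ to be chosen non-forking over $A$ relative to $\bar a b$ and its predecessors, so that transitivity and symmetry give that the whole set is independent over $A\bar a_s$.
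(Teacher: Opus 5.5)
Your proof is correct. For $(3)\Rightarrow(2)$ and $(2)\Rightarrow(1)$ it is exactly the paper's argument: Lemma~\ref{lem: tot triv implies indisc triv} with Remark~\ref{rem: endless vs indisc triv}, and applying indiscernible $k$-triviality to $\mathcal{I}_0+b+\mathcal{I}_1$. You close the cycle differently, by proving $(1)\Rightarrow(3)$ directly, whereas the paper proves $(1)\Rightarrow(2)$ and $(2)\Rightarrow(3)$ separately.

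The paper's $(1)\Rightarrow(2)$ uses no forking at all. Starting from $\mathcal{I}_0$ indiscernible over each $a_t$ but not over $a_0a_1$, it builds a continuation $\mathcal{I}_1$ indiscernible over $a_0a_1$. It then picks a block $b_{J^*}$ of $\mathcal{I}_0$ on which $\varphi(-,a_0,a_1)$ disagrees with $\mathcal{I}_1$, and inserts it into a sequence of blocks taken from $\mathcal{I}_1$. The only stability input is that indiscernible sequences are totally indiscernible.

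The paper's $(2)\Rightarrow(3)$ takes a Morley sequence of the $\acl^{\eq}(A)$-nonforking extension of $\tp(b_0/A)$ \emph{starting} with $b_0$. It shows this sequence is independent from, hence indiscernible over, each $a_tA$, and applies indiscernible triviality. It then needs Kim's lemma to convert indiscernibility over $a_0a_1A$ back into $a_0a_1\ind_A b_0$.

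Your version builds the Morley sequence over $A\bar a b$ and inserts $b$ into the middle. This gives the extra hypothesis of strong distality for free, namely that $\mathcal{I}_0+\mathcal{I}_1$ is indiscernible over all of $\bar a$. Stationarity supplies the remaining hypotheses. The conclusion then hands you $\tp(b/A\bar a)=p|_{A\bar a}$ directly, with no Kim's lemma step.

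So your route is shorter, and it shows exactly which single configuration of strong distality yields total triviality. The paper's route, in exchange, gets $(1)\Leftrightarrow(2)$ purely at the level of indiscernible sequences. In your cycle, $(1)\Rightarrow(2)$ is only reached through forking and the resilience argument of Lemma~\ref{lem: tot triv implies indisc triv}.

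The two bookkeeping points you flag are harmless.

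First, the ordering. Do not choose elements ``in order'' along a dense order. Instead, choose $p$ and take, by compactness, a Morley sequence of $p$ over $\acl^{\eq}(A)\bar a b$ indexed by $\mathbb{Q}+\mathbb{Q}$. Then $b$ followed by any finite increasing subtuple of it is a Morley sequence of $p$ over $\acl^{\eq}(A)\bar a_s$, directly from the definitions, since $b\models p|_{\acl^{\eq}(A)\bar a_s}$.

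Second, the imaginaries. You never need strong distality with imaginary parameters. Apply strong $(k+1)$-distality over the real set $A$, for example by appending $A$ to each $a_t$; this is fine because every $s$ is nonempty. Indiscernibility over $A\bar a$ already suffices: it gives $\tp(b/A\bar a)=\tp(c/A\bar a)$ for $c\in\mathcal{I}_0$, and $c\ind_A\bar a$ by Fact~\ref{fac: forking in general T}(2).
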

\begin{proof}

(2) $\Rightarrow$ (1). Endless indiscernible $k$-triviality implies strong $(k+1)$-distality in any theory (straightforward from the definitions).

	(1) $\Rightarrow$ (2). We give a proof for $k=1$ (which adapts to the general $k$ in a straightforward manner). Assume $T$ is strongly $2$-distal, but not endlessly indiscernibly $1$-trivial, and let $\mathcal{I}_0$ be an endless sequence which is indiscernible over $a_0$ and over $a_1$, but not over $a_0, a_1$. Let $J_0, J_1 \subseteq I_0$ with $\ell := |J_0|=|J_1|<\omega$ and $\varphi \in \mathcal{L}(\emptyset)$ be such that $\models \varphi(b_{J_0},a_0,a_1) \land \neg \varphi(b_{J_1},a_0,a_1)$.

	 We can find a sequence $\mathcal{I}_1$ indexed by $\mathbb{Q}$ so that $\mathcal{I}_0 + \mathcal{I}_1$ is indiscernible over $a_0$, $\mathcal{I}_0 + \mathcal{I}_1$ is indiscernible over $a_1$, and $\mathcal{I}_1$ is indiscernible over $a_0 a_1$  (for each finite $J \subseteq I_0$, finite sets of formulas $\Delta_0 \in \mathcal{L}(a_0), \Delta_1 \in \mathcal{L}(a_1)$, $\Delta \in \mathcal{L}(a_0a_1)$ and $n \in \omega$, by Ramsey can find $i_0 < \ldots < i_{n-1} \in I_0$ with $J < i_0$ so that $(b_{i_j} : j < n)$ is $\Delta$-indiscernible, and by the assumption on $\mathcal{I}_0$, $(b_i : i \in J) + (b_{i_j} : j < n)$ is both $\Delta_0$-indiscernible and $\Delta_1$-indiscernible --- we can then conclude by compactness). 
	
	We either have $\varphi(b_J, a_0, a_1)$ for every $J \subseteq I_1, |J|=\ell$, in which case we let $J^* := J_1$, or $\neg \varphi(b_J, a_0, a_1)$ for every $J \subseteq I_1, |J|=\ell$, in which case we let $J^* := J_0$.
	
	Let $I'_0 = I'_1 := \mathbb{Q}, I' := I'_0 + I'_1$ and choose arbitrary $(J'_i : i \in I')$ with $J'_i \subseteq I_1, |J'_i| = \ell$ and $J'_i < J'_j$ for all $i<j \in I'$.
	By stability of $T$ the sequence $\mathcal{I}_0 + \mathcal{I}_1$ is totally indiscernible over each of $a_0$ and $a_1$, hence taking $\mathcal{I}'_t := \left( b_{J'_i} :  i \in I'_t\right)$, the sequence of $\ell$-tuples $\mathcal{I}'_0 + b_{J^*} + \mathcal{I}'_1$ is indiscernible over each of $a_0$ and $a_1$, and $\mathcal{I}'_0  + \mathcal{I}'_1$ is $a_0a_1$-indiscernible. Hence $\mathcal{I}'_0 + b_{J^*} + \mathcal{I}'_1$ is $a_0a_1$-indiscernible by strong $2$-distality. But this is a contradiction, as by the choice of $J^*$ we have $\models \varphi(b_{J^*}, a_0, a_1) \not \leftrightarrow \varphi(b_{J_i}, a_0, a_1)$ for some/any $i \in I'$.
 	
	(3) $\Rightarrow$ (2). By Lemma \ref{lem: tot triv implies indisc triv} (and Remark \ref{rem: endless vs indisc triv}). We also give a quicker proof using symmetry of forking for $T$ stable. Assume that an infinite endless sequence  $\mathcal{I}_0$ is indiscernible over each of $a_0, a_1$.   
By compactness, as in the proof of (1)$\Rightarrow$(2), we can find an endless sequence $\mathcal{I}_1$ so that $\mathcal{I}_0 + \mathcal{I}_1$ is indiscernible over each of $a_1$ and $a_2$. 
Hence $\mathcal{I}_0 \ind^u_{\mathcal{I}_1} a_t$, and so $\mathcal{I}_0 \ind_{\mathcal{I}_1} a_t$, for $t \in \{0,1\}$.  But then by total triviality $\mathcal{I}_0 \ind_{\mathcal{I}_1} a_0 a_1$, hence $ a_0 a_1 \ind_{\mathcal{I}_1} \mathcal{I}_0 $ by symmetry. By Fact \ref{fac: forking in NIP}(1),(2), as $\mathcal{I}_0$ is indiscernible over $\mathcal{I}_1$, it follows that $\mathcal{I}_0$ is indiscernible over $a_0a_1$.

(2) $\Rightarrow$ (3). Again, we give a proof for $k=1$, it generalizes to arbitrary $k$ in a straightforward manner. Assume $T$ is indiscernibly trivial. Assume $b_0 \ind_A a_0, b_0 \ind_A a_1$, then also $b_0 \ind_{\acl^{\eq}(A)} a_0, b_0 \ind_{\acl^{\eq}(A)} a_1$ (by Fact \ref{fac: forking in general T}).
Let $p(x)$ be a global type extending $\tp(b_0/\acl^{\eq}(A))$ and non-forking over $\acl^{\eq}(A)$, hence invariant over $\acl^{\eq}(A)$ by Fact \ref{fac: forking in NIP}. For $1 \leq i < \omega$, let $b_i \models p|_{\acl^{\eq}(A) b_{<i} a_0 a_1}$. Then $\mathcal{I} := (b_i : 0 \leq i < \omega) \models p^{\otimes \omega}|_{\acl^{\eq}(A)}$ is indiscernible over $A$. And for each $t \in \{0,1\}$ we have $\mathcal{I} \ind_{A} a_t$. Indeed, by induction on $i \in \omega$, we show $b_{\leq i} \ind_{A} a_t$. For $i = 0$, $b_0 \ind_A a_t$ by assumption. For $i \in \omega$, we have $b_{i+1} \ind_{A} b_{\leq i} a_t $ by the choice of $b_{i+1}$, hence $b_{i+1} \ind_{A b_{\leq i}}  a_t$ by base monotonicity, and $b_{\leq i} \ind_{A} a_t$ by the inductive assumption, hence by left transitivity $b_{\leq i+1} \ind_{A} a_t$. Now by symmetry we have $a_t \ind_{A} \mathcal{I}$ and $\mathcal{I}$ is $A$-indiscernible, hence, by Fact \ref{fac: forking in NIP}(2) and (3), $\mathcal{I}$ is $a_tA$-indiscernible, for both $t \in \{0,1\}$. By indiscernible triviality (and Remark \ref{rem: indisc triv def remarks}(1)), $\mathcal{I}$ is $a_0 a_1 A$-indiscernible. By Kim's lemma (as in the proof of Lemma \ref{lem: tot triv implies indisc triv})  this implies $a_0 a_1 \ind_{A} \mathcal{I}$, in particular $a_0 a_1 \ind_{A} b_0$.
\end{proof}

\begin{remark}\label{rem: gen stable tot triv}
	The proof in particular shows that if $T$ is strongly $2$-distal, $p \in S_x(\cU)$ is generically stable over $A$, $b \models p|_A$ and $a_1,a_2$ are arbitrary with $b \ind_{A} a_i$ (which is equivalent to $a_i \ind_{A} b$ in this case), then $b \ind_{A} a_1 a_2$ --- so we get total triviality of forking for realizations of generically stable types.
\end{remark}

\begin{problem}

Does the implications (2)$\Rightarrow$(3) in Proposition \ref{prop: total triv in stable} generalize to (extensible) NIP theories (with respect to Definition \ref{def: tot k-triv forking NIP})? 
%***
%In simple theories (maybe with enough amalgamation) might work if we allow to move indiscernible sequences in the definition of indiscernible triviality?
%***
\end{problem}

Next we show that the implication (1)$\Rightarrow$(2) can fail badly for unstable NIP theories. The following is  \cite[Proposition 4.3]{braunfeld2021characterizations} for $k = 1$, and we show that it generalizes to arbitrary $k$:
\begin{prop}\label{prop: fin dprk group not indisc k-triv}
	Assume $T$ has finite $\Dp$-rank and satisfies endless indiscernible $k$-triviality for some $1 \leq k < \omega$. Then $T$ does not define an infinite group.
\end{prop}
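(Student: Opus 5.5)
We will adapt the argument of \cite[Proposition 4.3]{braunfeld2021characterizations} for $k=1$, arguing by contradiction. Assume $T$ has finite dp-rank, is endlessly indiscernibly $k$-trivial, and $G$ is an infinite definable group. By Remark \ref{rem: indisc triv def remarks}(1) we may name parameters, so take $G$ to be $\emptyset$-definable. The plan is to produce an endless indiscernible sequence $\mathcal{I}$ of elements of $G$ and tuples $a_0,\dots,a_k$ such that $\mathcal{I}$ is indiscernible over every $k$ of the $a_t$'s but not over all of them. The group operation supplies the dependence: if $a_0,\ldots,a_{k-1}$ are generic and independent, then the product $a_k := (a_0\cdots a_{k-1})^{-1}\cdot c$ ties the $k+1$ elements together, while any $k$ of them still look independent generics. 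The intended contradiction is that a formula such as $x\cdot a_0\cdots a_k = \ldots$ then distinguishes some element of $\mathcal{I}$ from the rest.

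\textbf{Step 1 (generic indiscernibles).} Using finite dp-rank, fix a suitable infinite type-definable object with a good notion of genericity, for instance $G^{00}$ or a dp-generic type over a small model $\M$. Take an endless $\M$-indiscernible sequence $\mathcal{I}=(g_i : i\in\mathbb{Q})$ of elements of $G$ that is non-constant, say a Morley sequence in a global $\M$-invariant type extending a type of maximal dp-rank in $G$.

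\textbf{Step 2 (the elements $a_t$).} Fix $i^*\in\mathbb{Q}$ and replace $\mathcal{I}$ by its subsequence with index $\neq i^*$, keeping it endless. Choose $a_0,\dots,a_{k-1}\in G$ realizing independent dp-generic types over $\M\mathcal{I}$ (for example $a_t \models q|_{\M\mathcal{I}a_{<t}}$ for an $\M$-invariant generic $q$). Set $a_k := (a_0\cdots a_{k-1})^{-1} g_{i^*}$. Then $a_0\cdots a_k = g_{i^*}$, which lies in the cut at $i^*$. Since $g_{i^*}$ inserted into $\mathcal{I}$ does not preserve indiscernibility over $a_0,\dots,a_k$ (the equality $x=a_0\cdots a_k$ singles it out), $\mathcal{I}\cup\{g_{i^*}\}$ fails to be indiscernible over all $k+1$ elements. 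To obtain a failure for the endless sequence $\mathcal{I}$ itself, we instead fold the element into the sequence: we consider pairs $(g_i, g_{i^*})$, or run the whole argument on the sequence $\mathcal{I}$ with $g_{i^*}$ adjoined as a point of the sequence.

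\textbf{Step 3 (indiscernibility over any $k$).} We must show that for each $s$ with $|s|=k$, $\mathcal{I}$ is indiscernible over $(a_t : t\in s)$. If $k\notin s$ this holds by choice, since $\mathcal{I}$ is indiscernible over $\M a_{<k}$ because the $a_t$ realize invariant types over $\M\mathcal{I}$. If $k\in s$, the tuple $(a_t:t\in s)$ is interdefinable over $\M$ with $(a_t : t\in s\setminus\{k\})$ together with $a_j^{-1}\cdots g_{i^*}$ for the missing index $j$. Left translation of a generic over $\M\mathcal{I}$ by an element $\M\mathcal{I}$-independent of it remains generic and independent over $\M\mathcal{I}$. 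Using that dp-generics in finite dp-rank are closed under translation and, if needed, passing to $G^{00}$-cosets, the tuple realizes the same kind of independent generic type over $\M\mathcal{I}$, so $\mathcal{I}$ is indiscernible over it.

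\textbf{Main obstacle.} The hard part is Step 3. Finite dp-rank does not give stable-style independence calculus, so we need a notion of genericity for which (a) the translate of a generic by an independent element is again generic and independent, and (b) such types are invariant, so that indiscernibility is preserved. I would first try the approach of Braunfeld--Laskowski for $k=1$, namely dp-minimal subgroups and dp-rank additivity, and iterate it $k$ times. Failing that, I would use an fsg-like argument inside a dp-minimal definable subgroup, which exists in finite dp-rank by Simon's results, to get a well-behaved generic. I would also check that the final contradiction can be arranged with an endless sequence rather than by inserting an extra point; if necessary, I would replace elements by pairs so that the distinguished element is one of the sequence's own coordinates.
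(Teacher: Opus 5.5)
The gap is Step 3, which is where all the content lies. The claim that the relevant tuple ``realizes the same kind of independent generic type over $\M\mathcal{I}$'' is asserted, not proved, and you yourself flag it as the main obstacle. The trouble is that the two properties you need belong to \emph{different} notions of genericity, and finite dp-rank supplies no single notion with both.

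Types of maximal dp-rank are closed under translation, but realizing one over $\M\mathcal{I}$ does not preserve indiscernibility of $\mathcal{I}$. In $\RCF$ with $G=(\cU,+)$, any $a\notin\operatorname{dcl}(\M\mathcal{I})$ has $\Dp(a/\M\mathcal{I})=1$, yet if $a$ lies between two members of $\mathcal{I}$, the formula $x<a$ breaks indiscernibility.

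Realizations of $\M$-invariant types do preserve indiscernibility, but two problems remain.
\begin{enumerate}
\item A translate such as $g\cdot q^{-1}$ with $g\in\mathcal{I}$ need not be $\M$-invariant. This is what f-generic types provide, and in NIP they exist only for definably amenable groups, so not e.g.\ for $\operatorname{SL}_2$ over a real closed field without some further reduction.
\item For $k\geq 2$ and $s$ omitting an index $j<k-1$, the element $a_k$ is, over $\M\mathcal{I}(a_t : t<k,\ t\neq j)$, a two-sided translate of $a_j^{-1}$. So you must control $\tp(a_j/\M\mathcal{I}(a_t : t<k,\ t\neq j))$, whereas your construction only gives $a_j\models q|_{\M\mathcal{I}a_{<j}}$. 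Invariant types in NIP theories need not commute with themselves (think of the type at $+\infty$ in DLO), so this control is only automatic for generically stable $q$. Moreover, an infinite group of finite dp-rank need not contain an infinite fsg definable subgroup; $(\cU,+)$ in $\RCF$ has none.
\end{enumerate}
So the case $k\in s$ of Step 3 is not established.

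The missing idea, which is how the paper proceeds, is to avoid genericity altogether and argue by counting with mutually indiscernible sequences. Let $m=\Dp(G)$. Take many endless sequences $\mathcal{I}_0,\ldots,\mathcal{I}_{n-1}$ in $G$, mutually indiscernible over $A$, pick $a_i\in\mathcal{I}_i$, and set $b:=a_0\cdots a_{n-1}$. The key input is the Kaplan--Onshuus--Usvyatsov fact that a tuple of dp-rank at most $m$ over $A$ destroys the mutual indiscernibility of at most $m$ of the sequences, together with subadditivity of dp-rank.

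Since $\Dp(b/A)\leq m$ no matter how $b$ was produced, after permuting, a large family of sequences containing $\mathcal{I}_0$ stays mutually indiscernible over $bA$. The remaining $a_i$'s have bounded dp-rank, so they spoil only boundedly many more. With $n$ large enough, one splits $a_1,\ldots,a_{n-1}$ into $k$ blocks so that $\mathcal{I}_0$ is indiscernible over $b$ together with any $k-1$ of the blocks, and over all $k$ blocks by mutual indiscernibility over $A$. But $a_0\in\mathcal{I}_0$ is definable from $b$ and all the blocks, so $\mathcal{I}_0$ is not indiscernible over all $k+1$ tuples. This needs no invariance, translation-stability, or independence calculus.
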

\begin{proof}
Recall that if $\Dp(b/A) \leq m$ and $\mathcal{I}_0, \ldots, \mathcal{I}_{n-1}$ for $m < n \in \omega$ are infinite mutually indiscernible over $A$ sequences of tuples, then there exists some $u \subseteq n$ with $|n \setminus u| \geq n - m$ so that $\left( \mathcal{I}_i : i \in u \right)$ are mutually indiscernible over $A b$ \cite[Proposition 4.4]{kaplan2013additivity}. In particular this implies that $\Dp$-rank is subadditive: for any $a,b$ and $A$, $\Dp(a,b/A) \leq \Dp(a/A) + \Dp(b/Aa)$.

	Assume now $G = G(x)$ is an infinite $A$-definable group, and let $m := \Dp(G(x))$. We show that $T$ is not endlessly indiscernibly $2$-trivial (the general $k$ is similar). Let $n := m + (mk + 1)$. We can choose endless mutually indiscernible sequences $\mathcal{I}_0, \ldots, \mathcal{I}_{n-1}$ of elements of $G$ (by Ramsey and compactness, e.g.~choose a single endless indiscernible sequence using that $G$ is infinite, and cut it into pieces). Let $a_i$ be an arbitrary element of $\mathcal{I}_i$ and $b := a_0 \cdot \ldots \cdot a_{n-1} \in G$. As $\Dp(b/A) \leq m$, by the above and permuting the order of the sequences if necessary, we may assume that $(\mathcal{I}_i : i < mk+1)$ are mutually indiscernible over $bA$. By subadditivity of $\Dp$-rank we have $\Dp(a_{mk+1}, \ldots, a_{mk + m}/A) \leq mk$, hence also $\Dp(a_{mk+1}, \ldots, a_{mk + m}/bA) \leq mk$. Applying the previous paragraph again, we thus have (without loss of generality) that $\mathcal{I}_0$ is indiscernible over $A, b, a_{mk+1}, \ldots, a_{mk + m}$. Combining, we have that $\mathcal{I}_0$ is indiscernible over:
	\begin{itemize}
		\item $b; a_{mk+1}, \ldots, a_{mk + m}; A$,
		\item $b; a_1, \ldots, a_{mk}; A$ (as $(\mathcal{I}_i : i < mk+1)$ are mutually indiscernible over $bA$),
		\item $a_1, \ldots, a_{mk}; a_{mk+1}, \ldots, a_{mk + m}; A$ (as $\mathcal{I}_0, \ldots, \mathcal{I}_{n-1}$ are mutually indiscernible over $A$).
	\end{itemize}
But $\mathcal{I}_0$ is not indiscernible over $b; a_1, \ldots, a_{mk}; a_{mk+1}, \ldots, a_{mk + m}; A$ as $a_0$ is definable over this tuple.
\end{proof}

\begin{remark}
	 For example, let $T = \RCF$. Then $T$ is a dp-minimal, extensible theory which is strongly $1$-distal (so in particular strongly $2$-distal). However, it fails  endless indiscernible $k$-triviality for all $k \in \omega$ by 	Proposition \ref{prop: fin dprk group not indisc k-triv}.
\end{remark}

We summarize the results in the literature about the distality hierarchy:
\begin{fact}
	\begin{enumerate}
		\item  $T$ is $1$-distal if and only if it is strongly $1$-distal (right to left is clear; left to right: by Proposition \ref{prop: n-dist implies n-dep}, if $T$ is $1$-distal then it is NIP, hence  it is also strongly $1$-distal by the equivalence of the ``internal'' and ``external'' characterizations of distality from \cite{simon2013distal}). 
		\item No stable theory is $1$-distal \cite{simon2013distal}. There exist superstable  rank $1$ strongly $2$-distal theories that are not $1$-distal (e.g.~the theory of equality); or not $k$-distal for any $k$ (e.g.~$T = \ACF$, see \cite[Section 5]{walker2023distality}).
		\item If $T$ is superstable and $k$-distal for some $k \geq 2$, then it is already $2$-distal (\cite[Proposition 8.11]{walker2023distality}, combining Fact \ref{fac: walker $k$-triv vs $k$-dist} and  Fact \ref{fac: Poizat triv}).
		\item For every $k \in \omega$, there exist (not NIP) theories $T$ which are strongly $k$-distal, but not $k$-distal \cite[Section 5]{walker2023distality}.
	\end{enumerate}
\end{fact}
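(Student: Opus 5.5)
Since the statement collects results from the literature, the plan is to check each item against what is already available, quoting the external sources only where the paper does.

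For (1), the direction from strong $1$-distality to $1$-distality is immediate. Specializing Definition \ref{def: (k,l)-distality}, the hypothesis of $(1,0)$-distality is a special case of that of $(1,1)$-distality, so Remark \ref{rem: k,l dist implic} applies. For the converse, I first apply Proposition \ref{prop: n-dist implies n-dep} with $n=1$: a $1$-distal $T$ is NIP. I then invoke Simon's theorem \cite{simon2013distal} that, under NIP, the internal characterization of distality (inserting into Dedekind cuts of a single sequence, which is Definition \ref{def: n-dist mut ind seqs} for $n=1$) is equivalent to the external one, which is exactly strong $1$-distality. The only care needed is to match the mutually indiscernible formulation with Simon's single-sequence one, using the simplification from \cite[Proposition 3.12]{walker2023distality} recorded before Definition \ref{def: n-dist mut ind seqs}.

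For (2):
\begin{enumerate}
\item Non-distality of stable theories is \cite{simon2013distal}. In a stable theory, take a non-constant totally indiscernible sequence $\mathcal{I}_0+\mathcal{I}_1$ and let $b$ be an element of the sequence, placed at a different cut from its original position. Then $b$ inserts into the cut $(\mathcal{I}_0,\mathcal{I}_1)$ preserving $\emptyset$-indiscernibility by total indiscernibility, but not preserving indiscernibility over $b$ itself.
\item Strong $2$-distality of the theory of equality follows from Proposition \ref{prop: total triv in stable} with $k=1$. It suffices to check that pure equality has totally trivial forking. There, $a \ind_A b$ means $a$ and $b$ share no elements outside $A$, so independence from $b_0$ and from $b_1$ separately gives independence from $b_0b_1$. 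Equality is not $1$-distal by the previous point, since it is stable.
\item That $\ACF$ is not $k$-distal for any $k$ is \cite[Section 5]{walker2023distality}. Via Fact \ref{fac: walker $k$-triv vs $k$-dist}, this amounts to showing that forking in $\ACF$ is not $(k-1)$-trivial for any $k$, which I would witness with generic points satisfying a single linear relation $a_0+\dots+a_{k}=0$: any $k$ of them are independent, but all $k+1$ are not. Strong $2$-distality of $\ACF$ is not claimed in this item.
\end{enumerate}

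For (3), I combine two results. By Fact \ref{fac: walker $k$-triv vs $k$-dist}, a stable $k$-distal theory has $(k-1)$-trivial forking. By Poizat's theorem (Fact \ref{fac: Poizat triv}), in a superstable theory $n$-triviality of forking for some $n$ implies $1$-triviality. Applying Fact \ref{fac: walker $k$-triv vs $k$-dist} again then gives $2$-distality. This is \cite[Proposition 8.11]{walker2023distality}.

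For (4), I simply cite Walker's examples in \cite[Section 5]{walker2023distality}. These are suitable generic hypergraph-type structures, necessarily with IP by Proposition \ref{prop: n-dist implies n-dep}.

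The only step with real content is the matching of definitions in (1); the other items reduce directly to the cited statements.
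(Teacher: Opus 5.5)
Items (1)--(3) are correct and follow the paper, which only cites the literature there. In (2) you replace the citations by your own arguments, and these are valid alternatives:
\begin{enumerate}
\item Your moved-element argument refutes the external (strong) form of $1$-distality, provided you make explicit that $b$ is deleted from its original position, so that the remaining sequence is indiscernible over $b$ by total indiscernibility. Item (1) then transfers this to $1$-distality. Like Simon's statement, it needs $T$ to have infinite models.
\item For equality, going through totally trivial forking and Proposition \ref{prop: total triv in stable} works. Quantifier elimination in a binary language gives strong $2$-distality more directly, as in the Example of Section \ref{sec: Invariant generically stable measures in n-distal groups}.
\item Your relation $a_0+\dots+a_k=0$ correctly refutes $(k-1)$-triviality of forking in $\ACF$. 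The paper obtains the same conclusion more generally in Section \ref{sec: Invariant generically stable measures in n-distal groups}: no stable $k$-distal theory type-defines an infinite group.
\end{enumerate}

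The gap is in (4). Read literally, that item is false. Strong $k$-distality is $(k,k)$-distality, which implies $(k,0)$-distality, i.e.\ $k$-distality, in any theory by Remark \ref{rem: k,l dist implic} (Proposition \ref{prop: refinements of n-distality} for $k=2$). For $k=1$ it also directly contradicts item (1). So it cannot be established by citation; you needed to spot the misprint and argue for a corrected statement. The other conceivable reading, ``$k$-distal but not strongly $k$-distal'', again fails at $k=1$ by item (1). The reading consistent with ``(not NIP)'' and ``for every $k$'' is \emph{strongly $(k+1)$-distal but not $k$-distal}. The random $(k+1)$-uniform hypergraph witnesses this:
\begin{enumerate}
\item It has quantifier elimination in a $(k+1)$-ary relational language. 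An atomic formula mentioning the inserted element has at most $k$ further places, so it involves at most $k$ of the parameter tuples $a_0,\dots,a_k$. This gives strong $(k+1)$-distality.
\item Its edge relation has the $k$-independence property, so it is not $k$-distal by Proposition \ref{prop: n-dist implies n-dep}.
\end{enumerate}
This is also where your IP remark goes wrong. Proposition \ref{prop: n-dist implies n-dep} says $k$-distal implies $k$-dependent, so it can only be used contrapositively to show such examples are not $k$-distal; it never forces IP. The random hypergraphs have IP for trivial reasons, and IP is not necessary at all: by your own item (2), the theory of equality is NIP, strongly $2$-distal and not $1$-distal.
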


We recall some results of Poizat concerning $k$-triviality and total $k$-triviality of forking in stable theories:
\begin{fact}\label{fac: Poizat triv} The following hold for stable $T$ and any $k \in \omega$.
	\begin{enumerate}
		\item \cite[Proposition 3]{goode1991some} A superstable $k$-trivial theory is trivial.
		\item \cite[Proposition 5]{goode1991some} A trivial superstable theory with finite $U$-rank is totally trivial. More generally, a trivial superstable theory with $U$-rank strictly bounded by $\omega^{n+1}$ is $2^{n}$-totally trivial.
		\item \cite[Example after Proposition 5]{goode1991some} For each $n < \omega$ there exists a superstable trivial theory $T_n$ (of $U$-rank $\omega^{n}$) which is exactly $2^{n}$-totally trivial (i.e~$2^n$-totally trivial, but not $(2^{n}-1)$-totally trivial). There exists a superstable theory $T_{\omega}$ (of rank $\omega^{\omega}$) which is trivial, but not $n$-totally trivial for any $n$.
		\item (see \cite[Proposition 8]{goode1991some} and the ``Added in proof'' section) For a  $1$-based theory, all notions of triviality are equivalent (trivial, $k$-trivial, totally trivial, $k$-totally trivial).
	\end{enumerate}
\end{fact}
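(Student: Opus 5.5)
Since this Fact is quoted from Poizat \cite{goode1991some}, the plan is to reconstruct each item from the forking calculus of stable (superstable) theories. The main tools are canonical bases, $U$-rank and the Lascar inequalities. Throughout, independence of a set $\{a_i\}$ over $A$ is reduced to the chain condition $a_i \ind_A a_{<i}$. By Fact \ref{fac: forking in general T}(2) we may replace $A$ by $\acl^{\eq}(A)$, which makes types stationary.

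For (1), assume $T$ is superstable and $k$-trivial with $k\geq 2$. To get triviality it suffices to show $(k-1)$-triviality and then induct downward. So take $a_0,\dots,a_{k}$ pairwise (indeed $k$-wise) independent over $A$ but not independent. I would add a new element $a_{k+1}$ realizing a non-forking extension of a suitable type, for instance a Morley sequence in $\tp(a_0/A)$ over everything. This produces a $(k+2)$-tuple in which every $(k+1)$-subset is independent, while the whole set still fails to be independent. That contradicts $k$-triviality. Superstability, through regular types and the existence of nonforking witnesses of finite length, is used to make the chosen element behave generically with respect to the dependence.

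For (2), assume trivial forking and finite $U$-rank, and suppose $a\ind_A b_0$ and $a\ind_A b_1$. I would induct on $U(a/A)$. If $a\nind_A b_0b_1$, take $c\in\acl^{\eq}(\mathrm{Cb}(\tp(a/Ab_0b_1)))$ of minimal rank witnessing the dependence. Triviality, applied to $\{a,b_0,b_1\}$ together with decompositions of $a$ into regular components along a coordinatization, should force the dependence to be carried by a single $b_i$, giving a contradiction. For rank $<\omega^{n+1}$ the same induction runs over the Cantor normal form of the rank, with $n+1$ levels. Each level doubles the number of parameters that must be controlled, which is where the bound $2^n$ comes from. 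I expect this rank bookkeeping to be the main obstacle, namely showing that $2^n$ parameters always suffice for $\omega^{n}$-rank pieces.

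For (3), I would build $T_n$ as an iterated theory of equivalence relations or trees. The base level is a pure set, and level $j$ is an infinite-branching refinement, arranged so that forking is trivial. For suitable elements, $a$ is independent from any $2^n-1$ of $2^n$ parameters but depends on all of them, so that the parameters jointly pin down a rank-$\omega^{n-1}$ class of $a$. Taking a disjoint union of all the $T_n$ gives $T_\omega$. Finally, for (4), $1$-basedness says $a\ind_A B$ if and only if $\acl^{\eq}(aA)\cap\acl^{\eq}(BA)=\acl^{\eq}(A)$. Under triviality, $\acl^{\eq}(Ab_0\dots b_k)$ is then the union of the $\acl^{\eq}(Ab_i)$ on the relevant sorts, so all the triviality notions collapse to the pairwise case.
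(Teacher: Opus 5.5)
The paper gives no proof of this Fact; it is quoted from \cite{goode1991some}. Your reconstruction therefore has to stand on its own, and it has genuine gaps.

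\textbf{Item (1) fails outright.} To refute $k$-triviality you need $k+2$ elements, every $k+1$ of which form an independent set, while the whole set is dependent. If you start from a dependent set $\{a_0,\dots,a_k\}$ and merely add a new element $a_{k+1}$, however it is chosen, then $\{a_0,\dots,a_k\}$ is itself one of the $(k+1)$-subsets. So the hypothesis of $k$-triviality is never met.

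What is needed is to split one element into two. That is, amalgamate two bad configurations along a common element, independently over it; for example $c=a+b$ and $e=c+d$ in a group give the bad $4$-set $\{a,b,d,e\}$. That every $(k+1)$-subset of the amalgam is independent is routine forking calculus. The real content is that the whole amalgam stays dependent, and this requires choosing the bad configuration carefully; this is where superstability is used.

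For an arbitrary choice it fails, even in $\ACF$. Take $a,b$ independent generics and $c$ a generic point of the line $y=ax+b$; then $\{a,b,c\}$ is pairwise independent but dependent. Now amalgamate with a second line $(a',b')$ through $c$, chosen with $a'b'\ind_c ab$. Since $c\in\operatorname{dcl}(a,b,a',b')$, the tuple $(a,b,a',b')$ has transcendence degree $2+1+1=4$, so $\{a,b,a',b'\}$ is independent. Your appeal to regular types and ``nonforking witnesses of finite length'' does not supply this step.

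\textbf{Items (2)--(4) are sketches rather than proofs.}

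In (2), the whole content of total triviality is the case $b_0\nind_A b_1$, where triviality says nothing about $\{a,b_0,b_1\}$. Nothing in your outline handles that case, and the bound $2^n$ is asserted rather than derived.

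In (3), no theory is actually specified. Moreover, by (4) any example with $n\geq 1$ must be non-$1$-based. This rules out plain nested equivalence relations: these are $1$-based, so trivial ones are totally trivial.

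In (4), your characterization of $\ind$ in $1$-based theories is correct. However, the claim that $\acl^{\eq}(Ab_0\dots b_k)$ is the union of the $\acl^{\eq}(Ab_i)$ is false already for a pure infinite set: the unordered pair $\{b_0,b_1\}$ lies in $\operatorname{dcl}^{\eq}(b_0b_1)$ but in neither $\acl^{\eq}(b_i)$. Restricting to real sorts does not rescue it, since the criterion lives in $\acl^{\eq}$.

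A correct argument for trivial $\Rightarrow$ totally trivial, working in $\cU^{\eq}$, goes as follows.
\begin{enumerate}
\item Suppose $a\ind_A b_0$ and $a\ind_A b_1$ but $a\nind_A b_0b_1$, and put $e:=\operatorname{Cb}(\tp(a/\acl^{\eq}(Ab_0b_1)))$.
\item By $1$-basedness, $e\in\acl^{\eq}(aA)\cap\acl^{\eq}(Ab_0b_1)$, and $e\notin\acl^{\eq}(A)$; since $e\in\acl^{\eq}(aA)$, also $e\ind_A b_i$ for $i<2$.
\item Let $A':=\acl^{\eq}(Ab_0)\cap\acl^{\eq}(Ab_1)$. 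Then $b_0\ind_{A'}b_1$ by $1$-basedness, and $e\ind_{A'}b_i$ by base monotonicity.
\item Triviality now gives $e\ind_{A'}b_0b_1$. Since $e\in\acl^{\eq}(Ab_0b_1)$, this forces $e\in\acl^{\eq}(A')\subseteq\acl^{\eq}(Ab_0)$.
\item Together with $e\ind_A b_0$, this gives $e\in\acl^{\eq}(A)$, a contradiction.
\end{enumerate}
You also do not address $k$-trivial $\Rightarrow$ trivial for $1$-based $T$.
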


%\begin{problem}
%	Does this generalize to extensible NIP? In an NIP theory, if $a_0 \ind b, a_1 \ind b$, then can find a sequences containing $b$  and indiscernible over each of $a_0$ and $a_1$? Can in stable. *** Assume $b \ind a_0, b \ind a_1$. If $\tp(b/A)$ has a global strictly invariant extension, generate a MS over $a_0a_1A$... If strict independence has some poor mans transitivity, maybe can get $I \ind^{\st}_A a_t$, so $a \ind^i_A I$, so $I$ is $a$-indiscernible ***
%\end{problem}

Combining Fact \ref{fac: Poizat triv} with Proposition \ref{prop: total triv in stable} (and Fact \ref{fac: walker $k$-triv vs $k$-dist}), we can thus answer a question of Walker \cite[Section 9]{walker2023distality}:
\begin{cor}\label{cor: sep for strong $n$-dist}
	 For every $n\geq 1$, there exists a superstable $2$-distal $T$ which is strongly $2^{(n+1)}$-distal but not $2^{n}$-strongly distal. There exists a $2$-distal superstable theory which is not strongly $n$-distal for any $n < \omega$. For a $1$-based stable theory and any $k \geq 2$, $k$-distality implies strong $2$-distality.
\end{cor}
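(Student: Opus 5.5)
The plan is to translate Poizat's examples (Fact \ref{fac: Poizat triv}) through Proposition \ref{prop: total triv in stable}, which for stable $T$ identifies strong $(k+1)$-distality with totally $k$-trivial forking, and through Fact \ref{fac: walker $k$-triv vs $k$-dist}, which identifies $k$-distality with $(k-1)$-triviality of forking. First we fix the dictionary. For stable $T$, ``$2$-distal'' means $1$-trivial forking, i.e.\ trivial forking in Poizat's sense. ``Strongly $m$-distal'' means totally $(m-1)$-trivial forking. Here we take Poizat's total $k$-triviality to match Definition \ref{def: tot k-triv forking NIP} up to the usual index conventions; this alignment should be checked against Proposition \ref{prop: total triv in stable} with $k=1$.

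For the first claim, fix $n\geq 1$ and take Poizat's theory $T_n$ from Fact \ref{fac: Poizat triv}(3). It is superstable and trivial, hence $2$-distal by Fact \ref{fac: walker $k$-triv vs $k$-dist}. It is $2^n$-totally trivial, so strongly $(2^n+1)$-distal by Proposition \ref{prop: total triv in stable}. By monotonicity of strong distality in the arity (Remark \ref{rem: k,l dist implic}), it is then strongly $2^{n+1}$-distal. It is not $(2^n-1)$-totally trivial, so it is not strongly $2^n$-distal, again by Proposition \ref{prop: total triv in stable}.

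For the second claim, take $T_\omega$ from Fact \ref{fac: Poizat triv}(3). It is trivial, hence $2$-distal. It is not $n$-totally trivial for any $n$, so it is not strongly $(n+1)$-distal for any $n$. Since strong $m$-distality implies strong $m'$-distality for $m'\geq m$, it is not strongly $n$-distal for any $n<\omega$. For the $n=1$ case we use that strongly $1$-distal implies $1$-distal, and no stable theory is $1$-distal.

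For the third claim, let $T$ be stable, $1$-based and $k$-distal with $k\geq 2$. Fact \ref{fac: walker $k$-triv vs $k$-dist} gives that $T$ has $(k-1)$-trivial forking. Fact \ref{fac: Poizat triv}(4) then gives that $T$ is totally ($1$-)trivial. Proposition \ref{prop: total triv in stable} with $k=1$ then gives strong $2$-distality.

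The main obstacle is purely bookkeeping: we must verify that Poizat's indexing of ``$k$-totally trivial'' matches Definition \ref{def: tot k-triv forking NIP}, since an off-by-one shift would change the exact exponents. The statement's asymmetric form, ``strongly $2^{n+1}$ but not $2^n$'', leaves slack that absorbs this, so I would first pin down the case $k=1$ (totally trivial corresponds to strongly $2$-distal) and then argue uniformly. We also need the upward monotonicity of strong distality in arity. This follows from Remark \ref{rem: k,l dist implic}, or directly from Definition \ref{def: (k,l)-distality} by adding dummy parameters.
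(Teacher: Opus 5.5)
Your proposal is correct and follows the paper's argument, which simply combines Fact~\ref{fac: Poizat triv} with Proposition~\ref{prop: total triv in stable} and Walker's identification of $k$-distality with $(k-1)$-triviality for stable $T$. It reads Poizat's total $k$-triviality as Definition~\ref{def: tot k-triv forking NIP}; the paper states Poizat's definition in exactly this form, and Fact~\ref{fac: Poizat triv}(2) at $n=0$ confirms the indexing at $k=1$.

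One correction: the upward monotonicity you need (strongly $(2^n+1)$-distal $\Rightarrow$ strongly $2^{n+1}$-distal) is not what Remark~\ref{rem: k,l dist implic} says, since it requires $\ell'<\ell$. It also does not come from adding dummy parameters; it comes from merging two of the parameters $a_i$ into one tuple, or, for stable $T$, from Proposition~\ref{prop: total triv in stable} together with Remark~\ref{rem: indisc triv def remarks}(2). Moreover, the exponent slack absorbs an index shift in only one direction: had Poizat's $m$-total triviality meant the paper's total $(m-1)$-triviality, you would lose ``not strongly $2^n$-distal''. So it is the $k=1$ check, not the slack, that secures the exponents.
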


\begin{prop}\label{prop: 2-dist implies 2+-dist}
	If $T$ is stable, then for any $k \geq 2$,  $T$ is $(k,0)$-distal if and only if $T$ is $(k,1)$-distal.
\end{prop}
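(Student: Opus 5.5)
The direction ``$(k,1)$-distal $\Rightarrow$ $(k,0)$-distal'' is an instance of Remark \ref{rem: k,l dist implic} (take $k'=k$ and $\ell'=0<\ell=1$). So the work is the converse for stable $T$. I plan to go through forking, using Fact \ref{fac: walker $k$-triv vs $k$-dist}: since $T$ is stable and $(k,0)$-distal, forking in $T$ is $(k-1)$-trivial. It then suffices to show that $(k-1)$-triviality of forking implies $(k,1)$-distality, in the same style as the proof of (3)$\Rightarrow$(2) in Proposition \ref{prop: total triv in stable}.

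\emph{Setup.} Fix data witnessing a potential failure of $(k,1)$-distality. We have $\mathcal{I}=\mathcal{I}_0+\dots+\mathcal{I}_k$ and tuples $b_0,\dots,b_{k-1}$ such that $\mathcal{J}:=\mathcal{I}[b_i:i<k]$ is $\emptyset$-indiscernible. We also have a tuple $a$ such that $\mathcal{I}[b_i:i\in u]$ is $a$-indiscernible for every $u\subseteq k$ with $|u|=k-1$. By stability, every indiscernible sequence is totally indiscernible over its base. Hence the positions of the cuts are irrelevant, and we may regard $\mathcal{J}$ as an indiscernible set containing $b_0,\dots,b_{k-1}$. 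As in the proof of (1)$\Rightarrow$(2) of Proposition \ref{prop: total triv in stable}, I would use Ramsey and compactness to adjoin an endless tail $\mathcal{I}'$ with two properties: $\mathcal{J}+\mathcal{I}'$ is $\emptyset$-indiscernible, and $\mathcal{I}[b_i:i\in u]+\mathcal{I}'$ is $a$-indiscernible for each $|u|=k-1$. The compactness step only requires finitely many formulas at a time. It uses the hypotheses on the $(k-1)$-subinsertions together with total indiscernibility, so that the new tail is compatible with all $k$ of the conditions simultaneously.

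\emph{Main step.} Let $D:=\mathcal{I}\cup\mathcal{I}'$; the elements of $\mathcal{I}$ are part of the base. By endlessness of $\mathcal{I}'$ and indiscernibility, $\tp(b_i/Db_{\neq i})$ is finitely satisfiable in $D$, so $\{b_0,\dots,b_{k-1}\}$ is an independent set over $D$. Likewise $\tp(b_u/Da)$ is finitely satisfiable in $D$, hence $b_u\ind_D a$, and by symmetry $a\ind_D b_u$ for every $|u|=k-1$. Now consider the $(k+1)$-element family $\{a,b_0,\dots,b_{k-1}\}$, which has $(k-1)+2$ members. Every $k$-element subfamily is independent over $D$: either it consists of all the $b_i$'s, or it is $a$ together with some $b_u$, which is independent because $b_u$ is independent over $D$ and $a\ind_D b_u$. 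By $(k-1)$-triviality the whole family is independent, so $a\ind_D b_0\dots b_{k-1}$. Finally, the sequence $(b_0,\dots,b_{k-1})+\mathcal{I}'$ is indiscernible over $\mathcal{I}$. By Fact \ref{fac: forking in NIP}(1)--(3), as in the proof of Proposition \ref{prop: total triv in stable}, it is then indiscernible over $a\mathcal{I}$. Combining this with the $a$-indiscernibility of $\mathcal{I}$ itself (the case $u=\emptyset$ is implied by the hypotheses) and total indiscernibility, $\mathcal{J}$ is $a$-indiscernible. This is the conclusion of $(k,1)$-distality.

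\emph{Expected obstacle.} The delicate point is the bookkeeping needed to turn sequence-indiscernibility with interleaved segments $\mathcal{I}_j$ into independence statements over a common base $D$. In particular, I need to check that putting the segments $\mathcal{I}_j$ into the base loses nothing, and that the final step recovers indiscernibility of the whole of $\mathcal{J}$ over $a$, not just of $(b_i)+\mathcal{I}'$ over $a\mathcal{I}$. I expect total indiscernibility, which holds because $T$ is stable, to resolve this: it makes all orderings of the sequence interchangeable, so the $b_i$ can be moved to the front. To guard against this failing, I would first write out the case $k=2$ in full and then generalize.
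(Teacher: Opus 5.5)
Your argument is correct in substance and is the paper's argument. Walker's Fact gives $(k-1)$-triviality. The indiscernibility hypotheses give the required $k$-wise independences via finite satisfiability. Triviality then makes $a$ independent from the inserted elements, and Fact \ref{fac: forking in NIP}(2),(3) turns this into indiscernibility over $a$.

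The only difference is the choice of base, and the paper's choice is cleaner. It takes the last segment $\mathcal{I}_k$ as the base and the $k+1$ points $a,\mathcal{I}_0b_0,\dots,\mathcal{I}_{k-1}b_{k-1}$; for $k=2$ this gives $a\ind_{\mathcal{I}_2}\mathcal{I}_0b_0\mathcal{I}_1b_1$. So no extra tail and no total indiscernibility are needed. Fact \ref{fac: forking in NIP}(3) then applies directly, because $\mathcal{I}_0+b_0+\dots+\mathcal{I}_{k-1}+b_{k-1}$ is indiscernible over that same base $\mathcal{I}_k$.

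With your base $D=\mathcal{I}\cup\mathcal{I}'$, one line is missing in the final step. The sequence $(b_0,\dots,b_{k-1})+\mathcal{I}'$ is indiscernible over $\mathcal{I}$, not over $D$. So to apply Fact \ref{fac: forking in NIP}(3) you need $a\ind_{\mathcal{I}}b_0\dots b_{k-1}\mathcal{I}'$, not just $a\ind_D b_0\dots b_{k-1}$. This follows from $\mathcal{I}'\ind^u_{\mathcal{I}}a$, which holds because $\mathcal{I}+\mathcal{I}'$ is $a$-indiscernible and $\mathcal{I}_k$ is endless. Combine that with symmetry and left transitivity (Fact \ref{fac: forking in general T}(3)).

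Likewise, the concluding combination needs $a$-indiscernibility of $\mathcal{I}+\mathcal{I}'$, not just of $\mathcal{I}$, since the $b_i$ are being replaced by elements of $\mathcal{I}'$. Your tail construction does provide this.

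Finally, the tail needs no Ramsey. By compactness, realize any finitely many conditions by elements of $\mathcal{I}_k$ far to the right; these lie in every $\mathcal{I}[b_i:i\in u]$.
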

\begin{proof}
	We show the case $k=2$ (general $k$ is similar). Assume that $T$ is stable and $2$-distal (hence $1$-trivial by Fact \ref{fac: walker $k$-triv vs $k$-dist}), and we are given  an indiscernible sequence $\mathcal{I}_0 + b_0 + \mathcal{I}_1 + b_1 + \mathcal{I}_2$ so that each of $\mathcal{I}_0 + b_0 + \mathcal{I}_1 +\mathcal{I}_2, \mathcal{I}_0 + \mathcal{I}_1 + b_1 + \mathcal{I}_2$  is indiscernible over $a$ (where $\mathcal{I}_t = (b_i : i \in I_t)$ are endless). It follows by indiscernibility that $\mathcal{I}_0 b_0 \ind^u_{\mathcal{I}_2} a$ and $\mathcal{I}_1 b_1 \ind^{u}_{\mathcal{I}_2} a$, and $\mathcal{I}_1 b_1 \ind^{u}_{\mathcal{I}_2} \mathcal{I}_0 b_0 $. Hence, by $1$-triviality and symmetry, $a \ind_{\mathcal{I}_2} \mathcal{I}_0 b_0 \mathcal{I}_1 b_1$. By assumption $\mathcal{I}_0 + b_0 + \mathcal{I}_1 + b_1$ is indiscernible over $\mathcal{I}_2$, hence, by Fact \ref{fac: forking in NIP}, $\mathcal{I}_0 + b_0 + \mathcal{I}_1 + b_1$ is indiscernible over $a \mathcal{I}_2$, which implies that $\mathcal{I}_0 + b_0 + \mathcal{I}_1 + b_1 + \mathcal{I}_2$ is indiscernible over $a$. Indeed, for any $\bar{i}_t$ tuple from $I_t$ and formula $\varphi \in L(a)$, we have $\models \varphi(b_{\bar{i}_0}, b_0,  b_{\bar{i}_1}, b_1, b_{\bar{i}_2} )\Leftrightarrow  \ \models \varphi(b_{\bar{i}_0}, b_{i_0},  b_{\bar{i}'_1}, b_{i_1}, b_{\bar{i}_2})$ for some/any $\bar{i}_0 < i_0 < \bar{i}'_1 < i_1$ in $I_0$ (as $\mathcal{I}_0 + b_0 + \mathcal{I}_1 + b_1$ is indiscernible over $a \mathcal{I}_2$), $\Leftrightarrow \  \models \varphi(b_{\bar{i}_0}, b_{i_0},  b_{\bar{i}'_1}, b_{i_1}, b_{\bar{i}'_2})$ for some/any $ i_1 < \bar{i}'_{2}$ in $I_0$ (as $\mathcal{I}_0 + \mathcal{I}_2$ is indiscernible over $a$). The other direction is by Proposition \ref{prop: refinements of n-distality} (and Remark \ref{rem: k,l dist implic}).
\end{proof}

%\begin{remark}
%	In fact, NIP is sufficient. 
%	
%	Assume we have an $\emptyset$-indiscernible sequence $\mathcal{I}_0 + b_0 + \mathcal{I}_1 + b_1 + \mathcal{I}_2$ not indiscernible over $a$, but so that each of $\mathcal{I}_0 + b_0 + \mathcal{I}_1 +\mathcal{I}_2, \mathcal{I}_0 + \mathcal{I}_1 + b_1 + \mathcal{I}_2$  is indiscernible over $a$. 
%	Then there exist $\varphi \in L(\emptyset)$ and, for each $t \in 3$, finite tuples $\bar{i}_t, \bar{i}'_t$  of the same length from $I_t$ so that 
%	$$\models \varphi(a; b_{\bar{i}_0}, b_0, b_{\bar{i}_1}, b_1, b_{\bar{i}_3}) \land \neg \varphi(a; b_{\bar{i}'_0}, b_0, b_{\bar{i}'_1}, b_1, b_{\bar{i}'_3}).$$
%	
%	Let $\psi \in L(\emptyset)$ be as given by UDTFS for $\varphi$. 
%	
%	Then 
%	$$ \models \psi(; b_{\bar{i}_0}, b_0, b_{\bar{i}_1}, b_1, b_{\bar{i}_3}) \land \neg \psi()$$ 
%\end{remark}

\begin{problem}
	Does Proposition \ref{prop: 2-dist implies 2+-dist} hold only assuming that $T$ is NIP? Even assuming stability, for $k \geq 3$, does $(k,\ell)$-distality form a strict hierarchy when varying $1 \leq \ell < k$?
\end{problem}

\section{Invariant generically stable measures in $n$-distal groups}\label{sec: Invariant generically stable measures in n-distal groups}

\begin{fact}\cite{simon2013distal}\label{fac: distal implies gen stab is smooth}
	If $T$ is $1$-distal then every generically stable measure $\mu \in \mathfrak{M}_x(\cU)$ is smooth.
\end{fact}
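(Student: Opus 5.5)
This fact is Simon's theorem that in a distal theory every generically stable measure is smooth. We could cite it, but it is also the case $n=1$ of Proposition \ref{2-determinacy with a type}, and that is the route I would take. Distal means $1$-distal. Since $1$-distality implies NIP (Proposition \ref{prop: n-dist implies n-dep}), the equivalence of the internal and external characterizations of distality then gives strong $1$-distality. So Proposition \ref{2-determinacy with a type} applies with $n=1$ to a single measure $\mu_0 = \mu$, generically stable over some small model $\M$.

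For $n=1$ the marginal conditions $\omega|_{(x_i : i\neq j)}=\mu|_{(x_i:i\neq j)}$ involve the empty tuple of variables and are therefore vacuous. The conclusion of Proposition \ref{2-determinacy with a type} then reads: every global $\omega$ with $\omega|_{\M}=\mu|_{\M}$ equals $\mu$. This is exactly smoothness of $\mu$ over $\M$.

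The step that needs care is whether the proof of Proposition \ref{2-determinacy with a type} really degenerates correctly at $n=1$. Lemma \ref{key lemma with strong indiscernibility}, invoked through Lemma \ref{measure strong 2-distality}, assumes indiscernibility of the sequence over the restrictions of $\omega$ to the empty variable set. That hypothesis is trivially true. Lemma \ref{measure strong 2-distality} then says: if $b_{\Ical_0+\Ical_1}$ is $\omega$-indiscernible and $b_\Ical$ is indiscernible, then $b_\Ical$ is $\omega$-indiscernible. This is precisely the external characterization of distality transferred to measures via Lemma \ref{Artem lemma}. Lemma \ref{existence of smooth extensions of products} with $n=1$ is Keisler's existence of smooth extensions.

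I would check each of these lemmas at $n=1$, confirming that none of them tacitly uses $n\ge 2$. With that done, the statement follows as the $n=1$ instance.
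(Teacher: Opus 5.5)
Your proposal is correct and is essentially the paper's own route. The paper only cites Simon for this Fact, but it presents Proposition \ref{2-determinacy with a type} as the direct generalization of Simon's Proposition 2.27. So its $n=1$ instance, after passing from $1$-distal to NIP to strongly $1$-distal exactly as in the paper's summary Fact in Section \ref{sec: n-dist and n-triv}, is this statement, with the marginal conditions vacuous as you say. There is also no circularity, since the proof of Proposition \ref{2-determinacy with a type} never invokes this Fact; it is used only later, in Theorem \ref{thm: G-inv gen stab mu is smooth}.
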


This fails badly in stable $2$-distal theories:
\begin{expl}
	The theory of an infinite set with no additional structure is $2$-ary by quantifier elimination, hence strongly $2$-distal. The unique non-algebraic type is generically stable, but not smooth (viewed as a measure).
\end{expl}

Recall that if $T$ is NIP and $G$ is an  $\emptyset$-type-definable group, then taking $G^{00} \leq G$ to be the intersection of all type-definable (over an arbitrary small subset of $\cU$) subgroups $H \leq G$ of bounded index, we have that $G^{00}$ is type-definable over $\emptyset$ \cite{shelah2008minimal}. Moreover, equipped with the logic topology, $G/G^{00}$ is a compact Hausdorff topological group, hence it is equipped with (left-invariant Borel probability) Haar measure \cite{pillay2004type}. We let $\pi_0 : G \to G/G^{00}$ be the canonical projection homomorphism. The following is an unpublished result of Hrushovski, Macpherson and Pillay (see \cite[Theorem 8.37]{simon2015guide}):
\begin{fact}\label{fac: comp dom iff smooth measure}
	 ($T$ is NIP) A definable group  $G$ admits a smooth (left-)$G$-invariant measure $\mu \in \mathfrak{M}_{G}(\cU)$ if and only if $G$ satisfies \emph{compact domination}, i.e.~for every $\cU$-definable set $D$, the set $\{a \in G/G^{00} : \pi^{-1}_0(a) \cap D \neq \emptyset \textrm{ and } \pi^{-1}_0(a) \cap \neg D \neq \emptyset \}$ has $h$-measure $0$.
\end{fact}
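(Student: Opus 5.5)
The Fact is an equivalence, and I would prove the two directions separately. Both rest on one bridge: comparing global measures on $G$ with the Haar measure $h$ on $G/G^{00}$ via pushforward along $\pi_0$. Fix a small model $\M$, large enough that the logic topology on $G/G^{00}$ is witnessed by $\M$. Concretely, preimages of closed sets are type-definable over $\M$, and for compact $K \subseteq U$ open there is an $\M$-definable $X$ with $\pi_0^{-1}(K)\subseteq X \subseteq \pi_0^{-1}(U)$, by compactness.

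First step. For any global measure $\nu$ on $G$ that is left $G$-invariant, or merely agrees on $\M$-definable sets with such a measure, I would show $(\pi_0)_\ast\nu = h$. Define $(\pi_0)_\ast\nu(K) := \inf\{\nu(X) : X \ \M\text{-definable},\ X \supseteq \pi_0^{-1}(K)\}$ on compact $K$, and check that it is a regular Borel probability measure. It is left-invariant because $G$ acts on $G/G^{00}$ through $\pi_0$ and $\nu$ is invariant. Uniqueness of Haar measure then gives equality with $h$.

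($\Leftarrow$) Assume compact domination. Define
\[
\mu(D) := h(\{a \in G/G^{00} : \pi_0^{-1}(a)\subseteq D\}).
\]
The sets of cosets inside $D$ and inside $\neg D$ are Borel, and by compactness they are $F_\sigma$-type unions of closed sets with type-definable preimages contained in $D$. Compact domination says these two sets have total $h$-measure $1$, which gives finite additivity. Left-invariance of $\mu$ is inherited from $h$. For smoothness over $\M$, let $\nu$ be any global measure agreeing with $\mu$ on $\M$-definable sets, so $(\pi_0)_\ast\nu = h$ by the first step. For any compact $K$ contained in the set of cosets inside $D$, we have $\pi_0^{-1}(K)\subseteq D$, hence $\nu(D)\geq h(K)$. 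By regularity of $h$, $\nu(D)\geq \mu(D)$, and the same argument for $\neg D$ gives $\nu(\neg D)\ge\mu(\neg D)$. Since both measures have total mass $1$, it follows that $\nu(D)=\mu(D)$ and so $\nu = \mu$.

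($\Rightarrow$) Assume $\mu$ is left-invariant and smooth over $\M$, so $(\pi_0)_\ast\mu = h$. Suppose some $D$ has border set $E$ (the cosets meeting both $D$ and $\neg D$) with $h(E)>0$; I want to build a global $\nu\neq\mu$ with $\nu|_{\M} = \mu|_{\M}$, contradicting smoothness. The idea is to disintegrate $\mu$ over $G/G^{00}$. On each coset in $E$, I would push the fibre mass into $D$ in one construction and into $\neg D$ in the other, for example by replacing the fibre on each coset $a\in E$ with a type concentrating on $\pi_0^{-1}(a)\cap D$, respectively on $\pi_0^{-1}(a)\cap\neg D$. Since $\M$-definable sets are, up to $h$-null error, unions of full cosets (by the first step applied to the approximations $X$), this surgery should not change values on $\M$-definable sets, while it changes $\nu(D)$ by roughly $h(E)$.

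The main obstacle is making this surgery rigorous: a measurable selection of types along cosets in $E$ that yields a genuine finitely additive measure on $\mathcal{L}_G(\cU)$, together with the verification that it agrees with $\mu$ on $\M$-definable sets. I would first try to avoid selection altogether. Work in the Stone space $S_G(\M D)$ and use Proposition~\ref{border and determination}: $\mu$ is $\mathcal{L}(\M)$-determined on $\mathcal{L}(\M D)$ if and only if its border $\partial_{\mathcal{L}(\M)}D$ is null. Then relate that border to $\pi_0^{-1}(E)$, using invariance to show that types in $\pi_0^{-1}(E)$ within the support lie in $\partial_{\mathcal{L}(\M)}D$. Together with the first step this would give $\tilde\mu(\partial D)\ge h(E)>0$, so $\mu$ is not determined by its restriction to $\M$, contradicting smoothness.
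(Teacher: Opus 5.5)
The paper does not prove this Fact; it quotes it from Simon's book (Theorem 8.37) as an unpublished result of Hrushovski--Macpherson--Pillay. So your argument has to stand on its own.

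\textbf{What works.} Your first step (the pushforward to $G/G^{00}$ of any measure agreeing with a left-invariant one on $\M$-definable sets is $h$) is correct. The direction ($\Leftarrow$) is also essentially correct. Two small corrections:
\begin{itemize}
\item $I_D=\{a:\pi_0^{-1}(a)\subseteq D\}$ is in fact open. It is the complement of $\pi_0(\neg D)$, which is closed because $(\neg D)G^{00}$ is type-definable.
\item Finite additivity needs slightly more than $h(I_D)+h(I_{\neg D})=1$. You also need that a coset contained in $D_1\sqcup D_2$ but in neither $D_i$ lies in $E_{D_1}$, and this is immediate.
\end{itemize}

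\textbf{The gap in ($\Rightarrow$).} Your fallback rests on the claim that support types lying over $E$ belong to $\partial_{\mathcal{L}(\M)}D$. You intend to get this from invariance alone, and then conclude $\tilde\mu(\partial_{\mathcal{L}(\M)}D)\ge h(E)$. This cannot be right, for the following reason.
\begin{itemize}
\item $\mu$ is also left-invariant and smooth over any small $\M'\succeq\M$ containing the parameters of $D$.
\item Over such $\M'$ we have $\partial_{\mathcal{L}(\M')}D=\emptyset$.
\item So the same inequality would give $h(E_D)=0$ for every definable $D$ and every left-invariant $\mu$, smooth or not.
\end{itemize}
Here is a concrete counterexample. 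In $\ACF_0$ take $G=(K,+)$, $\mu$ its generic type, and $D=\{0\}$. Then $G^{00}=G$ and $h(E_D)=1$. But the unique support type $p|_{\M}$ lies over $E_D$ and contains $x\neq 0$, so it is not in the border.

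The surgery route has the same defect. For the surgery not to change values on $\M$-definable sets, you need $h(E_X)=0$ for every $\M$-definable $X$. That is not a consequence of $(\pi_0)_\ast\mu=h$; it is compact domination for $\M$-definable sets, i.e.\ what you are trying to prove. In the example, $X=\{0\}$ is $\emptyset$-definable with $h(E_X)=1$. A type concentrating on $\pi_0^{-1}(a)\cap D$ is $\delta_0$, which already changes the value on $\{0\}$.

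\textbf{What is missing.} Determinacy over $\M$ carries no information about $\M$-definable sets. So any proof of ($\Rightarrow$) must apply smoothness to sets not defined over $\M$. One natural choice is translates $gD$ of an $\M$-definable $D$: their $\M$-definable inner and outer approximations depend only on $\tp(g/\M)$. You would then need to control how these approximations interact with the uncountably many $G^{00}$-cosets. That argument is absent, so as written ($\Rightarrow$) is not proved.
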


\begin{cor}
	If $T$ is distal and $G$ is an fsg group, then $G$ is compactly dominated.
\end{cor}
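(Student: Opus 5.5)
The plan is to combine Fact \ref{fac: distal implies gen stab is smooth} with Fact \ref{fac: comp dom iff smooth measure}. Recall that $T$ distal implies $T$ NIP (Proposition \ref{prop: n-dist implies n-dep} with $n=1$), so both facts apply. Let $G$ be a definable fsg group. The first step is to invoke the standard theory of fsg groups in NIP theories (Hrushovski--Pillay--Peterzil, see e.g.~\cite[Section 8.4]{simon2015guide}). It says that an fsg group admits a left $G$-invariant global Keisler measure $\mu \in \mathfrak{M}_G(\cU)$ which is generically stable; in fact $\mu$ is the unique left-invariant measure, and it is also right-invariant. I would obtain $\mu$ from the definition of fsg: there is a global type $p$ concentrating on $G$, finitely satisfiable in a small model, all of whose translates are finitely satisfiable in that model. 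Averaging translates of $p$ (or citing the existence result for an invariant measure directly) gives $\mu$, and finite satisfiability together with definability over a small model makes $\mu$ generically stable.

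The second step applies Fact \ref{fac: distal implies gen stab is smooth}. Since $T$ is ($1$-)distal and $\mu$ is generically stable, $\mu$ is smooth. Thus $G$ admits a smooth left-$G$-invariant measure.

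The third step applies Fact \ref{fac: comp dom iff smooth measure} in the ``if'' direction. Its hypothesis is exactly that $G$ admits a smooth left-invariant measure, so $G$ satisfies compact domination.

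The only possible obstacle is the first step, namely extracting a \emph{generically stable} invariant measure from the fsg assumption. This is a cited result rather than something proved in the paper, and the proposal would simply reference it. One point to check is that Fact \ref{fac: comp dom iff smooth measure} is stated for definable (rather than type-definable) groups, which matches our hypothesis. This corollary is also the $n=1$ instance of the later Theorem \ref{thm: G-inv gen stab mu is smooth}, but here Simon's Fact \ref{fac: distal implies gen stab is smooth} suffices, and we do not even need $G$-invariance to get smoothness.
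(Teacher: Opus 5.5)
Your proposal is correct and matches the paper's argument, which is left implicit there. The fsg assumption (in an NIP, hence distal, theory) provides a generically stable left-$G$-invariant measure, which is smooth by Fact \ref{fac: distal implies gen stab is smooth}, and Fact \ref{fac: comp dom iff smooth measure} then yields compact domination.
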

\noindent This has a combinatorial consequence, an arithmetic version of the distal regularity lemma \cite[Section 6]{conant2021structure}. We conjecture that these results hold already in $k$-distal theories for an arbitrary $k$:

\begin{conjec}\label{conj: k-dist G-inv meas}
Assume $2 \leq k \in \omega$, $T$ is $k$-distal NIP and $G$ a definable group. If  $\mu \in \mathfrak{M}_{G}(\cU)$ is generically stable and $G$-invariant, then $\mu$ is smooth. In particular, if $G$ is fsg, then it is compactly dominated. More generally,  we can ask the same question for \emph{fim} groups in (not necessarily NIP) $k$-distal theories (see \cite{chernikov2024definable}).
\end{conjec}

Using determinacy for types (Fact \ref{fac: det for types}), we can show that Conjecture \ref{conj: k-dist G-inv meas}  holds in the case of types:

\begin{prop}\label{prop: stable groups are not $n$-distal}
\begin{enumerate}
	\item If in $T$ there is a type-definable group $G$ with a non-algebraic (left-) $G$-invariant type $p \in S_G(\mathbb{M})$ so that $p^{\otimes k}$ is generically stable for all $1 \leq k \in \omega$ (so e.g.~$p$ is generically stable and $T$ is $\NTP_2$, see \cite[Section 4]{conant2025generic}), then $T$ is not $k$-distal for any $1 \leq k \in \omega$.
	\item In particular, if $T$ is stable and $G$ is an infinite type-definable group, then $T$ is not $k$-distal for any $1 \leq k\in \omega$.
\end{enumerate}

\end{prop}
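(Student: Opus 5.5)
We argue by contradiction using $k$-determinacy for types (Fact \ref{fac: det for types}). Suppose $T$ is $k$-distal for some $k \geq 1$. Consider the $(k+1)$-tuple $(p, p, \ldots, p)$ in variables $x_0, \ldots, x_k$ ranging over $G$. Since $p^{\otimes m}$ is generically stable for all $m$, the copies of $p$ pairwise commute. Indeed, a generically stable type commutes with any invariant type (the type version of Fact \ref{fact: gen stable measures}(1), or directly from generic stability of $p\otimes p$ in the $\NTP_2$ setting). Hence Fact \ref{fac: det for types} applies and gives
\[ \bigcup_{u \subseteq k+1, |u| = k} p^{\otimes u}(x_u) \vdash p^{\otimes (k+1)}(x_0, \ldots, x_k). \]
The plan is to exhibit a tuple $(c_0, \ldots, c_k)$ that realizes every $k$-subproduct $p^{\otimes k}$ but does not realize $p^{\otimes(k+1)}$. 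This gives the contradiction.

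\textbf{Construction.} Work over a small model $\M$ over which $p$ and all $p^{\otimes m}$ are invariant. Take $(a_0, \ldots, a_{k-1}, a_k) \models p^{\otimes (k+1)}|_{\M}$, and define
\[ c_i := a_i \text{ for } i<k, \qquad c_k := (a_0 \cdots a_{k-1})^{-1} \cdot a_k^{-1}\cdots \]
More simply, we want $c_0 \cdots c_k = e$ (or some fixed element of $\M$). So take $a_0, \ldots, a_{k-1}$ realizing $p^{\otimes k}$, and set $c_k := (a_0 \cdots a_{k-1})^{-1}$.

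\emph{Proper subtuples.} We use $G$-invariance of $p$. If $a \models p|_{\M B}$ and $g \in \dcl(\M B) \cap G$, then $g a \models p|_{\M B}$; the analogous statement holds on the right by inverting, or we switch to right translates as appropriate. We want each $k$-subtuple of $(c_0, \ldots, c_k)$ to realize $p^{\otimes k}$. For the subtuple omitting $c_j$ with $j<k$, write $c_k = (a_0\cdots a_{j-1})^{-1} \ldots$. Choosing the realization order so that $a_j$ is realized last over the others, $c_k$ is then a translate of $a_j^{-1}$ by elements already present. To make this work cleanly, I would instead use the symmetric relation $c_0 c_1 \cdots c_k = e$ with inverses arranged so that each coordinate is a two-sided translate of a single free generic. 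Left invariance handles translation on one side. For the other side, I would use that a generically stable $G$-invariant type in this setting is also right-invariant and inverse-invariant (the standard fact that a generically stable left-generic is the unique generic). Commutativity of the $p$'s lets us reorder the Morley sequence freely.

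\emph{Failure of the full product.} The full tuple does not realize $p^{\otimes(k+1)}$. Indeed, $c_k \in \dcl(c_0, \ldots, c_{k-1})$, whereas $p^{\otimes(k+1)}$ requires $c_k \models p|_{\M c_{<k}}$. That is impossible since $p$ is non-algebraic.

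\textbf{Part (2).} In a stable theory, an infinite type-definable group has a generic type $p$. This type can be taken left $G$-invariant after passing to $G^0$ connected component considerations; more precisely, a generic of the connected component $G^0$ is $G^0$-invariant. We then apply (1) to $G^0$, which is infinite because $G$ is infinite and $G^0$ has bounded index. The type $p$ is non-algebraic, and all its powers are generically stable by stability.

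\textbf{Main obstacle.} The delicate step is verifying that each $k$-subtuple realizes exactly $p^{\otimes k}$ over $\M$. This needs two-sided (and inverse) invariance of $p$ together with commutativity, so that any coordinate can be "solved for" as a translate of a fresh generic. I would first establish right-invariance from left-invariance via generic stability (uniqueness of the invariant generic). Then I would do the translation argument coordinate by coordinate.
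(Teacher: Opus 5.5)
Your strategy is the paper's. You contradict the $k$-determinacy of $(p,\ldots,p)$ from Fact \ref{fac: det for types} using the graph of the $k$-fold product. You compute the $k$-variable marginals from self-commutation of $p$ plus two-sided invariance. For (2) you apply (1) to the principal generic of $G^0$. Two points need repair, and one small justification is missing.

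First, Fact \ref{fac: det for types} is about global types: $\bigcup_{|u|=k} p^{\otimes u}(x_u) \vdash p^{\otimes(k+1)}$. Since $p$ is non-algebraic, $p^{\otimes(k+1)}$ contains $x_k \neq x_0\cdots x_{k-1}$. By compactness this formula is implied by finitely many formulas from the $p^{\otimes u}(x_u)$, but their parameters are arbitrary elements of $\cU$, not of $\M$.

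So a tuple whose $k$-subtuples realize only $p^{\otimes k}|_{\M}$, and which satisfies the product relation, contradicts nothing. You need the $k$-subtuples to realize $p^{\otimes k}|_N$ for every small $N \succeq \M$. Equivalently, the global pushforward $q := f_*(p^{\otimes k})$, with $f(x_0,\ldots,x_{k-1}) = (x_0,\ldots,x_{k-1},x_0\cdots x_{k-1})$, must have all its $k$-variable restrictions equal to $p^{\otimes k}$. This is exactly how the paper sets it up, taking $(a_i)\models p^{\otimes k}|_N$ for arbitrary $N$. Your translation argument uses nothing about $\M$ beyond invariance, so it transfers. As written, though, the step you call the main obstacle is checked over the wrong base.

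Second, closing the relation with an inverse, $c_k = (a_0\cdots a_{k-1})^{-1}$, means you must know $a_j^{-1}\models p|_{N a_{\neq j}}$, i.e.\ $p^{-1}=p$, on top of left and right invariance. This is true. Granting right invariance, $p^{-1}$ is again generically stable and bi-invariant. For two such types $p,r$, take $a\models p$ and $b\models r|_{\cU a}$ in an elementary extension; commutation gives $a\models p|_{\cU b}$. Then $ab$ realizes $r$ by left invariance of $r$ and realizes $p$ by right invariance of $p$, so $p=r$.

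Still, this is an avoidable detour. With $c_k := a_0\cdots a_{k-1}$, as in the paper, you only need three things. Commutation gives $a_j\models p|_{N a_{\neq j}}$. Left invariance absorbs $a_0\cdots a_{j-1}$. Right invariance (Pillay--Tanovi\'c, Lemma 1) absorbs $a_{j+1}\cdots a_{k-1}$.

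Finally, in (2) you should justify that $p$ is non-algebraic. Every formula in the generic type is generic, so finitely many translates of its solution set cover the infinite group, which forces that set to be infinite.
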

\begin{proof}

(1) Assume $M \prec \cU$ is a small model so that $G$ is type-definable over $M$ and $p^{\otimes k}$ is generically stable over $M$ for all $k$. Let $1 \leq k \in \omega$ be arbitrary. Consider the $M$-definable partial function $f: (x_1, \ldots, x_k) \mapsto (x_1, \ldots, x_k, x_1 \cdot \ldots \cdot x_k)$, and the push-forward type $q(x_1, \ldots, x_k, y) := f_{\ast}\left(p^{\otimes k}_{x_1, \ldots, x_k} \right)$. Then $q$ is also generically stable over $M$ (see e.g.~\cite[Section 3.5]{chernikov2024definable}). Of course $q \restriction_{x_1, \ldots, x_k} = p^{\otimes k}_{x_1, \ldots, x_k}$, and we claim that for any $i \in [k]$, $q \restriction_{x_{\neq i}, y} = p^{\otimes k}(x_{\neq i}, y)$. Indeed, fix $i \in [k]$, let $M \preceq N \prec \cU$ be arbitrary, and let $(a_1, \ldots, a_k) \models p^{\otimes k}|_{N}$. By generic stability $p$ commutes with itself, hence also $(a_{\neq i}, a_i) \models p^{\otimes k}|_{N}$, so $a_i \models p|_{N, a_{\neq i}}$. In particular $a_i \models p|_{N, a_{\neq i}, a_1 \cdot \ldots \cdot a_{i-1}}$.
By left $G$-invariance of $p$  we then have $ (a_1 \cdot \ldots \cdot a_{i-1}) \cdot a_{i} \models p|_{N, a_{\neq i}}$, in particular  $ a_1 \cdot \ldots \cdot a_{i-1} \cdot a_{i} \models p|_{N, a_{\neq i}, a_{i+1} \cdot \ldots \cdot a_k}$. As $p$ is also right-$G$-invariant (see \cite[Lemma 1]{pillay2011generic}), we get  $ (a_1 \cdot \ldots \cdot a_{i-1} \cdot a_{i}) \cdot (a_{i+1} \cdot \ldots \cdot a_k) \models p|_{N, a_{\neq i}}$. Hence $(a_{\neq i}, a_1 \cdot \ldots \cdot a_k) \models p^{\otimes k}|_{N}$, as wanted.

However, if $p$ is not realized in $\cU$, we have $q \neq p^{\otimes (k+1)}$ --- contradicting $k$-determinacy of $p^{\otimes (k+1)}$ in $T$. Indeed, $x_1 \cdot \ldots \cdot x_k = y \in q$ by definition of $q$. But if $x_1 \cdot \ldots \cdot x_k = y \in p^{\otimes (k+1)}$, for $(a_1, \ldots, a_{k}) \models p^{\otimes (k+1)}|_{M}$ we must have $a_1 \cdot \ldots \cdot a_k = y \in p_y$ and $a_1 \cdot \ldots \cdot a_k \in G(\cU)$, hence $p$ must be realized in $\cU$.

(2) 	Let $T$ be stable and $G$ an infinite type-definable group. Let $G^0 = G^{00}$ be the intersection of all definable subgroups of finite index, it is a type-definable subgroup of $G$ and is still infinite (by compactness), and there is principal generic type $p \in S_{G^0}(\cU)$ of $G$ which is $G^0$-invariant. As $G$ is infinite, $p$ has to be non-algebraic (every $\varphi(x) \in p$ is generic, that is finitely many $G$-translates of $\varphi(\cU)$ cover $G$, hence $\varphi(\cU)$ is infinite).  As $p$ is generically stable (and $p^{\otimes k}$ is generically stable for all $k<\omega$, by stability of $T$, this contradicts (1).
\end{proof}

\begin{remark}
	We note that if $\mu \in \mathfrak{M}_{G}(\cU)$ is a $G$-invariant generically stable measure, a similar argument with pushforwards of measures (see \cite[Proposition 3.37, Lemma 3.44]{chernikov2024definable}) shows that for 
	$\nu(x_1, \ldots, x_{k}, x_{k+1}) := f_{\ast}\left(\mu^{\otimes k}_{x_1, \ldots, x_{k}} \right)$, we have $\nu|_{(x_i : i \in u)} = \mu^{\otimes k}(x_i : i \in u)$ for all $u \subseteq k+1, |u|=k$. And $\nu(x_1 \cdot \ldots \cdot x_k = x_{k+1}) = 1$, while $\mu^{\otimes (k+1)}(x_1 \cdot \ldots \cdot x_k = x_{k+1}) = 1$. However, to contradict $k$-determinacy for measures, we would need to know more: that $\nu|_{\mathcal{L}^{k}_{x_1, \ldots, x_{k+1}}} = \mu^{\otimes (k+1)}|_{\mathcal{L}^{k}_{x_1, \ldots, x_{k+1}}}$.
\end{remark}

We now prove Conjecture \ref{conj: k-dist G-inv meas} for measures in \emph{strongly} $n$-distal NIP theories.
\begin{theorem}\label{thm: G-inv gen stab mu is smooth}
 Assume $1 \leq n \in \omega$ and $T$ is NIP and strongly $n$-distal. Assume $G$ is a definable group and $\mu \in \mathfrak{M}_{G}(\cU)$ is generically stable and $G$-invariant. Then $\mu$ is smooth.
\end{theorem}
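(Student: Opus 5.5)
Let $\M$ be a small model over which $G$ is definable and $\mu$ is generically stable. The plan is to use Proposition \ref{2-determinacy with a type} (n-smoothness of $\mu^{\otimes n}$ over $\M$) together with the group multiplication map, in the spirit of the proof of Proposition \ref{prop: stable groups are not $n$-distal}. Fix an arbitrary small tuple $a$ and an arbitrary global measure $\mu'(y) \in \mathfrak{M}_G(\cU)$ with $\mu'|_{\M} = \mu|_{\M}$. It suffices to show $\mu'|_{\M a} = \mu|_{\M a}$; this gives smoothness over $\M$.

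\emph{Step 1 (building a competitor to $\mu^{\otimes n}$).} Since $\mu$ is generically stable, it is $G$-invariant on both sides (as for types, cf.\ \cite[Lemma 1]{pillay2011generic} and its measure analogue). For $n=1$ the claim is Fact \ref{fac: distal implies gen stab is smooth}, so assume $n\ge 2$. Consider the measure
\[
\omega(x_0,\dots,x_{n-1}) := g_{\ast}\left(\mu^{\otimes (n-1)}(x_0,\dots,x_{n-2}) \otimes \mu'(y)\right),
\]
where $g(x_0,\dots,x_{n-2},y) := (x_0,\dots,x_{n-2},\, (x_0\cdots x_{n-2})^{-1} y)$, so that $x_0\cdots x_{n-1}=y$. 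This uses that $\mu^{\otimes(n-1)}$ is invariant, so the product with an arbitrary global $\mu'$ on the right is defined.

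\emph{Step 2 (marginals).}
\begin{itemize}
\item The restriction $\omega|_{x_0\dots x_{n-2}}$ is $\mu^{\otimes(n-1)}$.
\item For $j<n-1$, dropping $x_j$ leaves $(x_{\neq j,\, n-1})$ together with $x_{n-1} = (x_0\cdots x_{n-2})^{-1}y$. Integrating out $x_j$ first, using the translation invariance of $\mu$ (with the fiber parameters fixed), the value $x_j$ absorbs the translate. So the pair (other $x$'s, $x_{n-1}$) has law $\mu^{\otimes(n-1)}$, independently of $\mu'$. Here commutativity of generically stable measures, Fact \ref{fact: gen stable measures}, is needed to reorder the factors.
\item For the condition $\omega|_{\M}=\mu^{\otimes n}|_{\M}$, use that $\mu'|_{\M}=\mu|_{\M}$ together with $\M$-invariance of $\mu^{\otimes(n-1)}$. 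The integral defining $\omega$ on $\M$-formulas only sees $\mu'|_{\M}$, and the same computation with $\mu$ in place of $\mu'$ gives $\mu^{\otimes n}$, again by invariance.
\end{itemize}
Proposition \ref{2-determinacy with a type} then yields $\omega = \mu^{\otimes n}$.

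\emph{Step 3 (recovering $\mu'$).} Pushing forward by $(x_0,\dots,x_{n-1}) \mapsto x_0\cdots x_{n-1}$ gives $\mu'$ from $\omega$ (by construction) and $\mu$ from $\mu^{\otimes n}$ (by invariance). Hence $\mu'=\mu$.

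This argument never uses $a$, and in fact shows that every global extension of $\mu|_\M$ equals $\mu$, i.e.\ $\mu$ is smooth over $\M$. The statement requires only $\mu'|_{\M}=\mu|_{\M}$, not that $\mu'$ be $\M$-invariant. The main obstacle I expect is Step 2: verifying rigorously that the pushforward marginals are exactly $\mu^{\otimes(n-1)}$, and matching the exact $n$-smoothness hypotheses. This needs care with tensor products of a non-invariant measure $\mu'$ (it must be placed on the right), and with the use of two-sided invariance and commutativity inside the Fubini-type integral computations. If a direct check fails, I would first replace $\mu'$ by an $\M$-invariant extension via Lemma \ref{existence of smooth extensions of products} with $n=1$, and handle the general case by a Morley-sequence argument as in the proof of Proposition \ref{2-determinacy with a type}.
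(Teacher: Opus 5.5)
Your proposal is correct and is essentially the paper's proof. It uses the same pushforward of $\mu^{\otimes(n-1)}\otimes\mu'$ along a multiplication map, the same marginal computation via two-sided invariance and commutativity of $\mu$, the same use of $\mu'|_{\M}=\mu|_{\M}$ together with $\M$-invariance of $\mu^{\otimes(n-1)}$, and then Proposition \ref{2-determinacy with a type}. Working with an arbitrary extension $\mu'$ instead of a smooth one is harmless, since $\mu'$ only ever appears as the outermost integral.

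One small point needs filling in. With your parametrization $x_{n-1}=(x_0\cdots x_{n-2})^{-1}y$, the variable $x_j$ enters the marginal computation as $x_j^{-1}$, so two-sided translation invariance alone is not enough; you also need $\mu$ to be inversion-invariant. This holds: $\mu^{-1}$ is right-invariant, so it equals $\mu$ by the uniqueness in \cite[Theorem 4.3]{hrushovski2013generically}. Alternatively, use the paper's parametrization $x_{n-1}=x_0\cdots x_{n-2}\,y$, which needs only two-sided translation invariance.
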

\begin{proof}
Let $\M$ be a small model so that $G$ is definable over $\M$ and $\mu$ is generically stable over $\M$.  As $T$ is NIP and $G$ is fsg, we know that $\mu$ is the unique left $G$-invariant measure in $\mathfrak{M}_{G}(\cU)$, and it is also the unique right $G$-invariant one (see \cite[Theorem 4.3]{hrushovski2013generically}).

As $T$ is NIP, using Lemma \ref{existence of smooth extensions of products}, let $\mu' \in \mathfrak{M}(\cU)$ be a global measure extending $\mu|_{\M}$ and smooth (i.e.~$1$-smooth) over a small model $\Ncal \succeq \M$; in particular $\mu' \in \mathfrak{M}_{G}(\cU)$. We have $\mu' \neq \mu$ (as otherwise we would be done), so let $\varphi(x) \in \Lcal(\cU)$ be so that $\mu(\varphi(x)) \neq \mu'(\varphi(x))$.

Without loss of generality $n \geq 2$, as otherwise we are done by Fact \ref{fac: distal implies gen stab is smooth}. Consider the $\M$-definable map $g: (x_0, \ldots, x_{n-1}) \mapsto (x_0, \ldots, x_{n-2}, x_0  \cdot \ldots \cdot x_{n-1})$ and let $\omega(x_0, \ldots, x_{n-1}) := g_{\ast} \left( \mu_{x_0} \otimes \ldots \otimes \mu_{x_{n-2}} \otimes \mu'_{x_{n-1}}\right) \in \mathfrak{M}_{x_0, \ldots, x_{n-1}}(\cU)$ be the pushforward measure (note that all of these measures are invariant and generically stable over $\Ncal$, so the $\otimes$-product is well-defined and does not depend on the order by Fact \ref{fact: gen stable measures}; we refer to \cite[Section 3.5]{chernikov2024definable} for a general discussion of definable pushforwards). That is, for every $\psi(x_0, \ldots, x_{n-1}) \in \Lcal(\cU)$ we have $\omega(\psi(x_0, \ldots, x_{n-1})) = \mu_{y_0} \otimes \ldots \otimes \mu_{y_{n-2}} \otimes \mu'_{y_{n-1}}(\psi(y_0, \ldots, y_{n-2}, y_0 \cdot \ldots \cdot y_{n-1}))$.
	
	\medskip
	
	 \noindent  \textbf{Claim 1.} For every $j < n$ we have that $\omega|_{(x_i : i \neq j)} = \bigotimes_{i \neq j}\mu_i(x_i)$. 

   \noindent \textbf{Proof of Claim 1.} This is obvious for $j = n-1$, so assume $0 \leq j \leq n-2$ and let $\psi(x_0, \ldots, x_{j-1}, x_{j+1}, \ldots, x_{n-1}) \in \Lcal(\cU)$ be arbitrary. Take a small model $\M'$ containing $\Ncal$ and the parameters of $\psi$. Then
	\begin{gather*}
		\omega(\psi(x_0, \ldots, x_{j-1}, x_{j+1}, \ldots, x_{n-1})) \\ 
		= \mu_{y_0} \otimes \ldots \otimes \mu_{y_{n-2}} \otimes \mu'_{y_{n-1}}(\psi(y_0, \ldots, y_{j-1}, y_{j+1}, \ldots, y_{n-2}, y_0 \cdot \ldots \cdot y_{n-1})) =\\
		 \int_{q \in S_{y_{n-1}}(\M')} \mu_{y_0} \otimes \ldots \otimes \mu_{y_{n-2}}(\psi(y_0, \ldots, y_{j-1}, y_{j+1}, \ldots, y_{n-2}, y_0 \cdot \ldots \cdot  y_{n-2} \cdot a_{n-1})) d \mu'(q)
	\end{gather*}
	for $a_{n-1} \models q$ in $\cU$. For a fixed $a_{n-1}$, taking $\M'' \supseteq \M' a_{n-1}$, $I := \{0, \ldots, n-2\} \setminus \{j\}$ and using that the measures commute we then have
	\begin{gather*}
		\mu_{y_0} \otimes \ldots \otimes \mu_{y_{n-2}}(\psi(y_0, \ldots, y_{j-1}, y_{j+1}, \ldots, y_{n-2}, y_0 \cdot \ldots \cdot  y_{n-2} \cdot a_{n-1})) = \\
		\mu_{y_j} \otimes \left( \bigotimes_{i \in I} \mu_{y_{i}}  \right) (\psi(y_0, \ldots, y_{j-1}, y_{j+1}, \ldots, y_{n-2}, y_0 \cdot \ldots \cdot  y_{n-2} \cdot a_{n-1})) =\\
		\int_{r \in S_{(y_i : i \in I)}(\M'')} \mu_{y_j} ( \psi(a_0, \ldots, a_{j-1}, a_{j+1}, \ldots, a_{n-2},\\
		 a_0 \cdot \ldots \cdot a_{j-1} \cdot y_j \cdot a_{j+1} \cdot \ldots \cdot   a_{n-2} \cdot a_{n-1}) ) d \left( \bigotimes_{i \in I} \mu_{y_{i}}  \right)(r) 
	\end{gather*}
for $(a_i : i \in I) \models r$	in $\cU$. As $\mu$ is both left and right $G$-invariant, this is equal to 
\begin{gather*}
	 = \int_{r \in S_{(y_i : i \in I)}(\M'')} \mu_{y_j} ( \psi(a_0, \ldots, a_{j-1}, a_{j+1}, \ldots, a_{n-2}, y_j ) ) d \left( \bigotimes_{i \in I} \mu_{y_{i}}  \right)(r) = \\
	\left( \left( \bigotimes_{i  \in I } \mu_{y_i} \right) \otimes \mu_{y_j}  \right)( \psi(y_0, \ldots, y_{j-1}, y_{j+1}, \ldots, y_{n-2}, y_j ) ).
\end{gather*}
Plugging this into the first integral and renaming the variables we get 
\begin{gather*}
	\omega(\psi(x_0, \ldots, x_{j-1}, x_{j+1}, \ldots, x_{n-1})) =\\
	\left( \left( \bigotimes_{i  \in I } \mu_{y_i} \right) \otimes \mu_{y_j}  \right) ( \psi(y_0, \ldots, y_{j-1}, y_{j+1}, \ldots, y_{n-2}, y_j )\\
	= \mu_{x_0} \otimes \ldots \otimes \mu_{x_{j-1}} \otimes \mu_{x_{j+1}} \otimes \ldots \otimes \mu_{x_{n-1}}(\psi(x_0, \ldots, x_{j-1}, x_{j+1}, \ldots, x_{n-1})).
\end{gather*}
 \hfill$\blacksquare_{\textit{Claim }1}$
	
	\medskip
	
 \noindent  \textbf{Claim 2.}  $\omega|_{\M} = \left(\bigotimes_{i=1}^{n-1}\mu_i(x_i) \right)|_{\M}$. 

   \noindent \textbf{Proof of Claim 2.}
	Let $\psi(x_0, \ldots, x_{n-1}) \in \Lcal(\M)$ be arbitrary. As $\mu'|_{\M} = \mu|_{\M}$,  we also have the equality $\widetilde{\mu'|_{\M}} = \widetilde{\mu|_{\M}}$ of their extensions to regular Borel measures on $S_{x}(\M)$ by uniqueness. Hence, using that $\mu$ is generically stable over $\M$ and left $G$-invariant,    
	\begin{gather*}
		\omega(\psi(x_0, \ldots, x_{n-1})) = \mu_{y_0} \otimes \ldots \otimes \mu_{y_{n-2}} \otimes \mu'_{y_{n-1}}(\psi(y_0, \ldots, y_{n-2}, y_0 \cdot \ldots \cdot y_{n-1})) =\\
		\int_{q \in S_{y_{n-1}}(\M)}(\mu_{y_0} \otimes \ldots \otimes \mu_{y_{n-2}})(\psi(y_0, \ldots, y_{n-2}, y_0 \cdot \ldots \cdot  y_{n-2} \cdot a_{n-1})) d\widetilde{\mu'|_{\M}}(q) =\\
		\int_{q \in S_{y_{n-1}}(\M)}(\mu_{y_0} \otimes \ldots \otimes \mu_{y_{n-2}})(\psi(y_0, \ldots, y_{n-2}, y_0 \cdot \ldots \cdot  y_{n-2} \cdot a_{n-1})) d\widetilde{\mu|_{\M}}(q)= \\
		\mu_{y_0} \otimes \ldots \otimes \mu_{y_{n-2}} \otimes \mu_{y_{n-1}}(\psi(y_0, \ldots, y_{n-2}, y_0 \cdot \ldots \cdot y_{n-2} \cdot y_{n-1})) = \\
		\int_{r \in S_{x_0, \ldots, x_{n-2}}(\M)} \mu_{y_{n-1}}(\psi(a_0, \ldots, a_{n-2}, a_0 \cdot \ldots \cdot a_{n-2} \cdot y_{n-1})) d \mu_{y_0} \otimes \ldots \otimes \mu_{y_{n-2}}(r) = \\
		\int_{r \in S_{x_0, \ldots, x_{n-2}}(\M)} \mu_{y_{n-1}}(\psi(a_0, \ldots, a_{n-2},  y_{n-1})) d \mu_{y_0} \otimes \ldots \otimes \mu_{y_{n-2}}(r) = \\
		\mu_{y_0} \otimes \ldots \otimes \mu_{y_{n-1}}(\psi(x_0, \ldots, x_{n-1})),
	\end{gather*}
where $a_{n-1} \models q$ and $(a_0, \ldots, a_{n-2}) \models r$ in $\cU$.
	 \hfill$\blacksquare_{\textit{Claim }2}$
	 
	 \medskip 
	 
	Now consider the formula $\theta(x_0, \ldots, x_{n-1}) := \varphi( x^{-1}_{n-2} \cdot  \ldots \cdot  x^{-1}_1 \cdot x^{-1}_0 \cdot x_{n-1}) \in \Lcal(\cU)$. On the one hand we have 
	\begin{gather*}
		\omega(\theta(x_0, \ldots, x_{n-1}))) = \omega(\varphi( x^{-1}_{n-2} \cdot  \ldots \cdot  x^{-1}_1 \cdot x^{-1}_0 \cdot x_{n-1})) =\\
		\mu_{y_0} \otimes \ldots \otimes \mu_{y_{n-2}} \otimes \mu'_{y_{n-1}}(\varphi(y^{-1}_{n-2} \cdot  \ldots \cdot  y^{-1}_1 \cdot y^{-1}_0 \cdot (y_0 \cdot y_1 \cdot \ldots \cdot y_{n-1} ))) =\\
		\mu_{y_0} \otimes \ldots \otimes \mu_{y_{n-2}} \otimes \mu'_{y_{n-1}}(\varphi(y_{n-1} )) = \mu'_{y_{n-1}}(\varphi(y_{n-1} )).
	\end{gather*}
	\noindent On the other hand, letting $\M'$ be a small model containing $\M$ and the parameters of $\theta$,
		\begin{gather*}
		\mu_{x_0} \otimes \ldots \otimes \mu_{x_{n-1}}(\theta(x_0, \ldots, x_{n-1}))) = \mu_{x_0} \otimes \ldots \otimes \mu_{x_{n-1}}(\varphi( x^{-1}_{n-2} \cdot  \ldots \cdot  x^{-1}_1 \cdot x^{-1}_0 \cdot x_{n-1}))\\
		= \int_{q \in S_{x_0, \ldots, x_{n-2}}(\M') } \mu_{x_{n-1}} \left( \varphi( a^{-1}_{n-2} \cdot  \ldots \cdot  a^{-1}_1 \cdot a^{-1}_0 \cdot x_{n-1}) \right) d (\mu_{x_0} \otimes \ldots \otimes \mu_{x_{n-2}})(q)
	\end{gather*}
	for $(a_0, \ldots, a_{n-2}) \models q$ in $\cU$, which by left $G$-invariance of $\mu$ is equal to
	\begin{gather*}
			= \int_{q \in S_{x_0, \ldots, x_{n-2}}(\M') } \mu_{x_{n-1}} \left( \varphi( x_{n-1}) \right) d (\mu_{x_0} \otimes \ldots \otimes \mu_{x_{n-2}})(q) = \mu_{x_{n-1}} \left( \varphi( x_{n-1}) \right).
	\end{gather*}
	\noindent By the choice of $\varphi(x)$ we thus conclude that $\omega \neq \mu_{x_0} \otimes \ldots \otimes \mu_{x_{n-1}}$. Combined with Claims 1 and 2, this shows that $\mu_{x_0} \otimes \ldots \otimes \mu_{x_{n-1}}$ is not $n$-smooth over $\M$, contradicting strong $n$-distality by Proposition \ref{2-determinacy with a type}.
\end{proof}
\begin{cor}
Hence compact domination holds for definable fsg groups in strongly $n$-distal  NIP theories by Fact \ref{fac: comp dom iff smooth measure}, and so the  arithmetic version of the distal regularity lemma holds for definable groups exactly as in \cite[Section 6]{conant2021structure}	
\end{cor}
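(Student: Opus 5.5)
The corollary combines two earlier results: Theorem~\ref{thm: G-inv gen stab mu is smooth} and Fact~\ref{fac: comp dom iff smooth measure}. Let $T$ be NIP and strongly $n$-distal, and let $G$ be a definable fsg group. I first produce a left $G$-invariant generically stable measure on $G$. In NIP, fsg means there is a global generic type $p$ on $G$ that is finitely satisfiable in some small model. By \cite{hrushovski2013generically} (the result already quoted in the proof of Theorem~\ref{thm: G-inv gen stab mu is smooth}), $G$ then carries a unique left $G$-invariant global Keisler measure $\mu \in \mathfrak{M}_G(\cU)$. This $\mu$ is generically stable, and it is also the unique right-invariant measure. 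It can be obtained by averaging translates of $p$ against Haar measure on $G/G^{00}$. I would simply cite this, noting that the fsg hypothesis is exactly what guarantees it.

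Next I apply Theorem~\ref{thm: G-inv gen stab mu is smooth} to this $\mu$. Its hypotheses hold as stated: $T$ is NIP and strongly $n$-distal, $G$ is definable, and $\mu$ is generically stable and $G$-invariant. So $\mu$ is smooth. Since $\mu$ is left $G$-invariant, $G$ admits a smooth left-invariant measure. The right-to-left direction of Fact~\ref{fac: comp dom iff smooth measure} then gives compact domination: for every definable $D$, the set of cosets of $G^{00}$ meeting both $D$ and $\neg D$ has Haar measure $0$ in $G/G^{00}$.

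For the arithmetic regularity consequence, I would follow \cite[Section 6]{conant2021structure}. There the distal arithmetic regularity lemma is derived only from compact domination of fsg groups, together with NIP and generic stability of the Haar-type measure. I would check that distality enters that argument only through compact domination. If so, the argument transfers verbatim once compact domination is supplied by the previous step.

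The main obstacle is this last check. If \cite{conant2021structure} uses distality elsewhere, for example through distal cell decompositions or through the definable strong Erd\H{o}s--Hajnal property for the relevant binary relations, the transfer breaks. My first attempt would be to isolate the statement there that deduces regularity from ``$G$ is fsg and compactly dominated'' and confirm that it needs only NIP; the first two steps are routine.
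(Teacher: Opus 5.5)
Your proposal is correct and matches the paper, which gives no separate proof. In both, the unique left- and right-invariant generically stable measure of the fsg group from \cite{hrushovski2013generically} is smooth by Theorem~\ref{thm: G-inv gen stab mu is smooth}, and compact domination follows from Fact~\ref{fac: comp dom iff smooth measure} via its forward direction as stated (smooth invariant measure $\Rightarrow$ compactly dominated), not the right-to-left one you name. The paper likewise treats the arithmetic regularity step as immediate, reading \cite[Section 6]{conant2021structure} as deriving it from compact domination of fsg groups. So the check you flag is the right thing to confirm, but the paper does not regard it as a real obstacle.
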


It is conjectured in \cite[Conjecture 1.1]{chernikov2021n} (and also  \cite[Problem 4.10]{chernikov2019mekler}) that every $n$-dependent field (in the ring language) is already $1$-dependent. This and related questions are studied further in \cite{chernikov2024n}. 
Here we conjecture an analog for $n$-distality (one can also consider variants for strong $n$-distality, or for valued fields, possibly assuming NIP first):
\begin{conjec}
	Every  $n$-distal field (in the ring language) is already $1$-distal. In fact, we can even ask: are there (strongly, NIP) $(n+1)$-distal, not $n$-distal groups for all $n$ (in the pure group language)?
\end{conjec}

Distal (valued) fields are studied in \cite{aschenbrenner2022distality}, in particular distal Henselian valued fields are classified modulo Shelah's conjecture.  First examples of strictly $n$-dependent pure groups are given in \cite{chernikov2019mekler} using Mekler's construction (but note that the groups produced using it always interpret a (stably embedded) infinite stable group, hence cannot be $n$-distal for any $n$ by Proposition \ref{prop: stable groups are not $n$-distal}). 
It is observed in \cite{hieronymi2017distal} that dense pairs of $o$-minimal structures need not be $1$-distal. Are they $2$-distal?

\section{Infinite strongly $k$-distal field have characteristic $0$}\label{sec: Infinite strongly k-distal field have characteristic 0}

In this section we generalize \cite[Corollary 6.3]{chernikov2018regularity}, which shows that no theory satisfying strong Erd\H{o}s-Hajnal  (= $1$-strong Erd\H{o}s-Hajnal) can define an infinite field of positive characteristic, to $k$-strong Erd\H{o}s-Hajnal for arbitrary $k$ (see Corollary \ref{cor: n-str EH} and Definition \ref{def: k-sEH for finite}).

\subsection{Strong discrepancy and dense fibers on cylinder intersections}

For a function $f:Z\to B$ on a finite set $Z$ and a subset $S\subseteq Z$, Babai--Hayes--Kimmel \cite{babai1998cost} and subsequent work in multiparty communication complexity consider the following ``strong discrepancy'' measure:

\begin{defn}\cite[Definition 2.1]{babai1998cost}\label{def:strong-disc}
Let $f:Z\to B$ be a function between finite sets and let $S\subseteq Z$. The \emph{strong discrepancy} of $f$ on $S$ is
\begin{equation}
\Gamma(f,S)
\;:=\;
\max_{y\in B}\ \frac{1}{|Z|}\,\Bigl|\ |f^{-1}(y)\cap S|-\frac{|S|}{|B|}\ \Bigr| = \max_{y\in B}\ \frac{|S|}{|Z|}\,\Bigl|\ \Pr_{z\sim S}\bigl[f(z)=y\bigr]-\frac{1}{|B|}\ \Bigr|.
\end{equation}
where $z\sim S$ denotes a uniformly random element of $S$.
\end{defn}

Low strong discrepancy forces $f$ to take all possible values on dense sets (see the discussion in \cite[Section~4.3]{beame2010separating}):
\begin{lemma}\label{lem:disc-implies-nonempty}
Let $f:Z\to B$ and $S\subseteq Z$. Then for every $y\in B$,
\begin{equation}\label{eq:fiber-lower}
|f^{-1}(y)\cap S|
\ \ge\
\frac{|S|}{|B|}-\Gamma(f,S)\,|Z|.
\end{equation}
In particular, if $\alpha >0$ is such that $|S|\ge \alpha |Z|$ and $\Gamma(f,S)<\alpha/|B|$, then $f^{-1}(y)\cap S\neq\varnothing$ for every $y\in B$.
\end{lemma}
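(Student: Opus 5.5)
This is a direct unwinding of Definition \ref{def:strong-disc}; no earlier results are needed. Fix $y\in B$. By definition, $\Gamma(f,S)$ is a maximum over all elements of $B$, so in particular it dominates the term for this $y$:
\[
\Gamma(f,S)\ \ge\ \frac{1}{|Z|}\Bigl|\,|f^{-1}(y)\cap S|-\frac{|S|}{|B|}\,\Bigr| .
\]
Multiplying by $|Z|>0$ and using $|t|\ge -t$ for the real number $t=|f^{-1}(y)\cap S|-|S|/|B|$, we get
\[
\frac{|S|}{|B|}-|f^{-1}(y)\cap S|\ \le\ \Gamma(f,S)\,|Z| .
\]
Rearranging gives exactly \eqref{eq:fiber-lower}.

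For the ``in particular'' part, assume $|S|\ge\alpha|Z|$ and $\Gamma(f,S)<\alpha/|B|$. Note that $Z\neq\varnothing$, since $|Z|$ appears as a denominator in the definition of $\Gamma$ (if one wishes to be careful, this is implicit in the definition). Substituting both assumptions into \eqref{eq:fiber-lower} yields
\[
|f^{-1}(y)\cap S|\ \ge\ \frac{\alpha|Z|}{|B|}-\Gamma(f,S)|Z|\ =\ |Z|\Bigl(\frac{\alpha}{|B|}-\Gamma(f,S)\Bigr)\ >\ 0 .
\]
Strict positivity uses $|Z|>0$ and the strict inequality on $\Gamma$. Since a set of positive cardinality is nonempty, $f^{-1}(y)\cap S\neq\varnothing$, and this holds for every $y\in B$.

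There is no real obstacle here. The only points requiring care are to keep the inequality strict in the last step and to note the implicit nonemptiness of $Z$ (and of $B$, if $y$ is to exist at all).
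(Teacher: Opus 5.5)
Your proof is correct and follows the paper's own argument: bound the fiber deviation for the fixed $y$ by the maximum defining $\Gamma(f,S)$, rearrange to get the lower bound, then substitute $|S|\ge\alpha|Z|$ and $\Gamma(f,S)<\alpha/|B|$ to make the right-hand side strictly positive. Your explicit remark that $|Z|>0$ is needed for strictness is a detail the paper leaves implicit.
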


\begin{proof}
Fix $y\in B$. By Definition~\ref{def:strong-disc} we have
$
\Bigl|\ |f^{-1}(y)\cap S|-\frac{|S|}{|B|}\ \Bigr|
\le \Gamma(f,S)\,|Z|$. 
Rearranging gives~\eqref{eq:fiber-lower}. If moreover $|S|\ge \alpha|Z|$ and $\Gamma(f,S)<\alpha/|B|$, then the right-hand side of~\eqref{eq:fiber-lower} is strictly positive, so $|f^{-1}(y)\cap S|>0$ for every $y \in B$.
\end{proof}

\subsection{Discrepancy of generalized inner products over finite fields}

\begin{defn}\cite[Definition 2.1]{babai1998cost}
	For a prime power $q$, positive integers $s,k$, the generalized inner product $\GIP_{q,s,k}: \left( \mathbb{F}^{s}_{q} \right)^k \to \mathbb{F}_q$ is defined for $x_1, \ldots, x_k \in \mathbb{F}^s_{q}$ via $\GIP_{q,s,k}(x_1, \ldots, x_k) = \sum_{i=1}^{s} x_{1,i} \cdot x_{2,i} \cdot  \ldots \cdot x_{k,i}$.
\end{defn}

\begin{fact}\cite{babai1998cost}\label{fac: GIP str discrep bound}
Given a prime power $q$ and positive integers $s,k$, let $X_1 = \ldots = X_k := \mathbb{F}_q^s$. Then for any cylinder intersection set $C \subseteq X := X_1 \times \ldots X_k$,
\begin{gather*}
	\Gamma(\GIP_{q,s,k}, C) \leq (1-1/q)\left(1 - (1-1/q)^{k-1} \right)^{s 2^{1-k}}.
\end{gather*}
	
\end{fact}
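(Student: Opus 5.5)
We plan to follow the standard Babai--Nisan--Szegedy / Chung--Tetali route: Fourier expansion over $\mathbb{F}_q$, then repeated Cauchy--Schwarz to remove the $k$ cylinders one at a time, then an explicit computation of the resulting box norm for $\GIP$. Write $f:=\GIP_{q,s,k}$ and let $\psi$ range over the additive characters of $\mathbb{F}_q$. For $y\in\mathbb{F}_q$ we have $\1[f(x)=y]=\frac1q\sum_{\psi}\psi(f(x))\overline{\psi(y)}$. The trivial character contributes exactly $|C|/q$, so
\[
\frac{1}{|X|}\Bigl|\,|f^{-1}(y)\cap C|-\frac{|C|}{q}\Bigr|\le \frac1q\sum_{\psi\neq 1}\Bigl|\mathbb{E}_{x\in X}\1_C(x)\psi(f(x))\Bigr| .
\]
There are $q-1$ nontrivial characters, so it suffices to show that for every nontrivial $\psi$
\[
\bigl|\mathbb{E}_{x}\1_C(x)\psi(f(x))\bigr|\le \bigl(1-(1-1/q)^{k-1}\bigr)^{s2^{1-k}},
\]
which then gives the factor $(1-1/q)$ in the stated bound.

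\textbf{Step 1 (remove the cylinders).} Write $\1_C=\prod_{i=1}^k c_i(x_{-i})$ and set $g:=\psi\circ f$, a function of modulus $1$. We prove by induction the BNS inequality
\[
\bigl|\mathbb{E}_x \prod_i c_i(x_{-i})\,g(x)\bigr|^{2^{k-1}}\le \mathbb{E}_{x^0,x^1\in X_1\times\dots\times X_{k-1}}\Bigl|\mathbb{E}_{x_k}\prod_{\varepsilon\in\{0,1\}^{k-1}}\mathcal{C}^{|\varepsilon|}g(x^{\varepsilon},x_k)\Bigr| ,
\]
where $x^\varepsilon=(x^{\varepsilon_1}_1,\dots,x^{\varepsilon_{k-1}}_{k-1})$ and $\mathcal{C}$ denotes complex conjugation. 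Each step goes as follows. Pull the cylinder $c_1$, which does not depend on $x_1$, outside the expectation over $x_1$. Apply Cauchy--Schwarz in the remaining variables, using $|c_1|\le 1$. Expanding the square duplicates $x_1$ into $x_1^0,x_1^1$. The remaining cylinders $c_2,\dots,c_{k-1}$ become products over both copies, and each is still bounded by $1$ and still independent of its own direction, so the step can be repeated. After $k-1$ rounds only $c_k$ survives. It is independent of $x_k$, so it is killed by the triangle inequality, bounding it by $1$. This bookkeeping is the step I expect to be the main obstacle: one has to check that after each duplication the product of the surviving cylinders, taken over the $2^{j}$ copies, is still a function not depending on the next variable to be removed.

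\textbf{Step 2 (evaluate the box norm).} Expand $f(x^\varepsilon,x_k)=\sum_{t=1}^s x_{k,t}\prod_{j<k}x^{\varepsilon_j}_{j,t}$. The signed sum over $\varepsilon$ gives
\[
\prod_\varepsilon \mathcal{C}^{|\varepsilon|}g(x^\varepsilon,x_k)=\psi\Bigl(\sum_{t=1}^s x_{k,t}\prod_{j<k}(x^0_{j,t}-x^1_{j,t})\Bigr).
\]
The expectation over $x_k\in\mathbb{F}_q^s$ factors over $t$. Since $\psi$ is nontrivial, it equals $1$ if $\prod_{j<k}(x^0_{j,t}-x^1_{j,t})=0$ for every $t$, and $0$ otherwise. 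So the right-hand side of Step 1 equals the probability that for every coordinate $t$ some $j<k$ has $x^0_{j,t}=x^1_{j,t}$. By independence this probability is $\bigl(1-(1-1/q)^{k-1}\bigr)^s$. Taking the $2^{k-1}$-th root and combining with the Fourier reduction yields the stated bound on $\Gamma(\GIP_{q,s,k},C)$.
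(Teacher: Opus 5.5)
Your proposal is correct and is the paper's argument written out in full. The paper only cites Babai--Hayes--Kimmel: your character-sum reduction is their Lemma 2.9 ($\Gamma\le(1-1/q)\Gamma^{\mathrm{weak}}$), your iterated Cauchy--Schwarz is their Theorem 4.6, and your evaluation of the resulting cube expression is the pair of claims in the proof of their Corollary 4.12. The only cosmetic difference is that you express the final probability through the differences $x^0_{j,t}-x^1_{j,t}$ rather than as $\Pr_{u\in\widetilde{X}}[\forall x\in X_1,\ \GIP_{q,s,k}(x,u)=0]$; the two probabilities coincide because these differences are independent and uniform.
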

\begin{proof}
	This is contained in the proof of \cite[Corollary 4.12]{babai1998cost} (and pointed out in the case $q = 2^m$ in \cite[Fact 4.11]{beame2010separating}).
	
	Namely, let $\widetilde{X} := X_2 \times \ldots \times X_k$. The two claims in the proof of \cite[Corollary 4.12]{babai1998cost} combined give
	 $$\Pr_{u \in \widetilde{X}}[ \forall x \in X_1, \ \GIP_{q,s,k}(x,u) = 0]  = \left(1 - (1-1/q)^{k-1} \right)^s.$$
	 
	 Then \cite[Theorem 4.6]{babai1998cost} implies that for any cylinder intersection set $C \subseteq X$, 
	 $$\Gamma^{\textrm{weak}}(\GIP_{q,s,k}, C) \leq \left( \Pr_{u \in \widetilde{X}}[ \forall x \in X_1, \ \GIP_{q,s,k}(x,u) = 0] \right)^{2^{1-k}} = \left(1 - (1-\frac{1}{q})^{k-1} \right)^{s 2^{1-k}},$$
	 where $\Gamma^{\textrm{weak}}$ is defined in \cite[Definition 2.5]{babai1998cost}; 
	 hence by \cite[Lemma 2.9]{babai1998cost},
	 $$\Gamma(\GIP_{q,s,k}, C) \leq (1-1/q) \Gamma^{\textrm{weak}}(\GIP_{q,s,k}, C) \leq (1-1/q)\left(1 - (1-1/q)^{k-1} \right)^{s 2^{1-k}}. $$
\end{proof}

\begin{cor}\label{cor: k-sEH fails in fin fields}
Fix $k\ge 3$ and let $s:=2^{k}$. Then, for any $\alpha\in(0,1]$, if $
q>\frac{(k-1)^2}{\alpha}$ is a prime power and $C\subseteq (\F_q^{\,s})^k$ is a cylinder intersection set with $\abs{C}\ge \alpha\,q^{ks}$, then 
$\GIP_{q,s,k}$ assumes every value of $\F_q$ on $C$.
In particular, $\GIP_{q,s,k}$ has a zero on $C$ and also a nonzero on $C$.
\end{cor}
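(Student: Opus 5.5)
The plan is to combine the discrepancy bound of Fact \ref{fac: GIP str discrep bound} with Lemma \ref{lem:disc-implies-nonempty}. We take $Z := X = (\F_q^{\,s})^k$, so $|Z| = q^{ks}$, and $B := \F_q$, so $|B| = q$, with $f := \GIP_{q,s,k}$ and $S := C$. By Lemma \ref{lem:disc-implies-nonempty}, since $|C| \geq \alpha |Z|$ by hypothesis, it suffices to show that $\Gamma(\GIP_{q,s,k}, C) < \alpha/q$. This yields $f^{-1}(y) \cap C \neq \varnothing$ for every $y \in \F_q$.

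For the bound, we apply Fact \ref{fac: GIP str discrep bound} with $s = 2^k$. The exponent then becomes $s 2^{1-k} = 2$, so
\[
\Gamma(\GIP_{q,s,k}, C) \leq \Bigl(1-\frac{1}{q}\Bigr)\Bigl(1-\bigl(1-\tfrac{1}{q}\bigr)^{k-1}\Bigr)^{2} < \Bigl(1-\bigl(1-\tfrac{1}{q}\bigr)^{k-1}\Bigr)^{2},
\]
where the strict inequality uses $1 - 1/q < 1$ and the positivity of the squared term (for $k \geq 2$). Next, Bernoulli's inequality gives $(1-1/q)^{k-1} \geq 1 - (k-1)/q$, hence $0 \leq 1 - (1-1/q)^{k-1} \leq (k-1)/q$. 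We conclude that $\Gamma(\GIP_{q,s,k}, C) < (k-1)^2/q^2$.

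Finally, the assumption $q > (k-1)^2/\alpha$ is equivalent to $(k-1)^2/q^2 < \alpha/q$, which gives the required strict bound $\Gamma < \alpha/q$. The lemma then shows that every value of $\F_q$ is attained on $C$. Because $q \geq 2$, the field $\F_q$ contains both $0$ and a nonzero element, so $\GIP_{q,s,k}$ has both a zero and a nonzero on $C$.

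There is no real obstacle in this argument. The only points that need care are matching the choice $s = 2^k$ so that the exponent $s2^{1-k}$ collapses to $2$, and keeping the inequality strict, which the factor $(1-1/q)$ supplies. The hypothesis $k \geq 3$ is not needed for this computation; it is presumably there for the later application.
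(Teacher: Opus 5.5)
Your proposal is correct and matches the paper's proof step for step: apply Fact \ref{fac: GIP str discrep bound} with exponent $s2^{1-k}=2$, use Bernoulli to get $\Gamma(\GIP_{q,s,k},C)\le (k-1)^2/q^2<\alpha/q$, then conclude with Lemma \ref{lem:disc-implies-nonempty}. The extra strictness you take from the factor $1-1/q$ is harmless but not needed, since the hypothesis $q>(k-1)^2/\alpha$ already makes $(k-1)^2/q^2<\alpha/q$ strict.
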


\begin{proof}
Let $Z:=(\F_q^{\,s})^k$, so $\abs{Z}=q^{ks}$ and $\abs{C}\ge \alpha\abs{Z}$.
By Fact \ref{fac: GIP str discrep bound} and the choice $s=2^k$,
\[
\Gamma(\GIP_{q,s,k},C)
\le
\left(1-\frac{1}{q}\right)\left(1-\left(1-\frac{1}{q}\right)^{k-1}\right)^{2}.
\]
Using Bernoulli's inequality $(1-x)^{k-1}\ge 1-(k-1)x$ for $x\in[0,1]$ with $x=1/q$ gives
$
1-\left(1-\frac{1}{q}\right)^{k-1}\ \le\ \frac{k-1}{q}
$, hence 
\[
\Gamma(\GIP_{q,s,k},C)
\le
\left(1-\frac{1}{q}\right)\left(\frac{k-1}{q}\right)^2
\le
\frac{(k-1)^2}{q^2}.
\]
If $q>(k-1)^2/\alpha$, then $\Gamma(\GIP_{q,s,k},C)<\alpha/q$.
Lemma~\ref{lem:disc-implies-nonempty} then implies that for every $y\in\F_q$,
$\GIP_{q,s,k}^{-1}(y)\cap C\neq \varnothing$.
In particular, $C$ contains a point $x$ where $\GIP_{q,s,k}(x)=0$ and a point $x'$ where $\GIP_{q,s,k}(x')=1$.
\end{proof}

\subsection{No theory satisfying the $k$-strong Erd\H{o}s-Hajnal property can define an infinite field of positive characteristic}

\begin{theorem}\label{thm: no inf fields n-dist}
	No theory satisfying the $k$-strong Erd\H{o}s-Hajnal property (for uniform finitely supported measures) can define an infinite field of positive characteristic. In particular (by Corollary \ref{cor: n-str EH}), any infinite field definable in a strongly $k$-distal NIP theory has characteristic $0$.
\end{theorem}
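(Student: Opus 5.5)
We argue by contradiction, modelled on \cite[Corollary 6.3]{chernikov2018regularity} but with the generalized inner product playing the role of point--line incidences. Suppose $T$ satisfies $k$-sEH for uniform finitely supported measures and defines an infinite field $F$ of characteristic $p>0$. By monotonicity we may assume $k \geq 3$, since $k$-sEH for a given formula in $k+1$ variables yields $(k+1)$-sEH for a formula in one more dummy variable. Set $m := k+1$ and $s := 2^{m}$. Consider the definable $(m)$-ary relation
\[
\varphi(x_1, \ldots, x_m) \;:=\; \Bigl( \sum_{i=1}^{s} x_{1,i} \cdot x_{2,i} \cdots x_{m,i} = 0 \Bigr),
\]
where each $x_j$ ranges over $F^{s}$. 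By $k$-sEH let $\alpha = \alpha(\varphi)>0$. We will find finite subfields $\mathbb{F}_q \subseteq F$ with $q$ arbitrarily large. Putting the uniform measure on $V_j := \mathbb{F}_q^{s} \subseteq F^{s}$ for each $j$ then produces a $\varphi$-homogeneous cylinder intersection $C \subseteq V_1 \times \cdots \times V_m$ with $|C| \geq \alpha q^{ms}$. On $V$ the relation $\varphi$ is exactly the zero set of $\GIP_{q,s,m}$, since the field operations of $F$ restrict to those of $\mathbb{F}_q$. Corollary \ref{cor: k-sEH fails in fin fields} (with $k$ there replaced by $m \geq 3$) says that once $q > (m-1)^2/\alpha$, $\GIP_{q,s,m}$ takes both a zero and a nonzero value on $C$, so $C$ is not $\varphi$-homogeneous. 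This is the contradiction.

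The key step is producing arbitrarily large finite subfields, and this is where the main obstacle lies. If $F$ is not algebraic over its prime field $\mathbb{F}_p$, it has no large finite subfields, so we must pass to a suitable auxiliary field. The plan is to follow \cite{chernikov2018regularity}.

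First, $k$-sEH for uniform finitely supported measures implies $k$-dependence of every formula. A formula with the $k$-independence property encodes, on a finite sub-configuration, an arbitrary $(k+1)$-partite hypergraph, for instance a random one. Random hypergraphs have no homogeneous cylinder intersection of positive density: a cylinder intersection is determined by $(k+1)$ functions of $k$ coordinates, giving roughly $2^{(k+1)N^k}$ options, whereas the probability that a fixed dense one is homogeneous is $2^{-\Omega(N^{k+1})}$. So a union bound over all cylinder intersections works.

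Hence $F$ is $k$-dependent, and so, by \cite{hempel2016n}, $F$ is Artin--Schreier closed. It is also perfect, arguing as in the NIP case, or alternatively by replacing $F$ with its perfect hull, which is interpretable.

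Second, an infinite Artin--Schreier closed field $F$ of characteristic $p$ contains $\mathbb{F}_{p^{p^{n}}}$ for every $n$. Indeed $\mathbb{F}_p \subseteq F$, and Artin--Schreier closedness gives the successive degree-$p$ extensions $\mathbb{F}_{p^{p^{n}}} \subseteq \mathbb{F}_{p^{p^{n+1}}}$ inside $F$, each obtained as the splitting field of some $X^p - X - a$ over $\mathbb{F}_{p^{p^n}}$. This supplies arbitrarily large $q = p^{p^n}$ with $\mathbb{F}_q \subseteq F$.

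The second part of the statement then follows immediately from Corollary \ref{cor: n-str EH}.

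I expect the main obstacle to be the first step, namely cleanly deriving $k$-dependence, or at least Artin--Schreier closedness, from $k$-sEH alone. If the counting argument becomes awkward, one fallback is to apply \cite{hempel2016n} directly under the strongly $k$-distal NIP hypothesis, where $n$-dependence is already known by Proposition \ref{prop: n-dist implies n-dep}. That fallback only covers the ``in particular'' clause.
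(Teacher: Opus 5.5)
Your proof is correct, and it departs from the paper's in two places.

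\textbf{Non-$k$-dependent case.} The paper does not use random hypergraphs here. It encodes the $\GIP$ hypergraph itself via the $k$-independence property and applies Corollary \ref{cor: k-sEH fails in fin fields} again, so one discrepancy estimate serves both cases. Your union bound is an equally valid, self-contained alternative: there are at most $2^{(k+1)N^{k}}$ cylinder intersections, against homogeneity probability $2^{1-\alpha N^{k+1}}$. You should also note that with high probability the vertices on the encoded side have pairwise distinct neighbourhoods. This makes the encoding maps injective, so the uniform measures transfer.

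\textbf{Large finite subfields.} Here the difference is more substantial. The paper proves the stronger Claim $\mathbb{F}_p^{\alg}\subseteq K$. Besides Artin--Schreier closedness, this uses the Duret-type theorem \cite[Theorem 7.3]{hempel2016n} on PAC subfields. Your tower argument shows this input is unnecessary. The map $x\mapsto x^p-x$ has kernel $\mathbb{F}_p$ on $\mathbb{F}_q$, so some $X^p-X-a$ with $a\in\mathbb{F}_q$ has no root in $\mathbb{F}_q$ and is therefore irreducible over it. A root in $F$ then gives $\mathbb{F}_{q^p}\subseteq F$. So Artin--Schreier closedness alone yields $\mathbb{F}_{p^{p^n}}\subseteq F$ for all $n$, which is all the $\GIP$ step needs. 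What the paper's route buys is the more informative structural fact generalizing \cite[Corollary 4.5]{kaplan2011artin}.

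A few local repairs are needed.

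\begin{enumerate}
\item Your justification of ``we may assume $k\geq 3$'' runs the wrong way. Adding a dummy variable only shows that formulas \emph{not} depending on the new variable have $(k+1)$-sEH. You need genuinely $(k'+1)$-ary relations such as $\GIP$ to have $k'$-sEH.
\begin{itemize}
\item The right argument is grouping: view a $(k'+1)$-ary formula as $(k+1)$-ary by merging the last $k'-k+1$ variable blocks into one.
\item The uniform measure on a product of finite sets is the product of the uniform measures.
\item A cylinder intersection for the coarser partition is also one for the finer partition.
\end{itemize}
Alternatively, skip the reduction: the computation in the proof of Corollary \ref{cor: k-sEH fails in fin fields} goes through verbatim for $m=2$ parts.
\item Perfectness of $F$ is never used, and the perfect hull $\bigcup_n F^{1/p^n}$ is not interpretable in general. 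Drop that sentence.
\item The motivating remark that a field not algebraic over $\mathbb{F}_p$ has no large finite subfields is false (e.g.\ $\mathbb{F}_p^{\alg}(t)$), though it plays no role in the argument.
\item The fallback in your last paragraph is unnecessary, since your counting argument already derives $k$-dependence from $k$-sEH.
\end{enumerate}
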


\begin{proof}

Assume $T$ is a theory and $(K,+,\cdot)$ is an infinite definable field of characteristic $p > 0$, $M \models T$.

Assume first that $T$ is not $k$-dependent, witnessed by a formula $\varphi(x_1, \ldots, x_{k+1}) \in L$. Then the $(k+1)$-ary relation defined by $\varphi$ on $M^{x_1} \times \ldots \times M^{x_{k+1}}$ does not satisfy $k$-sEH. Indeed, fix any $\alpha > 0$. For any finite $X_1, \ldots, X_{k+1}$ and $E \subseteq X := X_1 \times \ldots \times X_{k+1}$ there exist some $Y_i \subseteq M^{y_i}$ and bijections $f_i : X_i \to Y_i$ so that $(a_1, \ldots, a_{k+1}) \in E \Leftrightarrow M \models \varphi(f(a_1), \ldots, f(a_{k+1}))$ for all $a = (a_1, \ldots, a_{k+1}) \in X$. In particular, taking any $q > k^{2}/\alpha$ and $X_i := \mathbb{F}_q^{2^{k+1}}$, by Corollary \ref{cor: k-sEH fails in fin fields} and translating via the bijections $f_i$, for any cylinder intersection set $C \subseteq Y := Y_1 \times \ldots \times Y_{k+1}$ with $|C| \geq \alpha |Y|$, we have $M \models \varphi(b)$ and $M \models \neg \varphi(b')$ for some $b,b' \in C$.

So we may assume $T$ is $k$-dependent. Generalizing \cite[Corollary 4.5]{kaplan2011artin} for $k=1$, we then have: 
\begin{claim}
	$\mathbb{F}_p^{\alg}$ is a subfield of $K$.
\end{claim}
\begin{proof}
	Let $F := K \cap \mathbb{F}_p^{\alg}$, the relative algebraic closure of $\mathbb{F}_p$ in $K$. By \cite[Theorem 6.3]{hempel2016n} (generalizing \cite{kaplan2011artin} for $k=1$), the field $K$ is Artin-Schreier closed, hence so is $F$. Hence $F$ is infinite, perfect and PAC (pseudo-algebraically closed). But by \cite[Theorem 7.3]{hempel2016n} (generalizing \cite{duret2006corps} for $k=1$), any fields with a relatively algebraically closed PAC subfield which is not separably closed is not $k$-dependent. Hence $F$ is algebraically closed, i.e.~$F = \mathbb{F}_p^{\alg}$.
	 \end{proof}

 We let $s := 2^{k+1}$ and let the partitioned $L$-formula $\varphi(x_1, \ldots, x_{k+1})$, where  $x_i = (x_{i,1}, \ldots, x_{i,s})$ and $x_{i,j}$ a tuple of variables corresponding to elements of $K$, be 
 \begin{gather*}
 	\varphi(x_1, \ldots, x_{k+1}) := \left( \sum_{i = 1}^{s} x_{1,i} \cdot \ldots \cdot x_{k+1,i} = 0 \right).
 \end{gather*}
 
 We show that $\varphi$ does not satisfy $k$-sEH. Let $\alpha >0$ be arbitrary. Let $m$ be sufficiently large so that for $q := p^m$, $q > k^2/\alpha$. By the claim, $K$ contains $\mathbb{F}_q$ as a subfield. We let $X_1 = \ldots = X_{k+1} := \mathbb{F}_q^{2^{k+1}}$ (more precisely, the set of tuples of elements of $K$ corresponding to it). Then, by Corollary \ref{cor: k-sEH fails in fin fields}, for any cylinder intersection set $C \subseteq X := X_1 \times \ldots \times X_{k+1}$ with $|C| \geq \alpha |X|$, there exist some $a,a' \in C$ so that $M \models \varphi(a) \land \neg \varphi(a')$. 
\end{proof}

\begin{cor}
	The theory $\ACF_p$ of algebraically closed fields of characteristic $p>0$ is stable, but does not admit a strongly $n$-distal NIP expansion for any $n$.
\end{cor}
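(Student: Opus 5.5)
The statement to prove is the final corollary: $\ACF_p$ is stable, yet admits no strongly $n$-distal NIP expansion for any $n$. The argument is a direct combination of Theorem \ref{thm: no inf fields n-dist} with the classical fact that $\ACF_p$ is stable; no new combinatorics is needed.

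\emph{Stability.} $\ACF_p$ is stable by the standard result (quantifier elimination together with counting types over models; in fact it is $\omega$-stable).

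\emph{No strongly $n$-distal NIP expansion.} Suppose $T' \supseteq \ACF_p$ is a complete theory in a language $\Lcal' \supseteq \Lcal_{\text{ring}}$ that is NIP and strongly $n$-distal for some $n \geq 1$. Let $K \models T'$. The universe of $K$ is an infinite field of characteristic $p$, definable (as the home sort, with $+,\cdot$ in the language) in $T'$. By Corollary \ref{cor: n-str EH}, every $\Lcal'$-formula satisfies $n$-sEH for finitely supported measures, and in particular for uniform measures on finite sets. Theorem \ref{thm: no inf fields n-dist} then says that $T'$ cannot define an infinite field of positive characteristic, which is a contradiction.

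Spelled out, the proof of Theorem \ref{thm: no inf fields n-dist} applied here runs as follows. Strong $n$-distality of $T'$ implies $n$-distality by Remark \ref{rem: k,l dist implic}, hence $n$-dependence by Proposition \ref{prop: n-dist implies n-dep}; $T'$ is in fact NIP by hypothesis. Since $K$ is algebraically closed, it contains $\F_q$ for every power $q$ of $p$, so the Claim in that proof holds trivially. The generalized inner product formula with $s=2^{n+1}$, evaluated on $\F_q^{s}$ for $q$ large relative to $\alpha$, then contradicts $n$-sEH by Corollary \ref{cor: k-sEH fails in fin fields}.

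\emph{Hypotheses to watch.} The only point needing care is that Corollary \ref{cor: k-sEH fails in fin fields} requires arity $k\ge 3$, i.e. $n\ge 2$. For $n=1$ we handle it separately: strong $1$-distality is distality, and \cite[Corollary 6.3]{chernikov2018regularity} applies directly. Alternatively, strong $1$-distality implies strong $2$-distality by Remark \ref{rem: k,l dist implic}, so we may replace $n$ by $\max(n,2)$.
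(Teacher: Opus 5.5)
Your proposal is correct and matches the paper, which obtains this corollary directly from Theorem \ref{thm: no inf fields n-dist} (via Corollary \ref{cor: n-str EH}) together with the standard stability of $\ACF_p$; you are also right that the arity restriction $k\ge 3$ in Corollary \ref{cor: k-sEH fails in fin fields} leaves the case $n=1$ to \cite[Corollary 6.3]{chernikov2018regularity}. One small correction to your alternative for $n=1$: Remark \ref{rem: k,l dist implic} only allows $\ell$ to decrease, so it does not give that strong $1$-distality implies strong $2$-distality; that implication instead follows directly from the definition, by applying strong $1$-distality to the merged tuple $b_0b_1$ (and the same merging argument shows that strong $n$-distality implies strong $(n+1)$-distality).
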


\bibliographystyle{plain}
\bibliography{ref}

\begin{thebibliography}{10}

\bibitem{zbMATH08064119}
A.~Abd~Aldaim, G.~Conant, and C.~Terry.
\newblock Higher arity stability and the functional order property.
\newblock {\em Sel. Math., New Ser.}, 31(3):79, 2025.
\newblock Id/No 59.

\bibitem{alon2007efficient}
Noga Alon, Eldar Fischer, and Ilan Newman.
\newblock Efficient testing of bipartite graphs for forbidden induced
  subgraphs.
\newblock {\em SIAM Journal on Computing}, 37(3):959--976, 2007.

\bibitem{alon2005crossing}
Noga Alon, J{\'a}nos Pach, Rom Pinchasi, Rado{\v{s}} Radoi{\v{c}}i{\'c}, and
  Micha Sharir.
\newblock Crossing patterns of semi-algebraic sets.
\newblock {\em Journal of Combinatorial Theory, Series A}, 111(2):310--326,
  2005.

\bibitem{anderson2023fuzzy}
Aaron Anderson.
\newblock Fuzzy {VC} combinatorics and distality in continuous logic.
\newblock {\em Preprint, arXiv:2310.04393}, 2023.

\bibitem{aschenbrenner2022distality}
Matthias Aschenbrenner, Artem Chernikov, Allen Gehret, and Martin Ziegler.
\newblock Distality in valued fields and related structures.
\newblock {\em Transactions of the American Mathematical Society},
  375(7):4641--4710, 2022.

\bibitem{babai1998cost}
L{\'a}szl{\'o} Babai, Thomas~P Hayes, and Peter~G Kimmel.
\newblock The cost of the missing bit: Communication complexity with help.
\newblock In {\em Proceedings of the thirtieth annual ACM symposium on Theory
  of computing}, pages 673--682, 1998.

\bibitem{basu2010combinatorial}
Saugata Basu.
\newblock Combinatorial complexity in o-minimal geometry.
\newblock {\em Proceedings of the London Mathematical Society},
  100(2):405--428, 2010.

\bibitem{bays2023incidence}
Martin Bays and Jean-Fran{\c{c}}ois Martin.
\newblock Incidence bounds in positive characteristic via valuations and
  distality.
\newblock {\em Annales Henri Lebesgue}, 6:627--641, 2023.

\bibitem{beame2010separating}
Paul Beame, Matei David, Toniann Pitassi, and Philipp Woelfel.
\newblock Separating deterministic from randomized multiparty communication
  complexity.
\newblock {\em Theory of Computing}, 6(1):201--225, 2010.

\bibitem{belegradek1999extended}
Oleg~V Belegradek, Alexei~P Stolboushkin, and Michael~A Taitslin.
\newblock Extended order-generic queries.
\newblock {\em Annals of Pure and Applied Logic}, 97(1-3):85--125, 1999.

\bibitem{ben2009continuous}
Ita{\"\i} Ben~Yaacov.
\newblock Continuous and random {V}apnik-{C}hervonenkis classes.
\newblock {\em Israel Journal of Mathematics}, 173(1):309--333, 2009.

\bibitem{benedikt2003definable}
Michael Benedikt, Leonid Libkin, Thomas Schwentick, and Luc Segoufin.
\newblock Definable relations and first-order query languages over strings.
\newblock {\em Journal of the ACM (JACM)}, 50(5):694--751, 2003.

\bibitem{boxall2018definable}
Gareth Boxall and Charlotte Kestner.
\newblock The definable (p, q)-theorem for distal theories.
\newblock {\em The Journal of Symbolic Logic}, 83(1):123--127, 2018.

\bibitem{boxall2023theories}
Gareth Boxall and Charlotte Kestner.
\newblock Theories with distal {S}helah expansions.
\newblock {\em The Journal of Symbolic Logic}, 88(4):1323--1333, 2023.

\bibitem{braunfeld2021characterizations}
Samuel Braunfeld and Michael Laskowski.
\newblock Characterizations of monadic {NIP}.
\newblock {\em Transactions of the American Mathematical Society, Series B},
  8(30):948--970, 2021.

\bibitem{braunfeld2024corrigenda}
Samuel Braunfeld and Michael Laskowski.
\newblock Corrigenda to ``{C}haracterizations of monadic {NIP}''.
\newblock {\em Transactions of the American Mathematical Society, Series B},
  11(34):1226--1232, 2024.

\bibitem{CheOber}
Artem Chernikov.
\newblock Towards higher classification theory.
\newblock In {\em Model Theory: Combinatorics, Groups, Valued Fields and
  Neostability. {Abstracts} from the workshop held {January} 8--14, 2023},
  volume~20 of {\em Oberwolfach Workshop Reports}, pages 129--134.
  Mathematisches Forschungsinstitut Oberwolfach, 2023.

\bibitem{chernikov2025externally}
Artem Chernikov.
\newblock Externally definable fsg groups in {NIP} theories.
\newblock {\em Preprint, arXiv:2506.23265}, 2025.

\bibitem{chernikov2020cutting}
Artem Chernikov, David Galvin, and Sergei Starchenko.
\newblock Cutting lemma and {Z}arankiewicz's problem in distal structures.
\newblock {\em Selecta Mathematica, New Series}, 26(2):8, 2020.

\bibitem{chernikov2022definable}
Artem Chernikov and Kyle Gannon.
\newblock Definable convolution and idempotent {K}eisler measures.
\newblock {\em Israel Journal of Mathematics}, 248(1):271--314, 2022.

\bibitem{chernikov2024definable}
Artem Chernikov, Kyle Gannon, and Krzysztof Krupi\'nski.
\newblock Definable convolution and idempotent {K}eisler measures {III}.
  {G}eneric stability, generic transitivity, and revised {N}ewelski's
  conjecture.
\newblock {\em Preprint, arXiv:2406.00912}, 2024.

\bibitem{chernikov2019mekler}
Artem Chernikov and Nadja Hempel.
\newblock Mekler's construction and generalized stability.
\newblock {\em Israel Journal of Mathematics}, 230(2):745--769, 2019.

\bibitem{chernikov2021n}
Artem Chernikov and Nadja Hempel.
\newblock On $n$-dependent groups and fields {II}.
\newblock {\em Forum of Mathematics, Sigma}, 9:e38, 2021.

\bibitem{chernikov2024n}
Artem Chernikov and Nadja Hempel.
\newblock On $n$-dependent groups and fields {III}. {M}ultilinear forms and
  invariant connected components.
\newblock {\em Preprint, arXiv:2412.19921}, 2024.

\bibitem{chernikov2012forking}
Artem Chernikov and Itay Kaplan.
\newblock Forking and dividing in {NTP2} theories.
\newblock {\em The Journal of Symbolic Logic}, 77(1):1--20, 2012.

\bibitem{chernikov2025semi}
Artem Chernikov and Alex Mennen.
\newblock Semi-equational theories.
\newblock {\em The Journal of Symbolic Logic}, 90(1):391--422, 2025.

\bibitem{chernikov2014n}
Artem Chernikov, Daniel Palacin, and Kota Takeuchi.
\newblock On n-dependence.
\newblock {\em Notre Dame Journal of Formal Logic, to appear}, 2019.

\bibitem{chernikov2024model}
Artem Chernikov, Ya'acov Peterzil, and Sergei Starchenko.
\newblock Model-theoretic {E}lekes--{S}zab{\'o} for stable and $o$-minimal
  hypergraphs.
\newblock {\em Duke Mathematical Journal}, 173(3):419--512, 2024.

\bibitem{chernikov2015externally}
Artem Chernikov and Pierre Simon.
\newblock Externally definable sets and dependent pairs {II}.
\newblock {\em Transactions of the American Mathematical Society},
  367(7):5217--5235, 2015.

\bibitem{chernikov2018regularity}
Artem Chernikov and Sergei Starchenko.
\newblock Regularity lemma for distal structures.
\newblock {\em Journal of the European Mathematical Society},
  20(10):2437--2466, 2018.

\bibitem{chernikov2021definable}
Artem Chernikov and Sergei Starchenko.
\newblock Definable regularity lemmas for {NIP} hypergraphs.
\newblock {\em The Quarterly Journal of Mathematics}, 72(4):1401--1433, 2021.

\bibitem{chernikov2021model}
Artem Chernikov and Sergei Starchenko.
\newblock Model-theoretic {E}lekes--{S}zab{\'o} in the strongly minimal case.
\newblock {\em Journal of Mathematical Logic}, 21(02):2150004, 2021.

\bibitem{chernikov2021ramsey}
Artem Chernikov, Sergei Starchenko, and Margaret~EM Thomas.
\newblock Ramsey growth in some {NIP} structures.
\newblock {\em Journal of the Institute of Mathematics of Jussieu},
  20(1):1--29, 2021.

\bibitem{chernikov2020hypergraph}
Artem Chernikov and Henry Towsner.
\newblock Hypergraph regularity and higher arity {VC}-dimension.
\newblock {\em Preprint, arXiv:2010.00726}, 2020.

\bibitem{chernikov2024perfect}
Artem Chernikov and Henry Towsner.
\newblock Perfect stable regularity lemma and slice-wise stable hypergraphs.
\newblock {\em Preprint, arXiv:2402.07870}, 2024.

\bibitem{arXiv:2508.05839}
Artem Chernikov and Henry Towsner.
\newblock Averages of hypergraphs and higher arity stability.
\newblock Preprint, {arXiv}:2508.05839, 2025.

\bibitem{chernikov2025higher}
Artem Chernikov and Henry Towsner.
\newblock Higher-arity {PAC} learning, {VC} dimension and packing lemma.
\newblock {\em Preprint, arXiv:2510.02420}, 2025.

\bibitem{conant2025generic}
Gabriel Conant, Kyle Gannon, and James~E Hanson.
\newblock Generic stability, randomizations and {NIP} formulas.
\newblock {\em Journal of Mathematical Logic}, page 2550016, 2025.

\bibitem{conant2021structure}
Gabriel Conant, Anand Pillay, and Caroline Terry.
\newblock Structure and regularity for subsets of groups with finite
  {VC}-dimension.
\newblock {\em Journal of the European Mathematical Society}, 24(2):583--621,
  2021.

\bibitem{CorMal2025}
Leonardo~N. Coregliano and Maryanthe Malliaris.
\newblock Sample completion, structured correlation, and {N}etflix problems.
\newblock {\em Preprint, arXiv:2509.20404}, 2025.

\bibitem{duret2006corps}
Jean-Louis Duret.
\newblock Les corps faiblement alg{\'e}briquement clos non s{\'e}parablement
  clos ont la propri{\'e}t{\'e} d'ind{\'e}pendance.
\newblock In {\em Model Theory of Algebra and Arithmetic: Proceedings of the
  Conference on Applications of Logic to Algebra and Arithmetic Held at
  Karpacz, Poland, September 1--7, 1979}, pages 136--162. Springer, 2006.

\bibitem{fox2012overlap}
Jacob Fox, Mikhail Gromov, Vincent Lafforgue, Assaf Naor, and J{\'a}nos Pach.
\newblock Overlap properties of geometric expanders.
\newblock {\em Journal f{\"u}r die Reine und Angewandte Mathematik}, 2012(671),
  2012.

\bibitem{fox2008erdHos}
Jacob Fox and J{\'a}nos Pach.
\newblock Erd{\H{o}}s-{H}ajnal-type results on intersection patterns of
  geometric objects.
\newblock In {\em Horizons of combinatorics}, pages 79--103. Springer, 2008.

\bibitem{fox2016polynomial}
Jacob Fox, J{\'a}nos Pach, and Andrew Suk.
\newblock A polynomial regularity lemma for semialgebraic hypergraphs and its
  applications in geometry and property testing.
\newblock {\em SIAM Journal on Computing}, 45(6):2199--2223, 2016.

\bibitem{fox2019erdHos}
Jacob Fox, J{\'a}nos Pach, and Andrew Suk.
\newblock Erd{\H{o}}s--{H}ajnal conjecture for graphs with bounded
  {VC}-dimension.
\newblock {\em Discrete \& Computational Geometry}, 61(4):809--829, 2019.

\bibitem{gishboliner2025regularity}
Lior Gishboliner, Asaf Shapira, and Yuval Wigderson.
\newblock Regularity for hypergraphs with bounded {VC}$_2$ dimension.
\newblock {\em Preprint, arXiv:2508.09969}, 2025.

\bibitem{goode1991some}
John~B Goode.
\newblock Some trivial considerations.
\newblock {\em The Journal of symbolic logic}, 56(2):624--631, 1991.

\bibitem{arXiv:2506.19147}
James~E. Hanson.
\newblock Indiscernible extraction at small large cardinals from a higher-arity
  stability notion.
\newblock Preprint, {arXiv}:2506.19147, 2025.

\bibitem{haussler1995sphere}
David Haussler.
\newblock Sphere packing numbers for subsets of the boolean $n$-cube with
  bounded {V}apnik-{C}hervonenkis dimension.
\newblock {\em Journal of Combinatorial Theory, Series A}, 69(2):217--232,
  1995.

\bibitem{hempel2016n}
Nadja Hempel.
\newblock On n-dependent groups and fields.
\newblock {\em Mathematical Logic Quarterly}, 62(3):215--224, 2016.

\bibitem{hieronymi2017distal}
Philipp Hieronymi and Travis Nell.
\newblock Distal and non-distal pairs.
\newblock {\em The Journal of Symbolic Logic}, 82(1):375--383, 2017.

\bibitem{hossain2022extension}
Akash Hossain.
\newblock Extension bases in {H}enselian valued fields.
\newblock {\em Preprint, arXiv:2210.01567}, 2022.

\bibitem{hrushovski2008groups}
Ehud Hrushovski, Ya'acov Peterzil, and Anand Pillay.
\newblock Groups, measures, and the {NIP}.
\newblock {\em Journal of the American Mathematical Society}, 21(2):563--596,
  2008.

\bibitem{hrushovski2011nip}
Ehud Hrushovski and Anand Pillay.
\newblock On {NIP} and invariant measures.
\newblock {\em Journal of the European Mathematical Society}, 13(4):1005--1061,
  2011.

\bibitem{hrushovski2013generically}
Ehud Hrushovski, Anand Pillay, and Pierre Simon.
\newblock Generically stable and smooth measures in {NIP} theories.
\newblock {\em Transactions of the American Mathematical Society},
  365(5):2341--2366, 2013.

\bibitem{kaplan2013additivity}
Itay Kaplan, Alf Onshuus, and Alexander Usvyatsov.
\newblock Additivity of the dp-rank.
\newblock {\em Transactions of the American Mathematical Society},
  365(11):5783--5804, 2013.

\bibitem{kaplan2011artin}
Itay Kaplan, Thomas Scanlon, and Frank~O Wagner.
\newblock Artin-{S}chreier extensions in {NIP} and simple fields.
\newblock {\em Israel Journal of Mathematics}, 185(1):141--153, 2011.

\bibitem{kaplan2017exact}
Itay Kaplan, Saharon Shelah, and Pierre Simon.
\newblock Exact saturation in simple and {NIP} theories.
\newblock {\em Journal of Mathematical Logic}, 17(01):1750001, 2017.

\bibitem{kaplan2014strict}
Itay Kaplan and Alexander Usvyatsov.
\newblock Strict independence.
\newblock {\em Journal of Mathematical Logic}, 14(02):1450008, 2014.

\bibitem{keisler1987measures}
H~Jerome Keisler.
\newblock Measures and forking.
\newblock {\em Annals of Pure and Applied Logic}, 34(2):119--169, 1987.

\bibitem{Kobayashi}
Munehiro Kobayashi.
\newblock A generalization of the {PAC} learning in product probability spaces.
\newblock {\em RIMS Kokyuroku (Proceedings of the workshop Model theoretic
  aspects of the notion of independence and dimension),
  http://hdl.handle.net/2433/223742}, 1938:33--37, 2015.

\bibitem{KotaPAC}
Takayuki Kuriyama and Kota Takeuchi.
\newblock On the ${PAC}_n$ learning.
\newblock {\em RIMS Kokyuroku (Proceedings of the workshop Model theoretic
  aspects of the notion of independence and dimension),
  http://hdl.handle.net/2433/223742}, 1938:54--58, 2015.

\bibitem{los1949extensions}
Jerzy {\L}o{\'s} and Edward Marczewski.
\newblock Extensions of measure.
\newblock {\em Fundamenta Mathematicae}, 36(1):267--276, 1949.

\bibitem{lovasz2010regularity}
L{\'a}szl{\'o} Lov{\'a}sz and Bal{\'a}zs Szegedy.
\newblock Regularity partitions and the topology of graphons.
\newblock In {\em An Irregular Mind: Szemer{\'e}di is 70}, pages 415--446.
  Springer, 2010.

\bibitem{okura2026distal}
Koki Okura.
\newblock Distal expansions of the integers and the $p$-adic fields.
\newblock {\em Preprint, arXiv:2603.19786}, 2026.

\bibitem{parigot1982theories}
Michel Parigot.
\newblock Th{\'e}ories d'arbres.
\newblock {\em The Journal of Symbolic Logic}, 47(4):841--853, 1982.

\bibitem{pillay1996geometric}
Anand Pillay.
\newblock {\em Geometric stability theory}.
\newblock Oxford University Press, 1996.

\bibitem{pillay2004type}
Anand Pillay.
\newblock Type-definability, compact {L}ie groups, and $o$-minimality.
\newblock {\em Journal of Mathematical Logic}, 4(02):147--162, 2004.

\bibitem{pillay2011generic}
Anand Pillay and Predrag Tanovic.
\newblock Generic stability, regularity, and quasiminimality.
\newblock {\em Models, logics, and higher-dimensional categories}, 53:189--211,
  2011.

\bibitem{sheats2025linear}
Hannah Sheats and Caroline Terry.
\newblock On the linear complexity of subsets of $\mathbb{F}_p^n$ of bounded
  {VC}$_2$-dimension.
\newblock {\em Preprint, arXiv:2512.02001}, 2025.

\bibitem{shelah2008minimal}
Saharon Shelah.
\newblock Minimal bounded index subgroup for dependent theories.
\newblock {\em Proceedings of the American Mathematical Society},
  136(3):1087--1091, 2008.

\bibitem{MR3273451}
Saharon Shelah.
\newblock Strongly dependent theories.
\newblock {\em Israel J. Math.}, 204(1):1--83, 2014.

\bibitem{MR3666349}
Saharon Shelah.
\newblock Definable groups for dependent and 2-dependent theories.
\newblock {\em Sarajevo J. Math.}, 13(25)(1):3--25, 2017.

\bibitem{simon2012finding}
Pierre Simon.
\newblock Finding generically stable measures.
\newblock {\em The Journal of Symbolic Logic}, 77(1):263--278, 2012.

\bibitem{simon2013distal}
Pierre Simon.
\newblock Distal and non-distal {NIP} theories.
\newblock {\em Annals of Pure and Applied Logic}, 164(3):294--318, 2013.

\bibitem{simon2015guide}
Pierre Simon.
\newblock {\em A guide to NIP theories}.
\newblock Cambridge University Press, 2015.

\bibitem{simon2016note}
Pierre Simon.
\newblock A note on ``{R}egularity lemma for distal structures''.
\newblock {\em Proceedings of the American Mathematical Society},
  144(8):3573--3578, 2016.

\bibitem{takeuchi}
Kota Takeuchi.
\newblock On 2-order property.
\newblock {\em Slides from a talk given at the Asian Logic Conference 2017,
  Daejeon, Korea}, 2017.

\bibitem{terry2025quadratic}
C~Terry and J~Wolf.
\newblock On the quadratic complexity of subsets of $\mathbb{F}_p^n$ of bounded
  {VC}$_2$-dimension.
\newblock {\em Preprint, arXiv:2510.12767}, 2025.

\bibitem{terry2025structure}
C~Terry and J~Wolf.
\newblock The structure of subsets of $\mathbb{F}_p^n$ of bounded
  {VC}$_2$-dimension.
\newblock {\em Preprint, arXiv:2510.12867}, 2025.

\bibitem{terry2023improved}
Caroline Terry.
\newblock An improved bound for regular decompositions of 3-uniform hypergraphs
  of bounded {VC}$_2$-dimension.
\newblock {\em Model Theory}, 2(2):325--356, 2023.

\bibitem{terry2024growth}
Caroline Terry.
\newblock Growth of regular partitions 4: strong regularity and the pairs
  partition.
\newblock {\em Preprint, arXiv:2404.02030}, 2024.

\bibitem{terry2021higher}
Caroline Terry and Julia Wolf.
\newblock Higher-order generalizations of stability and arithmetic regularity.
\newblock {\em Preprint, arXiv:2111.01739}, 2021.

\bibitem{terry2021irregular}
Caroline Terry and Julia Wolf.
\newblock Irregular triads in 3-uniform hypergraphs.
\newblock {\em Memoirs of the American Mathematical Society, accepted
  (arXiv:2111.01737)}, 2021.

\bibitem{TongThesis}
Ho~Wang~Mervyn Tong.
\newblock {\em Distality to and from combinatorics}.
\newblock PhD thesis, University of Leeds, 2025.

\bibitem{tong2025distal}
Mervyn Tong.
\newblock Distal expansions of {P}resburger arithmetic by a sparse predicate.
\newblock {\em The Journal of Symbolic Logic}, pages 1--33, 2025.

\bibitem{tong2026zarankiewicz}
Mervyn Tong.
\newblock Zarankiewicz bounds from distal regularity lemma.
\newblock {\em Bulletin of the London Mathematical Society}, 58(3):e70310,
  2026.

\bibitem{walker2023distality}
Roland Walker.
\newblock Distality rank.
\newblock {\em The Journal of Symbolic Logic}, 88(2):704--737, 2023.

\bibitem{WalkerThesis}
Roland Walker.
\newblock {\em Distality Rank and Tree Dimension}.
\newblock PhD thesis, University of Illinois at Chicago, 2023.

\bibitem{WestheadThesis}
Francis~Joseph Westhead.
\newblock Towards a regularity lemma for higher arity distal structures.
\newblock Master's thesis, University of Maryland, College Park, 2025.

\bibitem{yaacov2008model}
I~Ben Yaacov, Alexander Berenstein, C~Ward Henson, and Alexander Usvyatsov.
\newblock Model theory for metric structures.
\newblock {\em London Mathematical Society Lecture Note Series}, 350:315, 2008.

\bibitem{yaacov2014independence}
Ita{\"\i}~Ben Yaacov and Artem Chernikov.
\newblock An independence theorem for {NTP2} theories.
\newblock {\em The Journal of Symbolic Logic}, 79(1):135--153, 2014.

\end{thebibliography}

\end{document}